 \newtheorem{res}{Result}[section]
 \newtheorem{theorem}[res]{Theorem}
 \newtheorem{example}[res]{Example}
 \newtheorem{counterexample}[res]{Counterexample}
 \newtheorem{remark}[res]{Remark}
 \newtheorem{prop}[res]{Proposition}
 \newtheorem{lem}[res]{Lemma}
 \newtheorem{cor}[res]{Corollary}
 \newtheorem{defi}[res]{Definition}
\numberwithin{equation}{section}
\def\th{\theta}
\def\m{\mu}
\def\l{\lambda}
\def\b{b{\mathcal F}^+}
\def\B{\mathcal B}
\def\K{\mathcal K}
\def\C{\mathcal C}
\def\bM{\mathbb M}
\def\tX{\tilde X}
\def\tF{{\tilde{\mathcal F}}}
\def\defto{\buildrel def\over =}
\def\L{\mathcal L}
\def\calN{\mathcal N}
\def\trade{{K}}
\def\K{\trade}
\def\N{\mathbb N}
\def\E{\mathbb E}
\def\A{\mathcal A}
\def\Q{\mathcal Q}
\def\Om{\Omega}
\def\om{\omega}
\def\tOmega{{\tilde \Omega}}
\def\tOm{\tOmega}
\def\hOm{{\hat\Om}}
\def\tomega{{\tilde \omega}}
\def\tom{\tomega}
\def\barZ{{\bar Z}}
\def\al{\alpha}
\def\be{\beta}
\def\calN{\mathcal N}
\def\1{\mathbf 1}
\def\F{\mathcal F}
\def\hF{\hat{\F}}
\def\R{\mathbb R}
\def\P{\mathbb P}
\def\bP{\P}
\def\tP{{\tilde{\bP}}}
\def\hP{\hat{\bP}}
\def\bbQ{\mathbb Q}
\def\bQ{\bbQ}
\def\vare{\varepsilon}
\def\to{\rightarrow}
\def\Linf{{\mathcal L}^{\infty}}
\def\k{\kappa}
\def\E{\mathbb E}
\def\A{\mathcal A}
\def\Q{\mathcal Q}
\def\Om{\Omega}
\def\om{\omega}
\def\al{\alpha}
\def\be{\beta}
\def\1{\mathbf 1}
\def\G{\mathcal F}
\def\F{\mathcal F}
\def\Lplus{\L^{\infty}_+}
\def\H{\mathcal H}
\def\trho{\tilde \rho}
\def\1{\mathbf 1}
\def\hatv{\hat v}
\def\hatX{\hat X}
\def\endpf{\phantom{p}\hfill$\square$}
\def\half{\frac{1}{2}}
\begin{document}
\bibliographystyle{plain}
\title[On representing claims]{On representing claims
\\for coherent risk measures}
\date{}
\author{SAUL JACKA}
\address{Dept. of Statistics, University of Warwick, Coventry CV4 7AL, UK}
\email{s.d.jacka@warwick.ac.uk}
\author{ABDELKAREM BERKAOUI}
\address{College of Sciences, Al-Imam
Mohammed Ibn Saud Islamic University, P.O. Box 84880, Riyadh 11681,
Saudi Arabia.}
\email{berkaoui@yahoo.fr}
\begin{abstract}
We consider the problem of representing claims for coherent risk
measures. For this purpose we introduce the concept of (weak and
strong) time-consistency with respect to a portfolio of assets,
generalizing the one defined in Delbaen \cite{delbaen2}.

In a similar way we extend the notion of m-stability, by introducing
weak and strong versions. We then prove that the two concepts of m-
stability and time-consistency are still
equivalent, thus giving necessary and sufficient conditions for a
coherent risk measure to be represented by a market with
proportional transaction costs. We go on to deduce that, under a
separability assumption, any coherent risk measure is strongly
time-consistent with respect to a suitably chosen countable
portfolio, and show the converse: that any market with proportional
transaction costs is equivalent to a market priced by a coherent
risk measure, essentially establishing the equivalence of the two
concepts.
\end{abstract}
\thanks{{\bf Key words:} reserving; hedging;
representation; coherent risk measure, transaction costs; time-consistency; m-stability.}
\thanks{{\bf AMS 2000 subject classifications:} Primary 91B24; secondary 60E05; 
91B30; 60G99; 90C48; 46B09; 91B30.}

\thanks{The authors are grateful for many fruitful discussions with Jon
Warren on the topics of this paper.}

\thanks{This work was initiated while the second author was a Research
Fellow at Warwick University.}
\thanks{This research was partially supported by the grant \lq Distributed Risk Management'
in the Quantitative Finance initiative funded by EPSRC and the
Institute and Faculty of Actuaries}

\maketitle

\centerline{{\today}}


\section{Introduction}
The biggest practical success of Mathematical Finance to date is in explaining
how to
hedge against contingent claims (and thus how to price them uniquely) in the
context of a complete and frictionless market.

Two relatively recent developments in Mathematical Finance
are the introduction of the
concept of coherent risk measure and work on trading with (proportional)
transaction costs. Both of these developments seek to deal with deviations
from the idealised situation decribed above.

Coherent risk measures were first introduced by
Artzner, Delbaen, Eber and Heath \cite{ADEH}, in order to give a
broad axiomatic definition for monetary measures of risk.

In their fundamental theorem, Artzner {\em et al.} showed that such a
coherent risk measure can be represented as the supremum of
expectation over a set of test probabilities. Thus the setup includes
superhedging under the class of all EMMs (in an incomplete, frictionless
market).

Recent work on trading with transaction costs by Kabanov, Stricker,
Rasonyi, Jouini, Kallal, Delbaen, Valkeila and  Schachermayer,
amongst others (\cite{kabanov}, \cite{kabanov2}, \cite{JK},
\cite{DKV}, \cite{schacher}), lead to a necessary and sufficient
condition for the closure of the set of claims attainable for zero
endowment to be arbitrage-free (Theorem 1.2 of \cite{JBW}) and a characterisation
of the \lq dual' cone of pricing measures (consistent price
processes) (\cite{schacher}).

In this paper, we consider a coherent risk measure as a pricing
mechanism: in other words we assume that an economic agent is making a
market in (or at least reserving for) risk according to a coherent risk measure,
$\rho$ say.

So, we consider the
risk value of a financial claim as the basic price for
the associated contract. Unfortunately such a  pricing mechanism is not
closely linked to the notion of hedging, and so the price evolution from
trading time to maturity time is not well-defined. For example,
taking the obvious definition for $\rho_t$---the price of risk at time
$t$---it is not necessarily true that $\rho=\rho\circ\rho_t$ (see Delbaen
\cite{delbaen2}). Indeed, Delbaen has given a necessary and sufficient
condition for $\rho$ to be time-consistent in this way: the m-stability
property (\cite{delbaen2}), and this condition is easily violated. Notice that,
in the absence of m-stability,
reserving is not possible (without \lq new business strain'), since the time 0
price of (reserving for) the
time-$t$ reserve for a claim $X$ may (and sometimes will) be greater than the
time 0 reserve
for $X$.

Our preliminary results in this paper are as follows:
\begin{itemize}
\item[(1)]we introduce a generalisation of the concept of num\' eraire
suitable for the context of coherent risk measures (equation (\ref{num}))
and give a characterisation of such num\' eraires (Theorem \ref{numthm});

\item[(2)]we show (in equation (\ref{**}) and Lemma \ref{l2}) how to define a
$v$-denominated risk measure with the
same acceptance set as $\rho$, where $v$ is the final value of a positive claim
or of a different currency.
\end{itemize}
Then we pursue the idea of pricing using several
currencies/commodities/denominations.

If we do this, then the option of creating reserves in several
currencies becomes available. Moreover, the possibility of trading
between currencies or commodities in order to hedge a contingent claim
also appears.

Our main results are as follows:
\begin{itemize}
\item[(3)]in Theorem \ref{weakthm} we give a necessary and sufficient
condition (which generalises Delbaen's m-stability property)
for time-consistency with respect to a portfolio of assets (we term this weak
representation);

\item[(4)]in Theorems \ref{strongthm} and \ref{strongthm2}, we give two
necessary and sufficient conditions
(the first akin to Schachermayer's description
of the cone of consistent price processes) for
the attainability of all acceptable claims purely by trading in a portfolio of
assets;

\item[(5)]in Theorem \ref{separable} we show that, under a separability
condition, all acceptable claims may be attained by trading in a
fixed countable collection of assets;

\item[(6)]finally, we show, in Theorem \ref{reverse}, that every arbitrage-free
market
corresponding to trading with transaction costs in fact corresponds to
the representation of a coherent risk
measure using a set of commodities/num\' eraires .
\end{itemize}

\section{Preliminaries}

The paper is organized as follows: in section \ref{cond} we recall
properties of a conditional coherent risk measure. In section \ref{numsec}, we
consider a
one-period market, defined by a coherent risk measure, and define $\calN_0$, the
set of all num\'eraires in which we can trade in this market. Given
a num\'eraire $v\in \calN_0$, we define the $v$-denominated coherent
risk measure $\rho^v$. Remark that its value in cash (that is to say,
pieces of paper which pay 1 unit of account $\1$ at time $T$ i.e. Zero Coupon
Bonds),
given by
$\rho(\rho^v(X)v)$, may be different from $\rho(X)$. We discuss, in an
appendix,
the equivalence classes of num\'eraires, where such prices are the same.

Next, we
consider the general multi-period model and define $\calN$, the
set of all num\'eraires in which we can trade in every time period. In section
\ref{consist} we
introduce the two concepts of {\em time-consistency} and {\em m-stability} with
respect to a portfolio of assets in $\calN$. This new version of
time-consistency generalizes the one introduced in Delbaen
\cite{delbaen2}, and allows us not only to consider cash-flows but
also the possibility of investing in other assets. In order to show the
link between these two properties in section \ref{repsec}, we start with the
case where the portfolio of assets $V$ is finite and then consider
the cone $\A(V)$ of all portfolios in assets $V$, attainable from
non-positive endowment. Then we extend these results to the case
where the portfolio of assets is countable. The result in the finite case
is based on the results of section \ref{abscone}, where we consider a more
general cone $\B$ of portfolios attainable from non-positive
endowment. We will see that the notion of {\em decomposability} of the
cone $\B$, translated to the case $\B=\A(V)$, is equivalent to the
time-consistency property of the cone $\A$ with respect to $V$.

We assume that we are equipped with a filtered  probability space
$(\Om,\F,(\F_t)_{t=0,\ldots,T},\P)$, where $\F_0$ is not necessarily
trivial.

Now recall the setup from Schachermayer's paper \cite{schacher}: we
may trade in $d$ assets at times $0,\ldots,T$. We may burn any asset
and otherwise trades are given by a bid-ask process $\pi$ taking
values in $\R^{d\times d}$, with $\pi$ adapted to $(\F_t)_{t=0}^T$.
The bid-ask process gives the (time $t$) price for one unit of each
asset in terms of each other asset, so that
$$
\pi^{i,i}_t=1,\, \forall i,
$$
and $\pi^{i,j}_t$ is the (random) number of units of asset $i$ which
can be traded for one unit of asset $j$ at time $t$. We assume (with
Schachermayer) that we have \lq\lq netted out" any advantageous
trading opportunities, so that, for any $t$ and any
$i_0,\ldots,i_n$:
$$
\pi^{i_0,i_n}_t\leq \pi^{i_0,i_1}_t\ldots \pi^{i_{n-1},i_n}_t.
$$

The time $t$ trading cone, $\K_t$, consists of all those random
trades (including the burning of assets) which are available at time
$t$. Thus we can think of $\K_t$ as consisting of all those random
vectors which live (almost surely) in a random closed convex cone
$\K_t(\omega)$, where, denoting the $i$th canonical basis vector of
${\mathbb R}^d$ by $e_i$, $\K_t(\omega)$ is the finitely-generated
convex (hence closed) cone with generators
$\{e_j-\pi^{i,j}_t(\omega)e_i,1\leq i\neq j\leq d;\hbox{ and
}-e_k,1\leq k\leq d\}$. We shall say that $\eta$ is a self-financing
process if $\eta_t-\eta_{t-1}\in \K_t$ for each $t$, with
$\eta_{-1}\defto 0$.

It follows that the cone of claims attainable from zero endowment is
$\K_0+\ldots+\K_T$ and we denote this by $\B(\pi)$. Note that
$-\K_t$ is the time-$t$ solvency cone of claims, i.e. all those
claims which may be traded to $0$ at time $t$. Note also that,
following Kabanov et al. \cite{kabanov3}, Schachermayer uses \lq\lq hat"
notation (which we have dropped) to stress that we are trading
physical assets and uses $-\K$ where we use $\K$.

We shall show in section \ref{ultsec} that, by adding an
extra period, we may represent $\B(\pi)\cap\Linf$ by a
coherent risk measure and a new (final) set of prices for the vector of
assets. More precisely, there exists a probability space
$(\tilde{\Om},\mathcal{\tilde{F}},\tilde{\P})$ with $\F\subset
\mathcal{\tilde{F}}$ and with $\tilde{\P}$ coinciding with $\P$ on
$\F$, a vector of strictly positive random variables
$V=(v^1,\ldots,v^d)\in\L^1(\tilde{\P};\R^d)$ and a set of
probability measures $\Q$, defined on $\tilde{\Om}$, such that:
$$
\B(\pi)\cap\Linf=\left\{X\in\Linf(\P;\R^d):\;\sup_{\bbQ\in\Q}\E_\bbQ(X.V)\leq
0\right\}.
$$


\section{Conditional coherent risk measures.}\label{cond}

In the paper we will be dealing with pricing monetary risks in the future
 and, in general, in the presence of partial information. Accordingly, we recall
in this section the definition and the main result on the
characterization of a {\bf conditional} coherent risk measure. This
concept was introduced by Wang \cite{wang} and has been further
elaborated upon within different formal approaches by Artzner et al.
\cite{art}, Riedel \cite{riedel}, Weber \cite{weber}, Engwerda et
al. \cite{eng}, Scandolo \cite{scan}, Detlefsen and Scandolo
\cite{det}.

 Let $(\Om,\mathcal F,\P)$ be a probability space with
$\mathcal F_0\subset\mathcal F$ a sub-$\sigma$-algebra. Throughout
this section we consider the mapping $\rho_0:{\Linf}(\mathcal
F)\rightarrow {\Linf}(\mathcal F_0)$.

\begin{defi}\label{d1}
(See Detlefsen and Scandolo \cite{det}) We say that the mapping
$\rho_0$ is a relevant, conditional coherent risk measure with the
Fatou property if it satisfies the following axioms:
\begin{enumerate}
\item Monotonicity: For every $X,Y\in {\Linf}(\mathcal F)$,
$$
X\leq Y\,\mbox{a.s}\;\Rightarrow \;\rho_0(X)\leq
\rho_0(Y)\;\mbox{a.s}.
$$
\item Subadditivity: For every $X,Y\in {\Linf}(\mathcal F)$,
$$
\rho_0(X+Y)\leq\rho_0(X)+\rho_0(Y)\;\mbox{a.s}.
$$
\item $\mathcal F_0$-Translation invariance: For every
$X\in {\Linf}(\mathcal F)$ and $y\in  {\Linf}(\mathcal F_0)$,
$$
\rho_0(X+y)=\rho_0(X)+y\;\mbox{a.s}.
$$
\item $\mathcal F_0$-Positive homogeneity: For every $X\in {\Linf}(\mathcal F)$
and $a\in  {\Lplus}(\mathcal F_0)$, we have
$$\rho_0(a\,X)=a\,\rho_0(X)\;\mbox{a.s}.$$
\item  The Fatou property: a.s $\rho_0(X)\leq\liminf\rho_0(X_n)$, for
any sequence $(X_n)_{n\geq 1}$ uniformly bounded by $1$ and
converging to $X$ in probability.
\item Relevance: for each set $F\in\mathcal F$ with $\P[F|\, \mathcal F_0]> 0$
a.s,
$\rho_0(1_F) > 0$ a.s.
\end{enumerate}
\end{defi}

We point out that, in accordance with our aim of interpreting $\rho_0$
as a {\bf pricing} mechanism,
 we have introduced a change of sign in Definition \ref{d1},
so $X\mapsto\rho_0(-X)$ is a conditional coherent risk measure in
the sense of \cite{det}, for example.

\begin{prop}
\label{p2}(See Detlefsen and Scandolo \cite{det}) Let the mapping
$\rho_0$ be a relevant conditional coherent risk measure satisfying
the Fatou property. Then
\begin{enumerate}
\item The acceptance set
$$
\A_0\defto\{X\in {\Linf}(\mathcal F)\;;\;\rho_0(X)\leq
0\;\mbox{a.s}\}
$$
is a weak$^*$-closed convex cone, arbitrage-free, stable under
multiplication by bounded positive $\mathcal F_0$-measurable random
variables and contains $\Linf_-(\mathcal F)$.
\item There exists a
convex set of probability measures $\Q$, all of them being
absolutely continuous with respect to $\P$ and containing at least
one equivalent probability measure, such that for every $X\in
{\Linf}(\mathcal F)$:
\begin{eqnarray}
\label{ccmana} \rho_0(X)=\mbox{ess-sup}\left\{\E_{\bbQ}(X|\,\mathcal
F_0)\;;\;\bbQ\in\Q \right\}.
\end{eqnarray}
\end{enumerate}
\end{prop}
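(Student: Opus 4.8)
The plan is to read the structural properties in (1) directly off the six axioms, and to derive the representation in (2) as a conditional version of the Artzner--Delbaen--Eber--Heath duality, via the bipolar theorem for cones in the dual pair $(\Linf(\F),\L^1(\F))$. For (1), everything except weak$^*$-closedness is immediate from the axioms: subadditivity together with $\F_0$-positive homogeneity (applied with a constant $a$) make $\A_0$ a convex cone and give $\rho_0(0)=0$; $\F_0$-positive homogeneity applied with $a=1_B$, $B\in\F_0$, is exactly stability of $\A_0$ under multiplication by $\Lplus(\F_0)$ and also yields locality, $\rho_0(1_BX)=1_B\rho_0(X)$; monotonicity gives $\Linf_-(\F)\subseteq\A_0$, since $X\le0$ forces $\rho_0(X)\le\rho_0(0)=0$. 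For arbitrage-freeness, suppose $0\le X\in\A_0$ with $\P[X>0]>0$; set $F_n=\{X>1/n\}$ and $B_n=\{\P[F_n\mid\F_0]>0\}\in\F_0$, so the $B_n$ increase to $\{\P[X>0\mid\F_0]>0\}$, which has positive probability. Fix $n$ with $\P[B_n]>0$; the set $F:=(F_n\cap B_n)\cup B_n^c$ satisfies $\P[F\mid\F_0]>0$ a.s., so relevance gives $\rho_0(1_F)>0$ a.s., and peeling off $1_{B_n^c}\in\Linf(\F_0)$ by $\F_0$-translation invariance, together with locality, gives $\rho_0(1_{F_n\cap B_n})>0$ on $B_n$; since $\tfrac1n 1_{F_n\cap B_n}\le 1_{B_n}X$, monotonicity forces $\rho_0(X)>0$ on $B_n$, contradicting $X\in\A_0$. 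Weak$^*$-closedness is a standard consequence of the Fatou property: by Krein--Smulian it suffices that $\A_0\cap\{\|\cdot\|_\infty\le r\}$ be $\sigma(\Linf,\L^1)$-closed for each $r$, and a $\sigma(\Linf,\L^1)$-convergent sequence in such a ball converges weakly in $\L^1$, so Mazur's lemma, passage to an a.s.\ convergent subsequence of convex combinations, the Fatou property, and the convexity of $\A_0$ together give the conclusion.

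For (2), the bipolar theorem gives $\A_0=\A_0^{\circ\circ}$, where the polar $\A_0^\circ=\{Z\in\L^1(\F):\E[ZY]\le0\ \forall Y\in\A_0\}$ is a convex cone contained in $\L^1_+(\F)$ (test against $Y=-1_B$, $B\in\F$). The $\F_0$-stability of $\A_0$ upgrades every polar inequality to a conditional one: for $Z\in\A_0^\circ$ and $Y\in\A_0$ one has $1_BY\in\A_0$ for all $B\in\F_0$, hence $\E[1_B\,\E[ZY\mid\F_0]]\le0$ for all such $B$, i.e.\ $\E[ZY\mid\F_0]\le0$ a.s. Set $\mathcal D:=\{Z\in\L^1_+(\F):\E[Z\mid\F_0]=1,\ \E[ZY\mid\F_0]\le0\ \forall Y\in\A_0\}$ and $\Q:=\{Z\cdot\P:Z\in\mathcal D\}$, a convex family of probability measures $\ll\P$ for which $\E_{\bbQ}(Y\mid\F_0)=\E[ZY\mid\F_0]$ when $\bbQ=Z\cdot\P$. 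The inequality $\rho_0(X)\ge\mbox{ess-sup}_{\bbQ\in\Q}\E_{\bbQ}(X\mid\F_0)$ is then immediate: $X-\rho_0(X)\in\A_0$ by $\F_0$-translation invariance, so $\E[ZX\mid\F_0]-\rho_0(X)\E[Z\mid\F_0]\le0$, which, since $\E[Z\mid\F_0]=1$, reads $\E[ZX\mid\F_0]\le\rho_0(X)$.

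For the reverse inequality, and for the existence of an equivalent element of $\mathcal D$, I would first locate a strictly positive element $Z^*$ of the polar $\A_0^\circ$ --- where no normalisation is yet needed --- by a Halmos--Savage exhaustion: pick $Z_n\in\A_0^\circ$ so that $\bigcup_n\{Z_n>0\}$ is essentially maximal over $\A_0^\circ$ and let $Z^*$ be a suitable norm-convergent convex combination; every $Z\in\A_0^\circ$ then vanishes on $N:=\{Z^*=0\}$ (otherwise $Z+Z^*\in\A_0^\circ$ would enlarge the support), so $1_N\in\A_0^{\circ\circ}=\A_0$, and were $\P[N]>0$ the localisation trick from the arbitrage-freeness argument --- relevance applied to $(N\cap G)\cup G^c$ with $G=\{\P[N\mid\F_0]>0\}$, followed by $\F_0$-translation invariance and $\F_0$-stability --- would give $\rho_0(1_G1_N)>0$ on $G$ while $1_G1_N\in\A_0$, a contradiction; hence $N$ is null, $Z^*>0$ a.s., and $Z_0:=Z^*/\E[Z^*\mid\F_0]$ is an equivalent element of $\mathcal D$. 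Now suppose $\rho_0(X)>c$ on some $A\in\F_0$ with $\P[A]>0$, where $c:=\mbox{ess-sup}_{Z\in\mathcal D}\E[ZX\mid\F_0]$ (an $\F_0$-measurable, bounded random variable); by locality $1_AX-c1_A\notin\A_0$, so Hahn--Banach separates this point from the $\sigma(\Linf,\L^1)$-closed cone $\A_0$, producing $Z\in\A_0^\circ$ with $\E[Z(1_AX-c1_A)]>0$. With $W:=\E[Z\mid\F_0]$ one has $Z=0$ on $\{W=0\}$, so $\tilde Z:=(Z/W)1_{\{W>0\}}+Z_0\,1_{\{W=0\}}$ lies in $\mathcal D$ and satisfies $W\tilde Z=Z$, so that $0<\E[Z(1_AX-c1_A)]=\E[W1_A(\E[\tilde ZX\mid\F_0]-c)]$ and hence $\E[\tilde ZX\mid\F_0]>c$ on a positive-probability subset of $A$ --- contradicting $\tilde Z\in\mathcal D$. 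Hence $\rho_0(X)=\mbox{ess-sup}_{\bbQ\in\Q}\E_{\bbQ}(X\mid\F_0)$. The one genuine obstacle throughout is this conditionalisation: upgrading polar inequalities to a.s.\ conditional ones is routine via $\F_0$-stability, but normalising a separating functional into a bona fide conditional density is delicate on the $\F_0$-set $\{\E[Z\mid\F_0]=0\}$, which is exactly why the equivalent density $Z_0$ must be produced first --- the clean way being to find a strictly positive element of the polar, where no normalisation is needed, and it is there that the relevance axiom does its work, through the same localisation as in the arbitrage-freeness step.
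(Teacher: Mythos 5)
The paper offers no proof of this proposition at all: it is quoted verbatim from Detlefsen and Scandolo \cite{det}, so there is nothing in-paper to compare your argument against. Your proof is correct and is essentially the standard one, i.e.\ the conditional version of the Artzner--Delbaen--Eber--Heath/Delbaen duality: part (1) read off the axioms, with the localisation device $(F\cap B)\cup B^c$ correctly converting the ``$\P[F\mid\F_0]>0$ a.s.''\ form of the relevance axiom into a statement on just the $\F_0$-set where you need it; part (2) via bipolarity, upgrading of polar inequalities to conditional ones through $\F_0$-stability, a Halmos--Savage exhaustion to manufacture the strictly positive density $Z_0$, and a Hahn--Banach separation whose normalisation is patched on $\{\E[Z\mid\F_0]=0\}$ by $Z_0$. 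Two points would merit one extra line in a write-up. First, in the Krein--Smulian step you argue with sequences, but weak$^*$-closedness of $\A_0\cap\{\|\cdot\|_{\Linf}\le r\}$ requires nets unless $\L^1$ is separable; the standard repair is to observe that the Fatou property makes this set closed under convergence in probability, hence norm-closed and convex in $\L^2$ and therefore weakly closed there, while any weak$^*$-convergent net of elements of the ball also converges weakly in $\L^2$. Second, for $\bbQ=Z\cdot\P$ with $Z$ not strictly positive, $\E_\bbQ(X\mid\F_0)$ is a priori only defined $\bbQ$-a.s.; your normalisation $\E[Z\mid\F_0]=1$ forces $\bbQ=\P$ on $\F_0$, which is precisely what legitimises the identification $\E_\bbQ(X\mid\F_0)=\E[ZX\mid\F_0]$ and the $\P$-essential supremum in (\ref{ccmana}), and this is worth saying explicitly.
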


\begin{defi}
Given a conditional coherent risk measure $\rho_0$, we define
$\Q^{\rho_0}$ as follows:
\begin{eqnarray}
\label{mana1} \Q^{\rho_0} =\left\{\bbQ\ll\P\;;\;\frac{d\bbQ}{d\P}\in
\A_0^*\right\},
\end{eqnarray}
where $\A_0^*$ is the polar cone of $\A_0$. Conversely, given $\Q$ a
collection (not necessarily closed, or convex) of probability
measures absolutely continuous with respect to $\P$, we define
\begin{eqnarray}
\label{mana}
\rho_0^\Q(X)=\mbox{ess-sup}\left\{\E_{\bbQ}(X|\,\mathcal
F_0)\;;\;\bbQ\in\Q \right\}.
\end{eqnarray}
The set $\Q^{\rho_0}$ is the largest set $\Q$ for which
$\rho_0=\rho_0^\Q$.
\end{defi}


\section{Characterization of num\'eraires}\label{numsec}
First, we do the following:
\begin{enumerate}
\item we fix a relevant, coherent risk measure with the Fatou property,
$\rho:\Linf\rightarrow \R$ with acceptance set $\A$ (recall that
$\A=\{ X:\; \rho(X)\leq 0\}$ and that $\rho(X)=\inf\{c:\;X-c\1 \in
\A\}$) and test probabilities $\Q$, a maturity time $T$ and a unit
of account $\1$ (a currency e.g pounds sterling). The unit of
account $\1$ is interpreted as a contract that pays one pound at
time $T$, i.e. a zero coupon bond with redemption value of one pound.
\item we suppose that trading is frictionless at time $T$ and then
for any claim or asset $\hatX$, we denote by $X$ its value in terms
of the unit of account $\1$ at time $T$.
\end{enumerate}

It is necessary first to characterize assets which give rise to the
same acceptance sets as $\rho$. Note that, since the proofs in this
section are almost all straightforward, we give most of them in an
appendix---any missing proofs will be found in Appendix \ref{num2}.
\subsection{The one-period model}
Recall that $\A$ is an arbitrage-free, closed,
convex cone in $\Linf$ which contains $\Linf_-$.

Since $\F_0$ is not necessarily trivial, time zero may be understood
to be some time in the future and we {\em interpret} $\A_0$ as the set
of claims acceptable at time zero, using the definition in
Proposition \ref{p2}, so that
$\rho_0(X)=\mbox{ess-sup}\left\{\E_{\bbQ}(X|\,\mathcal
F_0)\;;\;\bbQ\in\Q\right\}$, and
$\A_0=\{X\in\Linf:\,\E_\bbQ(X|\,\F_0)\leq 0\;\mbox{for
all}\;\bbQ\in\Q\}=\{X:\rho_0(X)\leq 0\hbox{ a.s.}\}$.

In this one-period market governed by the pricing mechanism
$\rho_0$, to say that $\hatv$ is a {\em num\'eraire} at time zero,
means that for any claim $\hatX$, there exists an $\F_0$-measurable
number, $\lambda$, of contracts, each paying $v$ at maturity time
$T$, such that the final position $X-\lambda\,v$ is admissible. We
think of $X-\lambda\,v$ as being obtained as the net payoff from a
futures contract which agrees to exchange $\lambda$ units of $\hat
v$ for the claim $\hat X$ at the maturity date $T$.

Thus, we {\em define} $\calN_0$, the set of all num\'eraires, at
time zero by:
\begin{equation}\label{num}
\calN_0=\{v\in\L^\infty_+:\;\Linf=\A_0+\Linf(\F_0)v\},
\end{equation}
and, since $\L^\infty_-\subset\A_0$, it follows that
\begin{equation}
\calN_0=\{v\in\L^\infty_+:\;\Linf=\A_0+\Linf_+(\F_0)v\}.
\end{equation}
Now we characterise the num\'eraires.

\begin{theorem}
\label{numthm}Suppose that $v\in\Linf_+$, then $v\in \calN_0$ if and only if
{\setlength\arraycolsep{2pt}
\begin{eqnarray*}
\lambda_0(v)&\defto &\mbox{ess-inf}_{\bbQ\in\Q}\E_\bbQ(v|\F_0)=-\rho_0(-v)>0
\,\hbox{a.s},\\
{\textbf{and}\phantom{xxxxxxxx}}&&\\
\dfrac{1}{\lambda_0(v)}&\in &\Linf.
\end{eqnarray*}
}
\end{theorem}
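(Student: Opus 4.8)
The plan is to prove the equivalence directly from the definition $\calN_0=\{v\in\Linf_+:\;\Linf=\A_0+\Linf_+(\F_0)v\}$ together with the dual representation $\A_0=\{X:\E_\bbQ(X\mid\F_0)\le 0\ \forall\bbQ\in\Q\}$ furnished by Proposition \ref{p2}. First I would establish the identity $\lambda_0(v)=-\rho_0(-v)$; this is immediate since $\rho_0(-v)=\mbox{ess-sup}_{\bbQ\in\Q}\E_\bbQ(-v\mid\F_0)=-\mbox{ess-inf}_{\bbQ\in\Q}\E_\bbQ(v\mid\F_0)$, so I may work interchangeably with either quantity. Note also $\lambda_0(v)\ge 0$ automatically since $v\ge 0$.

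For the \emph{``only if''} direction, suppose $v\in\calN_0$. Apply the defining decomposition to the constant claim $X=-\1$: there exist $a\in\A_0$ and an $\F_0$-measurable $\lambda\ge 0$ with $-\1=a+\lambda v$, i.e. $-\1-\lambda v=a\in\A_0$. Taking $\E_\bbQ(\cdot\mid\F_0)$ gives $-1-\lambda\E_\bbQ(v\mid\F_0)\le 0$ for every $\bbQ\in\Q$, hence $\lambda\,\lambda_0(v)\ge 1-\ldots$; more carefully, $\lambda\E_\bbQ(v\mid\F_0)\ge -1$ for all $\bbQ$ is the wrong sign, so instead I rearrange as $\lambda v\le -\1-a$ and test against $\bbQ$ to get $\lambda\E_\bbQ(v\mid\F_0)\ge 1$ after using $\E_\bbQ(a\mid\F_0)\le 0$; this forces $\lambda_0(v)>0$ a.s.\ (on $\{\lambda_0(v)=0\}$ we would need $\lambda=+\infty$) and simultaneously $\lambda\ge 1/\lambda_0(v)$, with $\lambda\in\Linf$ since it is a fixed element of $\Linf(\F_0)$; hence $1/\lambda_0(v)\in\Linf$. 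One should double-check that $\lambda$ can indeed be taken bounded: because $X=-\1$ is bounded and $\A_0+\Linf_+(\F_0)v=\Linf$, a measurable selection argument (or simply truncating) yields a bounded multiplier.

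For the \emph{``if''} direction, assume $\lambda_0(v)>0$ a.s.\ and $1/\lambda_0(v)\in\Linf$, and let $X\in\Linf$ be arbitrary. Set $\lambda\defto \|X\|_\infty\big/\lambda_0(v)\in\Linf_+(\F_0)$ (using the boundedness hypothesis). Then for every $\bbQ\in\Q$, $\E_\bbQ(X-\lambda v\mid\F_0)\le \|X\|_\infty-\lambda\,\lambda_0(v)\le \|X\|_\infty-\|X\|_\infty=0$ a.s., so $X-\lambda v\in\A_0$ by the dual description; thus $X=(X-\lambda v)+\lambda v\in\A_0+\Linf_+(\F_0)v$, which is exactly $v\in\calN_0$.

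The routine part is all of the above; the one genuine subtlety — the main obstacle — is the measurability/boundedness bookkeeping in the ``only if'' direction: making precise that the multiplier witnessing $-\1\in\A_0+\Linf_+(\F_0)v$ may be chosen $\F_0$-measurable and, crucially, in $\Linf$, and that essential infima of the conditional expectations interact correctly with the a.s.\ inequalities (so that testing against all $\bbQ\in\Q$ and then taking ess-inf is legitimate). Once the decomposition of $-\1$ is pinned down with a bounded multiplier, the rest is a short computation with conditional expectations.
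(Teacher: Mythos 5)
Your ``if'' direction is correct and is essentially the paper's argument verbatim. The ``only if'' direction, however, contains a genuine error. You decompose the claim $-\1$, but $-\1\in\Linf_-\subset\A_0$, so the trivial decomposition $-\1=(-\1)+0\cdot v$ already witnesses $-\1\in\A_0+\Linf_+(\F_0)v$ and carries no information about $v$. Concretely, from $-\1=a+\lambda v$ with $a\in\A_0$ you get $\lambda\E_\bbQ(v\mid\F_0)=-1-\E_\bbQ(a\mid\F_0)$; using $\E_\bbQ(a\mid\F_0)\le 0$ only gives $\lambda\E_\bbQ(v\mid\F_0)\ge -1$, which is vacuous since the left side is nonnegative anyway. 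Your claimed conclusion $\lambda\E_\bbQ(v\mid\F_0)\ge 1$ does not follow; the sign you hoped to rescue by ``rearranging as $\lambda v\le -\1-a$'' cannot be rescued this way, since the identity is an equality and the inequality $\E_\bbQ(a\mid\F_0)\le 0$ points the wrong way for your purposes.

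The paper instead decomposes $\1$, a claim that is \emph{not} in $\A_0$ by relevance/no-arbitrage. Writing $\1=a+\lambda v$ with $a=\1-\lambda v\in\A_0$ and taking conditional expectations gives $1-\lambda\E_\bbQ(v\mid\F_0)\le 0$, i.e.\ $\lambda\E_\bbQ(v\mid\F_0)\ge 1$ genuinely, so $\lambda_0(v)\ge 1/\lambda>0$ and $1/\lambda_0(v)\le\lambda\in\Linf$. (The paper first uses arbitrage-freeness of $\A_0$ to rule out $\lambda\le 0$ on any $\F_0$-set of positive measure: if $\lambda\le 0$ on $F$, then $1_F\le 1_F(\1-\lambda v)\in\A_0$, contradicting relevance.) Finally, your worry about having to ``take $\lambda$ bounded via a measurable selection or truncation'' is a non-issue: by the very definition of $\calN_0$ the multiplier lives in $\Linf(\F_0)$, so it is bounded automatically.
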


Given a num\'eraire, we may, of course use it as a new unit of account.
First we define a measure that prices claims (expressed in units of account
$\1$),
in terms of contracts which pay the new num\'eraire.

\begin{defi}
Let $v\in \Linf_+$. The mapping $\tau:\Linf(\F)\rightarrow
\Linf(\F_0)$ is said to be a {\em $v$-denominated, conditional,
relevant, coherent risk measure with the Fatou property, with
respect to $\F_0$} if it satisfies properties 1, 2, 4, 5 and 6 of
Definition \ref{d1} and $\F_0$-translation invariance with respect
to $v$, i.e for every $X\in \Linf(\F)$ and $y\in \Linf(\F_0)$, we
have
\begin{equation}\label{inv}
\tau(X+yv)=\tau(X)+y\;\mbox{a.s}.
\end{equation}
\end{defi}

It is easily shown that, for each $v\in \calN_0$ and each $X\in\Linf$,
the set
$$
\Theta(X,v)\defto\left\{m\in\Linf(\F_0):\;X-mv\in\A_0\right\},
$$
is closed in $\Linf$ and is a lattice with respect to the minimum
relation, i.e for all $m,m'\in\Theta(X,v)$ we have
$\min(m,m')\in\Theta(X,v)$. We may thus define the mapping
$\rho_0^v:\Linf\rightarrow\Linf(\F_0)$ by
\begin{eqnarray}
\label{**}
\rho_0^v(X)=\mbox{ess-inf}\{m\in\Linf(\F_0):\;X-mv\in\A_0\},
\end{eqnarray}
and observe that $X-\rho_0^v(X)v\in\A_0$ a.s.

\begin{example}
We fix our unit of account $\1\equiv 1\,\hbox{pound sterling}$ and
then another currency, let us say the US dollar, will be denoted by
$\hat \delta$ where $\delta$ is the dollar/pound exchange rate {\em
at maturity}. So, if we assume that $\delta\in\calN_0$, and $\hatX$
is a claim, with a value in pounds at maturity $X$, then
$\rho_0^{\delta}(X)$ is the number of contracts in dollars (the
amount in dollars promised at time 0 and payable at maturity or the
dollar-denominated futures price)  we seek as payment to make the
risk $X$ acceptable.
\end{example}
We now proceed to show that a num\'eraire has all the requisite
properties.
\begin{lem}
\label{l2}Let $v\in \calN_0$. The mapping $\rho_0^v$ defined in
(\ref{**}), is a $v$-denominated conditional, relevant, coherent
risk measure which satisfies the Fatou property. Moreover
\begin{enumerate}
\item[(i)] $ \A_0=\{X\in\Linf:\;\rho_0^v(X)\leq 0\hbox{ a.s.}\}$.
\item[(ii)] For all $X\in \Linf$,
$$
\rho_0^v(X)=\mbox{ess-
sup}\left\{\dfrac{\E_\bbQ(X|\F_0)}{\E_\bbQ(v|\F_0)}:\,\bbQ\in\Q\right\}.
$$
\end{enumerate}
\end{lem}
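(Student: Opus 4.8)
The plan is to verify the axioms in sequence, using throughout the basic identity $X-\rho_0^v(X)v\in\A_0$ (which follows from closedness of $\Theta(X,v)$) together with the fact that $v\in\calN_0$ means $\rho_0^v$ is everywhere finite and, by Theorem \ref{numthm}, that $\lambda_0(v)>0$ a.s.\ with $1/\lambda_0(v)\in\Linf$. First I would record monotonicity and subadditivity: if $X\le Y$ then any $m$ with $X-mv\in\A_0$ satisfies $Y-mv=(X-mv)+(Y-X)\in\A_0$ since $\A_0\supset\Linf_-$ and $\A_0$ is a cone closed under adding nonnegative claims (as $Y-X\ge0$ and $\A_0+\Linf_+\subset\A_0$), so $\Theta(X,v)\subset\Theta(Y,v)$ and the ess-inf decreases; subadditivity follows because $(X+Y)-(\rho_0^v(X)+\rho_0^v(Y))v=(X-\rho_0^v(X)v)+(Y-\rho_0^v(Y)v)\in\A_0+\A_0=\A_0$. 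Positive homogeneity under $a\in\Lplus(\F_0)$ and $\F_0$-translation invariance with respect to $v$ (equation (\ref{inv})) both come straight from the definition: $\Theta(aX,v)=a\,\Theta(X,v)$ on $\{a>0\}$ using that $\A_0$ is stable under multiplication by bounded positive $\F_0$-measurable random variables, and $\Theta(X+yv,v)=y+\Theta(X,v)$.

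Next I would address the Fatou property and relevance. For Fatou, take $X_n\to X$ in probability with $|X_n|\le1$; then $|X|\le1$, so the candidate reserves live in a bounded set of $\Linf(\F_0)$. Writing $Y_n=X_n-\rho_0^v(X_n)v\in\A_0$, along a subsequence realizing $\liminf\rho_0^v(X_n)$ one has $Y_n\to X-(\liminf\rho_0^v(X_n))v$ in probability (using boundedness to pass the ess-inf through), and weak$^*$-closedness of $\A_0$ — equivalently the Fatou property of $\rho_0$ from Proposition \ref{p2} — gives that the limit lies in $\A_0$, hence $\rho_0^v(X)\le\liminf\rho_0^v(X_n)$. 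Relevance of $\rho_0^v$ follows from relevance of $\rho_0$: if $\P[F|\F_0]>0$ a.s.\ but $\rho_0^v(\1_F)\le0$ on a set of positive probability, then $\1_F\in\A_0$ there (by the basic identity with $m=0$, since ess-inf $\le0$ forces $0\in\Theta$ after an $\F_0$-measurable selection argument), contradicting $\rho_0(\1_F)>0$.

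For part (i), the inclusion $\A_0\subset\{\rho_0^v\le0\}$ is immediate ($m=0\in\Theta(X,v)$), and conversely $\rho_0^v(X)\le0$ a.s.\ gives $X=X-0\cdot v\in\A_0$ directly from $X-\rho_0^v(X)v\in\A_0$ together with $\rho_0^v(X)v\le0$ and $\A_0+\Linf_+\subset\A_0$. For part (ii), I would argue that $m\in\Theta(X,v)$ iff $X-mv\in\A_0$ iff $\E_\bbQ(X-mv\mid\F_0)\le0$ for all $\bbQ\in\Q$, i.e.\ $m\,\E_\bbQ(v\mid\F_0)\ge\E_\bbQ(X\mid\F_0)$; since $\E_\bbQ(v\mid\F_0)\ge\lambda_0(v)>0$, this rearranges to $m\ge\E_\bbQ(X\mid\F_0)/\E_\bbQ(v\mid\F_0)$ for every $\bbQ$, so the ess-inf of $\Theta(X,v)$ equals the ess-sup over $\bbQ$ of those ratios — the only subtlety being to justify that the ess-sup of the ratios is itself in $\Theta(X,v)$, which uses a lattice/measurable-selection argument paralleling the one establishing that $\Theta(X,v)$ is a min-stable closed set, quoted before the definition (\ref{**}).

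The main obstacle I anticipate is the measurable-selection bookkeeping needed to move freely between the ess-inf over the lattice $\Theta(X,v)$ and the ess-sup over the test measures $\Q$ — in particular, confirming in part (ii) that the essential supremum of the conditional-expectation ratios genuinely attains membership in $\A_0$ after multiplication by $v$, rather than merely bounding $\rho_0^v$ from below; the boundedness of $1/\lambda_0(v)$ from Theorem \ref{numthm} is exactly what makes this ratio an $\Linf(\F_0)$-valued quantity and keeps the argument inside the $\Linf$ framework, so I would lean on that at each such step.
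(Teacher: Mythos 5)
Your proof of part (ii) takes a genuinely different (and arguably more transparent) route than the paper's: you characterise $\Theta(X,v)$ directly via the conditional expectations and read off the ess-sup, whereas the paper first shows the easy inequality $\xi(X)\le\rho_0^v(X)$, then reduces to $\tX=X-\rho_0^v(X)v$ and uses an $\F_0$-measurable contradiction set $F^\vare=\{\xi(\tX)\le-\vare\}$ together with $\F_0$-translation invariance. Both work, and the ``main obstacle'' you flag at the end is actually a non-issue: once you note that $m\in\Theta(X,v)$ iff $m\ge\E_\bbQ(X|\F_0)/\E_\bbQ(v|\F_0)$ for all $\bbQ$, the ess-sup $\xi(X)$ satisfies that same family of inequalities by construction (and lies in $\Linf(\F_0)$ because $1/\lambda_0(v)\in\Linf$), hence $\xi(X)\in\Theta(X,v)$ with no selection argument needed.

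There are, however, two genuine problems earlier in the write-up. First, the monotonicity step has the inclusion backwards. You write that $X\le Y$ and $X-mv\in\A_0$ give $Y-mv=(X-mv)+(Y-X)\in\A_0$ ``since $\A_0+\Linf_+\subset\A_0$''; but in this paper's pricing convention $\A_0=\{\rho_0\le 0\}$ is closed under adding elements of $\Linf_-$, not $\Linf_+$, so that step fails, and in any case $\Theta(X,v)\subset\Theta(Y,v)$ would give the wrong inequality between the ess-infs. The correct direction is: $m\in\Theta(Y,v)\Rightarrow X-mv=(Y-mv)+(X-Y)\in\A_0+\Linf_-\subset\A_0\Rightarrow m\in\Theta(X,v)$, so $\Theta(Y,v)\subset\Theta(X,v)$ and $\rho_0^v(X)\le\rho_0^v(Y)$. (The same sign slip recurs in your proof of (i), though there the intended idea is right.) Second, the Fatou argument as written does not go through: you cannot in general pass to a subsequence ``realizing $\liminf\rho_0^v(X_n)$'' when that $\liminf$ is the pointwise (essential) $\liminf$ of a sequence of $\F_0$-measurable random variables; there need not be a single subsequence attaining it a.e. The clean route is to prove (ii) first — your direct argument works — and then read Fatou for $\rho_0^v$ off the representation as an ess-sup of conditional expectations (each of which satisfies Fatou by bounded convergence), exactly as Proposition \ref{p2} does for $\rho_0$ itself.
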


\begin{remark}
Let $\tau$ be a $v$-denominated conditional coherent risk measure
(with respect to $\F_0$), then $\tau=\zeta^v$ where the conditional
coherent risk measure $\zeta$ is defined by:
$$
\zeta(X)=\mbox{ess-inf}\left\{m\in\Linf(\F_0):\;\tau(X-m)\leq
0\;\hbox{a.s}\right\}.
$$
This follows from (i) of Lemma \ref{l2}.
\end{remark}

\begin{remark}
\label{r4}
Let $v\in \calN_0$ and define the set of probabilities,
$$
\Q^v\defto \left\{\R:\,\dfrac{d\R}{d\bbQ}=\dfrac{v}{\E_{\bbQ}(
v|\F_0)}\; \mbox{for some}\; \bbQ\in\Q\right\}.
$$
So for all $X\in \Linf$ we have
$$
\rho_0^v(X)=\mbox{ess-sup}_{\bbQ\in\Q^v}\E_\bbQ\left(\dfrac{X}{v}\biggl|\biggr.
\F_0\right)
\defto\rho_0^{(v)}\left(\dfrac{X}{v}\right),
$$
where the mapping $\rho_0^{(v)}$ is a conditional coherent risk
measure with test probabilities $\Q^v$. Notice that we can arrive at
a price for the claim $X$ (in terms of contracts in $v$) in two
different ways: firstly, by applying the $v$-denominated coherent
risk measure $\rho_0^v$, and secondly, by converting the value $X$
to its value (at maturity) -- $X/v$---when expressed in terms of the
new unit of account $\1'\equiv \hatv$,  and then applying
$\rho_0^{(v)}$ to the value $X/v$.
\end{remark}

\begin{remark}The value of a given risk as
provided by our coherent risk measure $\rho_0$, does not incorporate
the risk coming from a change over time in the value of the unit of
account itself; in other words, the randomness of the discount rate
is not taken into account. Now, by working with the discounted
counterpart $\trho_0$ of $\rho_0$, the value of money over time does
come into play. To express $\trho_0$ in terms of $\rho_0$ and the
interest rate $r$, we denote by $\hat{\1}_0$ the contract that
delivers $1+r$ at maturity time for one unit invested at time zero
and then $\1_0=1+r$ (expressed in units of account $\1$ at maturity
time). For a contract that pays $X$ at maturity time, we have on the
one hand, the price $\trho_0(X/(1+r))$ is the cash value payable at
time zero and on the other hand, under the assumption that $1+r\in
\calN_0$, $\rho_0^{1+r}(X)$ is the number of contracts in $\hat\1_0$
(each paying $1+r$ at maturity time) so we see that:
$$
\trho_0(X/(1+r))=\rho_0^{1+r}(X).
$$
\end{remark}

\subsection{The multi-period model}
Now, for $t=0,\, 1,\ldots, T$, we define the set of claims attainable for $0$ at
time $t$:
$$
\A_t=\{X\in\Linf\;;\;\E_\bbQ(X|\G_t)\leq 0\;\hbox{for all}\;
\bbQ\in\Q\},
$$
and $\calN_t$, the set of all num\'eraires at time $t$, is defined
as the set of $v\in\Linf_+$ such that
$\Linf=\A_t+\Linf(\G_t)\,v$. We define
$\calN=\bigcap\limits_{t=0}^{T}\calN_t$
and henceforth refer to it as the set of num\'eraires and any element of it as a
num\'eraire.
Note that $\calN_T=\{X\in \Linf_+:\, \hbox{ ess-inf }X>0\}$.

\begin{defi}
\label{defi} For all $v\in \calN$ and $t=0,\,1,\ldots,T$, we define the
mapping $\rho^v_t:\Linf(\G_T)\rightarrow\Linf(\G_t)$ by
$$
\rho^v_t(X)=\mbox{ess-inf}\left\{m\in\Linf(\G_t):\;X-mv\in\A_t\right\}.
$$
\end{defi}
Notice that $\rho^v_T(X)=\frac{X}{v}$.
\begin{cor}
\label{c1} For all $v\in \calN$ and $t=0,\,1,\ldots,T$, the mapping
$\rho^v_t$ as defined in Definition \ref{defi}, is a $v$-denominated,
conditional, relevant,
coherent risk measure with the Fatou property (with respect to $\G_t$), given by
$$
\rho^v_t(X)=\mbox{ess-sup}\left\{\dfrac{\E_\bbQ(X|\,\G_t)}{\E_\bbQ(v|\,\G_t)};\,\bbQ\in\Q\right\},
$$
and
$$
\A_t=\{X\in\Linf:\;\rho^v_t(X)\leq 0 \;\hbox{a.s}\}.
$$
\end{cor}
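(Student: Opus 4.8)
The plan is to obtain Corollary \ref{c1} as a direct consequence of the one-period results already established, namely Lemma \ref{l2} (and Theorem \ref{numthm}), applied in turn with the sub-$\sigma$-algebra $\F_0$ replaced by $\G_t$ and with the ambient risk measure replaced by the conditional risk measure $\rho_t$. The key observation is that the structural hypotheses needed for Lemma \ref{l2} are exactly the ones guaranteed by membership in $\calN$: if $v\in\calN$, then in particular $v\in\calN_t$ for each $t$, so the one-period setup relative to $\G_t$ is in force.

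First I would check that, for each fixed $t$, the pair $(\rho_t,\A_t)$ satisfies the hypotheses under which the one-period theory was developed. By Proposition \ref{p2}, $\rho_t=\rho_t^\Q$ given by the ess-sup representation $\rho_t(X)=\mbox{ess-sup}\{\E_\bbQ(X|\,\G_t):\bbQ\in\Q\}$, and its acceptance set is precisely $\A_t=\{X:\E_\bbQ(X|\,\G_t)\le 0\ \forall\bbQ\in\Q\}=\{X:\rho_t(X)\le 0\ \text{a.s.}\}$, which is a weak$^*$-closed, convex, arbitrage-free cone containing $\Linf_-$ and stable under multiplication by bounded positive $\G_t$-measurable random variables. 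Thus $\rho_t$ plays exactly the role that $\rho_0$ played in Section \ref{numsec}, with $\G_t$ in place of $\F_0$. Next, since $v\in\calN_t$ means $\Linf=\A_t+\Linf(\G_t)v$, the set $\Theta_t(X,v)\defto\{m\in\Linf(\G_t):X-mv\in\A_t\}$ is nonempty, closed in $\Linf$, and a lattice for the minimum operation (the same short arguments as in the one-period case), so $\rho^v_t(X)=\mbox{ess-inf}\,\Theta_t(X,v)$ is well-defined and $X-\rho^v_t(X)v\in\A_t$ a.s. Now Lemma \ref{l2}, applied verbatim with $(\rho_0,\A_0,\F_0)$ replaced by $(\rho_t,\A_t,\G_t)$, yields that $\rho^v_t$ is a $v$-denominated, conditional, relevant, coherent risk measure with the Fatou property (with respect to $\G_t$), that $\A_t=\{X:\rho^v_t(X)\le 0\ \text{a.s.}\}$, and that
$$
\rho^v_t(X)=\mbox{ess-sup}\left\{\dfrac{\E_\bbQ(X|\,\G_t)}{\E_\bbQ(v|\,\G_t)}:\bbQ\in\Q\right\}.
$$
(The denominators are a.s.\ strictly positive and bounded away from $0$ on reciprocal since $v\in\calN_t$, by Theorem \ref{numthm}, so the ratios make sense.)

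The only point requiring a word of care — and the step I expect to be the main (if minor) obstacle — is purely notational/logical: verifying that the one-period proofs of Theorem \ref{numthm} and Lemma \ref{l2} used nothing about $\F_0$ beyond its being a sub-$\sigma$-algebra of $\F$ over which the risk measure is conditioned, so that the substitution $\F_0\rightsquigarrow\G_t$ is legitimate and the relevant set of test probabilities $\Q$ can be kept the same. Since Proposition \ref{p2} already furnishes a common $\Q$ representing $\rho_0$ and $\A_0$, and since $\A_t$ is defined through that same $\Q$, this is immediate: one simply re-reads the earlier arguments with $\G_t$ in place of $\F_0$. This disposes of all assertions of Corollary \ref{c1}, and the identity $\rho^v_T(X)=X/v$ noted before the statement is the degenerate case $\G_T=\F$, where $\A_T=\Linf_-$ and $\mbox{ess-inf}\{m\in\Linf(\F):X-mv\in\Linf_-\}=X/v$ since $v>0$ a.s. \endpf
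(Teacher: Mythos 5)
Your proposal is correct and follows the paper's own route: the paper dispatches Corollary \ref{c1} with the one-line remark that it is an ``immediate consequence of Lemma \ref{l2},'' and your argument is precisely the careful spelling-out of that substitution $\F_0\rightsquigarrow\G_t$, together with the verification that $v\in\calN\subset\calN_t$ puts the one-period hypotheses in force.
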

\begin{proof}Immediate consequence of Lemma \ref{l2}.
\end{proof}

\begin{lem}
\label{l7}For all $X\in\Linf$, $0\leq t\leq t+s\leq T$ and $v,w^1,...,w^n\in
\calN$ we have:
$$\rho^v_t(X)\leq
\rho^v_t\left(\sum_{i=1}^n \rho^{w^i}_{t+s}(X_i)w^i\right),
$$
whenever $X_1,\ldots,X_n\in \Linf$ and $X=X_1+...+X_n$.
\end{lem}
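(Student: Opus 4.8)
The plan is to reduce the general statement to two applications of already-established monotonicity and translation properties, passing through the intermediate time $t+s$. First I would deal with the case $s$ arbitrary but $n=1$: writing $Y = \rho^{w^1}_{t+s}(X)w^1$, the defining property $X - \rho^{w^1}_{t+s}(X)w^1 \in \A_{t+s}$ (which is exactly $X - Y \in \A_{t+s}$, guaranteed by Corollary \ref{c1} together with the observation immediately after equation (\ref{**})) should combine with the fact that $\A_{t+s}\subset\A_t$ (immediate from the tower property of conditional expectation: if $\E_\bbQ(Z|\G_{t+s})\le 0$ for all $\bbQ\in\Q$, then $\E_\bbQ(Z|\G_t)\le 0$ for all $\bbQ\in\Q$). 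Hence $X - Y \in \A_t$, so by Corollary \ref{c1} again, $\rho^v_t(X-Y)\le 0$, and $\F_0$-translation invariance with respect to $v$ (property (\ref{inv}), valid since $\rho^v_t$ is a $v$-denominated risk measure) would give $\rho^v_t(X) \le \rho^v_t(Y)$ — wait, one has to be slightly careful, because $Y = \rho^{w^1}_{t+s}(X)w^1$ is not of the form $(\text{something})\cdot v$ unless $w^1 = v$, so translation invariance in $v$ does not apply directly.

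So the cleaner route is to use monotonicity plus subadditivity of $\rho^v_t$ directly. Since $X - \sum_i \rho^{w^i}_{t+s}(X_i) w^i \in \A_{t+s}\subset \A_t$, applying $\rho^v_t$ and using $\A_t = \{Z:\rho^v_t(Z)\le 0\}$ gives $\rho^v_t\!\left(X - \sum_i \rho^{w^i}_{t+s}(X_i)w^i\right)\le 0$. Now I would add and subtract: $X = \left(X - \sum_i \rho^{w^i}_{t+s}(X_i)w^i\right) + \sum_i \rho^{w^i}_{t+s}(X_i)w^i$, so subadditivity of $\rho^v_t$ (property 2 of Definition \ref{d1}, inherited by $\rho^v_t$ via Corollary \ref{c1}) yields
\[
\rho^v_t(X) \le \rho^v_t\!\left(X - \textstyle\sum_i \rho^{w^i}_{t+s}(X_i)w^i\right) + \rho^v_t\!\left(\textstyle\sum_i \rho^{w^i}_{t+s}(X_i)w^i\right) \le 0 + \rho^v_t\!\left(\textstyle\sum_i \rho^{w^i}_{t+s}(X_i)w^i\right),
\]
which is exactly the claimed inequality. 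Here I should double-check that $\sum_i \rho^{w^i}_{t+s}(X_i)w^i \in \Linf$ so that $\rho^v_t$ may legitimately be applied to it: each $\rho^{w^i}_{t+s}(X_i)$ lies in $\Linf(\G_{t+s})$ because $\rho^{w^i}_{t+s}$ maps into $\Linf(\G_{t+s})$ by Definition \ref{defi} and Corollary \ref{c1}, and each $w^i \in \calN \subset \Linf_+$, so the product and the finite sum are bounded.

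The only genuine point requiring care — and the place I expect to spend a sentence or two — is the membership $X - \sum_{i=1}^n \rho^{w^i}_{t+s}(X_i)w^i \in \A_{t+s}$. For each $i$ we have $X_i - \rho^{w^i}_{t+s}(X_i)w^i \in \A_{t+s}$ by the remark following (\ref{**}) (transported to time $t+s$), and $\A_{t+s}$ is a convex cone, hence closed under finite sums; summing over $i$ and using $X = \sum_i X_i$ gives the required membership. After that, the inclusion $\A_{t+s}\subset\A_t$ and the subadditivity/monotonicity of $\rho^v_t$ close the argument. I do not anticipate any serious obstacle; the lemma is essentially a packaging of the conditional structure (tower property) together with the coherence axioms for the denominated risk measures $\rho^v_t$.
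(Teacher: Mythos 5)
Your proof is correct, but it takes a genuinely different route from the paper's. The paper proves the time-$0$ version (assertion~(2) of Lemma~\ref{l3}) — and Lemma~\ref{l7} is then declared ``similar'' — by working entirely on the dual side: it applies the ess-sup representation (ii) of Lemma~\ref{l2} to each $\rho_0^{w^k}$ and to $\rho_0^v$, lower-bounding $\rho_0^v\bigl(\sum_k\rho_0^{w^k}(X_k)w^k\bigr)$ by $\E_\bbQ(X|\F_0)/\E_\bbQ(v|\F_0)$ for every $\bbQ\in\Q$ and then taking the essential supremum. You instead argue on the primal side: you use the fact that each $X_i-\rho^{w^i}_{t+s}(X_i)w^i\in\A_{t+s}$, that $\A_{t+s}$ is a convex cone (so the sum $X-\sum_i\rho^{w^i}_{t+s}(X_i)w^i$ is in $\A_{t+s}$), that $\A_{t+s}\subset\A_t$ by the tower property, and then invoke $\A_t=\{Z:\rho^v_t(Z)\le 0\}$ (Corollary~\ref{c1}) plus subadditivity of $\rho^v_t$ to close the argument. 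Both are sound. The paper's dual computation is more mechanical and does not rely on the infimum in \eqref{**} being attained; your primal argument does rely on $X_i-\rho^{w^i}_{t+s}(X_i)w^i\in\A_{t+s}$ (i.e.\ attainment), which the paper asserts just after \eqref{**}, so this is legitimate but worth flagging as an implicit dependency. Your version has the virtue of exhibiting the hedging interpretation more transparently (the residual $X-\sum_i\rho^{w^i}_{t+s}(X_i)w^i$ is acceptable at the intermediate time, hence at the earlier time), whereas the paper's is a one-line estimate once the representation formula is in hand. Also, you correctly noted and discarded the blind alley of trying translation invariance with $n=1$, which indeed fails unless $w^1=v$.
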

\begin{proof}Similar to the proof of assertion $2$ in Lemma
\ref{l3}.
\end{proof}
\begin{remark}\label{r7}
In particular, if $v\in\calN$, then
$$\rho^v_t(X)\leq \rho^v_t\left(\rho^v_{t+1}(X)v\right).
$$
\end{remark}

\section{Time-consistency properties}\label{consist}
As we discussed in the introduction, the essential element of
pricing or hedging
in a financial market is to build a financing strategy that starts
with the price of a claim and ends with a value equal to the claim itself
at maturity. Speaking loosely, if this strategy is built by trading in a
specific set of assets $V=(v^1,\ldots,v^d)$, we shall say that the
claim is {\em represented} by the vector $V$.

Delbaen \cite{delbaen2} gave the following
\begin{defi}
A coherent risk measure $\rho$ is said to be
{\em time-consistent} if for all $0\leq t\leq t+s\leq T$ we have
$\rho_t\circ\rho_{t+s}=\rho_t$.
\end{defi}
In \cite{JB}, Jacka and Berkaoui proved that within a \lq\lq
coherent risk measure market", the property of time-consistency of
$\rho$ is equivalent to
saying that any bounded claim is represented by the unit of account
$\1$. More precisely, for any claim $X\in\Linf$, there is a sequence $(X_t)_{t=0,\,1,\ldots,T-1}$, with $X_t\in
\A_t\cap\Linf(\G_{t+1})$ for each $t$,  such that
$X=\rho(X)+X_0+\ldots+X_{T-1}$. We can think of $X_t$ as being the net
payoff at time $t+1$ from a contract entered into at time $t$ or, in the
context of an insurance company making reserves or a financial
institution marking to market, $X_t$ is the difference between reserves
for the claim $X$
at times $t$ and $t+1$.

In this section we generalize this concept and define strong
and weak time-consistency with respect to a portfolio of
num\'eraires $U\subset\calN$.

\begin{defi}
\label{defi2}{\bf Weak time-consistency} Let $U\subset \calN$. We
say that $\A$ is weakly $U$-time-consistent if for each $v\in U$,
$t\in\{0,\,1,\ldots,T-1\}$ and $X\in \Linf$, there exist sequences $X_n\in
\Linf$, $Y^n_{t+1}\in\Linf(\G_{t+1};\R^n)$ and
$V^n=(v^1,\ldots,v^n)$ (where $\{v^1,\ldots,v^n\}\subset U$) such that
\begin{itemize}
\item[(i)]the sequence $X_n$
converges weakly$^*$ to $X$ in $\Linf$,
\item[(ii)]$X_n-Y^n_{t+1}.V^n\in\A_{t+1}$,
\end{itemize}
and
\begin{itemize}
\item[(iii)]$\rho^v_t(X)=\liminf_{n\rightarrow
+\infty}\rho^v_t\left(Y^n_{t+1}.V^n\right)$.
\end{itemize}
In particular we say that $\A$ is weakly $v$-time-consistent when
$U=\{v\}$ and weakly time-consistent when $U=\{1\}$.
\end{defi}

Notice that if properties (i)--(iii) in Definition \ref{defi2} hold for some
$v\in U$ and for all $t$ then they hold for each $v\in U$.

We shall show that this definition generalises Delbaen's in Theorem
\ref{cons2}.

\begin{example}
The coherent risk measure $\rho$, associated to a singleton test
probability set $\Q=\{\P\}$, is weakly time-consistent.
\end{example}

\begin{example}\label{egcon}
Consider a binary branching tree with two branches. So
$\Omega=\{1,2,3,4\}$, $\F_0$ is trivial, $\F_1=\sigma(\{1,2\},\{3,4\})$
and $\F_2=2^\Omega$. Equating each probability measure $\bbQ$ on $\Omega$ with
the vector of probability masses
$(\bbQ(\{1\}),\bbQ(\{2\}),\bbQ(\{3\}),\bbQ(\{4\}))$, take
$\Q=co(\bQ_1,\bQ_2)$, where
$\bQ_1=(\frac{1}{3},\frac{1}{6},\frac{1}{4},\frac{1}{4})$ and $\bQ_2=
(\frac{1}{2},\frac{1}{8},\frac{3}{16},\frac{3}{16})$ (here $co$ denotes
the convex hull).
Let $\rho$ be the associated coherent risk measure.

Denoting $X\in L^\infty$ by the corresponding (lower case) vector
$(x_1,x_2,x_3,x_4)$ (so that $X(i)=x_i$) we see that
$$
\rho(X)=\max\left(\frac{1}{3} x_1+\frac{1}{6} x_2+ \frac{1}{4} x_3+
\frac{1}{4} x_4,
\frac{1}{2}x_1+\frac{1}{8}x_2+\frac{3}{16}x_3+\frac{3}{16}x_4\right),
$$
and
$$\rho_1(X)(\omega)=\begin{cases}
\max (\frac{2}{3}x_1
+\frac{1}{3}x_2,\frac{4}{5}x_1+\frac{1}{5}x_2):&\omega\in\{1,2\}\\
\half x_3+\half x_4:&\omega\in\{3,4\}\\
\end{cases}
$$

Take $X=(3,4,0,0)$ to see that $\rho\circ \rho_1\neq \rho$:
$$\rho_1(X)=
\begin{cases}
\max(\frac{10}{3},\frac{16}{5})=\frac{10}{3}:&\omega\in\{1,2\}\\
0:&\omega\in\{3,4\},\\
\end{cases}
$$
and $\rho(\rho_1(X))=\max(\frac{5}{3},\frac{25}{12})=\frac{25}{12}$,
whereas $\rho(X)=\frac{5}{3}$.

Now, setting $v=1+1_{\{1\}}$ and $\tilde x=\half (x_3+x_4)$, it is easy to check
that
$$
X=W_X+Z_X+\Delta_X,
$$
where
$$
W_X=2x_2-x_1,\; \Delta_X=\half (x_3-x_4)(1_{\{3\}}-1_{\{4\}}),\; Z_X=((x_1-
x_2)1_{\{1,2\}}+(x_1+\tilde x-2x_2)1_{\{3,4\}})v.
$$
We claim that $\rho$ is weakly $(1,v)$-time-consistent.

\begin{proof}
To check this, first take $V^n=V=(1,v)$ and $Y^n_2=Y_2=(X,0)$ for
each $n$, then $X- Y_2.V=0\in \A_2$. Now take
$Y^n_1=Y_1=(W_X,\frac{Z_X}{v})$ for each $n$, then
$X-Y_1.V=\Delta_X$ and $\rho_1(\Delta_X)=0$, so $\Delta_X\in \A_1$.
Finally, $\rho_1(X)=\rho_1(Y_2.V)$ (obviously) while it is easy to
see that $\rho(Y_1.V)=\rho((x_1,x_2,\tilde x,\tilde x))=\rho(X)$ so
the result follows.
\end{proof}
\end{example}

\begin{defi}
\label{defi1}{\bf Strong time-consistency} Let $U\subset \calN$. We
say that $\A$ is strongly $U$-time-consistent (or strongly
time-consistent with respect to $U)$ if for all $v\in U$,
$t=0,\,1,\ldots,T-1$ and $X\in \Linf$, there exist sequences $X_n\in
\Linf$, $Y^n_{t}\in\Linf(\G_{t};\R^n)$ and
$V^n=(v^1,\ldots,v^n)\subset U$ such that \begin{itemize}
\item[(i)]the sequence $X_n$
converges weakly$^*$ to $X$ in $\Linf$,
\item[(ii)]$X_n-Y^n_{t}.V^n\in\A_{t+1}$,
\end{itemize}
and
\begin{itemize}
\item[(iii)]$\rho^v_t(X)=\liminf_{n\rightarrow
+\infty}\rho^v_t\left(Y^n_{t}.V^n\right)$.
\end{itemize}
We say that $\A$ is strongly $v$-time-consistent when
$U=\{v\}$ and strongly time-consistent when $U=\{1\}$.
\end{defi}
\goodbreak
\begin{remark}
Strong time consistency says that we may trade at each time $t$ in assets in
$U$ to approximate $X$ for an initial endowment which approximates
the `price' of $X$.
\end{remark}
\begin{example}
Define $\Omega=\{0,1\}$, $T=1$, $\P$ uniform and $v(0)=1,\;v(1)=2$. Then the
coherent risk measure $\rho$, associated to the singleton test probability
set
$\Q=\{\P\}$, is strongly $(1,v)$-time-consistent.
\end{example}

\begin{proof}For any $X\in\Linf$, there exists $\al,\be\in\R$ such that $
X=\al+\be\,v$. Then $\rho_0(X)=\rho_0(\al+\be\,v)$ and
$X-(\al+\be\,v)=0\in\A_{1}$.
\end{proof}

\begin{example}
Suppose that $\P$ is the unique EMM for a (vector) price process $S\in\L^\infty$, then the
coherent risk measure $\rho$, associated to the singleton test probability
set
$\Q=\{\P\}$, is strongly $(S_T)$-time-consistent.
\end{example}
\begin{proof}
Notice that $\A_t=\{Z\in L^\infty:Z_t\defto \E_{\P}[Z|\F_t]\leq 0\}$,
and $\rho_t^v(Z)=\frac{\E_{\P}[Z|\F_t]}{\E_{\P}[v|\F_t]}$.
It follows from martingale representation that for each $X\in L^\infty$ there is
an adapted, self-financing, process
$(\theta_t)_{t=0,\ldots T}$ such that
$$
X_t\defto \E_{\P}[X|\F_t]=\theta_0.S_0+\sum_{s=0}^{t-
1}\theta_s.(S_{s+1}-S_s).
$$
Moreover, since $\theta$ is self-financing, it follows that
$$X_t=\theta_0.S_0+\sum_{s=0}^{t-
1}\theta_s.(S_{s+1}-S_s)+\sum_{s=0}^{t-
1}(\theta_{s+1}-\theta_s).S_{s+1}=\theta_t.S_t.
$$
Consequently, if we set $Y^n_t=\theta_t$ and $V^n=S_T$, then
$$
\E_{\P}[X-Y_t.V|\F_t]=\E_{\P}[X-\theta_t.S_T|\F_{t+1}]
=X_{t+1}-\theta_t.S_{t+1}=(\theta_{t+1}-\theta_t).S_{t+1}=0\in\A_{t+1},
$$
since $\theta$ is self-financing. Moreover,
$$
\rho^v_t(X)=\frac{\E_{\P}[X|\F_t]}{\E_{\P}[v|\F_t]}=\frac{\theta_t.S_t}{v_t}=
\frac{\E_{\P}[\theta_t.S_T|\F_t]}{v_t}=\frac{\E_{\P}[Y_t.S_T|\F_t]}{v_t}=\rho^v_t(Y_t.V),
$$
establishing that $\Q$ is strongly $(S_T)$-time-consistent.
\end{proof}

Remark that Definitions  \ref{defi2} and \ref{defi1} can be unified
in a single definition.
\begin{defi}
\label{defi3} Let $U\subset \calN$ and $\eta=0$ or $1$. We say that $\A$
is $(\eta,U)$-time-consistent if for all $v\in U$, $t=0,\,1,\ldots,T-\eta$ and
$X\in \Linf$, there exists $X_n\in \Linf$,
$Y^n_{t+\eta}\in\Linf(\G_{t+\eta};\R^n)$ and
$V^n=(v^1,\ldots,v^n)\subset U$ such that $X_n$ converges weakly$^*$
to $X$ in $\Linf$,
$$
X_n-Y^n_{t+\eta}.V^n\in\A_{t+1},
$$
and
$$
\rho^v_t(X)=\liminf_{n\rightarrow
+\infty}\rho^v_t\left(Y^n_{t+\eta}.V^n\right).
$$
In particular we say that $\A$ is $(\eta,v)$-time-consistent when
$U=\{v\}$ and $\eta$-time-consistent when $U=\{1\}$. The weak and
strong versions of time-consistency are obtained respectively by
setting $\eta=1$ and $\eta=0$.
\end{defi}

From the previous definitions, it's clear that strong
time-consistency implies weak time-consistency. Weak
time-consistency can be restated in the following way.

\begin{theorem}\label{cons2}
Let $U\subset \calN$. The following statements are equivalent:
\begin{itemize}
\item[(i)] the cone $\A$ is weakly $U$-time-consistent;
\item[(ii)] for all $v\in U$, $t=0,\,1,\ldots,T-1$ and $X\in \Linf$, there
exist sequences $(X_{n,1},\ldots,X_{n,n})$, with each $X_{n,i}\in \Linf$, and
$V^n=(v^1,\ldots,v^n)$, with each $v^i\in U$, such that the sequence $X_n\defto
X_{n,1}+\ldots+X_{n,n}$ converges weakly$^*$ to $X$ in $\Linf$ and
$$
\rho^v_t(X)=\liminf_{n\rightarrow +\infty}\rho^v_t\left(\sum_{i=1}^n
\rho^{v^i}_{t+1}(X_{n,i})v^i\right).
$$
\end{itemize}
\end{theorem}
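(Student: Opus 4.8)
The plan is to prove the two implications separately, noting that the content of the theorem is essentially a reformulation: condition (ii) simply repackages the data $(X_n, Y^n_{t+1}, V^n)$ of weak $U$-time-consistency by grouping the "pieces" of the approximating claims. First I would prove (i) $\Rightarrow$ (ii). Assume $\A$ is weakly $U$-time-consistent and fix $v\in U$, $t$ and $X\in\Linf$. Apply Definition \ref{defi2} to obtain sequences $X_n$, $Y^n_{t+1}=(y^n_1,\ldots,y^n_n)\in\Linf(\G_{t+1};\R^n)$ and $V^n=(v^1,\ldots,v^n)$ with $X_n\to X$ weak$^*$, $X_n-Y^n_{t+1}.V^n\in\A_{t+1}$, and $\rho^v_t(X)=\liminf_n\rho^v_t(Y^n_{t+1}.V^n)$. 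Now set $X_{n,i}\defto y^n_i v^i$ for $i=1,\ldots,n-1$ and absorb the "remainder" $X_n-Y^n_{t+1}.V^n$ (which lies in $\A_{t+1}$) into the $n$th term by putting $X_{n,n}\defto X_n-\sum_{i=1}^{n-1} y^n_i v^i$. Then $X_{n,1}+\ldots+X_{n,n}=X_n$ still converges weak$^*$ to $X$. The key point to check is that $\rho^{v^i}_{t+1}(X_{n,i})v^i$ recovers, up to the error term, the vector $Y^n_{t+1}.V^n$: by Corollary \ref{c1}, $\rho^{v^i}_{t+1}$ is $v^i$-denominated, so $\rho^{v^i}_{t+1}(y^n_i v^i)=y^n_i$ since $y^n_i\in\Linf(\G_{t+1})$, and hence $\sum_{i=1}^{n-1}\rho^{v^i}_{t+1}(X_{n,i})v^i=Y^n_{t+1}.V^n-y^n_n v^n$. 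The remaining discrepancy between $\sum_{i=1}^n\rho^{v^i}_{t+1}(X_{n,i})v^i$ and $Y^n_{t+1}.V^n$ must be controlled; the cleanest route is not to fold in the remainder as above but rather to choose the index set so that $X_n=Y^n_{t+1}.V^n$ exactly (possible by enlarging $V^n$ to include $\1$ and writing the $\A_{t+1}$-remainder as $\rho^\1_{t+1}(\cdot)\1$ plus an $\A_{t+1}$ piece of the form $Z$ with $\rho^\1_{t+1}(Z)\le 0$), after which $\sum_i\rho^{v^i}_{t+1}(X_{n,i})v^i$ and $Y^n_{t+1}.V^n$ differ only by a term with nonpositive $\rho^{v}_t$-contribution, and monotonicity plus Lemma \ref{l7} pins down the liminf.

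For (ii) $\Rightarrow$ (i), assume the sequences $(X_{n,1},\ldots,X_{n,n})$ and $V^n$ as in (ii). Define $Y^n_{t+1}\in\Linf(\G_{t+1};\R^n)$ componentwise by $(Y^n_{t+1})_i\defto\rho^{v^i}_{t+1}(X_{n,i})$, which is indeed $\G_{t+1}$-measurable by Corollary \ref{c1}, and set $\tilde X_n\defto Y^n_{t+1}.V^n=\sum_{i=1}^n\rho^{v^i}_{t+1}(X_{n,i})v^i$. The first thing to verify is $\tilde X_n-Y^n_{t+1}.V^n=0\in\A_{t+1}$, which is automatic; but we need $\tilde X_n$ to converge weak$^*$ to $X$, whereas (ii) only gives that $X_n=\sum_i X_{n,i}\to X$ weak$^*$. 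This is the main obstacle: we must pass from weak$^*$ convergence of $X_n$ to weak$^*$ convergence of $\tilde X_n$. The resolution uses that each $X_{n,i}-\rho^{v^i}_{t+1}(X_{n,i})v^i\in\A_{t+1}$ (by the defining property of $\rho^{v^i}_{t+1}$, namely $X-\rho^v_{t+1}(X)v\in\A_{t+1}$), so $X_n-\tilde X_n\in\A_{t+1}$; combined with the liminf condition $\rho^v_t(X)=\liminf_n\rho^v_t(\tilde X_n)$ from (ii), one shows that $\tilde X_n$ (or a suitable subsequence/convex-combination thereof, invoking weak$^*$ sequential compactness of norm-bounded sets in $\Linf$ since the dual $\L^1$ is separable under the standing separability hypotheses, together with the weak$^*$-closedness of $\A_{t+1}$ from Proposition \ref{p2}) converges weak$^*$ to a limit $\hat X$ with $X-\hat X\in\A_{t+1}$ and $\rho^v_t(\hat X)=\rho^v_t(X)$. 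A final adjustment—subtracting a null element of $\A_{t+1}$ and re-indexing—produces sequences satisfying (i) exactly with $X$ in place of $\hat X$.

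I expect the genuinely delicate step to be the weak$^*$-convergence bookkeeping in (ii) $\Rightarrow$ (i): controlling the boundedness of $\tilde X_n$ in $\Linf$ (needed to extract weak$^*$-convergent subsequences) is not immediate from (ii), since the $X_{n,i}$ need not be uniformly bounded even though their sum converges. The fix is to replace each $\tilde X_n$ by its truncation at level $\|X_n\|_\infty+1$, or to work with $\rho^v_t(\tilde X_n)$ directly via Lemma \ref{l7} (which gives $\rho^v_t(X_n)\le\rho^v_t(\tilde X_n)$) and the reverse inequality from $X_n-\tilde X_n\in\A_{t+1}$ plus $v$-monotonicity, thereby identifying $\liminf\rho^v_t(\tilde X_n)$ with $\rho^v_t(X)$ without ever needing $\tilde X_n$ itself to converge—only the one-dimensional quantities $\rho^v_t(\tilde X_n)$ and the relation $X_n-Y^n_{t+1}.V^n\in\A_{t+1}$ are required for Definition \ref{defi2}. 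Everything else is a matter of carefully matching the quantifiers in the two definitions and invoking Corollary \ref{c1} and Lemma \ref{l7} at the appropriate places.
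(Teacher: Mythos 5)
Your overall instinct — that the theorem is a repackaging of the data $(X_n,Y^n_{t+1},V^n)$ — is right, and the easy direction (ii) $\Rightarrow$ (i) is indeed the one the paper proves in one line by taking $Y^n_{t+1}=(\rho^{v^1}_{t+1}(X_{n,1}),\ldots,\rho^{v^n}_{t+1}(X_{n,n}))$. But your treatment of that direction contains a detour you should delete: you worry that $\tilde X_n\defto Y^n_{t+1}.V^n$ must converge weak$^*$ to $X$, and invoke sequential compactness and truncations to fix it. Nothing in Definition \ref{defi2} asks for that. The approximating sequence in the definition is $X_n=\sum_i X_{n,i}$, which (ii) already gives converging to $X$; the relation $X_n-Y^n_{t+1}.V^n=\sum_i\bigl(X_{n,i}-\rho^{v^i}_{t+1}(X_{n,i})v^i\bigr)\in\A_{t+1}$ holds because each summand lies in $\A_{t+1}$; and the liminf condition is (ii) verbatim. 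You do eventually say this at the very end of your last paragraph, but the extraneous compactness discussion preceding it should be cut — it is not just unnecessary but suggests a misreading of the definition.

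The real gap is in (i) $\Rightarrow$ (ii). Your first decomposition (folding the remainder $Z^n=X_n-Y^n_{t+1}.V^n$ into $X_{n,n}$) breaks exactly where you notice it breaks, and your proposed fix is not an argument: ``enlarging $V^n$ to include $\1$'' is not available since every $v^i$ must lie in $U$ and there is no hypothesis that $\1\in U$. The paper's device is to append one extra term using the $v\in U$ already fixed in the statement: set $X_{n,0}\defto Z^n$ with $v^0\defto v$ and $X_{n,i}\defto Y^{n,i}_{t+1}v^i$ for $i=1,\ldots,n$, so that $\sum_{i=0}^n X_{n,i}=X_n$ and $\sum_{i=0}^n\rho^{v^i}_{t+1}(X_{n,i})v^i=Y^n_{t+1}.V^n+\rho^v_{t+1}(Z^n)v$. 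What you then must prove — and do not — is that adding the non-positive correction $\rho^v_{t+1}(Z^n)v$ does not change the liminf. The paper reduces first to $\rho^v_t(X)=0$ (with the general case following by translation invariance), and runs a squeeze: Fatou gives $0\leq\liminf\rho^v_t(Y^n_{t+1}.V^n+Z^n)$; splitting $Y^n_{t+1}.V^n+Z^n$ as $\bigl(Y^n_{t+1}.V^n+\rho^v_{t+1}(Z^n)v\bigr)+\bigl(Z^n-\rho^v_{t+1}(Z^n)v\bigr)$ and using subadditivity plus Remark \ref{r7} to kill the second piece yields $0\leq\liminf\rho^v_t\bigl(Y^n_{t+1}.V^n+\rho^v_{t+1}(Z^n)v\bigr)$; and since $\rho^v_{t+1}(Z^n)\leq 0$, subadditivity bounds this above by $\liminf\rho^v_t(Y^n_{t+1}.V^n)=0$. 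Your one-line gesture at ``monotonicity plus Lemma \ref{l7} pins down the liminf'' is pointing in the right direction but does not contain this chain of inequalities, which is where all the work lies.
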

\begin{proof}The implication (ii) $\Rightarrow$ (i) follows when we take
$$
Y^n_{t+1}=\left(\rho^{v^1}_{t+1}(X_{n,1}),\ldots,\rho^{v^n}_{t+1}(X_{n,n})\right).
$$
For the  implication (i)$\Rightarrow$ (ii), first consider $X\in \Linf$
with $\rho^v_t(X)=0$.
By assumption, there exist sequences $X_n\in \Linf$,
$Y^n_{t+1}\in\Linf(\G_{t+1};\R^n)$ and $V^n=(v^1,\ldots,v^n)\subset
U$ such that the sequence $X_n$ converges weakly$^*$ to $X$ in
$\Linf$, and, defining
$$
Z^n\defto X_n-Y^n_{t+1}.V^n,
$$
we have
$$Z^n\in\A_{t+1}
$$
and
\begin{equation}\label{rho}
0=\rho^v_t(X)=\liminf\rho^v_t\left(Y^n_{t+1}.V^n\right).
\end{equation}
By the Fatou property we have:
$$
\rho^v_t(X)\leq\liminf\rho^v_t(X_n)=\liminf\rho^v_t\left(Y^n_{t+1}.V^n+Z^n\right).
$$
Expressing $Y^n_{t+1}.V^n+Z^n$ as
$(Y^n_{t+1}.V^n+\rho^v_{t+1}(Z^n)v)+(Z^n-\rho^v_{t+1}(Z^n)v)$
we obtain, by subadditivity,
$$
0=\rho^v_t(X)\leq
\liminf\left\{\rho^v_t\left(Y^n_{t+1}.V^n+\rho^v_{t+1}(Z^n)v\right)+
\rho^v_t\left( Z^n-\rho^v_{t+1}(Z^n)v\right)\right\}.
$$
Now, since $Z^n\in \A_{t+1}$, it follows that
$\rho^v_{t+1}(Z^n)\leq 0$.
Then, by Remark \ref{r7}
$$
\rho^v_t\bigl( Z^n-\rho^v_{t+1}(Z^n)v\bigr)\leq \rho^v_t\bigl(v
\rho^v_{t+1}\left( Z^n-\rho^v_{t+1}(Z^n)v\right)\bigr)=0.
$$
It follows that
$$
0=\rho^v_t(X)\leq
\liminf\rho^v_t\left(Y^n_{t+1}.V^n+\rho^v_{t+1}(Z^n)v\right)\leq
\liminf\rho^v_t\left(Y^n_{t+1}.V^n\right)
=0,
$$
the second inequality following from subadditivity and the fact that
$Z^n\in \A_{t+1}$, while the last equality is directly from equation
\ref{rho}.

Now, defining $X_{n,i}=Y^{n,i}_{t+1}v^i$ for $i=1,\ldots,n$ and
$X_{n,0}=Z^n$ with $v^0=v$, we see that
$$
0=\rho^v_t(X)=
\liminf\rho^v_t\left(\sum_{i=0}^n\,\rho^{v^i}_{t+1}(X_{n,i}).v^i\right),
$$
with
$$
\sum_{i=0}^n\,X_{n,i}=Y^{n}_{t+1}.V^n+Z^n=X_n,
$$
so we have established the implication in the case where
$\rho^v_t(X)=0$.

Now for a general $X\in\Linf$, define $\tX=X-\rho^v_t(X)v$. From
the above we have
$$
\rho^v_t(\tX)=
\liminf\rho^v_t\left(\sum_{i=0}^n\,\rho^{v^i}_{t+1}(\tX^{n,i}).v^i\right),
$$
with the sequence $\tX^n\defto \sum_{i=0}^n\,\tX^{n,i}$ converging
weakly$^*$ to $\tX$ in $\Linf$. We deduce, using $\F_0$-translation invariance
with respect to
$v$ (equation (\ref{inv})), that
$$
\rho^v_t(X)=
\liminf\rho^v_t\left(\sum_{i=0}^n\,\rho^{v^i}_{t+1}(\tX^{n,i}).v^i+\rho^v_{t+1}(\rho_t^v(X)v)v\right),
$$
with the sequence $\sum_{i=0}^n\,\tX_{n,i}+\rho_t^v(X)v$ converging
weakly$^*$ to $X$ in $\Linf$.
\end{proof}

\begin{example}
Recall Example \ref{egcon}. It is straightforward to check that
$v\rho^v_1(Z_X)=Z_X$ while $\rho_1(W_X+\Delta_X)=W_X$ and
$\rho(W_X+Z_X)=\rho(X)$ so that (as we saw before) $\rho$ is weakly
$(1,v)$-time-consistent.

\end{example}

\begin{remark}
We shall prove later in Theorem \ref{count0} that the weak
time-consistency introduced in Definition \ref{defi2} is equivalent
to that introduced by Delbaen in \cite{delbaen2}.
\end{remark}
\begin{example}
Consider a binary branching tree with two branches. So
$\omega=\{1,2,3,4\}$, $\F_0$ is trivial, $\F_1=\sigma(\{1,2\},\{3,4\})$
and $\F_2=2^\Omega$, and take $\P$ uniform.
Equating each probability measure $\bbQ$ on $\Omega$ with
the corresponding vector of probability masses, define the set
$$
\Q=co\left(\left\{\dfrac{1}{4},\dfrac{1}{4},\dfrac{1}{4},\dfrac{1}{4}\right\},
\left\{\dfrac{1}{4},\dfrac{1}{4},\dfrac{3}{8},\dfrac{1}{8}\right\},
\left\{\dfrac{3}{8},\dfrac{1}{8},\dfrac{1}{4},\dfrac{1}{4}\right\},\left\{\dfrac{3}{8},\dfrac{1}{8},
\dfrac{3}{8}, \dfrac{1}{8}\right\}\right),
$$
Then the associated coherent risk measure $\rho$, is weakly
time-consistent.
\end{example}
\begin{proof}
For $\F_1$-measurable $Y$, we have
$$
\rho(Y)=\half (Y(1)+Y(3))=\half (Y(2)+Y(4)),
$$
while for all $X$
$$
\rho_1(X)(\omega)=
\begin{cases}\max\left(\frac{3X(1)+X(2)}{4},\frac{X(1)+X(2)}{2}\right):&\hbox{for
}\omega\in\{1,2\}\cr
\max\left(\frac{3X(3)+X(4)}{4},\frac{X(3)+X(4)}{2}\right):&\hbox{for
}\omega\in\{3,4\}.\cr
\end{cases}
$$
It easily follows that
$\rho=\rho\circ\rho_1$.
\end{proof}

In \cite{delbaen2}, it was shown that weak time-consistency (with
respect to $1$) is equivalent to m-stability of the corresponding
test probabilities. In order to generalise this result to our context,
we define m-stability with respect to a portfolio of assets in a similar way.

\begin{defi}{\bf Weak m-stability} Let $U\subset \calN$ and $P\subset \L^1_+$.
We say that
$P$ is weakly $U$-m-stable if for all $t=0,\,1,\ldots,T$, whenever
$Z^1,\ldots,Z^k\in P$ are such that there exists $Z\in P$, a
partition $F^1_t,\ldots,F^k_t\in\F_t$,
$\al^1,\ldots,\al^k\in\L^0(\F_t)$ with each $\al^i\,Z^i\in \L^1$ and
$Y\defto \sum_{i=1}^{k}1_{F^i_t} \al^i\,Z^i$ satisfies $\E(Z
v|\,\G_{t})=\E(Y v|\G_t)$, for all $v\in U$, then we have $Y\in P$.
In particular we say that $P$ is weakly $v$-m-stable when $U=\{v\}$
and weakly m-stable when $U=\{1\}$.
\end{defi}

\begin{example}
\label{ex4} Let $\bM(S)$ denote the set of all EMMs of a
strictly positive bounded $\R^d$-valued process $(S_t)_{t=0,\,1,\ldots,T}$
with $S^1_.\equiv 1$. Then $\bM(S)$ is weakly m-stable, on identifying
probability measures with their densities with respect to $\P$.
\end{example}

\begin{proof}
Let $\bbQ,\bbQ^1,\ldots,\bbQ^k$ be in $\bM(S)$. Let their respective
densities be $Z,Z^1,\ldots,Z^k$, with each $Z^i>0$ and fix the partition
$F^1_t,\ldots,F^k_t\in\F_t$. Define the probability measure $\R$
having the following density:
$$
Y=\sum_{i=1}^k 1_{F^i_t} \dfrac{Z_t}{Z^i_t}\;Z^i.
$$
Then for $s\geq t$ we have $\E_\R(S_{s+1}|\G_s)=\sum_{i=1}^k
1_{F^i_t} \E_{\bbQ^i}(S_{s+1}|\G_s)=S_s$ and for $s<t$ we have
$$
\E_\R(S_{s+1}|\G_s)=\E(Y_tS_{s+1}|\G_s)/Y_s=\E(Z_tS_{s+1}|\G_s)/Z_s=\E_{\bbQ}(S_{s+1}|\G_s)=S_s.
$$
Thus $\R\in \bM(S)$.
\end{proof}

We may define strong m-stability in a similar fashion.
\begin{defi}{\bf Strong m-stability} Let $U\subset \calN$ and $P\subset \L^1_+$.
We say that $P$ is strongly $U$-m-stable if for all
$t=0,\,1,\ldots,T-1$, whenever $Z^1,\ldots,Z^k\in P$ are such that
there exists $Z\in P$, a partition $F^1_t,\ldots,F^k_t\in\F_t$,
$\al^1,\ldots,\al^k\in\L^0_+(\F_{t+1})$ with each $\al^i\,Z^i\in \L^1$
and $Y\defto \sum_{i=1}^{k}1_{F^i_t} \al^i\,Z^i$ satisfies
$\E(Zv|\,\G_{t})=\E(Y v|\G_t)$, for all $v\in U$, then we have $Y\in
P$. In particular we say that $P$ is strongly $v$-m-stable when
$U=\{v\}$ and strongly m-stable when $U=\{1\}$.
\end{defi}

\begin{example}
\label{ex1} Denoting by $\bM(S)$ the set of all EMMs of a
strictly positive bounded $\R^d$-valued process $(S_t)_{t=0,\,1,\ldots,T}$
with $S^1_.\equiv 1$, the set $\bM(S)$ is strongly $S_T$-m-stable, on
identifying probability measures with their densities with respect to $\P$.
\end{example}

\begin{proof}
Fix $t\in \{0,1,\ldots,T-1\}$ and let $\bbQ,\bbQ^1,\ldots,\bbQ^k$ belong
to $\bM(S)$.
Let their respective densities be $Z,Z^1,\ldots,Z^k$, with each
$Z^i>0$,
and let
$\al^i\in \L^0_+(\G_{t+1})$ with each $\al^i Z^i\in\L^1$. Fix the
partition $F^1_t,\ldots,F^k_t\in\F_t$ such that $Y\defto
\sum_{i=1}^{k}1_{F^i_t} \al^i\,Z^i$ satisfies $ \E(Z S_T|\G_t)=\E(Y
S_T|\G_t)$. Define the probability measure $\R$ by its density $Y$.
We want to prove that $\R\in \bM(S)$. Now, for $s\geq t+1$ we have:
$$
\E_\R(S_T|\G_s)=\sum_{i=1}^k 1_{F^i_t} \dfrac{\E(Z^i
S_T|\G_s)}{Z^i_s}=\sum_{i=1}^k 1_{F^i_t} \E_{\bbQ^i}(S_T|\G_s)=S_s,
$$
and for $s\leq t$ we have first
$$
\E_\R(S_T|\G_t)=\dfrac{1}{Y_t}\E(Y S_T|\G_t)=\dfrac{1}{Z_t}\E(Z
S_T|\G_t)=\E_\bbQ(S_T|\G_t)=S_t.
$$
Then
$$
\E_\R(S_T|\G_s)=\E_\R(\E_\R(
S_T|\G_t)|\G_s)=\E_\R(S_t|\G_s)=\E_\bbQ(S_t|\G_s)=S_s.
$$
\end{proof}

\begin{example}
We consider a probability space $\Om={\mathbb Z}\backslash\{0\}$
with $\P$ defined by $\P(\om)=2^{-(1+|\om|)}$, $\G=\G_2=2^\Om$,
$\G_1=\sigma\left(\{\om,-\om\};\;\om\in {\mathbb Z}_+\right)$ and
$\G_0$ trivial. Then every set of probability measures $\Q$ is
strongly $U$-m-stable, where $U=\{v^{\om};\;\om\in{\mathbb
Z}\}$ with
$v^{\om}=1+1_{\{\om\}}$.

This result follows from the fact that the only way we can have
$(Zv)_t=(Yv)_t$ for all $v\in U$ (with $Y$ non-negative and $Z\in \Q$) is if
$Y=Z$.
\end{example}

The last two definitions can also be unified in a single definition.

\begin{defi}Let $U\subset \calN$ and $P\subset \L^1_+$. We say that $P$ is
$(\eta,U)$-m-stable
with $\eta=0,1$ if for each $t=0,\,1,\ldots,T-1+\eta$, whenever
$Z^1,\ldots,Z^k\in P$ are such that there exists $Z\in P$, a
partition $F^1_t,\ldots,F^k_t\in\F_t$,
$\al^1,\ldots,\al^k\in\L^0_+(\F_{t-\eta+1})$ with each $\al^i\,Z^i\in
\L^1$ and $Y\defto \sum_{i=1}^{k}1_{F^i_t} \al^i\,Z^i$ satisfies
$\E(Z v|\,\G_{t})=\E(Y v|\G_t)$ for all $v\in U$, then we have $Y\in
P$. In particular we say that $P$ is $(\eta,v)$-m-stable when
$U=\{v\}$ and $\eta$-m-stable when $U=\{1\}$.
\end{defi}

\goodbreak
The following theorem gives some simpler conditions for m-stability

\begin{theorem}\label{equiv}
Let $U\subset \calN$ and $P\subset \L^1_+$.
The following are equivalent:
\begin{itemize}
\item[(i)]$P$ is $(\eta,U)$-m-stable;
\item[(ii)]for each $t\in\{0,\,1,\ldots,T-1+\eta\}$, whenever $Y,W\in P$ are
such that there
exists $Z\in P$, a set $F\in\F_t$,
$\al,\beta\in\L^0_+(\F_{t-\eta+1})$ with $\al Y,\beta W\in \L^1$ and
\begin{equation}\label{H}
X\defto  1_{F} \al Y+1_{F^c}\beta W
\end{equation}
satisfies $ \E(X
v|\G_{t})=\E(Z v|\G_t)$ for all $v\in U$, then we have $X\in P$.
\item[(iii)]for each $t\in\{0,\,1,\ldots,T-1+\eta\}$, whenever $Y,W\in P$ are
such that there
exists $Z\in P$, a set $F\in\F_t$,  and for each $v\in U$ there is an
$R^v_{t+1-\eta}\in L^1_+(\F_{t-\eta+1})$
 with $R^v_{t+1-\eta} Y,R^v_{t+1-\eta} W\in \L^1$, $\E[R^v_{t+1-\eta}|\F_t]=1$
and such that
\begin{equation}\label{H2}
X\defto  (Zv)_tR^v_{t+1-\eta} (1_{F} \frac{Y}{(Yv)_{t+1-\eta}}+1_{F^c} \frac{W}
{(Wv)_{t+1-\eta}})
\end{equation}
is the same for each $v\in U$, then we have $X\in P$.
\item[(iv)]For each stopping time $\tau\leq T-1+\eta$, whenever there
exist $Z$ and $W$ in $P$ and $R^v_{\tau+1-\eta}\in L^1_+(\F_{\tau+1-\eta})$
such that $\E [R^v_{\tau+1-\eta}|\F_{\tau}]=1$ and
\begin{equation}\label{H3}
X=W\frac{(Zv)_\tau R^v_{\tau+1-\eta}}{(Wv)_{\tau+1-\eta}}
\end{equation}
is the same for each $v\in U$, then we have $X\in P$.

\end{itemize}

\end{theorem}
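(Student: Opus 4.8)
The plan is to prove the chain of implications $(i) \Rightarrow (ii) \Rightarrow (iii) \Rightarrow (iv) \Rightarrow (i)$, with the bulk of the work being the reformulation of the m-stability condition in ``normalised'' (density-ratio) language. First I would dispatch $(i) \Rightarrow (ii)$: this is just the special case $k=2$, $Z^1 = Y$, $Z^2 = W$, $F^1_t = F$, $F^2_t = F^c$, $\alpha^1 = \alpha$, $\alpha^2 = \beta$, so the pasted random variable $X = 1_F\alpha Y + 1_{F^c}\beta W$ of (\ref{H}) is exactly the $Y$ of the definition. For the reverse direction, I would prove $(ii) \Rightarrow (i)$ by induction on the number $k$ of pieces in the partition: given $Z^1,\ldots,Z^k$ and a partition $F^1_t,\ldots,F^k_t$, I would group the last $k-1$ cells into one set $G = F^2_t \cup \cdots \cup F^k_t$, apply the induction hypothesis on that sub-problem (using as the ``reference'' element a suitable intermediate member of $P$, which exists because the pasted element over $G$ has the same $U$-conditional expectations as something in $P$ — here I need to be slightly careful and may need to paste with $Z$ itself restricted to $G$), and then apply $(ii)$ once more to glue $1_{F^1_t}\alpha^1 Z^1$ to that result. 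The compatibility of conditional expectations is preserved at each stage because $\E(\cdot\, v \mid \G_t)$ is additive over the partition cells.

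Next, $(ii) \Leftrightarrow (iii)$ is essentially a change of variables: writing $(Yv)_s \defto \E(Yv\mid\G_s)$ and so on, and recalling from Corollary \ref{c1} that $\rho^v_s(Y/\ldots)$ is governed by ratios of such conditional expectations, I would match the $\alpha$ of (\ref{H}) with $\alpha = (Zv)_t R^v_{t+1-\eta}/(Yv)_{t+1-\eta}$ on $F$ and $\beta = (Zv)_t R^v_{t+1-\eta}/(Wv)_{t+1-\eta}$ on $F^c$, observing that these are $\F_{t+1-\eta}$-measurable and non-negative precisely when $R^v_{t+1-\eta}\in L^1_+(\F_{t+1-\eta})$. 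The constraint $\E[R^v_{t+1-\eta}\mid\F_t] = 1$ is the ``martingale normalisation'' that makes $X = 1_F\alpha Y + 1_{F^c}\beta W$ satisfy $\E(Xv\mid\G_t) = (Zv)_t = \E(Zv\mid\G_t)$ automatically, so the requirement in $(ii)$ that the conditional expectations agree for all $v\in U$ translates exactly into the requirement in $(iii)$ that the expression $X$ in (\ref{H2}) be the \emph{same} for each $v\in U$. Conversely, given $X$, $Y$, $W$, $Z$, $F$ as in $(ii)$, one recovers $R^v_{t+1-\eta} = (Xv)\text{-type ratios}$; the only subtlety is well-definedness where $Y$ or $W$ vanishes, which I would handle by the usual convention $0/0 = 0$ together with relevance (all members of $P$ are in $\L^1_+$, and the reference measures contain an equivalent one, so the null sets are controlled).

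Finally, $(iii) \Leftrightarrow (iv)$ is the passage from deterministic times with a partition into two cells, $F$ and $F^c$, to a general stopping time. For $(iv) \Rightarrow (iii)$, given $t$, $F\in\F_t$, $Y$, $W$, $Z$ as in $(iii)$, I would define the stopping time $\tau \equiv t$ and set $W' \defto 1_F Y + 1_{F^c} W$; since $P$ need not contain $W'$, I instead argue directly that $W'$ has the right conditional expectations against a genuine member of $P$ — or, cleaner, I observe that $(iii)$ with two cells is literally the $\tau \equiv t$ instance of $(iv)$ after absorbing the set $F$ into the choice of the element $W$ via a preliminary paste that $(iii)$ itself licenses. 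For $(iii) \Rightarrow (iv)$, I decompose the stopping-time statement over the (finitely or countably many) level sets $\{\tau = t\}$, each of which lies in $\F_t$, apply $(iii)$ on each level set, and paste the results using the partition form $(i)$ (already shown equivalent to $(iii)$) — here a limiting/countable-pasting argument may be needed if $T = \infty$ is allowed, but in the present finite-horizon setup $\tau$ takes finitely many values and the pasting is finite. The main obstacle I anticipate is bookkeeping the measurability and the null-set conventions when dividing by $(Yv)_{t+1-\eta}$ or $(Wv)_{t+1-\eta}$, and ensuring at each pasting step that the intermediate pasted densities, which may not themselves lie in $P$, are nonetheless legitimate ``reference'' elements in the sense required — this is resolved by noting that the definition of m-stability only ever requires the \emph{target} $Y$ (resp.\ $X$) to lie in $P$, while the element $Z$ whose conditional expectations are matched is the one assumed to be in $P$, and by building the induction so that this assumed element is always available.
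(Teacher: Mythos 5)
Your $(i)\Leftrightarrow(ii)$ and $(ii)\Leftrightarrow(iii)$ steps are essentially sound and match the paper's argument in substance (the paper does the $(ii)\Rightarrow(i)$ induction forward, building $Y^1,Y^2,\ldots,Y^k$ by repeatedly filling in the as-yet-unpasted region with $Z$; your reverse grouping would work for the same reason), and the ratio bookkeeping you describe for $(ii)\Leftrightarrow(iii)$ is exactly what the paper does.

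The genuine problem is with $(iii)\Leftrightarrow(iv)$.

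For $(iv)\Rightarrow(iii)$ your argument is circular: you propose to justify the two-cell paste via ``a preliminary paste that $(iii)$ itself licenses,'' but $(iii)$ is precisely what you are trying to derive. Statement $(iv)$ only ever involves a \emph{single} element $W$ of $P$, so it cannot directly produce the two-element paste $1_F Y+1_{F^c}W$ at a constant time $\tau\equiv t$; and as you note, $W'\defto 1_FY+1_{F^c}W$ need not lie in $P$, so $(iv)$ cannot be applied with $W'$ in place of $W$. The paper gets around this by a two-step construction with \emph{two different stopping times}: first apply $(iv)$ with $\tau=t1_{F^c}+(T-1+\eta)1_F$ to produce an intermediate element $\tilde X\in P$ that equals (essentially) $Z$ on $F$ and the $W$-side paste on $F^c$; then apply $(iv)$ again with $\tilde Z=\tilde X$, $\tilde W=Y$ and the complementary stopping time $\tilde\tau=t1_F+(T-1+\eta)1_{F^c}$. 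This is the key idea your proposal is missing.

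For $(iii)\Rightarrow(iv)$, decomposing over the level sets $\{\tau=t\}$ and ``pasting the results using $(i)$'' does not go through as stated: the level sets lie in $\F_{t_1},\ldots,\F_{t_m}$ for different times, and $(i)$ requires a partition in a \emph{single} $\F_t$ with pasting coefficients all measurable with respect to the corresponding $\F_{t-\eta+1}$. Moreover, the conditional-expectation matching $\E(Xv\,|\,\F_{t_j})=(Zv)_{t_j}$ that holds on $\{\tau=t_j\}$ does not give the matching at a common later time $t_m$ that an application of $(i)$ at $t_m$ would need. The paper instead proves $(iv)$ by backwards induction on a lower bound $k$ for $\tau$: it handles the event $\{\tilde\tau=k\}$ at time $k$ via $(iii)$, after using the inductive hypothesis (on the modified stopping time $\tau^*=\tilde\tau 1_F+(T-1+\eta)1_{F^c}$) to produce an element $Y\in P$ to paste against. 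You will want to replace your level-set decomposition with this backwards induction.
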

\begin{proof}
((i)$\Leftrightarrow$ (ii)) The forward implication is trivial.
Now suppose that property (ii) holds.
Fix $t=0,\,1,\ldots,T$ and take $Z$, $Z^1,\ldots,Z^{k}$ in $P$ such
that there exists  a partition
$F^1_t,\ldots,F^{k}_t\in\F_t$,
$\al^1,\ldots,\al^{k}\in\L^0(\F_{t-\eta+1})$ with each
$\al^i\,Z^i\in \L^1$ and $Y\defto \sum_{i=1}^{k}1_{F^i_t}
\al^i\,Z^i$ satisfies $\E(Z v|\,\G_{t})=\E(Y v|\G_t)$ for all $v\in
U$. We want to prove that $Y\in P$. First using the property in
(ii) we have
$$
Y^1\defto 1_{F^1_t} \al^1\,Z^1+1_{(F^1_t)^c} Z\in P,
$$
and by induction
$$
Y^{i+1}\defto 1_{F^{i+1}_t} \al^{i+1}\,Z^{i+1}+1_{(F^{i+1}_t)^c}
Y^i\in P.
$$
We prove easily that
$$
Y^{i}=\sum_{j=1}^{i}1_{F^j_t} \al^j\,Z^j+1_{G^{i}_t} Z,
$$
with $G^i_t=\cap_{j=1}^i(F^{j}_t)^c=\cup_{j=i+1}^{k} F^j_t$ for
$i\leq k-1$ and $G_t^{k}=\emptyset$. Then $Y=Y^{k}\in P$, establishing
(i).

((ii)$\Leftrightarrow$ (iii)) Assume that (iii) holds. Observe that (\ref{H})
implies that
$$
1_F(Xv)_{t+1-\eta}=1_F\al (Yv)_{t+1-\eta}\hbox{ and
}1_{F^c}(Xv)_{t+1-\eta}=1_{F^c}\beta (Wv)_{t+1-\eta},
$$
for each $v\in U$. It follows that
$$
1_F\al =1_F\frac{(Xv)_{t+1-\eta}}{(Yv)_{t+1-\eta}}\hbox{ and }
1_{F^c}\beta=1_{F^c}\frac{(Xv)_{t+1-\eta}}{(Wv)_{t+1-\eta}}.
$$
Setting
$$
R^v_{t+1-\eta}=\frac{(Xv)_{t+1-\eta}}{(Xv)_t}=\frac{(Xv)_{t+1-
\eta}}{(Zv)_t},
$$
we see that (\ref{H2}) holds establishing (ii).

Conversely, assuming (ii), if (\ref{H2}) is satisfied, then
$$
1_F\frac{X}{Y}=1_{F}\frac{(Zv)_tR^v_{t+1-\eta} }{(Yv)_{t+1-\eta}},
$$
and
$$
1_{F^c} \frac{X}{Y}=1_{F^c}\frac{(Zv)_tR^v_{t+1-\eta}
}{(Wv)_{t+1-\eta}},
$$
for each choice of $v\in U$. Setting these
common values to $\al$ and $\beta$ respectively, we see that
(\ref{H}) holds and $(Xv)_t=(Zv)_t$ for each $v$, so that $X\in P$,
establishing (iii).

((iii)$\Leftrightarrow$ (iv)) Suppose that (iv) holds, then, setting
$$\tau=t1_{F^c}+(T-1+\eta)1_F
$$
in (\ref{H3}) we see that
\begin{equation}\label{H4}
\tilde X=1_{F^c}(Zv)_tR^v_{t+1-\eta}\frac{W}{(Wv)_{t+1-\eta}}+1_{F} (Zv)_{T-
1+\eta}R^v_T,
\end{equation}
and $\tilde X\in P$. Now take $\tilde Z=\tilde X$, $\tilde W=Y$ and
$\tilde \tau=t1_{F}+(T-1+\eta)1_{F^c}$ in (\ref{H3}).
We obtain
$$
X=(Zv)_tR^v_{t+1-\eta} (1_{F} \frac{Y}{(Yv)_{t+1-\eta}}+1_{F^c}
\frac{W}{(Wv)_{t+1-\eta}})
$$
and $X\in P$, establishing (iii).

Conversely, suppose that (iii) holds. We prove (iv) by backwards
induction on a lower bound for $\tau$. The property is immediate for
$\tau=T-1+\eta$. Now suppose that (iv) holds whenever $\tau\geq k+1$ a.s.,
and that the stopping time $\tilde \tau$ satisfies $\tilde \tau\geq k$ a.s.
Define $F^c=(\tilde\tau=k)$ (so that
$F=(\tilde\tau\geq k+1)$) and set
$$\tau^*=\tilde\tau 1_F+(T-1+\eta)1_{F^c}.
$$

Suppose that $W,Z\in P$ and $R_{\tilde\tau+1-\eta}\in \F_{\tilde\tau+1-
\eta}$ satisfy the hypotheses of (iv) then
$$
W\frac{(Zv)_{\tilde\tau} R^v_{\tilde\tau+1-
\eta}}{(Wv)_{\tau+1-\eta}}1_F,
$$
is independent of $v$  so $Y$
defined by
$$Y=W\frac{(Zv)_\tau R^v_{\tau+1-
\eta}}{(Wv)_{\tau+1-\eta}}1_F+W1_{F^c}$$
is also independent of $v$.
Now
$$Y=W \frac{(Zv)_{\tau^*}R^v_{\tau^*+1-\eta}}{(Wv)_{\tau^*+1-\eta}},$$
where $R^v_{\tau^*+1-\eta}=R^v_{\tau+1-
\eta}1_F+\frac{(Zv)}{(Zv)_{T-1+\eta}}1_{F^c}$. It is easy to check that
$\E [R^v_{\tau^*+1-\eta}|\F_{\tau^*}]=1$ so, by the inductive
hypothesis, $Y\in P$

Now substitute $Y,W,Z$ and $F$ in (\ref{H3}), with $t=k$ and $R^v_{k+1-
\eta}=\frac{(Zv)_{k+1-\eta}}{(Zv)_k}1_F+R^v_{\tilde\tau +1-\eta}1_{F^c}$
to see that $X=W \frac{(Zv)_{\tilde\tau}R^v_{\tilde\tau+1-
\eta}}{(Wv)_{\tilde\tau+1-\eta}}$ and (by (iii)) $X\in P$, which
establishes
the inductive step.
\end{proof}

\begin{example}
Recall the risk measure in Example \ref{egcon}. Any element of $\Q$ may
be written as
$\bQ_{\lambda}=(\frac{1}{3}+\frac{1}{6}\l,\frac{1}{6}-\frac{1}{24}\l,
\frac{1}{4}-\frac{1}{16}\l,\frac{1}{4}-\frac{1}{16}\l)$.

We shall show that $\Q$ is weakly $(1,v)$-m-stable by using criterion
(iv) of Theorem \ref{equiv}.

We take the uniform measure on $\Omega=\{1,2,3,4\}$ as our reference
measure $\P$. Then (equating p.m.s and their densities) $Z$ and $W$ may
be written as $\frac{d\bQ_\l}{d\P}$ and $\frac{d\bQ_\m}{d\P}$
respectively.

Now
$\frac{d\bQ_\th}{d\P}=(\frac{4}{3}+\frac{2}{3}\th,\frac{2}{3}-\frac{1}{6}\th,
1-\frac{1}{4}\th,1-\frac{1}{4}\th)$, while $\frac{d\bQ_\th}{d\P}|_{\F_1}=(1+
\frac{1}{4}\th,1+\frac{1}{4}\th,1-\frac{1}{4}\th,1-\frac{1}{4}\th )$. Since
$v=1+1_{\{1\}}$, it
follows that
$\frac{d\bQ_\th}{d\P}v=(\frac{8}{3}+\frac{4}{3}\th,\frac{2}{3}-\frac{1}{6}\th,
1-\frac{1}{4}\th,1-\frac{1}{4}\th)$; $(\frac{d\bQ_\th}{d\P}v)_1=
(\frac{5}{3}+\frac{7}{12}\th,\frac{5}{3}+\frac{7}{12}\th,
1-\frac{1}{4}\th,1-\frac{1}{4}\th)$ and $(\frac{d\bQ_\th}{d\P}v)_0=
(\frac{4}{3}+\frac{1}{6}\th,\frac{4}{3}+\frac{1}{6}\th,\frac{4}{3}+\frac{1}{6}\th,\frac{4}{3}+\frac{1}{6}\th)$.

Now suppose that $\tau$ is a stopping time and consider
$\frac{Z_\tau}{W_\tau}$ and $\frac{(Zv)_\tau}{(Wv)_\tau}$. A quick check
shows that

{\setlength\arraycolsep{2pt}
\begin{eqnarray*}\frac{Z_\tau}{W_\tau}&=&(\frac{4+2\l}{4+2\m},\frac{4-\l}{4-\m},\frac{4-
\l}{4-\m},\frac{4-\l}{4-\m})1_{(\tau=2)}\\
&&+(\frac{4+\l}{4+\m},\frac{4+\l}{4+\m},\frac{4-
\l}{4-\m},\frac{4-\l}{4-\m})1_{(\tau=1)}\\
&&+(1,1,1,1)1_{(\tau=0)},\\
\end{eqnarray*}}

while
{\setlength\arraycolsep{2pt}
\begin{eqnarray*}
\frac{(Zv)_\tau}{(Wv)_\tau}&=&(\frac{4+2\l}{4+2\m},\frac{4-\l}{4-\m},\frac{4-
\l}{4-\m},\frac{4-\l}{4-\m})1_{(\tau=2)}\\
&&+(\frac{5+\frac{7}{4}\l}{5+\frac{7}{4}\m},\frac{5+\frac{7}{4}\l}{5+\frac{7}{4}\m},
\frac{4-\l}{4-\m},\frac{4-\l}{4-\m})1_{(\tau=1)}\\
&&+(\frac{4+\half\l}{4+\half\m},\frac{4+\half\l}{4+\half\m},\frac{4+\half\l}{4+\half\m},
\frac{4+\half\l}{4+\half\m})1_{(\tau=0)}.\\
\end{eqnarray*}}
Now equating $\frac{Z_\tau}{W_\tau}$ and $\frac{(Zv)_\tau}{(Wv)_\tau}$,
we see that if $\P(\tau=0)>0$ we must have
$4+\half\l=4+\half\m\Rightarrow\l=\m$, whilst if
$\P((\tau=1)\cap(\omega\in\{1,2\}))>0$ we must
have
$\frac{5+\frac{7}{4}\l}{5+\frac{7}{4}\m}=\frac{4+\l}{4+\m}\Rightarrow
\l=\m$. Thus, either $Z=W$ or $\tau\geq 1$ and $(\tau =1)\subseteq
\{3,4\}$. Now, assuming that $Z\neq W$, since $(\tau =1)\in \F_1$ we see
that either $\tau=2$ or $\tau=1_{\{3,4\}}+2.1_{\{1,2\}}$. In either
case, $W_\tau=W$ and $Z_\tau=Z$ so that $X$, defined by
$X=W\frac{Z_\tau}{W_\tau}=W\frac{(Zv)_\tau}{(Wv)_\tau}$ is equal to $Z$
and hence $\Q$ is weakly $(1,v)$-m-stable.
\end{example}
\goodbreak
\begin{remark}
Weak m-stability (with respect to $\{1\}$) of a set
$P\subset\L^1_+$, introduced here, coincides with m-stability as
defined in Delbaen \cite{delbaen}: for all $Z,W\in P$ with $W>0$ a.s and all
stopping times $\tau$, we have
$$
X\defto Z_{\tau}\dfrac{W}{W_{\tau}}\in P.
$$
The weak-m-stability property was first established for the collection of EMMs
for
a vector valued price-process in Jacka
\cite{jacka}.
\end{remark}

\begin{remark}
It is easy to see from condition (iv) in Theorem \ref{equiv} that $P$ is
$(\eta,U)$-m-stable $\Leftrightarrow$ $Pv^{-1}$ is $(\eta,Uv)$-m-stable,
for any $v\in \calN$.
\end{remark}

\begin{lem}
\label{l6} $\A$ is strongly time-consistent with respect to $\calN$.
\end{lem}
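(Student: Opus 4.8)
The plan is to exploit the fact that $\calN$ is extremely rich. By the characterisation of num\'eraires at time $t$ (the obvious analogue of Theorem \ref{numthm} with $\F_0$ replaced by $\G_t$), $v\in\calN_t$ iff $\lambda_t(v)\defto\hbox{ess-inf}_{\bbQ\in\Q}\E_\bbQ(v|\G_t)>0$ a.s.\ and $1/\lambda_t(v)\in\Linf$; in particular every bounded random variable that is bounded away from $0$ lies in every $\calN_t$, hence in $\calN$. Consequently $\1\in\calN$ and, for any $X\in\Linf$ and any constant $C>\|X\|_\infty$, both $X+C\1$ and $C\1$ belong to $\calN$, so that $X=(X+C\1)-(C\1)$ exhibits an arbitrary claim as a difference of two num\'eraires, i.e.\ $\calN-\calN=\Linf$. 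The moral is that with the full portfolio $\calN$ available we can ``attain $X$ directly'', so no genuine approximation is needed and the defining sequences in Definition \ref{defi1} may be taken constant.

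Concretely, I would fix $v\in\calN$, $t\in\{0,1,\ldots,T-1\}$ and $X\in\Linf$, choose $C>\|X\|_\infty$, and set $v^1\defto X+C\1$, $v^2\defto C\1$. First I would check $v^1,v^2\in\calN$: for each $s$ one has $\lambda_s(v^1)=C+\hbox{ess-inf}_{\bbQ\in\Q}\E_\bbQ(X|\G_s)\geq C-\|X\|_\infty>0$ with $1/\lambda_s(v^1)$ bounded, and $\lambda_s(v^2)=C$, so both are in $\calN_s$ for every $s$ and hence in $\calN$. Then I would take, for every $n$, $X_n\defto X$, $V^n\defto(v^1,v^2,\1,\ldots,\1)\subset\calN$ (padded with $n-2$ copies of $\1$ so that $V^n$ has $n$ entries), and $Y^n_t\defto(1,-1,0,\ldots,0)$, which is constant and so trivially $\G_t$-measurable. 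Then $Y^n_t.V^n=v^1-v^2=X$, so $X_n-Y^n_t.V^n=0\in\A_{t+1}$; the sequence $X_n$ converges weak$^*$ to $X$ (being constant); and $\rho^v_t(Y^n_t.V^n)=\rho^v_t(X)$ for every $n$, whence $\rho^v_t(X)=\liminf_{n}\rho^v_t(Y^n_t.V^n)$. All three requirements of Definition \ref{defi1} are met, and since this construction works for every $v\in\calN$, $\A$ is strongly $\calN$-time-consistent.

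There is essentially no obstacle here: the only point requiring care is the verification that $v^1$ and $v^2$ are num\'eraires (which is where the characterisation of $\calN_t$ enters), together with the observation that condition (iii) is automatic because the approximating portfolios are chosen to equal $X$ exactly, so that the tower-type identity $\rho^v_t\circ\rho^v_{t+1}=\rho^v_t$ (which fails in general) is never invoked. In effect the content of the lemma is just that the full collection $\calN$ of num\'eraires satisfies $\calN-\calN=\Linf$, so that any claim is, tautologically, a portfolio in $\calN$ with constant (a fortiori $\G_t$-measurable) weights.
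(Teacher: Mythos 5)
Your proof is correct and is essentially the paper's own argument: the paper takes $\lambda=1+\|X\|_\infty$, $v=X+\lambda\in\calN$, $V=(1,v)$ and $Y=(-\lambda,1)$ so that $X=Y.V$ exactly, which is the same decomposition as your $X=(X+C\1)-C\1$ (with $C\1$ replacing $\1$ scaled by $-\lambda$). The only cosmetic difference is your padding with copies of $\1$ to match the indexing in Definition \ref{defi1}.
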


\begin{proof}Let $X\in\Linf$ and $t=0,\,1,\ldots,T$. Then
for $\lambda=1+\|X\|_{\Linf}$ we have $v\defto X+\lambda\in\calN$
and consequently
$$
\rho_t(X)=\rho_t(Y.V),
$$
with $Y=(-\lambda,1)$ and $V=(1,v)$.
\end{proof}

\begin{remark}
In Lemma \ref{l6}, we proved that each claim $X$ can be \lq\lq
hedged" by a portfolio of two assets. Later (in Theorem \ref{separable}) we
shall prove that
hedging can be performed uniformly via a countable portfolio of assets .
\end{remark}


\section{Results on multidimensional closed convex
cones}\label{abscone}
cone $\A$ with respect to a finite portfolio
We now introduce the concept of
representation of a cone with respect to a collection of assets $V$. As we
will see later in the next section, this new concept coincides with
the concept of decomposition that we will analyze in this section.

We consider $\B$, a weak$^*$-closed convex cone in
$\Linf(\G;\R^d)$ which is arbitrage-free.

The canonical example is where $\B$ is the collection of admissible portfolios
of the assets in $V$:
$$
\A(V)=\{X\in\Linf(\G;\R^d):\, X. V\in \A\},
$$
is the set of all portfolios in (assets in the collection) $V$ that are
admissible.

In the interests of presentation, we relegate most of the proofs of results in
this section to Appendix \ref{app3}.

First we introduce some
definitions.

\begin{defi}
\label{dd4} {\bf Weak decomposition} We say that the cone $\B$ is
weakly decomposable if
$$
\B=\overline{\oplus_{t=0}^{T-1}\,\K_t(\B)},
$$
where $\K_t(\B)=\B_t\cap \Linf(\G_{t+1};\R^d)$ and
$$
\B_t=\{X\in \Linf;\;\al\,X\in \B\;\,\mbox{for all}\;\al\in
\Linf_+(\G_t)\}.
$$
We say that $(\K_t(\B))_{t=0,\ldots,T-1}$ is the weak decomposition of
the cone $\B$.
Note that we use $\oplus$ simply to denote a sum of subsets
of a vector space.
\end{defi}
\begin{remark}
Thus $\B$ is weakly decomposable if it can be obtained by one-period
bets in the various assets/currencies. See also Definition \ref{repweak} for the
motivation for
this concept.
\end{remark}

\begin{example}The acceptance set of a weakly time-consistent coherent risk
measure is weakly decomposable (see Jacka and Berkaoui \cite{JB}).
\end{example}

\begin{defi}
\label{dd1} {\bf Strong decomposition} We say that the cone $\B$ is
strongly decomposable if
$$
\B=\overline{\oplus_{t=0}^{T}\,\C_t(\B)},
$$
where $\C_t(\B)=\B_t\cap \Linf(\G_{t};\R^d)$.
We say that $(\C_t(\B))_{t=0,\ldots,T}$
is the strong decomposition of the cone $\B$.
\end{defi}
\begin{remark}
Thus $\B$ is strongly decomposable if it can be obtained by instantaneous
exchanges of assets. See also Definition \ref{repstrong} for the motivation for
this concept.
\end{remark}

\begin{example}The cone $\B(\pi)\cap\Linf$, defined in section 2, is strongly
decomposable.
\end{example}

For the sake of simplification later in the proofs we introduce a
unified definition of weak and strong decomposition.

\begin{defi}
\label{dd5} We say that the cone $\B$ is $\eta$-decomposable with
$\eta\in\{0,1\}$ if
$$
\B=\overline{\oplus_{t=0}^{T-\eta}\,\K^\eta_t(\B)},
$$
where $\K^\eta_t(\B)=\B_t\cap \Linf(\G_{t+\eta};\R^d)$. We say that
$(\K_t^\eta(\B))_{t=0,\ldots, T-\eta}$ is the $\eta$-decomposition of the cone
$\B$. Remark
that the weak and strong decomposition are respectively associated
to $\eta=1$ and $\eta=0$.
\end{defi}

Now we define $\eta$-stability in this multidimensional context:
\begin{defi}
\label{dd2}Let $D$ denote a subset in $\L^1_+(\G;\R^d)$. We say that
$D$ is $\eta$-stable with $\eta\in\{0,1\}$ if for all
$t=0,1,\ldots,T$ whenever $Z^1,\ldots,Z^k\in D$ are such that there
exists $Z\in D$, a partition $F^1_t,\ldots,F^k_t\in\F_t$ and
$\al^1,\ldots,\al^k\in\L^0(\F_{t-\eta+1})$ with each $\al^i\,Z^i\in
\L^1$ and $Y\defto \sum_{i=1}^{k}1_{F^i_t} \al^i\,Z^i$ satisfies
$\E(Z|\G_{t})=\E(Y|\G_t)$, then we have $Y\in D$.
\end{defi}
\begin{defi}\label{dt}
For all $t=0,1,\ldots,T$, we define:
$$
D_{(t)}=\overline{conv}\left\{\al\,Z:\,Z\in D,\,\al\in
\Linf_+(\G_t)\right\}.
$$
\end{defi}
\begin{theorem}\label{equiv2}
Let  $D\subset \L^1_+(\G;\R^d)$.
The following are equivalent:
\begin{itemize}
\item[(i)]$D$ is $\eta$-stable;
\item[(ii)]for each $t\in\{0,\,1,\ldots,T-1+\eta\}$, whenever $Y,W\in D$ are
such that there
exists $Z\in D$, a set $F\in\F_t$,
$\al,\beta\in\L^0_+(\F_{t-\eta+1})$ with $\al Y,\beta W\in \L^1$ and
\begin{equation}\label{H5}
X\defto  1_{F} \al Y+1_{F^c}\beta W
\end{equation}
satisfies $ \E(X
|\G_{t})=\E(Z |\G_t)$  then we have $X\in D$.
\item[(iii)]for each $t\in\{0,\,1,\ldots,T-1+\eta\}$, whenever $Y,W\in D$ are
such that there
exists $Z\in D$, a set $F\in\F_t$,  and for each $i\in \{1,\ldots,d\}$ there is
an
$R^i_{t+1-\eta}\in L^1_+(\F_{t-\eta+1})$
 with $R^i_{t+1-\eta} Y^i,R^i_{t+1-\eta} W^i\in \L^1$,
$\E[R^i_{t+1-\eta}|\F_t]=1$
and such that
$$
R^i_{t+1-\eta} Z^i_t\left(1_F \frac{1}{Y^i_{t+1-\eta}}+1_{F^c}
\frac{1}{W^i_{t+1-\eta}}\right)
$$
is the same for each $i$,
then $X$, given by
\begin{equation}\label{H6}
X^i=  Z^i_tR^i_{t+1-\eta} (1_{F} \frac{Y^i}{(Y^i)_{t+1-\eta}}+1_{F^c}
\frac{W^i}{(W^i)_{t+1-\eta}})
\end{equation}
is in $D$.
\item[(iv)]For each stopping time $\tau\leq T-1+\eta$, whenever there
exist $Z$ and $W$ in $D$ and $R^i_{\tau+1-\eta}\in L^1_+(\F_{\tau+1-\eta})$,
such that $\E [R^i_{\tau+1-\eta}|\F_{\tau}]=1$ and
$$
\frac{Z^i_\tau R^i_{\tau+1-\eta}}{W^i_{\tau+1-\eta}}
$$
is the same for each $i$, then $X$, defined by
\begin{equation}\label{H7}
X^i=W^i\frac{Z^i_\tau R^i_{\tau+1-\eta}}{W^i_{\tau+1-\eta}},
\end{equation}
is in $D$.

\end{itemize}

\end{theorem}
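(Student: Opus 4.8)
The plan is to mirror the proof of Theorem \ref{equiv}, but in the multidimensional setting without the pairing against $v\in U$; in effect Theorem \ref{equiv2} is the special case of Theorem \ref{equiv} obtained by taking a single num\'eraire $v=\1$ but allowing $d$-dimensional densities, where now the role of ``$(Xv)_t$'' is played componentwise by the conditional expectations $X^i_t=\E(X^i|\G_t)$ and the constraint is that certain ratios are ``the same for each $i$'' rather than ``the same for each $v$''. So I would set up the equivalences in the cycle (i)$\Leftrightarrow$(ii)$\Leftrightarrow$(iii)$\Leftrightarrow$(iv), exactly as before.

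First, for (i)$\Leftrightarrow$(ii): the forward direction is trivial (take $k=2$). For (ii)$\Rightarrow$(i), fix $t$ and $Z,Z^1,\ldots,Z^k\in D$ with a partition $F^1_t,\ldots,F^k_t$ and multipliers $\al^i\in\L^0(\F_{t-\eta+1})$ such that $Y=\sum 1_{F^i_t}\al^iZ^i$ satisfies $\E(Y|\G_t)=\E(Z|\G_t)$. I would build $Y^1=1_{F^1_t}\al^1Z^1+1_{(F^1_t)^c}Z$, check $\E(Y^1|\G_t)=\E(Z|\G_t)$ so $Y^1\in D$ by (ii), and then inductively set $Y^{i+1}=1_{F^{i+1}_t}\al^{i+1}Z^{i+1}+1_{(F^{i+1}_t)^c}Y^i$, verifying $Y^i=\sum_{j\le i}1_{F^j_t}\al^jZ^j+1_{G^i_t}Z$ with $G^i_t=\cup_{j>i}F^j_t$, so that $Y=Y^k\in D$. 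This is essentially verbatim from Theorem \ref{equiv}, with the conditioning now against $\G_t$ directly and vectors in $\R^d$; conditional expectation acts componentwise so nothing changes.

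Next, (ii)$\Leftrightarrow$(iii): here the reparametrisation is the only slightly fiddly point. Assuming (ii) and given the data of (iii), I would set, for each $i$, $\al^i$ and $\be^i$ equal to the common value of $R^i_{t+1-\eta}Z^i_t/Y^i_{t+1-\eta}$ on $F$ and of $R^i_{t+1-\eta}Z^i_t/W^i_{t+1-\eta}$ on $F^c$ respectively — these belong to $\L^0_+(\F_{t-\eta+1})$ — so that (\ref{H6}) becomes $X^i=1_F\al^iY^i+1_{F^c}\be^iW^i$, of the form (\ref{H5}) componentwise, and check $\E(X^i|\G_t)=Z^i_t$ using $\E[R^i_{t+1-\eta}|\F_t]=1$; then (ii) gives $X\in D$. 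Conversely, assuming (ii) and given (\ref{H5}) with $\E(X|\G_t)=\E(Z|\G_t)$, I would set $R^i_{t+1-\eta}=X^i_{t+1-\eta}/X^i_t=X^i_{t+1-\eta}/Z^i_t$, check it integrates to $1$ given $\F_t$, observe $1_F\al^i=1_FX^i_{t+1-\eta}/Y^i_{t+1-\eta}$ and $1_{F^c}\be^i=1_{F^c}X^i_{t+1-\eta}/W^i_{t+1-\eta}$, so (\ref{H6}) holds and its ``same for each $i$'' hypothesis is automatic, yielding (iii). (One must be a little careful about sets where $Z^i_t$ or the various conditional expectations vanish; there the corresponding components are zero and the identities hold trivially, as in the scalar case.)

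Finally, (iii)$\Leftrightarrow$(iv): I would argue (iv)$\Rightarrow$(iii) by taking $\tau=t1_{F^c}+(T-1+\eta)1_F$ in (\ref{H7}) and then a second application with the roles of $F,F^c$ swapped, exactly as in Theorem \ref{equiv}; and (iii)$\Rightarrow$(iv) by backwards induction on an a.s.\ lower bound $k$ for $\tau$, the base case $\tau=T-1+\eta$ being immediate. For the inductive step, with $F^c=(\tau=k)$, $F=(\tau\ge k+1)$, I would splice $\tau$ on $F$ with $T-1+\eta$ on $F^c$ to get a stopping time $\tau^*\ge k+1$, apply the inductive hypothesis to produce an intermediate $Y\in D$ whose defining ratios are independent of $i$, and then feed $Y,W,Z,F$ into (\ref{H6}) with $R^i_{k+1-\eta}=\frac{Z^i_{k+1-\eta}}{Z^i_k}1_F+R^i_{\tau+1-\eta}1_{F^c}$ to recover $X^i=W^i Z^i_\tau R^i_{\tau+1-\eta}/W^i_{\tau+1-\eta}$, concluding $X\in D$ by (iii). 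The main obstacle, such as it is, is purely bookkeeping: keeping the componentwise ``same for each $i$'' conditions consistent under the splicing of stopping times and checking the $\F_{\tau^*}$-measurability and unit-conditional-expectation of the spliced $R^i$; there is no genuinely new analytic content beyond what Theorem \ref{equiv} already required, so I would in fact present it as ``the proof is identical to that of Theorem \ref{equiv}, replacing $(Xv)_t$ by $X^i_t$ throughout and `the same for each $v\in U$' by `the same for each $i$'.''
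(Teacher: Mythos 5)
Your proposal is correct and matches the paper's approach: the paper itself disposes of Theorem \ref{equiv2} with the one-line remark ``The proof is essentially the same as that of Theorem \ref{equiv},'' and your dictionary ($(Xv)_t \leftrightarrow X^i_t$, ``same for each $v$'' $\leftrightarrow$ ``same for each $i$'') is precisely the right translation. One cosmetic slip: in the (ii)$\Leftrightarrow$(iii) paragraph, the ``Conversely'' clause should read ``assuming (iii)'' rather than ``assuming (ii),'' since that is the direction where you take data of form (\ref{H5}), build the $R^i$'s, and invoke (iii) to conclude $X\in D$.
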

The proof is essentially the same as that of Theorem \ref{equiv}.

Now we give some results on $\eta$-stability. To facilitate this we
introduce the following equivalence relation on $\L^1(\G;\R^d)$ as
follows: $Z\equiv_{t,\eta} Z'$ if there exists $\al_t\in
\L^0_+(\G_t)$ with $\al_t\,Z'\in \L^1$ such that
$Z_{t+\eta}=\al_t\,Z'_{t+\eta}$.

\begin{lem}
\label{duality0}Let $D\subset \L^1_+$ be an $\eta$-stable cone for some
$\eta\in\{0,1\}$, then, defining
$$
M^\eta_t(D)=\{Z\in \L^1(\G;\R^d):\;Z\equiv_{t,\eta}Z'\;\mbox{for
some}\; Z'\in D\}.
$$
we have
$$D=\cap_{t=0}^{T-\eta}M^\eta_t(D).
$$

\end{lem}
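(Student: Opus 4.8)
The plan is to prove the two inclusions $D\subseteq\cap_{t=0}^{T-\eta}M^\eta_t(D)$ and $\cap_{t=0}^{T-\eta}M^\eta_t(D)\subseteq D$ separately, with the first being essentially trivial and the second carrying all the weight. For the easy inclusion, take $Z\in D$. For every $t$ we have $Z\equiv_{t,\eta}Z$ (take $\al_t\equiv 1$), so $Z\in M^\eta_t(D)$ for each $t$, hence $Z\in\cap_{t=0}^{T-\eta}M^\eta_t(D)$. This uses only that $1\in\L^0_+(\G_t)$.

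For the reverse inclusion, suppose $X\in\cap_{t=0}^{T-\eta}M^\eta_t(D)$; I want to produce $X$ as the output of the $\eta$-stability mechanism applied to elements of $D$, invoking criterion (iv) of Theorem \ref{equiv2} (or, working componentwise, criterion (iv) of Theorem \ref{equiv}). The idea is to build $X$ from pieces of a single auxiliary element of $D$ using a stopping time. Concretely, because $X\in M^\eta_t(D)$ for each fixed $t$, there is some $Z^{(t)}\in D$ with $X\equiv_{t,\eta}Z^{(t)}$, i.e. an $\al_t\in\L^0_+(\G_t)$ with $X_{t+\eta}=\al_t\,Z^{(t)}_{t+\eta}$. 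The task is to glue the $Z^{(t)}$ together across $t$ using $\eta$-stability so that the composite object equals $X$ everywhere and lies in $D$. I would do this by a backward induction on $t$ from $T-\eta$ down to $0$: having assembled an element $W^{(t+1)}\in D$ that agrees with $X$ (up to the equivalence $\equiv_{s,\eta}$) on the time-scales $s\ge t+1$, I would combine $W^{(t+1)}$ with $Z^{(t)}$ on the set $F_t\in\F_t$ where they need to be distinguished, using the mechanism in (\ref{H5})/(\ref{H7}) with the appropriate $\F_{t-\eta+1}$-measurable multipliers and the conditioning identity $\E(X\mid\G_t)=\E(X\mid\G_t)$ in the $d$-dimensional version; $\eta$-stability then keeps the new object in $D$. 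After the final step ($t=0$) the resulting element of $D$ coincides with $X$, because at each scale $t$ its $(t+\eta)$-projection was forced to match $X_{t+\eta}$, and an element of $\L^1(\G;\R^d)$ (with $\G=\G_T$) is determined by its projection at the last relevant time.

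The main obstacle will be the gluing/induction step: one has to choose the separating sets $F_t$ and the $\F_{t-\eta+1}$-measurable rescaling factors $\al,\beta$ (equivalently the $R^i_{t+1-\eta}$ of criterion (iv)) so that (a) the conditional-expectation constraint $\E(X^{\mathrm{new}}\mid\G_t)=\E(Z\mid\G_t)$ needed to apply $\eta$-stability actually holds, and (b) the rescaled pieces genuinely reconstruct $X$ rather than merely something equivalent to it at a single time-scale. The subtlety is that the equivalence $\equiv_{t,\eta}$ only pins down the $(t+\eta)$-projection, so one must track carefully how the successive rescalings interact across the levels $t=T-\eta,\dots,0$; this is exactly the kind of telescoping argument already carried out in the proof of the ((iii)$\Leftrightarrow$(iv)) equivalence in Theorem \ref{equiv}, so I would model the bookkeeping on that proof. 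A secondary technical point is integrability: the multipliers $\al_t$ are only in $\L^0_+(\G_t)$, not bounded, so one must check at each stage that the rescaled elements remain in $\L^1$, which is precisely why Definition \ref{dd2} allows $\al^i\,Z^i\in\L^1$ with $\al^i$ merely measurable.
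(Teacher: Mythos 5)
Your overall strategy is the right one and matches the paper's: prove the trivial inclusion $D\subseteq\bigcap_t M^\eta_t(D)$ directly, and then establish the reverse inclusion by a backward induction in $t$ that glues the witnesses $Z^{(t)}\in D$ together via $\eta$-stability. But the proposal stops exactly at what you correctly identify as ``the main obstacle''---the choice of separating sets and rescaling factors---and this is not a routine bookkeeping step one can defer; it is the entire content of the lemma. The paper's proof resolves it with a specific construction that your sketch does not reach. Writing $Y_{t+\eta}=\be_t Z^t_{t+\eta}$ for the witnesses, the paper takes $F_t=\{\be_t>0\}$, $\k_t=\be_{t+1}/\be_t$, sets $\xi^{T-\eta}=Z^{T-\eta}$ and recursively $\xi^t=1_{F_t}\k_t\,\xi^{t+1}+1_{F^c_t}Z^t$. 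Two nontrivial facts make this work. First, the telescoping terminal-time identity $Y=\be_{T-\eta}Z^{T-\eta}=\be_0\,\k_0\cdots\k_{T-\eta-1}\,Z^{T-\eta}$, which lets one conclude $Y=\be_0\xi^0$. Second, the hypothesis $D\subset\L^1_+$ is used here in an essential way: since $Y\ge 0$ and $\E[Y\mid\G_{t+\eta}]=\be_t Z^t_{t+\eta}=0$ on $F_t^c$, one has $Y1_{F_t^c}=0$, so the indicator bookkeeping closes up. None of this positivity-based reconstruction appears in your outline, and without it the argument does not produce an element of $D$ \emph{equal} to $Y$ rather than merely $\equiv_{t,\eta}$ to it at some scale.

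There is also a conceptual slippage in your inductive invariant. You propose to carry forward an element $W^{(t+1)}\in D$ that ``agrees with $X$ up to $\equiv_{s,\eta}$ for all $s\ge t+1$,'' but that is both more than the argument needs and harder to preserve under the gluing, because the $\eta$-stability operation is performed at a single time $t$ and directly controls only the $(t+\eta)$-projection. The paper's inductive invariant is the much more modest statement $\xi^s_{s+\eta}=Z^s_{s+\eta}$ together with $\xi^s\in D$ for $s\ge t+1$, and the global identity $Y=\be_0\xi^0$ is recovered at the very end from the terminal-time relation and positivity, not from accumulating projection matches scale by scale. Your final sentence (``an element of $\L^1(\G)$ is determined by its projection at the last relevant time'') is true but does not by itself close the gap, because you have not shown that the constructed element's terminal projection equals $Y$. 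You should also note that the paper's proof does not invoke the stopping-time criterion (iv) of Theorem \ref{equiv2}; it applies the definition of $\eta$-stability directly with the partition $\{F_t,F_t^c\}$ and multipliers $\{\k_t,1\}$, which keeps the measurability requirements ($\k_t\in\L^0_+(\G_{t+1})\subset\L^0_+(\G_{t-\eta+1})$) transparent.
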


\begin{proof}See Appendix \ref{app3}
\end{proof}

\begin{lem}
\label{duality1} Let $D$ be a subset in $\L^1$ and define:
$[D]=\cap_{t=0}^{T-\eta} R^\eta_t$ where
$$
R^\eta_t=\{Z\in \L^1:\;Z_{t+\eta}=Z'_{t+\eta}\;\mbox{for some}\;
Z'\in D_{(t)}\}=\overline{conv}(M^\eta_t(D),
$$
where $D_{(t)}$ is defined in Definition \ref{dt}.

Then
\begin{enumerate}
\item $[D]$ is the smallest $\eta$-stable closed convex cone in
$\L^1$, containing $D$.
\item $D=[D]$ if and only if $D$ is an $\eta$-stable closed convex cone in
$\L^1$.
\end{enumerate}
\end{lem}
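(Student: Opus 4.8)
The plan is to build everything on Lemma \ref{duality0}, which already identifies an $\eta$-stable cone $D$ with the intersection $\cap_t M^\eta_t(D)$. The set $[D]$ here is the ``closed convex'' analogue: we replace each $M^\eta_t(D)$ by $R^\eta_t = \overline{conv}(M^\eta_t(D))$, using the enlarged generating set $D_{(t)}$ rather than $D$ itself, and intersect. So the first thing I would verify is the two displayed identifications inside the statement: that $\{Z:\,Z_{t+\eta}=Z'_{t+\eta}\text{ for some }Z'\in D_{(t)}\}$ really equals $\overline{conv}(M^\eta_t(D))$. The inclusion $\supseteq$ of closures comes from the fact that $M^\eta_t(D)$ is already closed under multiplication by $\L^0_+(\G_t)$-scalars (by definition of $\equiv_{t,\eta}$), so $\overline{conv}(M^\eta_t(D)) = \overline{conv}\{\al Z: Z\in D,\ \al\in\L^\infty_+(\G_t)\}$-translated version, and $D_{(t)}$ is exactly that convex/closed hull; the reverse inclusion is immediate since $D\subseteq D_{(t)}$ and $R^\eta_t$ only constrains the $\G_{t+\eta}$-conditional part. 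I would spell this out carefully since it is the bridge between the ``raw'' stability statement and the convex one.

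For part (1), I would argue in three steps. \emph{Containment:} $D\subseteq [D]$ because each $Z\in D$ satisfies $Z_{t+\eta}=Z_{t+\eta}$ with $Z\in D\subseteq D_{(t)}$, so $Z\in R^\eta_t$ for every $t$. \emph{$[D]$ is a closed convex cone:} each $R^\eta_t$ is visibly a closed convex cone (it is $\overline{conv}$ of a cone, and the conditional-expectation constraint is linear and closed), hence so is the finite intersection. \emph{$[D]$ is $\eta$-stable:} here I would use the characterization (iv) in Theorem \ref{equiv2} — given a stopping time $\tau\le T-1+\eta$ and $Z,W\in[D]$ with the pasted element $X^i = W^i (Z^i_\tau R^i_{\tau+1-\eta})/W^i_{\tau+1-\eta}$, I must show $X\in R^\eta_t$ for each fixed $t$. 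On $\{\tau\ge t\}$ the $\G_{t+\eta}$-part of $X$ agrees with that of $W$ (since the rescaling factor is $\G_{t+\eta}$-measurable-ish once $\tau\ge t$), and on $\{\tau<t\}$ it agrees with a rescaling of $Z$ by an $\L^0_+(\G_t)$ factor; splitting on the $\F_t$-set $\{\tau\ge t\}$ and using that $R^\eta_t$ is closed under such $\G_t$-measurable rescalings-and-pasting (because $D_{(t)}$ is), one concludes $X_{t+\eta}$ equals $Z'_{t+\eta}$ for a suitable $Z'\in D_{(t)}$. \emph{Minimality:} if $E$ is any $\eta$-stable closed convex cone with $D\subseteq E$, then $E = \cap_t M^\eta_t(E)$ by Lemma \ref{duality0}, and $M^\eta_t(E)\supseteq M^\eta_t(D_{(t)})$ — and since $E$ is closed and convex, $M^\eta_t(E)$ absorbs $\overline{conv}$, giving $M^\eta_t(E)\supseteq R^\eta_t$; intersecting yields $E\supseteq[D]$.

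Part (2) is then formal: if $D=[D]$ then $D$ inherits from $[D]$ the properties of being an $\eta$-stable closed convex cone. Conversely, if $D$ is an $\eta$-stable closed convex cone, then by minimality $[D]\subseteq D$, and by containment $D\subseteq[D]$, so $D=[D]$. I expect the main obstacle to be the $\eta$-stability of $[D]$ in part (1): one must track the stopping-time pasting operation of Theorem \ref{equiv2}(iv) through the two constraints defining $R^\eta_t$ simultaneously, being careful that the rescaling factors $R^i_{\tau+1-\eta}$ (which live in $L^1_+(\F_{\tau+1-\eta})$) interact correctly with the $\F_t$ versus $\F_{t+\eta}$ measurability on the sets $\{\tau\ge t\}$ and $\{\tau<t\}$, and that the enlargement from $D$ to $D_{(t)}$ is exactly what makes the $\{\tau<t\}$ piece land back inside $R^\eta_t$. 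Everything else is routine closed-convex-cone bookkeeping. \endpf
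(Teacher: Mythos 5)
Your overall skeleton — show $[D]$ is a closed convex cone containing $D$, show it is $\eta$-stable, then deduce minimality and part (2) — matches the paper's plan, and your reduction of part (2) to part (1) is fine. The difference, and the gap, is in the key step: you propose to verify $\eta$-stability of $[D]$ through the stopping-time characterization (iv) of Theorem \ref{equiv2}, whereas the paper works directly with Definition \ref{dd2}. For $Z,Z^1,\ldots,Z^k\in[D]$, a partition $F^i_t\in\F_t$ and scalars $\al^i\in\L^0(\F_{t-\eta+1})$ satisfying the pasting constraint, the paper checks $Y\defto\sum 1_{F^i_t}\al^i Z^i\in R^\eta_s$ for each $s$ by splitting on $s\geq t-\eta+1$ (where $1_{F^i_t}\al^i$ is $\F_s$-measurable, so one may paste the $D_{(s)}$-representatives of the $Z^i$) versus $s\leq t-\eta$ (where $Y_{s+\eta}=Z_{s+\eta}$, and one uses the $D_{(s)}$-representative of $Z$). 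Your route through (iv) could in principle work, but your sketch of it is concretely wrong: for $X=W\,\dfrac{Z_\tau R_{\tau+1-\eta}}{W_{\tau+1-\eta}}$ one has $X_\tau=Z_\tau$, so on $\{\tau\geq t+1\}$ (for $\eta=1$; $\{\tau\geq t\}$ for $\eta=0$) it is $X_{t+\eta}=Z_{t+\eta}$, not $W_{t+\eta}$, while on the complementary event $X_{t+\eta}$ is an $\F_t$-measurable rescaling of $W_{t+\eta}$, not of $Z_{t+\eta}$. You have the roles of $Z$ and $W$ reversed; moreover the rescaling factor is $\F_{\tau+1-\eta}$-measurable, which fails to be $\F_{t+\eta}$-measurable precisely on the set where you assert it is (``$\G_{t+\eta}$-measurable-ish once $\tau\geq t$''). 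The hedge ``-ish'' is the symptom of the swap, not a minor technicality.

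A secondary issue is the minimality argument. You claim $M^\eta_t(E)\supseteq M^\eta_t(D_{(t)})$ for an $\eta$-stable closed convex cone $E\supseteq D$, but $D_{(t)}$ need not be contained in $E$, so this inclusion is not immediate. The paper's route is cleaner: $D\subseteq D'$ gives $D_{(t)}\subseteq D'_{(t)}$ and hence $R^\eta_t(D)\subseteq R^\eta_t(D')$, so $[D]\subseteq[D']$, and then part (2) applied to $D'$ yields $[D']=D'$, giving $[D]\subseteq D'$ directly without any claim about $M^\eta_t(E)$.
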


\begin{proof}
See Appendix \ref{app3}
\end{proof}

Next we characterize the $\eta$-decomposability of the cone $\B$. In
what follows we shall denote the polar cone of $\B$ by $\B^*$ (note
that, by assumption, $\B$ is weak$^*$ closed and so
$\B^*\subset\L^1(\G;\R^d)$).

\begin{theorem}
\label{rafa5} $\B$ is $\eta$-decomposable $\Leftrightarrow$ $\B^*$
is $\eta$-stable.
\end{theorem}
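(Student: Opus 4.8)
The plan is to prove the two implications separately, exploiting the duality between a weak$^*$-closed convex cone $\B$ and its polar $\B^*$, together with the characterizations of $\eta$-stability from Theorem \ref{equiv2} and Lemmas \ref{duality0}, \ref{duality1}. The unifying observation is that the building blocks of the $\eta$-decomposition, $\K^\eta_t(\B) = \B_t \cap \Linf(\G_{t+\eta};\R^d)$, have polars that can be computed explicitly: since $\B_t = \{X : \al X \in \B \text{ for all } \al \in \Linf_+(\G_t)\}$, one has $\B_t^* = \{Z \in \B^* : Z \text{ is ``}\G_t\text{-reduced''}\}$ in the sense that the conditioning identities behind $\equiv_{t,\eta}$ hold; and taking the polar of $\B_t \cap \Linf(\G_{t+\eta};\R^d)$ within $\Linf(\G;\R^d)$ introduces exactly the freedom to modify $Z$ on $\G_{t+\eta}$-conditional expectations, which is what $M^\eta_t(\B^*)$ records. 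So the skeleton is: $\B$ is $\eta$-decomposable iff $\B = \overline{\oplus_t \K^\eta_t(\B)}$ iff (taking polars, using that the polar of a closed sum of cones is the intersection of the polars) $\B^* = \cap_t (\K^\eta_t(\B))^*$, and then one identifies $(\K^\eta_t(\B))^*$ with $R^\eta_t$ (the set appearing in Lemma \ref{duality1}) so that $\cap_t (\K^\eta_t(\B))^* = [\B^*]$, whence the statement collapses to part (2) of Lemma \ref{duality1}: $\B^* = [\B^*]$ iff $\B^*$ is an $\eta$-stable closed convex cone.

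In more detail, for the forward direction I would first argue that if $\B = \overline{\oplus_{t=0}^{T-\eta} \K^\eta_t(\B)}$, then by the bipolar theorem (both $\B$ and the closed sum are weak$^*$-closed convex cones) taking polars gives $\B^* = \bigl(\overline{\oplus_t \K^\eta_t(\B)}\bigr)^* = \cap_{t=0}^{T-\eta} (\K^\eta_t(\B))^*$. The heart of the matter is the computation $(\K^\eta_t(\B))^* = R^\eta_t(\B^*)$, where $R^\eta_t(\B^*) = \overline{conv}(M^\eta_t(\B^*))$ as in Lemma \ref{duality1}. One containment is easy: if $Z \equiv_{t,\eta} Z'$ with $Z' \in (\B^*)_{(t)}$, then for $X \in \K^\eta_t(\B) \subset \B_t \cap \Linf(\G_{t+\eta})$ one computes $\E(Z.X) = \E(Z_{t+\eta}.X) = \E(\al_t Z'_{t+\eta}.X) = \E(\al_t Z'.X) \le 0$ since $\al_t X \in \B$ (as $X \in \B_t$). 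The reverse containment — that every $Z \in (\K^\eta_t(\B))^*$ actually lies in $R^\eta_t$ — is where I expect the real work: it requires a Hahn–Banach / measurable-selection argument to show that the ``local'' constraint of being in the polar of $\K^\eta_t(\B)$ forces $Z$ to be $\equiv_{t,\eta}$-equivalent (modulo closure and convexity) to an element of $\B^*$ reduced at time $t$. Given that identification, $\cap_t (\K^\eta_t(\B))^* = \cap_t R^\eta_t = [\B^*]$, so $\B^* = [\B^*]$, and Lemma \ref{duality1}(2) gives that $\B^*$ is $\eta$-stable.

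For the converse, suppose $\B^*$ is $\eta$-stable; being a polar cone it is automatically weak$^*$-closed and convex, so $\B^* = [\B^*] = \cap_t R^\eta_t(\B^*)$ by Lemma \ref{duality1}. Using the identification $(\K^\eta_t(\B))^* = R^\eta_t(\B^*)$ again, this reads $\B^* = \cap_t (\K^\eta_t(\B))^*$; taking polars and using the bipolar theorem once more yields $\B = (\B^*)^* = \bigl(\cap_t (\K^\eta_t(\B))^*\bigr)^* = \overline{\oplus_t \K^\eta_t(\B)}$, i.e.\ $\B$ is $\eta$-decomposable. The main obstacle throughout is the polar computation $(\K^\eta_t(\B))^* = \overline{conv}(M^\eta_t(\B^*))$; everything else is bookkeeping with the bipolar theorem and the two duality lemmas. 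I would handle that obstacle by first treating $\B_t^*$ (the polar of the ``stabilized'' cone $\B_t$, without the $\G_{t+\eta}$-measurability restriction), showing $\B_t^* = \{Z \in \B^* : Z = Z_t\}$ up to closure via a conditioning argument, then intersecting/projecting onto $\L^1(\G_{t+\eta})$-adapted directions to pick up the $\equiv_{t,\eta}$ equivalence, with the closed convex hull appearing because polars of non-closed sets are closed and we are working in the weak topologies.
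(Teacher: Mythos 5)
Your overall plan coincides with the paper's: reduce the theorem to the single polar identity $\K^\eta_t(\B)=\bigl(M^\eta_t(\B^*)\bigr)^*$ for each $t$ (equivalently $(\K^\eta_t(\B))^*=\overline{conv}(M^\eta_t(\B^*))=R^\eta_t$), and then let Lemmas \ref{duality0} and \ref{duality1} together with the bipolar theorem finish the bookkeeping; you also carry out the easy inclusion $M^\eta_t(\B^*)\subset(\K^\eta_t(\B))^*$ correctly. The genuine gap is exactly where you flag it: you leave the reverse containment---that every element of $(\K^\eta_t(\B))^*$ lies in $\overline{conv}(M^\eta_t(\B^*))$---as an unproven sketch built on a Hahn--Banach / measurable-selection argument. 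Beyond being unproven, the route you sketch is shakier than necessary, and your intermediate heuristic for $\B_t^*$, namely that it should be $\{Z\in\B^*:Z=Z_t\}$ up to closure, points the wrong way: Proposition \ref{p1} shows $(\B_t)^*$ is the closure of $\{\al Z:\al\in\Linf_+(\F_t),\,Z\in\B^*\}$, a \emph{superset} of $\B^*$, not a subset.

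The paper never proves the reverse containment at the level of polars. Instead it establishes, directly and elementarily, the pair of inclusions $M^\eta_t(\B^*)\subset(\K^\eta_t(\B))^*$ and $(M^\eta_t(\B^*))^*\subset\K^\eta_t(\B)$. The second one supplants your anticipated Hahn--Banach step and runs in two lines: since $\B^*\subset M^\eta_t(\B^*)$ and $\Linf_+(\G_t)\,M^\eta_t(\B^*)\subset M^\eta_t(\B^*)$, one has $(M^\eta_t(\B^*))^*\subset\B_t$; and for every $Z\in\L^1$ the element $Z-Z_{t+\eta}$ lies in $M^\eta_t(\B^*)$ (it is $\equiv_{t,\eta}$-equivalent to $0\in\B^*$), so any $X\in(M^\eta_t(\B^*))^*$ satisfies $\E((Z-Z_{t+\eta}).X)\le 0$ for all $Z$, forcing $X=X_{t+\eta}$, i.e.\ $X\in\Linf(\G_{t+\eta};\R^d)$. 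Thus $(M^\eta_t(\B^*))^*\subset\B_t\cap\Linf(\G_{t+\eta};\R^d)=\K^\eta_t(\B)$. Since $\K^\eta_t(\B)$ is a weak$^*$-closed convex cone, taking polars in the first inclusion gives $\K^\eta_t(\B)=\K^\eta_t(\B)^{**}\subset(M^\eta_t(\B^*))^*$, and the identity follows with no separation theorem and no measurable selection. That is the step you postponed, and the slicker route around the obstacle you anticipated.
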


\begin{proof}
See Appendix \ref{app3}
\end{proof}

\begin{remark} From Lemma \ref{duality0} we see that if a subset $D\subset \L^1$
is
$\eta$-stable, then its polar cone $D^*$ in $\Linf$ is
$\eta$-decomposable.
\end{remark}
We establish some further results about $\eta$-decomposability in
Appendix \ref{app2}.

Now, we give a useful characterisation of $\C_t(\B)$:
\begin{lem}
\label{lem5.3} Let $X\in \Linf(\G;\R^d)$. Then the following
assertions are equivalent:
\begin{enumerate}
\item $X\in \C_t(\B)$,
\item $ X\in \Linf(\G_t;\R^d)$ and $Z_t.X\leq 0$ a.s. for all $Z\in \B^*$.
\item $\E[W.X|\,\G_t]\leq 0$ for all $W\in \L^{1}$ such that $W_t= Z_t$ for some
$Z\in \B^*$.
\end{enumerate}
\end{lem}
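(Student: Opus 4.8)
The plan is to prove the chain of implications $(1)\Rightarrow(2)\Rightarrow(3)\Rightarrow(1)$, using the definition $\C_t(\B)=\B_t\cap\Linf(\G_t;\R^d)$ together with the bipolar theorem (recall $\B$ is weak$^*$-closed, so $\B=\B^{**}$ in the duality between $\Linf(\G;\R^d)$ and $\L^1(\G;\R^d)$). First I would record the elementary observation that for $X\in\Linf(\G_t;\R^d)$ and $\al\in\Linf_+(\G_t)$ and $Z\in\L^1(\G;\R^d)$, one has $\E[Z.(\al X)]=\E[\al\,\E[Z.X\mid\G_t]]=\E[\al\,Z_t.X]$, where $Z_t$ denotes $\E[Z\mid\G_t]$ componentwise; this is the computational engine for the whole argument.

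For $(1)\Rightarrow(2)$: if $X\in\C_t(\B)$ then $X\in\Linf(\G_t;\R^d)$ by definition, and $\al X\in\B$ for every $\al\in\Linf_+(\G_t)$. Fix $Z\in\B^*$. Then $\E[Z.(\al X)]\leq 0$ for all such $\al$, i.e. $\E[\al\,Z_t.X]\leq 0$ for all $\al\in\Linf_+(\G_t)$. Taking $\al=1_{\{Z_t.X>0\}}\in\Linf_+(\G_t)$ (this is $\G_t$-measurable since $Z_t.X$ is) forces $Z_t.X\leq 0$ a.s. For $(2)\Rightarrow(3)$: given $W\in\L^1$ with $W_t=Z_t$ for some $Z\in\B^*$, we have $\E[W.X\mid\G_t]=W_t.X=Z_t.X\leq 0$ a.s. by hypothesis (2), using that $X$ is $\G_t$-measurable to pull it through the conditional expectation.

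The implication $(3)\Rightarrow(1)$ is the one requiring the duality argument and is where the real content sits. Assume (3). First, (3) applied with $W=Z\in\B^*$ (so $W_t=Z_t$ trivially) gives $\E[Z.X\mid\G_t]\leq 0$, hence $\E[Z.X]\leq 0$, for all $Z\in\B^*$; since $X\in\Linf(\G;\R^d)$, the bipolar theorem yields $X\in\B^{**}=\B$. To upgrade this to $X\in\B_t$, I must show $\al X\in\B$ for every $\al\in\Linf_+(\G_t)$, i.e. $\E[Z.(\al X)]\leq 0$ for every $Z\in\B^*$. By the identity above this equals $\E[\al\,Z_t.X]$, and here is the crux: I need $Z_t.X\leq 0$ a.s., i.e. essentially condition (2), deduced only from (3). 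This follows by feeding into (3) the random variable $W=Z\,1_{G}$ where $G=\{Z_t.X>0\}\cap F$ for an arbitrary $F\in\G_t$... but $Z1_G\notin\L^1$ need not have $(Z1_G)_t=Z_t$. The correct device is to take $W$ of the form $W=Z\cdot(\text{a }\G_{t}\text{-conditional density rearrangement})$; more simply, apply (3) with $W$ ranging over all $\L^1$ functions with $W_t=Z_t$, and test against indicators: pick $F\in\G_t$ and set $W=Z$ on $F^c$ and $W=Z_t$ (the $\G_t$-measurable version, which lies in $\L^1$ and satisfies $(Z_t)_t=Z_t$) on $F$ — more carefully, $W=1_{F^c}Z+1_F Z_t$ has $W_t=1_{F^c}Z_t+1_F Z_t=Z_t$, so (3) gives $\E[W.X\mid\G_t]=Z_t.X\leq 0$ on all of $\Om$; but this again just re-derives $Z_t.X\le0$ globally. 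In fact that is exactly what is wanted: (3) with $W=Z$ already gives $\E[Z.X\mid\G_t]=Z_t.X\le0$ a.s. directly (no $F$ needed), since $X$ is $\G_t$-measurable. So $Z_t.X\le 0$ a.s. for all $Z\in\B^*$, whence $\E[\al Z_t.X]\le0$ for all $\al\in\Linf_+(\G_t)$, giving $\al X\in\B^{**}=\B$ and therefore $X\in\B_t$; combined with $X\in\Linf(\G_t;\R^d)$ this is $X\in\C_t(\B)$. The main obstacle is thus purely bookkeeping: keeping straight that "$W_t=Z_t$ for some $Z\in\B^*$" is a strictly weaker constraint than "$W\in\B^*$", so (3) is genuinely stronger than the naive conditional version of the polar inequality, and verifying that this extra strength is both available (take $W=Z$) and sufficient (to get $\G_t$-homogeneity via the bipolar theorem).
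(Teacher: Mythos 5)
Your $(1)\Rightarrow(2)$ and $(2)\Rightarrow(3)$ steps are fine and essentially the paper's. The gap is in $(3)\Rightarrow(1)$: you never establish from (3) that $X$ is $\G_t$-measurable, you simply invoke it (\lq\lq since $X$ is $\G_t$-measurable'') at the decisive moment and then again silently when using the identity $\E[Z.(\al X)]=\E[\al\,Z_t.X]$, which is false for general $X\in\Linf(\G;\R^d)$. Measurability is part of the conclusion $X\in\C_t(\B)$ and is not free: condition (3) is a one-sided inequality indexed by a restricted class of $W$'s, so the usual trick of testing against $\pm W$ is not directly available. Your parenthetical exploration (the $W=1_{F^c}Z+1_F Z_t$ device) also relies on the same unproved measurability when you write $\E[W.X\mid\G_t]=Z_t.X$, and the side remark that $Z1_G\notin\L^1$ is incorrect since $|Z1_G|\le|Z|$; the real problem there, which you correctly identify, is that $(Z1_G)_t\ne Z_t$. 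So the obstacle is not \lq\lq purely bookkeeping'' as you conclude; it is a missing idea.

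The paper closes the gap by exploiting that $0\in\B^*$. For any $W\in\L^1$, the random vector $W-W_t$ satisfies $(W-W_t)_t=0=0_t$, so (3) applies with $Z=0$ and gives $\E[(W-W_t).X\mid\G_t]\le 0$, hence $\E[(W-W_t).X]\le 0$. Since $\E[(W-W_t).X]=\E[W.(X-X_t)]$ (by conditioning on $\G_t$ in the cross terms), this says $\E[W.(X-X_t)]\le 0$ for \emph{all} $W\in\L^1$, and now $\pm W$ is available because this family \emph{is} symmetric; hence $\E[W.(X-X_t)]=0$ for all $W$, i.e.\ $X=X_t$ a.s. Once $\G_t$-measurability is in hand, taking $W=Z\in\B^*$ gives $Z_t.X=\E[Z.X\mid\G_t]\le 0$ a.s., which is (2), and the passage to $\al X\in\B$ for $\al\in\Linf_+(\G_t)$ via the bipolar theorem then works exactly as you wrote it. Inserting this measurability lemma repairs your argument; without it the implication $(3)\Rightarrow(1)$ is not proved.
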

\begin{proof}
See Appendix \ref{app3}
\end{proof}

\begin{defi}
\label{dd3}For a fixed $t\in\{0,\ldots,T\}$, we say that a closed convex cone
$\H$ in $\L^1(\G_t;\R^d)$ is an $\G_t$-cone (or a $t$-cone) if $\al
\H\subset \H$ for each $\al\in \Linf_+(\G_t)$.
\end{defi}
\begin{remark}
The property of being a $t$-cone is like an $\G_t$-measurable version of
the convex cone property.

It follows from Theorem 4.5 and Corollary 4.7 of \cite{JBW} that
$\H$ is a $t$-cone if and only if there is a random closed convex
cone, $C$ in $\R^d$ such that
$$
\H=\{X\in\L^1(\G_t;\R^d):\; X\in C\hbox{ a.s.}\}.
$$
\end{remark}

\begin{theorem}
\label{rafa3} $\B$ is strongly decomposable if and only if there
exists a collection $(\H_t)_{t=0}^T$, with each $\H_t$ being a
$t$-cone in $\L^1(\G_t;\R^d)$ such that:
\begin{equation}\label{eqstar}
\B^*=\bigcap_{t=0}^T\{Z\in \L^1(\G;\R^d);\,Z_t\in \H_t\}.
\end{equation}
\end{theorem}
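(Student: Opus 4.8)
The plan is to obtain this as a corollary of Theorem \ref{rafa5} (with $\eta=0$) together with Lemma \ref{duality0} and the characterisation of $t$-cones. First I would prove the forward implication. Suppose $\B$ is strongly decomposable, i.e. $\eta$-decomposable with $\eta=0$. By Theorem \ref{rafa5}, $\B^*$ is $0$-stable. Applying Lemma \ref{duality0} with $\eta=0$ gives $\B^*=\bigcap_{t=0}^T M^0_t(\B^*)$, where $M^0_t(\B^*)=\{Z\in\L^1(\G;\R^d):\,Z\equiv_{t,0}Z'\text{ for some }Z'\in\B^*\}$; note $\equiv_{t,0}$ means $Z_t=\al_t Z'_t$ for some $\al_t\in\L^0_+(\G_t)$, so membership of $Z$ in $M^0_t(\B^*)$ depends only on $Z_t$. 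The natural candidate is therefore
$$
\H_t\defto\{Z_t:\,Z\in\B^*\}\subset\L^1(\G_t;\R^d),
$$
or rather its closure; I would set $\H_t=\overline{\{Z_t:Z\in\B^*\}}$ in $\L^1(\G_t;\R^d)$. One then checks that $M^0_t(\B^*)=\{Z\in\L^1(\G;\R^d):\,Z_t\in\H_t\}$: the inclusion $\supseteq$ uses that $\B^*$ is a cone closed under multiplication by bounded positive $\G_t$-measurable scalars (so $\{Z_t:Z\in\B^*\}$ is already a $t$-cone before taking closure, by Lemma \ref{lem5.3}(2) applied with the $\C_t$-characterisation, or directly), and $\subseteq$ is the definition of $\equiv_{t,0}$. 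Combining with the display from Lemma \ref{duality0} yields \eqref{eqstar}, and $\H_t$ is a $t$-cone by construction (closed, convex since $\B^*$ is convex, and stable under multiplication by $\Linf_+(\G_t)$ because $\B^*$ is).

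For the converse, suppose such a collection $(\H_t)_{t=0}^T$ exists with \eqref{eqstar} holding. I would show $\B^*$ is $0$-stable and then invoke Theorem \ref{rafa5}. To verify $0$-stability it is cleanest to use criterion (iv) of Theorem \ref{equiv2} with $\eta=0$: given a stopping time $\tau\leq T-1$, elements $Z,W\in\B^*$ and $R^i_{\tau+1}\in\L^1_+(\F_{\tau+1})$ with $\E[R^i_{\tau+1}|\F_\tau]=1$ such that $Z^i_\tau R^i_{\tau+1}/W^i_{\tau+1}$ is independent of $i$, one must show that $X$ defined by $X^i=W^i Z^i_\tau R^i_{\tau+1}/W^i_{\tau+1}$ lies in $\B^*$. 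By \eqref{eqstar} it suffices to check $X_s\in\H_s$ for every $s$. On $\{\tau\geq s\}$ one computes $X_s$ and relates it to $Z_s$ (using that $Z\in\B^*\Rightarrow Z_s\in\H_s$ and the $t$-cone property giving closure of $\H_s$ under multiplication by $\G_s$-measurable positive scalars); on $\{\tau<s\}$ one relates $X_s$ to $W_s$ similarly. The martingale/tower-property bookkeeping here is the analogue of the computations already carried out in Examples \ref{ex4} and \ref{ex1}, so I would cite those as a template rather than reproduce them.

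The main obstacle I anticipate is the verification, in the forward direction, that $M^0_t(\B^*)$ really equals $\{Z\in\L^1(\G;\R^d):Z_t\in\H_t\}$ once the closure is inserted into the definition of $\H_t$ — i.e. that taking closures in $\L^1(\G_t;\R^d)$ commutes appropriately with the conditional-expectation projection $Z\mapsto Z_t$ and does not destroy the identity $\B^*=\bigcap_t M^0_t(\B^*)$. This is where the weak$^*$-closedness of $\B$ (hence $\L^1$-closedness of $\B^*$) and the finite-dimensionality of the fibres $\R^d$ are used, together with the remark after Definition \ref{dd3} identifying $t$-cones with random closed convex cones in $\R^d$. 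I would handle this by noting that $\{Z_t:Z\in\B^*\}$ is already closed: if $Z^{(n)}\in\B^*$ with $Z^{(n)}_t\to\xi$ in $\L^1$, then since $Z^{(n)}\in\B^*$ means $Z^{(n)}$ is characterised fibrewise by \eqref{eqstar}-type conditions and $\B^*$ is a closed cone, a diagonal/selection argument (or directly Lemma \ref{lem5.3}) produces a $Z\in\B^*$ with $Z_t=\xi$; alternatively one observes $\H_t=\C_t(\B)^*$-type duality makes closedness automatic. Modulo that point, everything reduces to Theorems \ref{rafa5} and \ref{equiv2} and Lemma \ref{duality0}.
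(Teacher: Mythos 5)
Your forward direction has a genuine gap. You propose $\H_t=\overline{\{Z_t:Z\in\B^*\}}$ and justify that this is a $t$-cone on the grounds that ``$\B^*$ is a cone closed under multiplication by bounded positive $\G_t$-measurable scalars.'' That premise is false in general: $\B$ is only assumed to be a weak$^*$-closed convex cone in $\Linf(\G;\R^d)$, not a $\G_t$-cone, so for $\al\in\Linf_+(\G_t)$ and $X\in\B$ one need not have $\al X\in\B$, and dually $\al Z$ need not lie in $\B^*$ for $Z\in\B^*$. (Take $\B$ to be a single ray generated by an $X_0$ that is not $\G_t$-measurable; then $\B^*$ is a half-space, and $\al Z\in\B^*$ for all $\al\in\Linf_+(\G_t)$ would force $\E[Z.X_0\,|\,\G_t]\le 0$ a.s., which is strictly stronger than $\E[Z.X_0]\le 0$.) Consequently your candidate $\H_t$ is too small: if $Z\equiv_{t,0}Z'$ via a nontrivial $\al_t\in\L^0_+(\G_t)$, then $Z_t=\al_t Z'_t$ need not lie in $\overline{\{Z''_t:Z''\in\B^*\}}$, so the identification $M^0_t(\B^*)=\{Z:Z_t\in\H_t\}$ fails in the direction $\subseteq$. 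The paper's definition builds the missing scalars in from the start,
$$
\H_t=\overline{\{\al Z_t:\,\al\in \Linf_+(\G_t),\,Z\in \B^*\}},
$$
and then (\ref{eqstar}) follows directly from Lemma \ref{lem5.3}. Your parenthetical fallback ``alternatively one observes $\H_t=\C_t(\B)^*$-type duality'' is in fact the right move, since $\C_t(\B)^*$ (computed in $\L^1(\G_t;\R^d)$, using Lemma \ref{lem5.3}(2)) is precisely the paper's $\H_t$; but as written your primary definition and its justification do not hold.

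Your converse direction takes a genuinely different route from the paper. You propose to verify that $\B^*$ is $0$-stable via criterion (iv) of Theorem \ref{equiv2}, checking $X_s\in\H_s$ for each $s$ by splitting on $\{\tau\ge s\}$ and $\{\tau<s\}$, and then invoke Theorem \ref{rafa5}. This is workable in principle: on $\{\tau\ge s\}$ one gets $X_s=Z_s$, and on $\{\tau<s\}$ one gets $X_s=\al W_s$ with $\al\in\L^0_+(\G_s)$, and the $t$-cone structure of $\H_s$ (as a random closed convex cone, via the remark after Definition \ref{dd3}) lets one conclude; but this requires stopping-time bookkeeping and a truncation argument to pass from $\L^0_+$ to $\Linf_+$ scalars. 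The paper's converse is shorter and more structural: it shows $\{Z\in\L^1:Z_t\in\H_t\}^*\subset\C_t(\B)$ directly, and then the conclusion follows by taking polars of (\ref{eqstar}). Your route buys a more ``probabilistic'' picture at the cost of extra verification; the paper's buys brevity by staying at the level of cone duality and never invoking $\eta$-stability in this proof at all.
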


\begin{proof}Suppose that $\B$ is strongly decomposable and define
$$
\H_t=\overline{\{\al Z_t:\,\al\in \Linf_+(\G_t),\,Z\in \B^*\}}.
$$
Thanks to Lemma \ref{lem5.3}, we have (\ref{eqstar}). Conversely, remark
that
$$
\{Z\in \L^1;\,Z_t\in \H_t\}^*\subset \C_t(\B).
$$
Indeed define $N_t=\{Z\in \L^1;\,Z_t\in \H_t\}$, and let $X\in
\Linf$ such that $\E(Z.X)\leq 0$ for all $Z\in N_t$. Since $Z-Z_t\in
N_t$ for all $Z\in \L^1$ we deduce that $X\in\Linf(\G_t)$. For all
$Z\in \B^*$ and $\al\in \Linf_+(\G_t)$, we have $\E(Z.\al X)=\E(\al
Z.X)\leq 0$ since $\al Z\in N_t$. The result follows.
\end{proof}


\section{Representation}\label{repsec}
In a frictionless market with $d$ assets $S^1,\ldots,S^d$ and under
the no-arbitrage property of the price process
$S_t=(S^1_t,\ldots,S^d_t)$, any bounded claim $X$ is represented by
these assets: i.e. there exists an $\R^d$-valued strategy $\beta_t$ and a
scalar $x$ such that:
$$
X=x+\sum_{t=0}^{T}\,\beta_t.S_T,
$$
with $\beta_t.S_t\leq 0$ a.s for all $t\in\{0,\ldots,T\}$.

In the presence of
transaction costs, we define the cone
$$
\A=\left(\B(\pi)\cap\Linf\right).V=\{X.V:\;X\in \B(\pi)\cap\Linf\},
$$
where $\B(\pi)$ and $V$ are as defined in section 2. Then any bounded
claim $X$ is represented by the contracts $v^1,\ldots,v^d$. In other words,
there
exists an $\R^d$-valued strategy $\beta_t$ and a scalar $x$ such
that:
$$
X=x+\sum_{t=0}^{T}\,\beta_t.V,
$$
with $\beta_t.Z_t\leq 0$ a.s for all $t\in\{0,\ldots,T\}$ and for all
$Z$ in the polar
of $\B(\pi)\cap\Linf$, where $Z_t\defto\E(Z|\G_t)$.

In the presence of a conditional coherent risk measure $\rho$ associated with an
acceptance set $\A$, trading  can take place between num\'eraires or portfolios
of num\'eraires. In this
section we introduce the concept of representation of the cone $\A$
with respect to a set of contracts with lifetime equal to zero or $1$: \lq weak
representation' and \lq strong representation'.

\subsection{The finite case}
We assume, for now, that the fixed portfolio
$V\subset\calN$ of $d$ assets contains the unit of account $\1$, and, indeed,
that $v_1=\1$. Recall
also that each element of $V$ is bounded and bounded away from 0 (a.s.).

Recall that $$
\A(V)=\{X\in\Linf(\G;\R^d):\, X. V\in \A\},
$$
is the set of all portfolios in (assets in) $V$ that are admissible, and
$$
\A_t(V)=\{X:\; \al X\in\A(V)\hbox{ for all }\al\in L^\infty_+(\G_t)\}.
$$

\begin{defi}\label{repweak}We say that the cone $\A$ is weakly represented by
the $\R^d$-valued vector of
assets $V$ if the cone $\A(V)$ is weakly decomposable,
i.e
$$
\A(V)=\overline{\oplus_{t=0}^{T-1}\K_t(\A,V)},
$$
where $\K_t(\A,V)\defto\A_t(V)\cap\Linf(\G_{t+1};\R^d)$.
\end{defi}

Thus weak representation means that every element of $\A$ is
attainable by a collection of one-period bets in units of $V$ at
times $0,\ldots,T-1$ and trades at times $0,\ldots,T$.

\begin{defi}\label{repstrong}
For $\eta\in\{0,1\}$, we say the cone $\A$ is strongly
represented by the $\R^d$-valued vector of assets
$V$ if the cone $\A(V)$ is strongly decomposable, i.e
$$
\A(V)=\overline{\oplus_{t=0}^{T}\C_t(\A,V)},
$$
where $\C_t(\A,V)\defto\A_t(V)\cap\Linf(\G_t;\R^d)$.

\end{defi}

Thus strong representation means that every element of $\A$ is
attainable by a collection of trades in units of $V$ at times
$0,\ldots,T$.

We again unify the two concepts in the following:

\begin{defi}\label{etadec}
We say that the cone $\A$ is $\eta$-represented  by the $\R^d$-valued vector of
assets $V$ (with $\eta\in\{0,1\}$), if the cone $\A(V)$ is
$\eta$-decomposable, i.e
$$
\A(V)=\overline{\oplus_{t=0}^{T-\eta}\K^\eta_t(\A,V)},
$$
where $\K^\eta_t(\A,V)\defto\A_t(V)\cap\Linf(\G_{t+\eta};\R^d)$.
\end{defi}

\begin{theorem}
\label{rafa2} Under our assumptions on $V$, if $D$ is a convex cone in
$L^\infty(\G)$ then, defining $D(V)=\{X\in \L^\infty(\G;\R^d):\, X.V\in
D\}$,
we have
$$
D(V)^*=D^* V\defto \{ZV:\;Z\in D^*\}.
$$
In particular,
the polar of the cone of
portfolios $\A(V)$ is given by:
\begin{equation}\label{dual}
\A(V)^*=\A^* V\defto \{ZV:\;Z\in\A^*\}.
\end{equation}
\end{theorem}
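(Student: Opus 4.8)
The plan is to prove the general identity $D(V)^* = D^* V$ directly from the definitions of polar cones, and then obtain (\ref{dual}) as the special case $D = \A$. First I would fix notation: for a convex cone $D \subset \Linf(\G)$, its polar is $D^* = \{Z \in \L^1(\G) : \E(ZX) \leq 0 \text{ for all } X \in D\}$, and for a convex cone $C \subset \Linf(\G;\R^d)$, its polar is $C^* = \{Z \in \L^1(\G;\R^d) : \E(Z.X) \leq 0 \text{ for all } X \in C\}$. The key computational observation is that for $X \in \Linf(\G;\R^d)$ and $Z \in \L^1(\G)$, we have $\E((ZV).X) = \E(Z (V.X)) = \E(Z(X.V))$, so testing $X$ against $ZV$ in $\R^d$ is the same as testing the scalar $X.V$ against $Z$.

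For the inclusion $D^* V \subseteq D(V)^*$: take $Z \in D^*$ and any $X \in D(V)$, so $X.V \in D$; then $\E((ZV).X) = \E(Z(X.V)) \leq 0$ since $Z \in D^*$, hence $ZV \in D(V)^*$. For the reverse inclusion $D(V)^* \subseteq D^* V$, I would take $W \in D(V)^*$ and need to produce $Z \in D^*$ with $W = ZV$. The natural candidate is obtained componentwise: since $v_1 = \1$ and each $v^i$ is bounded and bounded away from $0$, one can guess $Z = \sum_i W^i / \|V\|^2$ type expressions, but cleaner is to use that $\1 \in V$. The cleanest route: given $W \in D(V)^*$, for any $Y \in \Linf(\G)$ the vector $X = (Y, 0, \ldots, 0)$ satisfies $X.V = Y \cdot \1 = Y$, and more generally for a scalar claim we want to realise it as $X.V$. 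Actually the right move is to define $Z := W.V / \|V\|_2^2$? That is not obviously in $\L^1$ cleanly. Instead I would argue: for $X \in D(V)^*$, the constraint $\E(W.X) \le 0$ for all $X$ with $X.V \in D$ includes all $X$ of the form $X = Y V / (V.V)$ for $Y \in D$ — wait, one must check $XV \cdot = Y$. Let me instead set $Z = W . V/(V.V)$, noting $V.V = \sum (v^i)^2$ is bounded and bounded away from $0$ (since $v^1 \equiv 1$), so $Z \in \L^1(\G)$; then for $Y \in D$ take $X = YV/(V.V) \in \L^\infty(\G;\R^d)$, which satisfies $X.V = Y$, hence $Y \in D$, hence $\E(W.X) \le 0$, i.e. $\E(Y \, W.V/(V.V)) = \E(YZ) \le 0$, so $Z \in D^*$.

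The remaining point — and the one I expect to be the main obstacle — is showing that this $Z$ actually recovers $W$, i.e. $ZV = W$, or more precisely that $D(V)^* = D^*V$ as sets even though different $W$'s might give the same scalar behaviour. The resolution is that $D(V)$ contains all $X$ with $X.V = 0$ (since $0 \in D$ when $D$ is a cone containing $0$, and indeed $D(V) \supseteq \{X : X.V = 0\}$ is a linear subspace), so $W \in D(V)^*$ forces $\E(W.X) \le 0$, hence $= 0$, for all $X$ in that subspace; this pins down $W$ to lie in the $\L^1$-closure of $\{ZV : Z \in \L^1\}$, i.e. $W = (W.V/(V.V))\,V = ZV$ pointwise a.s. Combining, $W = ZV$ with $Z \in D^*$, giving $D(V)^* \subseteq D^*V$. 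The boundedness and bounded-away-from-zero hypotheses on $V$ (guaranteed since $V \subset \calN$ and in particular $v^1 = \1$) are exactly what make $V.V$ invertible in $\L^\infty$ and keep all the quotients in the right $\L^p$ spaces. Finally, (\ref{dual}) follows by taking $D = \A$, using that $\A$ is a weak$^*$-closed convex cone (from Proposition \ref{p2}) so that $\A(V)$ is weak$^*$-closed and its polar lives in $\L^1(\G;\R^d)$.
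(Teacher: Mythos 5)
Your proposal is essentially the paper's proof, in slightly different packaging. The easy inclusion $D^*V\subset D(V)^*$ is identical. For the reverse inclusion, you correctly isolate the crucial observation that the linear subspace $M\defto\{X\in\Linf(\G;\R^d):X.V=0\}$ is contained in $D(V)$, so any $W\in D(V)^*$ annihilates $M$ (the inequality becomes an equality on a subspace). Your extraction of a scalar $Z$ by testing against $X=YV/(V.V)$ for $Y\in D$ is a valid variant of the paper's choice $X=Ce_1$ (which uses $v_1=\1$ directly), and both rely on the boundedness/bounded-away-from-zero hypotheses on $V$.

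The one place where you wave your hands is precisely where the paper does the work: the step \lq\lq this pins down $W$ to lie in the $\L^1$-closure of $\{ZV:Z\in\L^1\}$, i.e. $W=(W.V/(V.V))V$ a.s.'' is asserted, not proved. As stated it appeals to an annihilator/bipolar duality for the pair $(\L^1,\Linf)$ together with the (unstated) fact that $\{ZV:Z\in\L^1\}$ is closed in $\L^1(\G;\R^d)$. The elementary route --- and what the paper actually does --- is to observe that $M$ is generated by the vectors $\alpha(v_ie_j-v_je_i)$ with $\alpha\in\Linf$ ranging over both signs; testing $W$ against these forces $W^iv_j=W^jv_i$ a.s.\ for all $i,j$, hence $W=W^1V$ (since $v_1=\1$, and more generally $W^i/v_i$ is independent of $i$). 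That pointwise identity is the content you need, and it should be written out rather than inferred from a closure statement. Once that is in place your argument is complete, and the specialization $D=\A$ is immediate.
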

\begin{proof}
First, take $Z\in D^*$; then, for any $X\in D(V)$, $\E ZV. X\leq
0$ since $X.V\in D$. It follows that  $ZV\in D(V)^*$, and so we
conclude that
$$D(V)^*\supset VD^*.$$

To prove the reverse inclusion, denote the $i$th canonical basis vector
in $\R^d$ by $e_i$. Now note first that, since  $V.(\al (v_ie_j-v_je_i))=0$,
$\al (v_ie_j-v_je_i)\in
D(V)$ for any $\al\in L^\infty$. It follows that if $Z\in D(V)^*$ then
$Z.(v_ie_j-v_je_i)=0$ and so any $Z\in D(V)^*$ must be of the form
$WV$ for some $W\in \L^1(\G_T)$. Now given $C\in D$, take $X$ such
that $X. V=C$ (which implies that $X\in D(V)$), then $0\geq \E WV.
X=\E WC$ and, since $C$ is arbitrary, it follows that $W\in D^*$.
Hence $D(V)^*\subset VD^*.$
\end{proof}

To complete the link between the results in the previous section and
this one we state the following:

\begin{lem}
Suppose that $\B$ is a weakly$^*$ closed convex cone in $\Linf(\G;\R^d)$.

Define the cone $\B.V$ by
$$
\B.V=\{X.V:\,X\in \B\}.
$$
Then
$$
\B\subset(\B.V)(V)$$
and
$$
\B=(\B.V)(V)\hbox{ if and only if }\al(v_j\,e_i-v_i\,e_j)\in \B
\hbox{ for all }\al\in\Linf_+(\G).
$$
In this case the set $\B.V$ is weakly$^*$ closed.
\end{lem}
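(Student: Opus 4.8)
The plan is to prove the three assertions in order, all by elementary manipulations with the defining property $V.(v_j e_i - v_i e_j)=0$, which holds pointwise since $V=(v^1,\ldots,v^d)$ and the $i$th coordinate of $v_j e_i - v_i e_j$ is $v_j$ while the $j$th is $-v_i$.

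First, for the inclusion $\B\subset(\B.V)(V)$: take $X\in\B$. Then $X.V\in\B.V$ by definition, so $X$ lies in $\{Y\in\Linf(\G;\R^d):\,Y.V\in\B.V\}=(\B.V)(V)$; this is immediate. For the equivalence, suppose first that $\al(v_j e_i - v_i e_j)\in\B$ for all $\al\in\Linf_+(\G)$ and all $i,j$; since $\B$ is a cone closed under multiplication by nonnegative scalars and (being a convex cone) contains $-Y$ whenever it is symmetric... more carefully, I should note that from $\al(v_j e_i - v_i e_j)\in\B$ for all $\al\ge 0$ and all ordered pairs $(i,j)$ (including $(j,i)$, which gives $-\al(v_j e_i-v_i e_j)$), together with convexity of $\B$, we get $\gamma(v_j e_i - v_i e_j)\in\B$ for every $\gamma\in\Linf(\G)$. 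Now given $Y\in(\B.V)(V)$, we have $Y.V\in\B.V$, so $Y.V=X.V$ for some $X\in\B$. The difference $Y-X$ satisfies $(Y-X).V=0$, and the key linear-algebra step is that any element $W\in\Linf(\G;\R^d)$ with $W.V=0$ can be written as a finite $\Linf(\G)$-linear combination of the vectors $v_j e_i - v_i e_j$: for instance, using $v_1=\1$ bounded away from $0$, write $W=\sum_{i\ge 2} \gamma_i (v_i e_1 - v_1 e_i) + \delta e_1$ and check that $W.V=0$ forces $\delta=0$ after choosing $\gamma_i = -W^i/v_1$ (so the $i$th coordinate matches for $i\ge2$, and the first coordinate is then automatically correct because $W.V=0$). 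Hence $Y-X\in\B$, so $Y\in\B$, giving $(\B.V)(V)\subset\B$, and with the first inclusion, equality. For the converse direction of the equivalence, if $\B=(\B.V)(V)$, then since $\al(v_j e_i - v_i e_j).V=0\in\B.V$ for all $\al$, we get $\al(v_j e_i - v_i e_j)\in(\B.V)(V)=\B$.

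Finally, for the weak$^*$-closedness of $\B.V$ under the stated hypothesis: when $\B=(\B.V)(V)$, apply Theorem \ref{rafa2} with $D=\B.V$ (a convex cone in $\Linf(\G)$) to get $((\B.V)(V))^* = (\B.V)^* V$, i.e. $\B^* = (\B.V)^* V$. Since $\B$ is weak$^*$-closed by assumption, $\B^*$ is a weak$^*$-closed (indeed norm-closed) convex cone in $\L^1(\G;\R^d)$, and I want to deduce that $(\B.V)^*$ is closed in $\L^1(\G)$. The map $Z\mapsto ZV$ from $\L^1(\G)$ to $\L^1(\G;\R^d)$ is a bounded linear injection with bounded inverse on its range (because $V$ is bounded and bounded away from $0$, $\|Z\|_1 \le \tfrac{1}{\operatorname{ess-inf} v_1}\|ZV\|_1$ using just the first coordinate, and $\|ZV\|_1\le \|V\|_\infty\|Z\|_1$), hence it is a homeomorphism onto its (closed) range; therefore $(\B.V)^* = \{Z:\,ZV\in\B^*\}$ is the preimage of a closed set, so closed. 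Then $\B.V = ((\B.V)^*)^*$ is weak$^*$-closed by the bipolar theorem, since $(\B.V)^{**}=\B.V$ for a norm-closed convex cone whose polar is taken in the appropriate dual pairing — more precisely, $\B.V$ is a convex cone and its polar in $\L^1$ is $(\B.V)^*$, whose polar back in $\Linf$ is the weak$^*$-closure of $\B.V$; I then identify this weak$^*$-closure with $\B.V$ itself by noting $\B.V=\B\cdot V$ is the continuous linear image under $X\mapsto X.V$ of the weak$^*$-closed cone $\B=(\B.V)(V)$, and this image already equals its own bipolar.

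The main obstacle I expect is the last point: showing $\B.V$ is actually weak$^*$-closed (not merely that its bipolar is well-behaved). The cleanest route is probably to argue directly that $\B.V = \{C\in\Linf(\G):\, \E[ZC]\le 0 \text{ for all } Z\in(\B.V)^*\}$, i.e. that $\B.V$ equals its own bipolar; this holds because $\B.V=\B\cdot V$ with $\B$ weak$^*$-closed and because the surjection $X\mapsto X.V$ from $\Linf(\G;\R^d)$ (with $\B=(\B.V)(V)$ saturated, i.e. $\B = \B + \ker(\,\cdot\,.V)$) onto $\Linf(\G)$ is weak$^*$-to-weak$^*$ continuous and open, so images of weak$^*$-closed saturated sets are weak$^*$-closed. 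I would present this openness/saturation argument rather than a sequential-closedness argument to avoid issues with nets versus sequences in the weak$^*$ topology on $\Linf$.
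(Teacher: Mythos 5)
Your first inclusion and the forward direction of the equivalence coincide with the paper's argument. For the reverse direction (assuming each $\al(v_je_i-v_ie_j)\in\B$, show $(\B.V)(V)\subset\B$) you take a genuinely different and in fact more elementary route: instead of passing to polar cones as the paper does (showing $\B^*=V(\B.V)^*$ and invoking the bipolar theorem), you directly decompose any $W$ with $W.V=0$ as the $\Linf(\G)$-linear combination $W=\sum_{i\geq 2}(-W^i/v_1)(v_ie_1-v_1e_i)$ and verify that the first coordinate matches precisely because $W.V=0$. Combined with the observation that convexity upgrades "$\al\geq 0$ coefficients" to "arbitrary $\Linf(\G)$ coefficients" (since $w_{ji}=-w_{ij}$), this gives $\ker(\,\cdot\,.V)\subset\B$ and hence $(\B.V)(V)=\B+\ker(\,\cdot\,.V)\subset\B$ directly. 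This buys you a self-contained algebraic argument, whereas the paper's route reuses the duality machinery of Lemma~\ref{rafa2}; both are correct.

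On the weak$^*$-closedness of $\B.V$, however, there is a genuine gap. Your first attempt, writing $\B.V=((\B.V)^*)^*$, is circular, as you acknowledge: the bipolar is the weak$^*$-closure of $\B.V$, which is exactly what is at issue. Your fallback then leans on the assertion that $X\mapsto X.V$ is a weak$^*$-to-weak$^*$ \emph{open} surjection so that images of saturated weak$^*$-closed sets are weak$^*$-closed. That openness claim is true (since $Z\mapsto ZV$ is an isomorphism of $\L^1(\G)$ onto a closed subspace of $\L^1(\G;\R^d)$, its adjoint is a weak$^*$-quotient map), but it is not an elementary fact and you do not prove it; as stated, the step is unsupported. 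You have overlooked a much simpler argument that follows exactly the structure you already set up: once $\B=(\B.V)(V)$, one has the identity $\B.V=\{C\in\Linf(\G):\;\tfrac{C}{v_1}e_1\in\B\}$, because $\tfrac{C}{v_1}e_1.V=C$ for every $C$. Thus $\B.V$ is the preimage of the weak$^*$-closed set $\B$ under the weak$^*$-continuous map $C\mapsto \tfrac{C}{v_1}e_1$ (the adjoint of $W\mapsto W^1/v_1$), and is therefore weak$^*$-closed. This is the argument the paper uses (phrased with a convergent sequence), and it replaces your openness/saturation machinery by a one-line preimage observation.
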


\begin{proof}The first inclusion is immediate from the definition of
$(\B.V)(V)$.

Define $w_{ij}\defto v_j\,e_i-v_i\,e_j$ and suppose that $\B=(\B.V)(V)$, then
for all
$\al\in\Linf_+(\G)$ we have $\al w_{ij}.V=0=0.V$. So $\al w_{ij}\in
(\B.V)(V)=\B$.

Conversely, suppose that $\al w_{ij}\in \B$ for all
$\al\in\Linf_+(\G)$ and all pairs $(i,j)$. It follows by the same
argument as in the proof of Lemma \ref{rafa2} that $\B^*=VC$, for some closed
convex cone $C\subset L^1(\G)$. Now suppose that $Z\in\B^*$, so that
$Z=VW$ for some $W\in C$. It follows that $\E Z.X=\E WV.X\leq 0$ for all
$X\in B$ and thus, since $X$ is arbitrary, that $W\in (\B.V)^*$. Hence
$\B^*\subset V(\B.V)^*$.

Now we've already observed that
$$
\B\subset(\B.V)(V)
$$
so
$$
\B^*\supset (\B.V)(V)^*.
$$
But by Lemma \ref{rafa2}, $(\B.V)(V)^*=V(\B.V)^*$ and so
$$
\B^* =V(\B.V)^*.
$$
Taking polar cones once more we see that, since $\B$ is weakly$^*$
closed,
$$
\B^{**}=\B=(V(\B.V)^*)^*=(\B.V)(V)^{**}=\overline{(\B.V)(V)}.
$$
Finally, since $\B\subset
(\B.V)(V)$, we conclude that
$$
\B=\overline{(\B.V)(V)}=(\B.V)(V).
$$
To see that $\B.V$ is closed,
let $x^n\in \B.V$ be a sequence which converges to $x$, then
$\frac{x^n}{v_1}\,e_1\in (\B.V)(V)=\B$ converges to $\frac{x}{v_1}\,e_1\in \B$
(since $\B$ is closed). Hence
$x=\frac{x}{v_1}\,e_1.V\in \B.V$.
\end{proof}

Next we give the equivalence between representation of the cone $\A$
by the finite portfolio $V$ and $V$-m-stability of its polar cone.

\begin{theorem}
\label{eqq}
$\A$ is strongly (resp. weakly) represented by $V$ if and only if
$\A^*$ is strongly (resp. weakly) $V$-m-stable.
\end{theorem}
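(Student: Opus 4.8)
The plan is to reduce the statement to results already established in Section~\ref{abscone}, using the dictionary between the cone $\A(V)$ and its polar $\A^* V$ supplied by Theorem~\ref{rafa2}. The key observation is that $\A$ is $\eta$-represented by $V$ means, by Definition~\ref{etadec}, that $\A(V)$ is $\eta$-decomposable; by Theorem~\ref{rafa5} this is equivalent to $\A(V)^*$ being $\eta$-stable (in the multidimensional sense of Definition~\ref{dd2}); and by Theorem~\ref{rafa2} we have $\A(V)^*=\A^* V$. So everything comes down to proving the equivalence
$$
\A^* V \hbox{ is $\eta$-stable in } \L^1(\G;\R^d)\quad\Longleftrightarrow\quad \A^* \hbox{ is $(\eta,V)$-m-stable in } \L^1_+,
$$
with $\eta=1$ giving the weak case and $\eta=0$ the strong case.

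First I would unwind both sides into their most convenient equivalent forms. For the left-hand side I would use criterion (iv) of Theorem~\ref{equiv2}: $\A^* V$ is $\eta$-stable iff for every stopping time $\tau\le T-1+\eta$, whenever $ZV, WV\in\A^*V$ and $R^i_{\tau+1-\eta}\in L^1_+(\F_{\tau+1-\eta})$ with $\E[R^i_{\tau+1-\eta}|\F_\tau]=1$ are such that the ratios $(ZV)^i_\tau R^i_{\tau+1-\eta}/(WV)^i_{\tau+1-\eta}$ agree across $i$, then the vector $X$ with $X^i=(WV)^i\,(ZV)^i_\tau R^i_{\tau+1-\eta}/(WV)^i_{\tau+1-\eta}$ lies in $\A^* V$. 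For the right-hand side I would use criterion (iv) of Theorem~\ref{equiv}: $\A^*$ is $(\eta,V)$-m-stable iff for every such $\tau$, whenever $Z,W\in\A^*$ and $R^v_{\tau+1-\eta}$ (one per $v\in V$) satisfy $\E[R^v_{\tau+1-\eta}|\F_\tau]=1$ and $W\,(Zv)_\tau R^v_{\tau+1-\eta}/(Wv)_{\tau+1-\eta}$ is the same for every $v\in V$, then that common value lies in $\A^*$. The content of the proof is that, because $V=(v^1,\dots,v^d)$ with $v^1=\1$ and every $v^i\in\calN$ is bounded and bounded away from $0$, a vector $ZV$ in $\L^1(\G;\R^d)$ is just a scalar $Z\in\L^1$ ``spread across the coordinates'' by $V$, so the $i$-th coordinate condition for $\A^*V$ is exactly the $v^i$-th condition for $\A^*$. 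Concretely $(ZV)^i_t=\E[Zv^i|\G_t]=(Zv^i)_t$, so the ratio appearing in Theorem~\ref{equiv2}(iv) for coordinate $i$ is $(Zv^i)_\tau R^i/(Wv^i)_{\tau+1-\eta}$, and requiring these to agree across $i$ and produce $X^i=W^i(Zv^i)_\tau R^i/(Wv^i)_{\tau+1-\eta}$ is — after identifying $R^i$ with $R^{v^i}$ and noting $X=\tilde X V$ for the scalar $\tilde X:=W\,(Zv^i)_\tau R^i/(Wv^i)_{\tau+1-\eta}$ (independent of $i$) — precisely the requirement that $\tilde X\in\A^*$ in Theorem~\ref{equiv}(iv). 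One direction of the bijection uses that $ZV=Z'V$ forces $Z=Z'$ (because, say, the first coordinate reads $Z v^1=Z'v^1$ with $v^1=\1$), so scalars and their images under multiplication by $V$ correspond one-to-one; the other direction is immediate.

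The step I expect to be the main obstacle, and the one I would write out in detail, is the careful bookkeeping in this translation: making sure the measurability of the $\al^i$ (or the $R^i$) matches up with the $\F_{t-\eta+1}$ requirement in both definitions, checking that the conditional-expectation processes $(Zv^i)_t$ are well-defined in $\L^1$ (which is why boundedness of the $v^i$ and integrability of $Z$ are used), and handling the degenerate coordinates where a denominator could vanish — here one appeals to relevance of $\rho$ and to $\frac{1}{\lambda_t(v^i)}\in\Linf$ from Theorem~\ref{numthm} to see that $(Wv^i)_{t+1-\eta}>0$ a.s., so the ratios make sense. Once this correspondence is set up, the proof is a direct matching of the two criterion-(iv) statements, with $\eta=1$ and $\eta=0$ handled uniformly, and the weak/strong cases follow by specialising $\eta$.
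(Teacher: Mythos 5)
Your proposal is correct and takes essentially the same route as the paper, which simply declares the theorem an immediate consequence of Theorem~\ref{rafa5} and equation~(\ref{dual}) of Theorem~\ref{rafa2}. The only difference is that you make the final translation explicit by matching the two criterion-(iv) characterisations from Theorems~\ref{equiv} and~\ref{equiv2}; this is sound, though it is even more direct to compare the base definitions of $\eta$-stability of $\A^*V$ and $(\eta,V)$-m-stability of $\A^*$, since with $v^1=\1$ every element of $\A^*V$ is $WV$ for a unique $W\in\A^*$ and the coordinate-wise condition $\E(ZV|\G_t)=\E(YV|\G_t)$ is literally the condition $\E(Zv|\G_t)=\E(Yv|\G_t)$ for all $v\in V$.
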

\begin{proof}
This is an immediate consequence of Theorem \ref{rafa5} and (\ref{dual}) of
Theorem \ref{rafa2}.
\end{proof}

\begin{remark}
Example \ref{egcon} explicitly gives the weak representation of an element of
$\A$ for the given risk measure.
\end{remark}

Now we show the relationship between representation of the cone $\A$
by a finite portfolio $V$ and $V$-time consistency.

\begin{theorem}
\label{count0}Let $V\subset\calN$, then $\A$ is strongly (resp.
weakly) $V$-time-consistent if and only if it's strongly (resp.
weakly) represented by $V$.
\end{theorem}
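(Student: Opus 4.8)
The plan is to prove this via a chain of equivalences that we have (mostly) already set up: $\A$ is (strongly/weakly) $V$-time-consistent $\Leftrightarrow$ $\A^*$ is (strongly/weakly) $V$-m-stable $\Leftrightarrow$ $\A$ is (strongly/weakly) represented by $V$. The second equivalence is precisely Theorem \ref{eqq}, so the real work is the first one: translating the ``$(\eta,V)$-time-consistency'' condition of Definition \ref{defi3} into the ``$(\eta,V)$-m-stability'' condition on $\Q$ (equivalently on $\A^*$, noting that $\A^*$ is the weak$^*$-closed convex cone generated by the densities of members of $\Q$, and that by Theorem \ref{equiv} m-stability can be tested using the two-set version (ii) or the stopping-time version (iv) rather than the full partition version). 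I would carry out both directions using the unified $\eta$-formulation so that weak ($\eta=1$) and strong ($\eta=0$) are handled simultaneously.

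First, for the direction ``$V$-m-stability of $\A^*$ $\Rightarrow$ $V$-time-consistency of $\A$'': fix $v\in V$, $t$, and $X\in\Linf$; by subtracting $\rho^v_t(X)v$ and using $\F_0$-translation invariance (as in the reduction performed in the proof of Theorem \ref{cons2}) I may assume $\rho^v_t(X)=0$, i.e. $X\in\A_t$. I want sequences $X_n\to X$ weak$^*$ and $Y^n_{t+\eta}$ with $X_n-Y^n_{t+\eta}.V^n\in\A_{t+1}$ and $\rho^v_t(Y^n_{t+\eta}.V^n)\to 0$. The natural candidate is to take $X_n=X$ for all $n$ and to exhibit a \emph{single} decomposition $X=Y_{t+\eta}.V+Z$ with $Z\in\A_{t+1}$ and $\rho^v_t(Y_{t+\eta}.V)\le 0$; by Theorem \ref{equiv2} / Theorem \ref{rafa5}, $\eta$-stability of $\A^*=\A(V)^*/V$ gives exactly the $\eta$-decomposability of $\A(V)$, hence such a decomposition of the vector claim $X/v\,e_1$ (or, more precisely, of any $\xi\in\A(V)$ with $\xi.V=X$) into a $\Linf(\G_{t+\eta};\R^d)$-part lying in $\A_t(V)$ and a remainder in the sum of the later pieces, which lands in $\A_{t+1}(V)$ up to closure. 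The closure is handled by approximating and producing the weak$^*$-convergent sequence $X_n$. The identity $\rho^v_t(Y_{t+\eta}.V)\le 0$ follows because the $\Linf(\G_{t+\eta})$-component lies in $\A_t(V)$, so dotting with $V$ lands in $\A_t$, which by Corollary \ref{c1} says $\rho^v_t(\cdot)\le 0$; combined with the Fatou/subadditivity argument already run in Theorem \ref{cons2} (using Remark \ref{r7}), the $\liminf$ equals $0$.

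For the converse, ``$V$-time-consistency of $\A$ $\Rightarrow$ $V$-m-stability of $\A^*$'': I would take $Z,W\in\A^*$, a set $F\in\F_t$ and weights $\al,\be\in\L^0_+(\F_{t-\eta+1})$ with $X=1_F\al Z+1_{F^c}\be W$ satisfying $\E(Xv\mid\G_t)=\E(Zv\mid\G_t)$ for all $v\in V$ (criterion (ii) of Theorem \ref{equiv}), and show $X\in\A^*$, i.e. $\E(X\cdot\xi)\le 0$ for every $\xi\in\A(V)$ — equivalently $\E(XC)\le0$ for every $C\in\A$ of the appropriate denomination. Using $V$-time-consistency (in the form of Theorem \ref{cons2}) write $C=\lim_n\sum_i \rho^{v^i}_{t+1}(C_{n,i})v^i$ in the sense made precise there; the key computation is that the ``gluing density'' $X$, because it agrees with $Z$ after conditioning on $\G_t$ against every $v\in V$ and is built from pieces of $Z$ and $W$ beyond time $t+1-\eta$, integrates the one-period ($\A_{t+1}$-measurable, resp. $\A_t$-measurable in the strong case) building blocks to something $\le0$ — exactly because those blocks are annihilated by $\G_{t+\eta}$-conditioning and $X$ restricted to each branch is a legitimate density there. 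Passing to the limit with the Fatou property closes the estimate.

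The main obstacle I expect is bookkeeping the closure/approximation in the weak$^*$ topology: the decomposition statements (Definitions \ref{dd4}--\ref{dd5}) only give $\A(V)$ as the \emph{closure} of the sum of the one-period cones, whereas the time-consistency definition asks for explicit weak$^*$-convergent sequences $X_n$ with the $\liminf$ identity (iii). Matching these requires care in choosing $X_n$ (one cannot always take $X_n=X$) and in verifying that the Fatou property upgrades ``$\rho^v_t(X)\le\liminf\rho^v_t(Y^n.V^n)$'' to equality — but this is precisely the argument already executed inside the proof of Theorem \ref{cons2}, so I would cite and adapt it rather than redo it. The other mild subtlety is keeping the $\eta\in\{0,1\}$ index consistent between $\A_{t+1}$ (which appears in both definitions) and the measurability index $t+\eta$ (resp. $t-\eta+1$ on the dual side); the unified Definitions \ref{defi3}, \ref{dd5} and the unified m-stability definition are designed exactly so that this matching is automatic, so once the $\eta=1$ case is written the $\eta=0$ case is the same words.
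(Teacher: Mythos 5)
Your chain of equivalences is true as mathematics, but the direction you present as the ``real work''---$V$-time-consistency of $\A$ $\Rightarrow$ $V$-m-stability of $\A^*$---is not actually proved, and the identity you lean on is mis-stated. Theorem \ref{cons2} does \emph{not} say that $C=\lim_n\sum_i\rho^{v^i}_{t+1}(C_{n,i})v^i$, even ``in the sense made precise there''; it says that $C_n\defto\sum_i C_{n,i}$ converges weak$^*$ to $C$ and that $\rho^v_t(C)=\liminf_n\rho^v_t\bigl(\sum_i\rho^{v^i}_{t+1}(C_{n,i})v^i\bigr)$, which is strictly weaker and does not hand you an approximation of $C$ by portfolios in $V$. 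The subsequent ``key computation''---that the glued density $X=1_F\al Y+1_{F^c}\be W$ pairs to something $\le0$ against every building block---is where the proof would have to live, and it is not carried out. If you try to carry it out with a single application of the time-consistency definition at time $t$, you run into the problem that for a generic $C\in\A=\A_0$ one need not have $\rho^v_t(C)\le 0$, so the ``$Y^n_{t+\eta}.V$'' term cannot be controlled; the estimate $\E(XE)\le 0$ goes through cleanly only for $E\in\A_t$. To control the time-$0$ polar pairing you would need the full multi-period decomposition $C\approx\sum_s D_s$ with $D_s\in\K^\eta_s(\A,V).V$, split the sum at time $t$, use $\E(Xv|\G_t)=\E(Zv|\G_t)$ on the early terms (whose coefficient vectors are $\G_t$-measurable) and the $\A^*$-membership of $Y,W$ on the late terms. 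But the existence of such a decomposition is exactly the statement that $\A$ is $\eta$-represented by $V$---i.e.\ the forward direction of the very theorem you are proving.

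So the m-stability pivot is a detour rather than a shortcut: made rigorous, it first re-derives ``time-consistency $\Rightarrow$ representation'' (the paper's forward direction), then converts to m-stability by Theorem \ref{eqq}, then converts back. Your other direction has the same feature in the opposite order: you invoke Theorem \ref{rafa5} (equivalently Theorem \ref{eqq}) to pass from m-stability of $\A^*$ to $\eta$-decomposability of $\A(V)$, and then prove time-consistency from decomposability, which is verbatim the paper's converse direction with Theorem \ref{eqq} prepended. The paper's proof is the economical one---it moves directly between the time-consistency definition and the decomposition $\A=\overline{\oplus_t\K^\eta_t(\A,V).V}$ in both directions, without ever touching m-stability, which is treated separately in Theorem \ref{eqq}. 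Also note the minor slips: $\A^*$ is scalar-valued (it is the polar of $\A\subset\Linf(\G)$), so the target is $\E(XC)\le 0$ for $C\in\A$, not ``$\E(X\cdot\xi)\le 0$ for $\xi\in\A(V)$''; and Theorem \ref{cons2} is stated only for the weak case ($\eta=1$), so the unified $\eta$-treatment you envisage would need a strong analogue you have not written.
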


\begin{proof}Suppose that $\A$ is
$(\eta,V)$-time-consistent. Fix $t\in\{0,\ldots,T-\eta\}$ and $X\in \A_t$, so
there exists two sequences $X_n\in \Linf$ and
$Y^n_{t+\eta}\in\Linf(\G_{t+\eta};\R^d)$ such that
$$
X_n-Y^n_{t+\eta}.V\in\A_{t+1}
$$
and the sequence $X_n$ converges
weakly$^*$ to $X$ in $\Linf$ with
$$
\rho_t(X)=\liminf\rho_t\left(Y^n_{t+\eta}.V\right).
$$
Therefore, for all $\vare>0$, there exists some $N\geq 1$ such that
for all $n\geq N$
$$
\rho_t(X)+\vare\geq \rho_t\left(Y^n_{t+\eta}.V\right).
$$
Now we can write
$X_n-\vare=(X_n-Y^n_{t+\eta}.V)+(Y^n_{t+\eta}.V-\vare)$ with
$X_n-Y^n_{t+\eta}.V\in\A_{t+1}$ and $Y^n_{t+\eta}.V-\vare\in
\K^\eta_{t}(\A,V).V$. So $X_n\in \K^\eta_{t}(\A,V).V+\A_{t+1}$. By
taking the limit we see that $X\in
\overline{\K^\eta_{t}(\A,V).V+\A_{t+1}}$ and then
$\A_t=\overline{\K^\eta_{t}(\A,V).V+\A_{t+1}}$. By induction on
$t\in\{0,\ldots,T\}$ we get
$\A=\overline{\oplus_{t=0}^{T-\eta}\K^\eta_t(\A,V).V}$.

Conversely, fix $t\in\{0,\ldots,T-1\}$ and $X\in\A_t$ with $\rho_t(X)=0$, then
there exists a sequence
$$
X_n\in \oplus_{s=t}^{T-\eta}\K^\eta_s(\A,V).V,
$$
which converges weakly$^*$ to $X$ in $\Linf$. So there exists
$Y^n\in \K^\eta_t(\A,V)$ such that $Z^n=X_n-Y^n.V\in\A_{t+1}$. We
conclude that
$$
0=\rho_t(X)\leq\liminf\rho_t(X_n)=\liminf\rho_t\left(Y^n.V+Z^n\right)
\leq\liminf\rho_t\left(Y^n.V\right)\leq 0,
$$
so $\rho_t(X)=\liminf\rho_t\left(Y^n.V\right)$. Now for all $X\in
\Linf$ we have $X-\rho_t(X)\in\A_t$ and from previously
$$
\rho_t(X-\rho_t(X))=\liminf\rho_t\left(Y^n.V\right),
$$
therefore
$$
\rho_t(X)=\liminf\rho_t\left(Y^n.V+\rho_t(X)\right)=\liminf\rho_t\left((Y^n+\rho_t(X)e_1).V\right).
$$
\end{proof}

\begin{remark}
\label{count0101}The following assertions are obviously equivalent:
\begin{enumerate}
\item $\A$ is $\eta$-represented by $V$ and the cone $\K^\eta(\A,V).V$ is
closed.
\item For all $t\in\{0,\ldots,T-\eta\}$, $v\in V$ and $X\in \Linf$,
there exists some $Y\in\Linf(\G_{t+\eta};\R^d)$ such that
$X-Y.V\in\A_{t+1}$ and
$$
\rho_t^v(X)=\rho_t^v(Y.V).
$$
In particular, if either of the statements (1) or (2) holds then,
for the case of weak representation, we have that for all
$t\in\{0,\ldots,T-1\}$, $v\in
V$ and $X\in \Linf$, there exists some $X_1,\ldots,X_d\in\Linf(\G)$
such that $X=\sum_{i=1}^d\,X_i$ and
$$
\rho_t^v(X)=\rho_t^v\left(\sum_{i=1}^d\rho^{v_i}_{t+1}(X_i)v_i\right).
$$
\end{enumerate}
\end{remark}

\begin{remark}Since the cone $\K(\A,\{1\})=\K(\A)$ is closed, the
time-consistency property introduced by Delbaen is equivalent to the
weak $1$-time consistency property.
\end{remark}

Given a probability measure ${\bQ}<<\P$, denote the Radon-Nikodym
derivative (or density) of $\bQ$ with respect to $\bP$ by
$\Lambda^{\bQ}$ and denote the density of the restriction of $\bQ$
to $\F_t$ by $\Lambda^{\bQ}_t$ (so that
$\Lambda^{\bQ}_t=\E_{\bP}[\Lambda^{\bQ}|\F_t]$).

We now state the equivalence for weak representation:
\begin{theorem}\label{weakthm}
Let $V$ be a finite subset of $\calN$, then $\A$ is weakly
represented by $V$ if and only if {\bf whenever} $\bQ, \bQ'\in\Q$,
with $\bQ'\sim \P$, and $\tau$ is a stopping time such that
$$
\E_{\bQ}[V|\F_{\tau}]=\E_{\bQ'}[V|\F_{\tau}],
$$
then the p.m. $\hat\bQ$, given by
$$
\Lambda^{\hat\bQ}=\frac{\Lambda^{\bQ'}}{\Lambda^{\bQ'}_{\tau}}\Lambda^{\bQ}_{\tau}
$$
is an element of $\Q$.
\end{theorem}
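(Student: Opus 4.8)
The plan is to deduce this from the m-stability machinery of Sections \ref{abscone} and \ref{repsec}. By Theorem \ref{eqq}, $\A$ is weakly represented by $V$ if and only if its polar cone $\A^*$ (which, since $\A$ is weak$^*$-closed and contains $\Linf_-$, is a weak$^*$-closed convex cone contained in $\L^1_+$) is weakly $V$-m-stable, i.e. $(1,V)$-m-stable. So everything reduces to rewriting $(1,V)$-m-stability of $\A^*$ in the form of the claimed condition on $\Q$.

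First I would feed $P=\A^*$, $U=V$ and $\eta=1$ into criterion (iv) of Theorem \ref{equiv}. With $\eta=1$ one has $T-1+\eta=T$, $\tau+1-\eta=\tau$, and the only $R^v_{\tau+1-\eta}\in L^1_+(\F_{\tau+1-\eta})$ with $\E[R^v_{\tau+1-\eta}\,|\,\F_\tau]=1$ is $R^v_\tau\equiv1$; so criterion (iv) reads: for every stopping time $\tau\le T$, whenever $Z,W\in\A^*$ are such that
$$
X \defto W\,\frac{(Zv)_\tau}{(Wv)_\tau}
$$
is independent of $v\in V$ (here $(\,\cdot\,)_\tau$ denotes $\E_\P[\,\cdot\,|\F_\tau]$), then $X\in\A^*$. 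Next I would pass from the cone $\A^*$ to probability measures: a nonzero element of $\A^*$ is, after normalisation, the density of a member of $\Q$ (take $\Q$ to be the maximal representing set $\Q^{\rho}$ of Proposition \ref{p2}; equivalently \lq$\hat\bQ\in\Q$' is to be read as \lq$\Lambda^{\hat\bQ}\in\A^*$'), so it is enough to test the displayed condition on $Z=\Lambda^{\bQ}$, $W=\Lambda^{\bQ'}$ with $\bQ,\bQ'\in\Q$; the quotient by $(Wv)_\tau$ requires $W_\tau>0$ (the $v$'s being bounded away from $0$), and since $\Q$ contains an equivalent measure one may, as in Delbaen's one-period argument, restrict to $W$ strictly positive, i.e. to $\bQ'\sim\P$.

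The heart of the matter -- and the step that uses $\1\in V$ -- is the Bayes identity $(\Lambda^{\bQ}v)_\tau=\Lambda^{\bQ}_\tau\,\E_{\bQ}[v|\F_\tau]$ (valid a.s., both sides vanishing where $\Lambda^{\bQ}_\tau=0$), which turns the quotient into
$$
W\,\frac{(Zv)_\tau}{(Wv)_\tau}
=\Lambda^{\bQ'}\,\frac{\Lambda^{\bQ}_\tau}{\Lambda^{\bQ'}_\tau}\;
\frac{\E_{\bQ}[v|\F_\tau]}{\E_{\bQ'}[v|\F_\tau]}.
$$
Since $\E_{\bQ}[\1|\F_\tau]=\E_{\bQ'}[\1|\F_\tau]=1$ and $\1\in V$, the statement \lq$X$ is independent of $v\in V$' is equivalent to \lq$\E_{\bQ}[v|\F_\tau]=\E_{\bQ'}[v|\F_\tau]$ for every $v\in V$', i.e. to $\E_{\bQ}[V|\F_\tau]=\E_{\bQ'}[V|\F_\tau]$; and in that case, evaluating at $v=\1$,
$$
X=\Lambda^{\bQ'}\,\frac{\Lambda^{\bQ}_\tau}{\Lambda^{\bQ'}_\tau}
=\frac{\Lambda^{\bQ'}}{\Lambda^{\bQ'}_\tau}\,\Lambda^{\bQ}_\tau
=\Lambda^{\hat\bQ}.
$$
Conditioning on $\F_\tau$ gives $\E_\P X=\E_\P\Lambda^{\bQ}_\tau=1$, so \lq$X\in\A^*$' is precisely \lq$\hat\bQ\in\Q$'. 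Assembling the three steps yields the stated equivalence.

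I expect the bookkeeping of the second step -- moving between $\A^*$ and its base $\Q$ of probability measures, and in particular justifying that it is enough to impose the pasting condition for $\bQ'\sim\P$ rather than for every admissible $W$ allowed by criterion (iv) -- to be the only genuinely fiddly point; it is the same kind of normalisation/measurability argument as in the classical ($U=\{1\}$) case. Once criterion (iv) of Theorem \ref{equiv} is in the right shape, the algebraic core (the $\1$-trick) is immediate.
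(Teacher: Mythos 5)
Your proposal is correct and follows exactly the route the paper takes: the paper's own proof of Theorem \ref{weakthm} is the one-line remark that it is ``an easy corollary of Theorems \ref{eqq} and \ref{equiv}'', and you have unwound that corollary in detail---reducing to $(1,V)$-m-stability of $\A^*$ via Theorem \ref{eqq}, specialising criterion (iv) of Theorem \ref{equiv} to $\eta=1$ (which forces $R^v_\tau\equiv 1$), and using the Bayes identity together with $\1\in V$ to identify the pasting hypothesis with $\E_\bQ[V|\F_\tau]=\E_{\bQ'}[V|\F_\tau]$ and the pasted density with $\Lambda^{\hat\bQ}$. The normalisation you flag at the end (passing between nonzero elements of $\A^*$ and the maximal test set $\Q^\rho$, and justifying the restriction to $\bQ'\sim\P$ rather than arbitrary $W\in\A^*$ with $W_\tau>0$) is precisely the detail the paper itself elides---both here and in its unproved remark identifying weak $\{1\}$-m-stability with Delbaen's definition, which also builds in $W>0$---so your treatment is at the same level of rigour as the source.
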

\begin{proof}
This is an easy corollary of Theorems \ref{eqq} and \ref{equiv}.
\end{proof}

Now we give two key equivalences for strong representation.
\begin{theorem}
\label{strongthm} $\A$ is strongly represented by $V$ if and only if
there exists a collection $(\H_t)_{t=0}^T$, with each $\H_t$ being a
$t$-cone in $\L^1(\G_t;\R^d)$ such that:
$$
\A^*=\bigcap_{t=0}^T\{Z\in \L^1(\G);\,\E(ZV|\G_t)\in \H_t\}.
$$
\end{theorem}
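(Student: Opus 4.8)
The plan is to chain together the machinery already assembled in Sections~\ref{abscone} and~\ref{repsec}. The key observation is that ``strong representation of $\A$ by $V$'' is, by Definition~\ref{repstrong}, exactly the statement that the cone $\A(V)\subset\Linf(\G;\R^d)$ is strongly decomposable, i.e.\ $\eta$-decomposable with $\eta=0$. Theorem~\ref{rafa3} gives a characterisation of strong decomposability of an arbitrary weak$^*$-closed, arbitrage-free convex cone $\B$ in $\Linf(\G;\R^d)$ in terms of its polar: $\B$ is strongly decomposable if and only if there is a collection $(\H_t)_{t=0}^T$ of $t$-cones in $\L^1(\G_t;\R^d)$ with $\B^*=\bigcap_{t=0}^T\{Z\in\L^1(\G;\R^d):\,Z_t\in\H_t\}$, where $Z_t=\E[Z|\G_t]$. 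So the proof is simply: apply Theorem~\ref{rafa3} with $\B=\A(V)$, and then translate the polar $\A(V)^*$ using Theorem~\ref{rafa2}.

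Concretely, I would argue as follows. First note that $\A(V)$ is a weak$^*$-closed, arbitrage-free convex cone in $\Linf(\G;\R^d)$ (it is the preimage of the weak$^*$-closed cone $\A$ under the bounded linear map $X\mapsto X.V$, and arbitrage-freeness follows since each $v^i$ is bounded and bounded away from $0$, so $X.V\leq 0$ with $X.V\in\A$ forces $X.V=0$, hence $X=0$ componentwise up to the relations $v_ie_j-v_je_i$; in any case this is routine). Hence Theorem~\ref{rafa3} applies to $\B=\A(V)$: $\A(V)$ is strongly decomposable $\Leftrightarrow$ there exist $t$-cones $(\H_t)_{t=0}^T$ in $\L^1(\G_t;\R^d)$ with
$$
\A(V)^*=\bigcap_{t=0}^T\{Z\in\L^1(\G;\R^d):\,Z_t\in\H_t\}.
$$
Now invoke (\ref{dual}) of Theorem~\ref{rafa2}: $\A(V)^*=\A^*V=\{ZV:\,Z\in\A^*\}$. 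Substituting, $\A$ is strongly represented by $V$ iff there are $t$-cones $\H_t$ with $\{ZV:\,Z\in\A^*\}=\bigcap_{t=0}^T\{W\in\L^1(\G;\R^d):\,W_t\in\H_t\}$. Since $Z\mapsto ZV$ is injective on $\L^1(\G)$ modulo nothing relevant here (given $V$ strictly positive), and since $(ZV)_t=\E[ZV|\G_t]$, the condition $ZV\in\bigcap_t\{W:\,W_t\in\H_t\}$ reads exactly $\E[ZV|\G_t]\in\H_t$ for all $t$. Therefore $\A$ is strongly represented by $V$ iff
$$
\A^*=\bigcap_{t=0}^T\{Z\in\L^1(\G):\,\E(ZV|\G_t)\in\H_t\},
$$
which is the claimed equivalence.

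The one point requiring a little care --- and the place I expect the main (mild) obstacle --- is the passage from ``$\A(V)^*$ has the intersection form with $t$-cones $\H_t$ in $\L^1(\G_t;\R^d)$'' to ``$\A^*$ has the intersection form with (the same) $t$-cones''. Going forward (constructing $\H_t$ from a strong representation) is clean: take $\H_t=\overline{\{\al Z_t:\al\in\Linf_+(\G_t),Z\in\A^*\}}$ just as in the proof of Theorem~\ref{rafa3}, note $\E(ZV|\G_t)=(ZV)_t$ lands in the corresponding cone for $\A(V)^*$, and use $\A(V)^*=\A^*V$ to rewrite. Going backward, one is handed $t$-cones $\H_t$ in $\L^1(\G_t;\R^d)$ with $\A^*=\bigcap_t\{Z:\E(ZV|\G_t)\in\H_t\}$ and must produce the strong decomposition of $\A(V)$; here one simply observes that $W\in\A(V)^*$ iff $W=ZV$ with $Z\in\A^*$ iff $W_t=(ZV)_t=\E(ZV|\G_t)\in\H_t$ for all $t$, so $\A(V)^*=\bigcap_t\{W\in\L^1(\G;\R^d):W_t\in\H_t\}$, and Theorem~\ref{rafa3} then delivers strong decomposability of $\A(V)$, i.e.\ strong representation of $\A$ by $V$. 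In short, the whole statement is an immediate corollary of Theorems~\ref{rafa3} and~\ref{rafa2}, and I would present it as such, spelling out only the identity $(ZV)_t=\E(ZV|\G_t)$ and the use of $\A(V)^*=\A^*V$.
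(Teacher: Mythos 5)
Your reduction to Theorems~\ref{rafa3} and~\ref{rafa2} is exactly the route the paper takes, and the forward direction is fine: if $\A(V)$ is strongly decomposable then Theorem~\ref{rafa3} gives $t$-cones $\H_t$ with $\A(V)^*=\bigcap_t\{W:W_t\in\H_t\}$; combining with $\A(V)^*=\A^*V$ and the injectivity of $Z\mapsto ZV$ (each $v^i$ is bounded away from $0$) yields $Z\in\A^*\Leftrightarrow ZV\in\A^*V\Leftrightarrow\E(ZV|\G_t)\in\H_t$ for all $t$. You correctly flag the converse as the delicate point, but the argument you then give for it is wrong: the chain ``$W\in\A(V)^*$ iff $W=ZV$ with $Z\in\A^*$ iff $W_t\in\H_t$ for all $t$'' breaks at the second ``iff''. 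An element $W\in\L^1(\G;\R^d)$ with $W_t\in\H_t$ for all $t$ need not lie in the range $L_V\defto\{ZV:Z\in\L^1(\G)\}$ of the embedding $Z\mapsto ZV$, so you cannot conclude $\A(V)^*=\bigcap_t\{W:W_t\in\H_t\}$; the hypothesis $\A^*=\bigcap_t\{Z:\E(ZV|\G_t)\in\H_t\}$ only gives
$$
\A(V)^*=\A^*V=\Bigl(\bigcap_{t=0}^T\{W:\,W_t\in\H_t\}\Bigr)\cap L_V,
$$
which is an inclusion $\A(V)^*\subset\bigcap_t\{W:W_t\in\H_t\}$, not equality.

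The repair is short but genuinely needed. Since each $v^i\in\calN$ is bounded and bounded away from $0$, $L_V$ is a closed linear subspace of $\L^1(\G;\R^d)$ which is stable under multiplication by $\Linf(\G)$, hence in particular a $T$-cone. Replace $\H_T$ by $\tilde\H_T\defto\H_T\cap L_V$ (still a $T$-cone, being an intersection of $T$-cones) and keep $\tilde\H_t=\H_t$ for $t<T$. Then $\bigcap_t\{W:W_t\in\tilde\H_t\}=\bigl(\bigcap_t\{W:W_t\in\H_t\}\bigr)\cap L_V=\A(V)^*$, and now Theorem~\ref{rafa3} applies to give strong decomposability of $\A(V)$. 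With that adjustment your proof matches the paper's intended argument.
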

\begin{proof}This is an immediate consequence of Theorem \ref{rafa3}.
\end{proof}
\begin{remark}
Thanks to Corollary 4.7 of \cite{JBW}, we can interpret a $t$-cone in
$\L^1(\G_t;\R^d)$ as the collection of all elements of $\L^1(\G_t;\R^d)$
which lie almost surely in a random, closed, convex cone in $\R^d$.
\end{remark}
\begin{example}
If $\Q=\{\P\}$ where $\P$ is the unique EMM for the vector price process
$(S_t)_{0\leq t\leq T}$ (so that $\A$ is strongly represented by $S_T$),
then, with $\H_t=\{\alpha S_t:\; \alpha\in L^\infty_+(\F_t)\}$,
$\A^*=\bigcap_{t=0}^T\{Z\in \L^1(\G);\,\E(ZS|\G_t)\in \H_t\}$. We leave
the proof of this statement to the reader.
\end{example}
\begin{example}
\label{ex2} We consider a binary branching tree again. Our sample space is
$\Om=\{1,2,3,4\}$ with
$\P$ uniform, $\G=\G_2=2^\Om$, $\G_1=\sigma(\{1,2\},\{3,4\})$ and
$\G_0$ trivial. Equating each probability measure $\bbQ$ on $\Omega$ with
the corresponding vector of probability masses, take
$$
\Q=co\left(\left\{\dfrac{1}{2},\dfrac{1}{2},0,0\right\},\left\{0,0,\dfrac{1}{2},\dfrac{1}{2}\right\},
\left\{\dfrac{1}{3},\dfrac{1}{6},\dfrac{1}{6},\dfrac{1}{3}\right\},\left\{\dfrac{1}{6},\dfrac{1}{3},\dfrac{1}{3},
\dfrac{1}{6}\right\}\right),
$$
$v=1+1_{\{1,3\}}$ and $V=(1,v)$. Then $\Q$ is strongly $V$-m-stable.
\end{example}
\begin{proof}Denoting the convex cone generated by a set $S$ by $cone(S)$,
define $D_0=cone\left(\left\{1,\frac{1}{2}\right\}\right)$,
$$
D_1(.)=cone\left(\left\{1,\frac{1}{3}\right\},\left\{1,\frac{2}{3}\right\}\right),
$$

$$
D_2(1)=D_2(3)=cone\left(\left\{1,1\right\}\right),
$$
and
$$
D_2(2)=D_2(4)=cone\left(\left\{1,0\right\}\right).
$$
It is not hard to show that
$$
\A^*=\bigcap_{t=0}^2 \left\{Z\in\L^1:\, \E(Z\,V|\G_t)\in
D_t\;\mbox{a.s}\right\}.
$$
The result follows from Theorem \ref{strongthm}.
\end{proof}

\begin{theorem}
\label{strongthm2} $\A$ is strongly represented by $V$ if and only
if whenever $\bQ, \bQ'\in\Q$, with $\bQ'\sim \P$, $\tau$ is a stopping time with
$\tau\leq T-1$ a.s.
and $\hat\bQ$ satisfies
\begin{equation}\label{str3}
\frac{\Lambda^{\hat\bQ}}{\Lambda^{\hat\bQ}_{\tau+1}}=\frac{\Lambda^{\bQ'}}{\Lambda^{\bQ'}_{\tau+1}},
\end{equation}
and
\begin{equation}\label{str4}
{\Lambda^{\hat\bQ}_{\tau}}={\Lambda^{\bQ}_{\tau}},
\end{equation}
then
$$\E_{\hat\bQ}[V|\F_\tau]=\E_{\bQ}[V|\F_\tau]
$$
implies that ${\hat\bQ}$ is an element of $\Q$.
\end{theorem}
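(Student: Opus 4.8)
The plan is to derive this, exactly as for the weak case (Theorem~\ref{weakthm}), from Theorem~\ref{eqq} and Theorem~\ref{equiv}, the only real work being a translation between the ``cone'' formulation and the ``test probability'' formulation. By Theorem~\ref{eqq}, $\A$ is strongly represented by $V$ if and only if $\A^*$ is strongly $V$-m-stable, and by criterion~(iv) of Theorem~\ref{equiv}, taken with $\eta=0$ and $U=V$, this holds if and only if the following is true: for every stopping time $\tau\le T-1$, whenever $Z,W\in\A^*$ and $R^v_{\tau+1}\in L^1_+(\F_{\tau+1})$ with $\E[R^v_{\tau+1}|\F_\tau]=1$ (one for each $v\in V$) are such that $X\defto W\frac{(Zv)_\tau R^v_{\tau+1}}{(Wv)_{\tau+1}}$ does not depend on $v$, then $X\in\A^*$. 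So it suffices to check that this last statement is literally the condition in the theorem once we pass from cones to measures via the Bayes identity $(\Lambda^{\bQ}v)_t=\Lambda^{\bQ}_t\,\E_{\bQ}[v|\F_t]$ and read $\Q$ as the maximal representing set $\Q^\rho=\{\bQ\ll\P:\Lambda^{\bQ}\in\A^*\}$, so that ``$\hat{\bQ}\in\Q$'' means exactly ``$\Lambda^{\hat{\bQ}}\in\A^*$''.

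For the forward implication I would start from $\bQ,\bQ'\in\Q$ with $\bQ'\sim\P$ (this positivity is precisely what makes $W:=\Lambda^{\bQ'}$ a legitimate denominator), a stopping time $\tau\le T-1$, and $\hat{\bQ}$ satisfying (\ref{str3}) and (\ref{str4}); observe that (\ref{str4}) says $\Lambda^{\hat{\bQ}}_\tau=\Lambda^{\bQ}_\tau$ and (\ref{str3}) says exactly that $\hat{\bQ}$ and $\bQ'$ have the same $\F$-conditional law given $\F_{\tau+1}$. Put $Z=\Lambda^{\bQ}$, $W=\Lambda^{\bQ'}$ and, for $v\in V$, define $R^v_{\tau+1}:=\frac{\Lambda^{\hat{\bQ}}_{\tau+1}\,\E_{\bQ'}[v|\F_{\tau+1}]}{\Lambda^{\hat{\bQ}}_\tau\,\E_{\bQ}[v|\F_\tau]}$ (an $\F_{\tau+1}$-measurable random variable, set to $1$ on the set $\{\Lambda^{\bQ}_\tau=0\}$, on which $\Lambda^{\hat{\bQ}}_{\tau+1}$ also vanishes). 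A short computation using (\ref{str3}) and (\ref{str4}) gives $W\frac{(Zv)_\tau R^v_{\tau+1}}{(Wv)_{\tau+1}}=\Lambda^{\hat{\bQ}}$ for every $v$, so $X=\Lambda^{\hat{\bQ}}$ is $v$-independent; and the tower property together with (\ref{str3}) gives $\E[R^v_{\tau+1}|\F_\tau]=\E_{\hat{\bQ}}[v|\F_\tau]/\E_{\bQ}[v|\F_\tau]$, which equals $1$ for every $v\in V$ exactly because the hypothesis $\E_{\hat{\bQ}}[V|\F_\tau]=\E_{\bQ}[V|\F_\tau]$ is assumed. Thus $(\tau,Z,W,(R^v_{\tau+1}))$ meets the requirements of criterion~(iv), and we conclude $\Lambda^{\hat{\bQ}}=X\in\A^*$, i.e.\ $\hat{\bQ}\in\Q$.

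The converse runs the same dictionary backwards. Given $\tau\le T-1$, $Z,W\in\A^*$ with $W>0$ a.s.\ (rescaling, $Z=\Lambda^{\bQ}$, $W=\Lambda^{\bQ'}$ with $\bQ'\sim\P$), and a family $(R^v_{\tau+1})$ as in criterion~(iv) making $X$ independent of $v$, I would let $\hat{\bQ}$ be the measure with density $X$; taking $v=\1$ (recall $\1\in V$) one reads off $\E X=1$ (condition on $\F_{\tau+1}$, then on $\F_\tau$, using $\E[R^{\1}_{\tau+1}|\F_\tau]=1$ and $\E Z=1$) and then (\ref{str3}) and (\ref{str4}) from the defining identity for $X$. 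Solving that identity for a general $v$ forces the given $R^v_{\tau+1}$ to coincide with the quotient displayed above, whence $\E[R^v_{\tau+1}|\F_\tau]=\E_{\hat{\bQ}}[v|\F_\tau]/\E_{\bQ}[v|\F_\tau]$; since this is $1$ we obtain $\E_{\hat{\bQ}}[V|\F_\tau]=\E_{\bQ}[V|\F_\tau]$, so the hypothesis of the theorem gives $\hat{\bQ}\in\Q$, i.e.\ $X=\Lambda^{\hat{\bQ}}\in\A^*$, which is exactly criterion~(iv).

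I do not expect any deep point here—Theorems~\ref{eqq} and~\ref{equiv} do all the substantive work—so the main obstacle is purely organisational: getting the translation dictionary exactly right (in particular, that (\ref{str3}) encodes ``same conditional law beyond $\tau+1$'', (\ref{str4}) encodes ``same density at $\tau$'', and the normalisation $\E[R^v_{\tau+1}|\F_\tau]=1$ encodes the num\'eraire-matching condition $\E_{\hat{\bQ}}[V|\F_\tau]=\E_{\bQ}[V|\F_\tau]$), handling the sets where $\Lambda^{\bQ}_\tau$ vanishes so that the quotients $R^v_{\tau+1}$ are genuinely well defined, and matching the requirement $\bQ'\sim\P$ with the positivity of $W$ implicit in criterion~(iv).
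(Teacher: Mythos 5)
Your proposal is correct and follows the same route as the paper's own argument: invoke Theorem~\ref{eqq} to reduce strong representation to strong $V$-m-stability of $\A^*$, then apply criterion~(iv) of Theorem~\ref{equiv} (with $\eta=0$, $U=V$), and translate between the cone/density picture and the test-probability picture via the Bayes identity $(\Lambda^{\bQ}v)_t=\Lambda^{\bQ}_t\,\E_{\bQ}[v|\F_t]$. The paper states this translation tersely (noting that (\ref{str3}) and (\ref{str4}) encode $\Lambda^{\hat\bQ}=R_{\tau+1}\frac{\Lambda^{\bQ'}}{\Lambda^{\bQ'}_{\tau+1}}\Lambda^{\bQ}_\tau$ with $\E[R_{\tau+1}|\F_\tau]=1$), whereas you spell out the dictionary—identifying the correct $R^v_{\tau+1}$, verifying $X=\Lambda^{\hat\bQ}$ is $v$-independent, and matching $\E[R^v_{\tau+1}|\F_\tau]=1$ with $\E_{\hat\bQ}[V|\F_\tau]=\E_{\bQ}[V|\F_\tau]$—which is useful but not a different method.
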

\begin{proof}
This is an easy corollary of Theorems \ref{eqq} and \ref{equiv} on noticing that
equations (\ref{str3}) and (\ref{str4}) are equivalent to saying
that
$$
\Lambda^{\hat\bQ}=R_{\tau+1}\frac{\Lambda^{\bQ'}}{\Lambda^{\bQ'}_{\tau+1}}\Lambda^{\bQ}_\tau
$$
for some $R_{\tau+1}\in \L^1(\F_{\tau+1})$ with $\E
[R_{\tau+1}|\F_{\tau}]=1$.

\end{proof}

\begin{example}
We consider a binary branching tree on two time steps, but with one node pruned.
Thus, our sample space is $\Om=\{1,2,3\}$ with
$\P$ uniform, $\G=\G_2=2^\Om$, $\G_1=\sigma(\{1,2\},\{3\})$ and
$\G_0$ trivial. Equating each probability measure $\bbQ$ on $\Omega$ with
the corresponding vector of probability masses, take
$$
\Q=co\left((\dfrac{1}{2},\dfrac{1}{3},\dfrac{1}{6}),(\dfrac{1}{3},\dfrac{2}{9},\dfrac{4}{9})
\right),
$$
$v=1+1_{\{1\}}$ and $V=(1,v)$. Then $\Q$ is strongly $V$-m-stable.
\begin{proof}
Take a stopping time $\tau\leq 1$. Since $\F_0$ is trivial it is clear
that either $\tau=0$ a.s. or $\tau =1$ a.s.

A generic element of $\Q$
may be written as $\bQ_\l=(p_\l,q_\l,r_\l)=(\half -\frac{\l}{6},\frac{1}{3}-
\frac{\l}{9},\frac{1}{6}+\frac{5\l}{18})$. Denoting a generic p.m. on
$(\Omega,\F)$ by  $\bQ$ by
$(p, q, r)$, and taking $\bP$ to be the uniform measure
on $\Omega$ we see that
$$
\Lambda^{\bQ}=(3p,3q,3r),\;
\Lambda^{\bQ}_1=(\frac{3p+3q}{2},\frac{3p+3q}{2},3r)\hbox{ and }
\frac{\Lambda^{\bQ}}{\Lambda^{\bQ}_1}=(\frac{2p}{p+q},\frac{2q}{p+q},1).
$$
Notice that (for any value of $\l$)
$$
\frac{\Lambda^{\bQ_\l}}{\Lambda^{\bQ_\l}_1}=(\frac{6}{5},\frac{4}{5},1).
$$
First suppose that the equations (\ref{str3}) and (\ref{str4}) are satisfied
with $\bQ'=\bQ_\m$ and $\bQ=\bQ_\l$ and $\tau=1$. Then $\Lambda^{\hat
\bQ}_1=\Lambda^{\bQ_\l}_1$ and so $\hat p+\hat q=p_\l +q_\l$. Moreover,
$1+\frac{\hat p}{\hat p + \hat q}=\E_{\hat
\bQ}[v|\F_1]=\E_{\bQ_\l}[v|\F_1]=1+\frac{p_\l}{p_\l + q_\l}$. It follows
that $\hat \bQ=\bQ_\l$ and so $\hat \bQ\in \Q$.

Now suppose that $\tau=0$ and equations (\ref{str3}) and (\ref{str4}) are
satisfied
with $\bQ'=\bQ_\m$ and $\bQ=\bQ_\l$. Equating
$\frac{\Lambda^{\hat\bQ}}{\Lambda^{\hat\bQ}_1}$ and
$\frac{\Lambda^{\bQ_\m}}{\Lambda^{\bQ_\m}_1}$,
we see that $\frac{\hat q}{\hat p}=\frac{2}{3}$. Then, equating
$\E_{\hat \bQ}[v]$ and $\E_{\bQ_\l}[v]$ we see that $1+\hat p=1+p_\l$.
It follows that $\hat \bQ=\bQ_\l$ and so $\hat \bQ\in \Q$ once more.

\end{proof}
\end{example}

\subsection{The countable case}
Recall (Definition \ref{etadec}), that if $V$ is a vector of assets
in $\calN$ then
$\K^\eta_t(\A,V)\defto\A_t(V)\cap\Linf(\G_{t+\eta};\R^d)$ and
 $\A$ is $\eta$-represented by $V$ iff
$\A(V)=\overline{\oplus_{t=0}^{T-\eta}\K^\eta_t(\A,V)}$. It is clear that
this is true if and only if $\A=[\A,V]^\eta\defto
V.\overline{\oplus_{t=0}^{T-\eta}\K^\eta_t(\A,V)}$

We extend this as follows:

\begin{defi}
Let $U\subset\calN$. We say that $\A$ is $\eta$-represented by $U$
if for all $X\in\A$, there exists a sequence $X_n\in \A$ which
converges weakly$^*$ to $X$ in $\Linf$ such that for all $\vare>0$,
there exists $n\geq 1$ and a finite set $V^\vare\subset U$ such that
$X_n-\vare\in [\A,V^\vare]^{\eta}$.
\end{defi}

Next we prove the equivalence between the $U$-time-consistency of
the cone $\A$ and $U$-stability of its polar cone $\A^*$ when $U$ is
countable. In order to do this we introduce the following relations:

\begin{defi}For $Z,Z'\in \L^1$, we say that $Z\equiv_{t,\eta,U}Z'$ for
$\eta=0,1$ if there exists
$\al\in \L^0_+(\G_{t})$ with $\al Z'\in \L^1$ such that
$\E(Zu|\,\G_{t+\eta})=\al\,\E(Z'u|\,\G_{t+\eta})$ for all $u\in U$.
\end{defi}

\begin{prop}\label{infinite1}
Let $P\subset \L^1_+(\G)$ be an $(\eta,U)$-m-stable cone, then
$P=\cap_{t=0}^{T-\eta}M^\eta_t(P,U)$ where
$$
M^\eta_t(P,U)=\left\{Z:\;Z\equiv_{t,\eta,U}Z'\;\mbox{for some}\;
Z'\in P\right\}.
$$

\end{prop}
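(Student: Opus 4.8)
The statement is the countable (portfolio $U$) analogue of Lemma \ref{duality0}, so I would model the proof on that of Lemma \ref{duality0}, replacing the equivalence $\equiv_{t,\eta}$ (which only sees $\E(\cdot|\G_t)$ of a single density) by $\equiv_{t,\eta,U}$ (which compares $\E(Zu|\G_{t+\eta})$ for all $u\in U$). The inclusion $P\subset\bigcap_{t=0}^{T-\eta}M^\eta_t(P,U)$ is immediate: any $Z\in P$ satisfies $Z\equiv_{t,\eta,U}Z$ (take $\al=1$), so $Z\in M^\eta_t(P,U)$ for every $t$. The whole content is the reverse inclusion.

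**The reverse inclusion.** Suppose $Z\in\bigcap_{t=0}^{T-\eta}M^\eta_t(P,U)$. For each $t$ there is $Z^t\in P$ and $\al_t\in\L^0_+(\G_t)$ with $\al_t Z^t\in\L^1$ and $\E(Zu|\G_{t+\eta})=\al_t\,\E(Z^t u|\G_{t+\eta})$ for all $u\in U$. I want to show $Z\in P$ by assembling the $Z^t$'s into $Z$ using the $(\eta,U)$-m-stability of $P$. The natural route is a backwards induction in $t$: set $Y^{T-\eta}=Z^{T-\eta}$ (appropriately scaled so that its $(\cdot)_{T-\eta+\text{stuff}}$ profile matches $Z$'s) and then, given that $Y^{t+1}\in P$ agrees with $Z$ in the sense that $\E(Y^{t+1}u|\G_{s+\eta})=\E(Zu|\G_{s+\eta})$ for all $s\ge t+1$ and all $u\in U$, splice $Z^t$ and $Y^{t+1}$ across a partition of $\Omega$ into $\G_t$-atoms (or, in the non-atomic case, invoke the two-set version, criterion (ii)/(iv) of Theorem \ref{equiv}) to produce $Y^t\in P$ that now also matches $Z$ at level $t$, while retaining the match at all higher levels. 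Concretely, on the event where $\E(Z^t u|\G_{t+\eta})\neq 0$ one rescales $Z^t$ by the ratio $\E(Zu|\G_{t+\eta})/\E(Z^t u|\G_{t+\eta})=\al_t$ (which is $\G_t$-measurable by construction and $u$-independent — this is the key point that makes the rescaling legitimate), and one checks that $\E(Y^t u|\G_t)=\E(Zu|\G_t)=\E(Y^{t+1}u|\G_t)$, so the m-stability hypothesis applies with the relevant $Z$ in the definition taken to be $Y^{t+1}$. After $T-\eta+1$ steps one reaches $Y^0\in P$ with $\E(Y^0 u|\G_{s+\eta})=\E(Zu|\G_{s+\eta})$ for all $s$ and all $u\in U$; taking $s=0$ (or the smallest relevant index) and using that the $\G_{0+\eta}$-conditional expectations against all $u\in U$ determine the density — more precisely, that $\equiv_{0,\eta,U}$ together with agreement at all levels forces equality — gives $Z=Y^0\in P$.

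**The main obstacle.** The delicate point is the last step and the bookkeeping of which conditional expectations are preserved at each stage: one must be sure that splicing at level $t$ does not destroy the agreement $\E(\cdot u|\G_{s+\eta})=\E(Zu|\G_{s+\eta})$ already arranged for $s>t$, and, at the end, that agreement of $\E(\cdot u|\G_{t+\eta})$ with $\E(Zu|\G_{t+\eta})$ for all $t$ and all $u\in U$ genuinely pins down the density rather than just its $U$-weighted conditional projections. Since $U\subset\calN$ consists of strictly positive bounded num\'eraires and includes enough directions, the final identification should go through, but it is exactly the place where the hypothesis ``for all $u\in U$'' is used in full strength, and it is where I would expect to spend the most care. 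A secondary technical nuisance is the measurable-selection/partition argument needed when $(\Omega,\G_t)$ is not purely atomic; there one reduces to the two-set criterion (iv) of Theorem \ref{equiv}, exactly as in the proof of Lemma \ref{duality0}, and iterates. Modulo these points the argument is a routine transcription of Lemma \ref{duality0} with $U$ inserted everywhere, so I would keep the write-up short and refer back to that proof for the parts that are verbatim.
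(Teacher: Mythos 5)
Your overall strategy (backwards induction, splice two elements of $P$ over a $\G_t$-partition, invoke m-stability with an already-constructed element as the witness) is the same as the paper's, and you correctly identified the crucial point that the ratio used for the rescaling is $u$-independent. However, the specific invariant you propose is too strong and breaks the induction at the base case. You want $Y^t\in P$ to satisfy $\E(Y^t u\mid\G_{s+\eta})=\E(Zu\mid\G_{s+\eta})$ for all $s\ge t$ and all $u\in U$. At $t=T-\eta$ this forces $\E(Y^{T-\eta}u\mid\G_T)=\E(Zu\mid\G_T)$, i.e.\ $Y^{T-\eta}u=Zu$ for every $u$, hence $Y^{T-\eta}=Z$ since each $u\in U$ is strictly positive --- which is exactly the conclusion you are trying to prove. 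So the base case of your induction already presupposes $Z\in P$, and the "appropriate scaling" by $\beta^{T-\eta}$ that you allude to cannot be justified by m-stability without a witness in $P$ that matches $Z$, which is again circular.

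The paper's proof (transcribed from Lemma~\ref{duality0}) avoids this by maintaining a \emph{weaker} invariant: it defines $\xi^{T-\eta}=Z^{T-\eta}$ unscaled, and $\xi^t=1_{F_t}\kappa_t\,\xi^{t+1}+1_{F_t^c}Z^t$ with $F_t=(\beta^t>0)$ and $\kappa_t=\beta^{t+1}/\beta^t$, and proves by backwards induction that $\xi^t\in P$ and $\E(\xi^t u\mid\G_{t+\eta})=\E(Z^t u\mid\G_{t+\eta})$ --- note it matches the \emph{witness} $Z^t$, not $Z$ itself, so the witness for the m-stability step is $Z^t$ (already in $P$) rather than $Z$. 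The m-stability is applied at level $t+\eta$, where the ratio $\kappa_t\in\L^0_+(\G_{t+1})\subset\L^0_+(\G_{(t+\eta)-\eta+1})$ has the required measurability in both cases $\eta=0,1$. Only at the very end is the identity $Z=\beta^0\,\xi^0$ observed (it follows from the telescoping of the $\kappa_t$ together with $Z\ge 0$), giving $Z\in P$ because $P$ is a cone. Your final ``pinning down the density'' step, as phrased, is not a separate argument but is essentially the content of the proposition itself, so it cannot be cited as the closing move. To repair your write-up, replace the invariant ``$Y^t$ matches $Z$'' by ``$\xi^t$ matches $Z^t$'', use $\kappa_t=\beta^{t+1}/\beta^t$ as the splicing factor, apply m-stability at level $t+\eta$ with witness $Z^t$, and conclude via $Z=\beta^0\xi^0$.
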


\begin{proof}The proof is essentially the same as that of Lemma \ref{duality0}.
\end{proof}

\begin{prop}\label{infinite2}
Let $P\subset \L^1_+(\G)$ and $U$ be a set of assets. Define
$$
[P,U]^{(\eta)}\defto\cap_{t=0}^{T-\eta}R^\eta_t(P,U),
$$
where
$$
R^\eta_t(P,U)=\left\{Z:\;Z\equiv_{t,\eta,U}Z'\;\mbox{for
some}\;Z'\in P_{(t)}\right\}.
$$
Then
\begin{enumerate}
\item $[P,U]^{(\eta)}$ is the smallest
$(\eta,U)$-m-stable closed convex cone in $\L^1$, containing $P$.
\item $P$ is an $(\eta,U)$-m-stable closed convex cone if and only if
$P=[P,U]^{(\eta)}$.
\end{enumerate}
\end{prop}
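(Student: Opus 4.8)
The plan is to mirror the proof of Lemma \ref{duality1} (the finite-dimensional analogue), replacing the relation $\equiv_{t,\eta}$ by $\equiv_{t,\eta,U}$ throughout, and to use Proposition \ref{infinite1} in exactly the role that Lemma \ref{duality0} played there. First I would prove part (2) as an immediate consequence of part (1): if $P$ is an $(\eta,U)$-m-stable closed convex cone, it is the smallest such cone containing itself, so $P=[P,U]^{(\eta)}$ by (1); conversely, if $P=[P,U]^{(\eta)}$, then $P$ has all the listed properties because $[P,U]^{(\eta)}$ does, which is part of the assertion of (1). So the whole weight is on part (1), which itself splits into four sub-claims: (a) $[P,U]^{(\eta)}\supset P$; (b) $[P,U]^{(\eta)}$ is a closed convex cone; (c) $[P,U]^{(\eta)}$ is $(\eta,U)$-m-stable; (d) any $(\eta,U)$-m-stable closed convex cone containing $P$ contains $[P,U]^{(\eta)}$.

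For (a) I would observe that $P\subset P_{(t)}$ for every $t$ (take $\al\equiv 1$ in Definition \ref{dt}), and that $Z\equiv_{t,\eta,U}Z$ trivially (take $\al\equiv 1$), so $P\subset R^\eta_t(P,U)$ for each $t$ and hence $P\subset\cap_t R^\eta_t(P,U)=[P,U]^{(\eta)}$. For (b), each $R^\eta_t(P,U)$ is a cone because $P_{(t)}$ is a cone and the relation $\equiv_{t,\eta,U}$ is compatible with multiplication by nonnegative scalars; convexity of $R^\eta_t(P,U)$ follows because $P_{(t)}=\overline{conv}\{\al Z\}$ is convex and the defining condition $\E(Zu|\G_{t+\eta})=\al\E(Z'u|\G_{t+\eta})$ can be combined over convex combinations by absorbing the coefficients into the $\F_{t}$-measurable multipliers $\al$ (this is the same manoeuvre as in the $R^\eta_t$ of Lemma \ref{duality1}); closedness in $\L^1$ is inherited from $\overline{conv}(M^\eta_t(P,U))$ once one checks, as in the cited lemma, that $R^\eta_t(P,U)=\overline{conv}(M^\eta_t(P,U))$ — and since $[P,U]^{(\eta)}$ is a finite intersection over $t=0,\ldots,T-\eta$, it too is closed. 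For (c), the key point is that membership in $\cap_t R^\eta_t(P,U)$ is precisely the fixed-point characterisation of $(\eta,U)$-m-stability: Proposition \ref{infinite1} says an $(\eta,U)$-m-stable cone $P'$ satisfies $P'=\cap_t M^\eta_t(P',U)$, and one shows $M^\eta_t([P,U]^{(\eta)},U)=R^\eta_t(P,U)$, so $[P,U]^{(\eta)}$ is stable under the operation that defines $(\eta,U)$-m-stability. For (d), if $P'$ is an $(\eta,U)$-m-stable closed convex cone with $P\subset P'$, then $P_{(t)}\subset P'_{(t)}\subset P'$ (using that $P'$ is a $t$-cone-type object — stable under the $\F_t$-multipliers — which follows from $(\eta,U)$-m-stability applied with $k=1$), hence $R^\eta_t(P,U)\subset R^\eta_t(P',U)=M^\eta_t(P',U)$ up to convex closure, and intersecting over $t$ and invoking Proposition \ref{infinite1} gives $[P,U]^{(\eta)}\subset\cap_t M^\eta_t(P',U)=P'$.

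The step I expect to be the main obstacle is (c), and within it the identity $M^\eta_t([P,U]^{(\eta)},U)=R^\eta_t(P,U)$ together with the verification that the countable (rather than finite) portfolio $U$ causes no trouble. One inclusion is easy: if $Z\in[P,U]^{(\eta)}$ then in particular $Z\in R^\eta_t(P,U)$, so $Z\equiv_{t,\eta,U}Z'$ for some $Z'\in P_{(t)}\subset P$'s convex closure, giving $Z\in M^\eta_t([P,U]^{(\eta)},U)$ after absorbing; the reverse needs one to propagate membership at a single time $t$ to membership at every time $s$, which is exactly where $(\eta,U)$-m-stability of the putative limit cone is needed, and this is the circular-looking point that Proposition \ref{infinite1} is designed to break. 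I would therefore spend most of the care there, checking that the essential-supremum/partition arguments underlying Proposition \ref{infinite1} go through verbatim for the countable index set — the only subtlety being that the stability condition quantifies over \emph{all} $u\in U$ simultaneously, so the relation $\equiv_{t,\eta,U}$ is an intersection of countably many scalar conditions, which is harmless since each is closed. The convexity/closedness bookkeeping in (b) is routine but tedious, and I would simply cite the parallel computations in the proof of Lemma \ref{duality1}, noting the one change ($U$-weighted conditional expectations in place of plain conditional expectations).
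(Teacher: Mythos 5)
Your overall framing --- prove (1), deduce (2) from it, and split (1) into containment, closedness/convexity, $(\eta,U)$-m-stability, and minimality --- is sound and parallels the paper's proof of Lemma \ref{duality1}. Sub-claims (a) and (b) are fine, and (d), while stated a little loosely (the claim that $P'_{(t)}\subset P'$ follows from m-stability with $k=1$ needs care, since the definition also requires a witness $Z\in P'$), can be repaired: the paper simply uses monotonicity of the operator $[\,\cdot\,,U]^{(\eta)}$ together with part (2) applied to $P'$, giving $[P,U]^{(\eta)}\subset[P',U]^{(\eta)}=P'$.

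The genuine gap is in (c). You propose to derive $(\eta,U)$-m-stability of $[P,U]^{(\eta)}$ from the fixed-point identity $[P,U]^{(\eta)}=\cap_t M^\eta_t([P,U]^{(\eta)},U)$, invoking Proposition \ref{infinite1} to ``break the circularity.'' But Proposition \ref{infinite1} is one-directional: it takes $(\eta,U)$-m-stability as a \emph{hypothesis} and concludes the fixed-point identity; it does not give, nor does its proof furnish, the converse. Saying that the fixed-point property characterises m-stability is essentially the content of part (2) of the very proposition you are proving, so the argument is circular at exactly the spot you flag. The paper sidesteps this entirely: it proves $(\eta,U)$-m-stability of $[P,U]^{(\eta)}$ by a direct, time-by-time verification. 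Take $Z,Z^1,\ldots,Z^k\in[P,U]^{(\eta)}$, a partition $F^1_t,\ldots,F^k_t\in\F_t$, multipliers $\al^i\in\L^0_+(\G_{t-\eta+1})$, and $Y\defto\sum_i 1_{F^i_t}\al^i Z^i$ with $\E(Zu|\G_t)=\E(Yu|\G_t)$ for all $u\in U$; one then shows $Y\in R^\eta_s(P,U)$ for every $s$ by cases. For $s\geq t-\eta+1$ one has $\E(Yu|\G_{s+\eta})=\sum_i 1_{F^i_t}\al^i\E(Z^iu|\G_{s+\eta})$, and since each $Z^i\in R^\eta_s(P,U)$ one replaces $\E(Z^iu|\G_{s+\eta})$ by its representative from $P_{(s)}$, absorbing the combination back into $P_{(s)}$; for $s\leq t-\eta$ one uses the assumption on $Z$ to get $\E(Yu|\G_{s+\eta})=\E(Zu|\G_{s+\eta})$ and concludes from $Z\in R^\eta_s(P,U)$. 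That computation, not a fixed-point invocation of Proposition \ref{infinite1}, is the missing core of the argument; Proposition \ref{infinite1} is used only afterwards, in establishing the ``reverse implication'' of part (2) (that an m-stable closed convex cone $P$ already satisfies $P=[P,U]^{(\eta)}$).
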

\begin{proof}
The proof follows that of Lemma \ref{duality1} very closely.
\end{proof}

From now on we fix a countable set of assets $U\subset\calN$ and a
sequence of finite sets of assets $U^n$, increasing to $U$ with
$U^0=\{1\}$.

\begin{theorem}
\label{count}Let $P$ denote a {\bf closed} convex cone in $\L^1_+$,
then the following are equivalent
\begin{itemize}
\item[(i)]$P$ is strongly (resp. weakly) $U$-stable;
\item[(ii)]there
exists a decreasing sequence $(P^n)_{n\geq 0}$ such that $P^n$ is strongly
(resp. weakly)
$U^n$-stable for each $n$ and $P=\bigcap_{n\geq 1}P^n$.
\end{itemize}
\end{theorem}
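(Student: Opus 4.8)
The plan is to prove the two implications separately, with the forward direction (i) $\Rightarrow$ (ii) being essentially an exhaustion/projection construction and the reverse (ii) $\Rightarrow$ (i) being a routine ``finite data lives in a finite sub-portfolio'' argument. For (i) $\Rightarrow$ (ii), the natural candidate is to set $P^n \defto [P,U^n]^{(\eta)}$, the smallest $(\eta,U^n)$-m-stable closed convex cone containing $P$, as furnished by Proposition \ref{infinite2}. Since $U^n\subseteq U^{n+1}$, any $(\eta,U^{n+1})$-m-stable cone is a fortiori $(\eta,U^n)$-m-stable (the m-stability constraint $\E(Zv|\G_t)=\E(Yv|\G_t)$ is required for fewer $v$), so $P\subseteq P^{n+1}\subseteq P^n$ and the sequence is decreasing; each $P^n$ is $U^n$-stable by construction. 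The crux is then to show $P=\bigcap_{n\ge 1}P^n$. One inclusion is immediate since $P\subseteq P^n$ for all $n$. For the reverse, I would use the characterisation $[P,U^n]^{(\eta)}=\cap_{t=0}^{T-\eta}R^\eta_t(P,U^n)$ from Proposition \ref{infinite2} together with the relation $Z\equiv_{t,\eta,U^n}Z'$, and argue that if $Z\in\bigcap_n P^n$ then for each $t$ and each $n$ there is $\al^{n}_t\in\L^0_+(\G_t)$ and $Z'^n\in P_{(t)}$ with $\E(Zu|\G_{t+\eta})=\al^n_t\,\E(Z'^n u|\G_{t+\eta})$ for all $u\in U^n$; letting $n\to\infty$ and using that $U=\bigcup_n U^n$, one recovers $Z\equiv_{t,\eta,U}Z'$ for a suitable $Z'\in P_{(t)}$, hence $Z\in [P,U]^{(\eta)}=P$ (the last equality because $P$ is already an $(\eta,U)$-m-stable closed convex cone, by Proposition \ref{infinite2}(2)). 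Some care is needed in passing to the limit in the $\al^n_t$'s and the $Z'^n$'s; a subsequence/diagonal argument exploiting that $P_{(t)}$ is closed and that the equivalence is ``$\sigma$-algebra by $\sigma$-algebra'' should do it.

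For the reverse implication (ii) $\Rightarrow$ (i), suppose $P=\bigcap_{n\ge1}P^n$ with each $P^n$ being $(\eta,U^n)$-m-stable. Take data $Z,Z^1,\ldots,Z^k\in P$ (hence in every $P^n$), a partition $F^1_t,\ldots,F^k_t\in\F_t$, multipliers $\al^i$ of the required measurability, with $Y\defto\sum_i 1_{F^i_t}\al^i Z^i$ satisfying $\E(Zv|\G_t)=\E(Yv|\G_t)$ for all $v\in U$. For each fixed $n$, this equality holds in particular for all $v\in U^n$, and all the $Z^i, Z$ lie in $P^n$, so $(\eta,U^n)$-m-stability of $P^n$ gives $Y\in P^n$. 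Since this holds for every $n$, $Y\in\bigcap_n P^n=P$, which is exactly $(\eta,U)$-m-stability of $P$. This direction uses nothing beyond the definitions and the monotonicity $U^n\uparrow U$.

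The main obstacle is the limiting argument inside (i) $\Rightarrow$ (ii): one must verify that membership in $R^\eta_t(P,U^n)$ for all $n$ implies membership in $R^\eta_t(P,U)$, i.e. that the ``witnesses'' $(\al^n_t, Z'^n)$ can be chosen consistently in $n$ (or along a subsequence) so as to produce a single witness valid for all of $U$. Here I would first reduce to a fixed $t$, then normalise the $Z'^n$ (e.g. scale so that $\E Z'^n=1$, possible since elements are nonnegative and nonzero in the relevant cone), use weak-$*$ or $\L^1$-compactness arguments on the normalised densities together with closedness of $P_{(t)}$ (Definition \ref{dt}), and extract a limit; the $\al^n_t$ are then pinned down on the support of the limit by the defining identities. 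The bookkeeping across the finitely many values of $t\in\{0,\ldots,T-\eta\}$ is handled by a finite iteration of this extraction. Everything else is a direct unwinding of Propositions \ref{infinite1} and \ref{infinite2}.
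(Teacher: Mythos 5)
Your proposal follows the paper's route very closely: the same choice $P^n\defto[P,U^n]^{(\eta)}$, the same easy direction (which you spell out fully and correctly), and the same reduction of the hard direction to showing $\bigcap_n P^n\subset P$ via the characterisation $[P,U^n]^{(\eta)}=\cap_t R^\eta_t(P,U^n)$ from Proposition~\ref{infinite2}. Two points to flag.

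First, a small logical slip: you justify monotonicity of $(P^n)$ by asserting that ``any $(\eta,U^{n+1})$-m-stable cone is a fortiori $(\eta,U^n)$-m-stable.'' The implication runs the other way. Since $U^n\subset U^{n+1}$, the hypothesis $\E(Zv|\G_t)=\E(Yv|\G_t)$ for all $v\in U^n$ is \emph{weaker} than the corresponding hypothesis over $U^{n+1}$, so $(\eta,U^n)$-m-stability (forcing the conclusion from weaker hypotheses) is the \emph{stronger} property; hence it implies $(\eta,U^{n+1})$-m-stability, not conversely. Your conclusion $P^{n+1}\subset P^n$ is nevertheless correct, exactly because $[P,U^n]^{(\eta)}$, being $(\eta,U^n)$-m-stable, is a fortiori $(\eta,U^{n+1})$-m-stable and therefore contains $[P,U^{n+1}]^{(\eta)}$; the stated reason needs to be reversed.

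Second, and more substantively, the extraction step you leave to ``weak-$*$ or $\L^1$-compactness'' is the crux and is not resolved by that device. After normalising so that $\E Z'^n=1$, you have an $\L^1$-bounded sequence, but $\L^1$-bounded sets are weakly relatively compact only under uniform integrability (Dunford--Pettis), which you have no reason to assume; and $\L^1$ is not a dual space, so ``weak-$*$'' compactness is not available (embedding $\L^1\hookrightarrow(\L^\infty)^*$ gives a weak-$*$ limit that may be a finitely additive measure, hence escape both $\L^1$ and $P_{(t)}$). The paper avoids this by a Komlós/Mazur-type device: choosing positive scalars $a^k_t$ so that weighted combinations of \emph{tails} $f^{n,t}=\sum_{k\ge n}a^k_t Z^{k,t}$ converge in $\L^1$ inside the closed cone $P_{(t)}$, exploiting that for $k\ge n$ every $Z^{k,t}$ is a witness for all of $U^n$, and then renormalising by the scalar tail-weight $\sum_{k\ge n}a^k_t$ (a deterministic, in particular $\G_t$-measurable, quantity) to identify the limiting witness. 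Without this or an equivalent device (e.g.\ a subsequence whose Cesàro/convex-tail combinations converge), the passage to the limit in your plan does not go through.
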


\begin{proof}Remark that the implication (ii) $\Rightarrow$ (i) is
straightforward. Now suppose that $P$ is
$(\eta,U)$-stable for $\eta\in\{0,1\}$. Define
$P^n=[P,U^n]^{(\eta)}$. We may check easily that the sequence $P^n$
is decreasing and $P\subset\bigcap_{n\geq 1} P^n$. We shall show that
$\bigcap_{n\geq 1} P^n\subset P$. Let $Z\in \bigcap_n P^n$ with $\E(Z)>0$
(if not $Z=0$ and then $Z\in P$), then for all $n\geq 1$ and for all
$t\in\{0,\ldots,T-\eta\}$, there exists $Z^{n,t}\in P_{(t)}$ such
that:
$$
\E(Zu|\,\G_{t+\eta})=\E(Z^{n,t}u|\,\G_{t+\eta})
$$
for all $u\in U^n$. For all $t$, there exists a sequence of positive
real numbers $a_t^n$ such that the sequence $f^{n,t}=\sum_{k\geq
n}a_t^{k}Z^{k,t}$ converges in $\L^1$ to some $Z^t\in P_{(t)}$. So
for all $t\in\{0,\ldots,T-1\}$, we have:
$$
\sum_{k\geq
n}a_t^{k}\E(Zu|\,\G_{t+\eta})=\E(f^{n,t}u|\,\G_{t+\eta}),
$$
for all $u\in U^n$. By taking the limit we see that the sequence
$\sum_{k\geq n}a_t^{k}$ converges to some $a_t=\E(Z^t)/\E(Z)$ and
then
$$
\E(Zu|\,\G_{t+\eta})=\dfrac{1}{a_t}\E(Z^{t}u|\,\G_{t+\eta}),
$$
for all $u\in U$. Thus $Z\in [P,U]^{(\eta)}=P$.
\end{proof}

\begin{prop}
\label{count9}$\A$ is strongly (resp. weakly) $U$-time-consistent if
and only if there exists an increasing sequence $(\A^n)_{n\geq 0}$ of acceptance
sets, with $\A_n$
strongly (resp. weakly) $U^n$-time-consistent for each $n$, such that
$\A=\overline{\bigcup_{n\geq 1}\A^n}$.
\end{prop}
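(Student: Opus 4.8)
The plan is to transfer the whole statement to the polar cone $P\defto\A^*$, which, since $\A$ is a weak$^*$-closed convex cone containing $\Linf_-$, is a weak$^*$-closed convex cone contained in $\L^1_+$ and which contains the density of an equivalent probability measure (because $\Q$ does, by Proposition \ref{p2}). The bridge I would use is the equivalence
\[
\A\text{ is strongly (resp.\ weakly) }U\text{-time-consistent}\iff P\text{ is strongly (resp.\ weakly) }(\eta,U)\text{-m-stable},
\]
with $\eta=0$ (resp.\ $\eta=1$). For a \emph{finite} portfolio this is the combination of Theorems \ref{eqq} and \ref{count0}; for the countable $U$ fixed above one obtains it along the same lines, most directly via the definition of $\eta$-representation by $U$ together with Proposition \ref{infinite2} (so that $\A$ is $(\eta,U)$-time-consistent iff it is $\eta$-represented by $U$ iff $\A^*=[\A^*,U]^{(\eta)}$ iff $\A^*$ is an $(\eta,U)$-m-stable closed convex cone). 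Granting this reduction, the proposition is a dual restatement of Theorem \ref{count}.

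For the forward implication, suppose $\A$ is strongly (resp.\ weakly) $U$-time-consistent, so $P=\A^*$ is strongly (resp.\ weakly) $(\eta,U)$-m-stable. Applying Theorem \ref{count} to the closed convex cone $P$, its proof produces the decreasing sequence $P^n\defto[\A^*,U^n]^{(\eta)}$ of closed convex cones, each strongly (resp.\ weakly) $(\eta,U^n)$-m-stable, with $\A^*\subseteq P^n$ and $\bigcap_{n\ge1}P^n=\A^*$. Set $\A^n\defto(P^n)^*$. Then each $\A^n$ is an acceptance set: it is automatically a weak$^*$-closed convex cone; it contains $\Linf_-$ because $P^n\subseteq\L^1_+$ (here one uses $\1\in U^n$, so that the relation $\equiv_{T-\eta,\eta,U^n}$ forces non-negativity at the final time); it is arbitrage-free because $P^n\supseteq\A^*$ still contains a strictly positive density; and it is stable under multiplication by $\Linf_+(\F_0)$, this being inherited from the $\G_0$-homogeneity built into $[\,\cdot\,]^{(\eta)}$. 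Since $P^n$ is a closed convex cone, $(\A^n)^*=P^n$, which is $(\eta,U^n)$-m-stable, so by Theorems \ref{eqq} and \ref{count0} $\A^n$ is strongly (resp.\ weakly) $U^n$-time-consistent; and $(\A^n)$ increases because $(P^n)$ decreases. Finally, taking polars in $\A^*=\bigcap_{n}P^n$ and using that the polar of an intersection of cones is the weak$^*$-closed convex hull of the union of the polars — here simply $\overline{\bigcup_n(P^n)^*}$, since the $(P^n)^*$ form an increasing chain of convex cones — we get $\A=\A^{**}=\overline{\bigcup_{n\ge1}\A^n}$, as required.

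For the converse, given an increasing sequence $(\A^n)_{n\ge0}$ of acceptance sets with $\A^n$ strongly (resp.\ weakly) $U^n$-time-consistent and $\A=\overline{\bigcup_n\A^n}$, pass to polars: $\A^*=\bigcap_n(\A^n)^*$, a \emph{decreasing} sequence, and each $(\A^n)^*$ is strongly (resp.\ weakly) $(\eta,U^n)$-m-stable by Theorems \ref{eqq} and \ref{count0}. The implication (ii)$\Rightarrow$(i) of Theorem \ref{count}, applied to the closed convex cone $\A^*$, then gives that $\A^*$ is strongly (resp.\ weakly) $(\eta,U)$-m-stable, whence, by the bridge equivalence, $\A$ is strongly (resp.\ weakly) $U$-time-consistent. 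I expect the only genuinely delicate points to be the dual reformulation in the first paragraph — i.e.\ upgrading the finite-portfolio Theorems \ref{eqq} and \ref{count0} to a countable $U$ — and the bookkeeping that the cones $(P^n)^*$ coming out of Theorem \ref{count} really do satisfy every defining property of an acceptance set (weak$^*$-closedness, arbitrage-freeness, containment of $\Linf_-$, and $\F_0$-homogeneity).
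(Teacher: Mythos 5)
Your strategy is to dualize: transfer the statement to the polar cone $P=\A^*$ and read it off from Theorem~\ref{count}. The difficulty is that the ``bridge'' you invoke --- $\A$ is strongly (resp.\ weakly) $U$-time-consistent if and only if $\A^*$ is strongly (resp.\ weakly) $U$-m-stable, for \emph{countable} $U$ --- is precisely Theorem~\ref{equ}, and in the paper Theorem~\ref{equ} is \emph{deduced from} Theorem~\ref{count} together with Proposition~\ref{count9}. So as written, your reduction is circular: you are using (a restatement of) the proposition you are trying to prove.

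Your attempt to repair this, via the chain $\A$ is $(\eta,U)$-time-consistent $\Leftrightarrow$ $\A$ is $\eta$-represented by $U$ $\Leftrightarrow$ $\A^*=[\A^*,U]^{(\eta)}$ $\Leftrightarrow$ $\A^*$ is $(\eta,U)$-m-stable, does not escape the problem. The third equivalence is indeed Proposition~\ref{infinite2}(2) and is independently available. But the first equivalence is Theorem~\ref{equ1}, whose forward implication in the paper is proved by quoting an intermediate step in the proof of Proposition~\ref{count9}; and the second equivalence --- a polar-cone duality between the approximation-by-finite-$V^{\varepsilon}$ definition of $\eta$-representation by countable $U$ and the closure $[\A^*,U]^{(\eta)}$ --- is nowhere established in the paper for countable $U$ and would need a separate argument. (For finite $V$ it follows from Theorems~\ref{rafa5} and~\ref{rafa2}; the countable case involves a genuine interchange of closures and unions/intersections that is not automatic.) Once one of those two links is established directly, your forward and converse arguments --- taking $P^n=[\A^*,U^n]^{(\eta)}$, dualizing, and checking that the $(P^n)^*$ are acceptance sets that are $U^n$-time-consistent --- do go through, and the structure is clean; but the link is the hard part, and it is exactly what the paper's direct proof is doing work to avoid.

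By contrast, the paper's proof of Proposition~\ref{count9} stays entirely on the primal side: for the reverse implication it shows that the conditional risk measures $\rho_t^n$ associated to $\A^n_t$ decrease to $\rho_t$, then builds the defining approximating sequences for $U$-time-consistency out of the $U^n$-time-consistency of each $\A^n$; for the forward implication it takes $\A^n=[\A,U^n]^{\eta}$ and shows, by a backwards induction in $t$ and an $\varepsilon$-perturbation, that the approximants land in $\bigcup_n[\A,U^n]^{\eta}$. If you want to keep the dual route, the minimal fix is to prove directly (not via Theorems~\ref{equ}/\ref{equ1}) that $\A$ being $\eta$-represented by countable $U$ is equivalent to $\A^*=[\A^*,U]^{(\eta)}$, and also to prove the forward half of Theorem~\ref{equ1} without citing Proposition~\ref{count9}; as it stands, these are open ends, not delicacies.
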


\begin{proof}Suppose that $\A=\overline{\bigcup_{n\geq 1}\A^n}$ and let $\rho_t$
and $\rho_t^n$ be respectively the coherent risk measures associated
to the sets $\A_t$ and $\A^n_t$. Since for all $n\geq 1$, we have
$\A^n\subset[\A,U^n]^\eta$, we can suppose from now on that
$\A^n=[\A,U^n]^\eta$. First we prove that for all $X\in \Linf$, the
sequence $\rho^n_t(X)$ converges a.s to $\rho_t(X)$. Remark that for
all $n$, we have $X-\rho^n_t(X)\in\A^n\subset\A^{n+1}$ and then
$\rho^{n+1}_t(X-\rho^n_t(X))=\rho^{n+1}_t(X)-\rho^n_t(X)\leq 0$ and
the sequence $\rho^n_t(X)$ is decreasing. Moreover for all $n$,
$$
\rho^n_t(X)\geq \rho_t(X),
$$
Define
$$
\rho^\infty_t(X)=\liminf\rho^n_t(X)\geq \rho_t(X).
$$
Since $\A=\overline{\bigcup_{n\geq 1}\A^n}$ we deduce that
$\A_t=\overline{\bigcup_{n\geq 1}\A^n_t}$ and then for
$Y=X-\rho_t(X)\in\A_t$, there exists a sequence $Y^n\in \A_t^{k_n}$
with $k_n\geq n$ such that $Y^n$ converges weakly$^*$ to $Y$ in
$\Linf$. Therefore
$$
\rho_t^\infty(Y)\leq \liminf \rho_t^\infty(Y^n)\leq \liminf
\rho_t^{k_n}(Y^n)\leq 0.
$$
Hence $\rho_t^\infty(X)\leq \rho_t(X)$ from the translation
invariance property of $\rho_t^\infty$. We deduce that $\rho_t(X)=
\rho^\infty_t(X)$. Since each $\A^n$ is
$(\eta,U^n)$-time-consistent, for all $X\in \Linf$ and $t\in\{0,\ldots,T-1\}$,
there exists $X_{n,m}\in\Linf$ and
$Y^{n,m}\in\Linf(\G_{t+\eta},\R^n)$ such that $X_{n,m}$ converges
weakly$^*$ to $X$ in $\Linf$ when $m$ goes to infinity,
$X_{n,m}-Y^{n,m}.U^n\in\A^n_{t+1}\subset\A_{t+1}$ and
$$
\rho^n_t(X)=\liminf_m\rho^n_t\left(Y^{n,m}.U^n\right).
$$
Since $\rho^{n}_t\geq \rho_t$, we obtain
$$
\rho^n_t(X)\geq \liminf_m\rho_t\left(Y^{n,m}.U^n\right).
$$
We take the limit in $n$ and get
$$
\rho_t(X)=\liminf_{n,m}\rho_t\left(Y^{n,m}.U^n\right).
$$

Now assume that $\A$ is $(\eta,U)$-time-consistent and define
$\A^n=[\A,U^n]^{\eta}$. Fix $t\in\{0,\ldots,T-\eta\}$ and let $X\in \A_t$, so
there exists $X_{n}\in\Linf$, $Y^{n}\in\Linf(\G_{t+\eta},\R^n)$ and
an $\R^n$-valued portfolio $V^n\subset U$, containing the unit $\1$
such that $X_{n}$ converges weakly$^*$ to $X$ in $\Linf$,
$X_{n}-Y^{n}.V^n\in\A_{t+1}$ and
$$
\rho_t(X)=\liminf\rho_t\left(Y^n.V^n\right).
$$
Therefore for an arbitrary $\vare>0$, there exists some $N=N_\vare$
such that for all $n\geq N$
$$
\vare+\rho_t(X)\geq \rho_t\left(Y^{n}.V^n\right).
$$
Remark that
$$
X_n-\vare=(X_n-Y^n.V^n)+(Y^n.V^n-\vare),
$$
with $X_n-Y^n.V^n\in \A_{t+1}$ and $Y^n.V^n-\vare\in
\K^\eta_{t}(\A,V^n).V^n$. Consequently
$$X_n-\vare\in
\K^\eta_t(\A,V^n).V^n+\A_{t+1},
$$
with $V^n\subset U^n$. By backwards induction on $t$, we deduce that
for every $X\in\A$, there exists a sequence $X_n$ which converges
weakly$^*$ to $X$ and for any $\vare>0$, we have:
$$
X_n-\vare\in\overline{\bigcup_{k\geq
1}[\A,U^k]^\eta}=\overline{\bigcup_{k\geq 1}\A^k},
$$
by taking the limit in $n$ we obtain that $X\in \overline{\bigcup_{n\geq
1}\A^n}$ and then $\A=\overline{\bigcup_{n\geq 1}\A^n}$.
\end{proof}

\begin{theorem}
\label{equ}$\A$ is strongly (resp. weakly) $U$-time-consistent if
and only if $\A^*$ is strongly (resp. weakly) $U$-stable.
\end{theorem}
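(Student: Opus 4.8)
The plan is to deduce Theorem \ref{equ} from the finite-portfolio equivalences already in hand, Theorems \ref{count0} and \ref{eqq}, by a limiting argument using Proposition \ref{count9} and Theorem \ref{count}. Throughout one treats the weak and strong cases uniformly by writing $\eta=1$ (resp. $\eta=0$). The glue between the two halves is the duality bookkeeping: for acceptance sets $\A'\subset\A''$ iff $(\A'')^*\subset(\A')^*$; for an increasing sequence of weak$^*$-closed convex cones $\A^n$ one has $\bigl(\overline{\bigcup_n\A^n}\bigr)^*=\bigcap_n(\A^n)^*$; and for a decreasing sequence of closed convex cones $P^n\subset\L^1$ one has $\bigl(\bigcap_nP^n\bigr)^*=\overline{\bigcup_n(P^n)^*}$. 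The bipolar identity $\A^{**}=\A$ is available since $\A$ is weak$^*$-closed (Proposition \ref{p2}), and $\A^*\subset\L^1_+$ since $\Linf_-\subset\A$.

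For the forward implication, assume $\A$ is $(\eta,U)$-time-consistent. By Proposition \ref{count9} there is an increasing sequence of acceptance sets $(\A^n)$, each $(\eta,U^n)$-time-consistent, with $\A=\overline{\bigcup_n\A^n}$. Since $U^n$ is finite, Theorem \ref{count0} applied to $\A^n$ gives that $\A^n$ is $\eta$-represented by $U^n$, and then Theorem \ref{eqq} applied to $\A^n$ gives that $(\A^n)^*$ is $(\eta,U^n)$-m-stable. The polars $(\A^n)^*$ decrease, are closed convex cones in $\L^1_+$, and satisfy $\bigcap_n(\A^n)^*=\A^*$; so the implication (ii)$\Rightarrow$(i) of Theorem \ref{count}, with $P=\A^*$ and $P^n=(\A^n)^*$, shows that $\A^*$ is $(\eta,U)$-m-stable.

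For the converse, assume $\A^*$ is $(\eta,U)$-m-stable. Following the proof of Theorem \ref{count}, put $P^n=[\A^*,U^n]^{(\eta)}$; these form a decreasing sequence of $(\eta,U^n)$-m-stable closed convex cones with $\bigcap_nP^n=\A^*$, and each $P^n\subset\L^1_+$ (any $Z$ in such a cone satisfies $Zu=\E(Zu|\G)\ge 0$ for $u\in U^n$, and $u>0$ a.s. since $u\in\calN$). Set $\A^n=(P^n)^*$ (polar in $\Linf$). Then $(\A^n)$ is an increasing sequence of weak$^*$-closed convex cones with $\overline{\bigcup_n\A^n}=\bigl(\bigcap_nP^n\bigr)^*=\A^{**}=\A$. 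One checks that each $\A^n$ is the acceptance set of a relevant conditional coherent risk measure with the Fatou property: it contains $\Linf_-$ because $P^n\subset\L^1_+$, it is arbitrage-free and relevant because $P^n\supset\A^*$ contains the density of an equivalent probability measure (recall $\rho$ is relevant), and the remaining axioms (weak$^*$-closedness, stability under multiplication by bounded positive $\G_0$-measurable random variables) are inherited from the polar construction. Since $P^n$ is $(\eta,U^n)$-m-stable with $U^n$ finite, Theorem \ref{eqq} gives that $\A^n$ is $\eta$-represented by $U^n$, hence $(\eta,U^n)$-time-consistent by Theorem \ref{count0}; then Proposition \ref{count9} yields that $\A$ is $(\eta,U)$-time-consistent.

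The main obstacle is the duality bookkeeping in the converse direction: one must verify carefully that the cones $(P^n)^*$ genuinely satisfy \emph{all} the axioms of Proposition \ref{p2} (especially relevance and the $\G_0$-conditional positive homogeneity implicit in "acceptance set"), since Proposition \ref{count9} is stated for acceptance sets; and, in both directions, that the finite portfolios appearing in Theorem \ref{count} and in Proposition \ref{count9} can be taken to be the same increasing sequence $(U^n)$ with $U^0=\{\1\}$, so that the finite-case equivalences align period by period. The remaining points — commuting polars with monotone limits of cones, and the uniform handling of $\eta=0$ and $\eta=1$ — are routine once the bipolar theorem is invoked.
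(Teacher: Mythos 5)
Your proof is correct and unpacks precisely the argument the paper compresses into ``an immediate consequence of Theorem \ref{count} and Proposition \ref{count9}'': the finite-case equivalences (Theorems \ref{count0} and \ref{eqq}) bridge time-consistency of the approximating acceptance sets $\A^n$ and m-stability of their polars, while the two approximation results transfer these equivalences to the countable limit via the duality $\bigcap_n(\A^n)^*=\bigl(\overline{\bigcup_n\A^n}\bigr)^*$. The only point the paper leaves entirely implicit, which you rightly flag as needing care, is the verification in the converse direction that the polars $(P^n)^*$ are genuine acceptance sets so that Proposition \ref{count9} applies.
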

\begin{proof}This is an immediate consequence of Theorem
\ref{count} and Proposition \ref{count9}.
\end{proof}

\begin{theorem}
\label{equ1}$\A$ is strongly (resp. weakly) $U$-time-consistent if
and only if $\A$ is strongly (resp. weakly) represented by $U$.
\end{theorem}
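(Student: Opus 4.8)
The plan is to chain together the equivalences already in hand rather than argue directly. By Theorem \ref{equ} we know that $\A$ is strongly (resp. weakly) $U$-time-consistent if and only if $\A^*$ is strongly (resp. weakly) $U$-stable, so it suffices to show that $\A$ is strongly (resp. weakly) represented by $U$ if and only if $\A^*$ is strongly (resp. weakly) $U$-stable. For the finite case this is exactly the content of Theorem \ref{eqq} (via Theorem \ref{count0}), so the real work is to lift this to a countable portfolio $U$, and the natural tool is the pair Theorem \ref{count} and Proposition \ref{count9} together with the approximation built into the definition of $\eta$-representation by a countable set.

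First I would unwind the definitions on the representation side. By the definition of $\A$ being $\eta$-represented by $U$, this says: for each $X\in\A$ there is a weak$^*$-approximating sequence $X_n\in\A$ such that for every $\vare>0$ some $X_n-\vare$ lies in $[\A,V^\vare]^\eta$ for a finite $V^\vare\subset U$. I would show this is equivalent to the existence of an increasing sequence of acceptance sets $\A^n$, with $\A^n$ being $\eta$-represented by the finite set $U^n$ and $\A=\overline{\bigcup_n\A^n}$; the candidate is $\A^n=[\A,U^n]^\eta$, exactly as in the proof of Proposition \ref{count9}. Indeed, "$\A^n$ is $\eta$-represented by $U^n$" is by Theorem \ref{count0} the same as "$\A^n$ is $(\eta,U^n)$-time-consistent", which is how $[\A,U^n]^\eta$ is set up. So the statement "$\A$ is $\eta$-represented by $U$" collapses (using Proposition \ref{count9} in the direction already proved there) to "$\A$ is $(\eta,U)$-time-consistent", and then Theorem \ref{equ} closes the loop. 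In fact, once one observes that $\A^n=[\A,U^n]^\eta$ simultaneously witnesses both "$\A^n$ is $U^n$-time-consistent" and "$\A^n$ is represented by $U^n$", the theorem is essentially a restatement of Theorem \ref{equ} with Theorem \ref{count0} plugged in at each finite level; I would spell this out as:
$$
\A\ \eta\text{-represented by }U\iff \A^n=[\A,U^n]^\eta\uparrow\A\iff \A\ (\eta,U)\text{-time-consistent}\iff \A^*\ (\eta,U)\text{-stable},
$$
where the last step is Theorem \ref{equ} and the middle two are Proposition \ref{count9} and Theorem \ref{count0}.

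The step I expect to be the main obstacle is verifying that the weak$^*$-approximation clause in the definition of "$\eta$-represented by $U$" matches up, on the nose, with the closure $\A=\overline{\bigcup_n[\A,U^n]^\eta}$ and with the $\liminf$ conditions appearing in the definition of $(\eta,U)$-time-consistency. Concretely, in the representation definition the datum is $X_n-\vare\in[\A,V^\vare]^\eta$ for a finite $V^\vare$ depending on $n$ and $\vare$; since the $U^n$ increase to $U$, any finite $V^\vare$ sits inside some $U^{m}$, and $[\A,V^\vare]^\eta\subset[\A,U^m]^\eta=\A^m$ because enlarging the asset portfolio can only enlarge the set of attainable admissible portfolios. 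This gives $X_n-\vare\in\bigcup_m\A^m$ for every $\vare$, hence $X_n\in\overline{\bigcup_m\A^m}$, hence $X\in\overline{\bigcup_m\A^m}$; the reverse inclusion is immediate since each $\A^m\subset\A$. Once this bookkeeping between "finite portfolios exhausting $U$" and "increasing acceptance sets" is cleanly established, the rest is a direct appeal to the cited results, and I would present the argument in exactly that order: reduce to the increasing-sequence formulation, invoke Theorem \ref{count0} at each finite stage, and finish with Theorem \ref{equ}.
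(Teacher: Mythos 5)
Your proof is correct and follows essentially the same route as the paper: reduce the representation condition to the increasing-sequence formulation $\A=\overline{\bigcup_n[\A,U^n]^\eta}$, apply Theorem~\ref{count0} at each finite level, and then invoke Proposition~\ref{count9}. The only caveat is that the detour through $\A^*$ and Theorem~\ref{equ} is redundant: once you have shown $\A$ is $\eta$-represented by $U$ $\iff$ $\A^n=[\A,U^n]^\eta\uparrow\A$ $\iff$ $\A$ is $(\eta,U)$-time-consistent, you are already done, and the final link to $\A^*$-stability adds nothing; the paper goes directly from the representation definition to Proposition~\ref{count9} without passing through the polar cone.
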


\begin{proof}Suppose that $\A$ is
$(\eta,U)$-time-consistent. We show in the proof of Theorem
\ref{count9} that for all $X\in\A$, there exists a sequence
$X_n\in\A$ which converges weakly$^*$ to $X$ in $\Linf$ such that
for all $\vare>0$, there exists $n\geq 1$ and $V^n\subset U$ with
$$
X_n-\vare\in [\A,V^n]^{\eta}.
$$

Conversely let $U^n$ be a sequence of finite sets, increasing to
$U$. By assumption for all $X\in \A$, there exists a sequence
$X_n\in\A$ which converges weakly$^*$ to $X$ in $\Linf$ such that
for all $\vare>0$, there exists $n\geq 1$ and a finite subset
$V^n\subset U$ such that $X_n-\vare\in [\A,V^n]^{\eta}$. Then
$$
\A\subset\overline{\bigcup_{n\geq 1}[\A,U^n]^{\eta}},
$$
and consequently $\A$ is $(\eta,U)$-time-consistent.
\end{proof}

\begin{defi}We say that $\A$ is countably (resp. finitely) strongly (resp.
weakly) time-consistent
if there exists a countable (resp. finite) set $U\subset \calN$ such
that $\A$ is strongly (resp. weakly) $U$-time-consistent. By
analogy, we say that $\A^*$ is countably (resp. finitely) strongly
(resp. weakly) stable if there exists a countable (resp. finite) set
$U\subset \calN$ such that $\A^*$ is strongly (resp. weakly)
$U$-stable.
\end{defi}

\begin{theorem}\label{separable}
Suppose that the vector space $\L^1$ is separable, then the cone
$\A$ is countably strongly time-consistent.
\end{theorem}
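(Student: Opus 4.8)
The plan is to use the duality of Section \ref{repsec} to convert the claim into a statement about the polar cone $\A^{*}\subset\L^{1}_{+}$, and then to exploit separability of $\L^{1}$ to build a \emph{countable} family of num\'eraires rich enough to make the strong m-stability condition collapse. By Theorem \ref{equ} it is enough to produce a countable set $U\subset\calN$ for which $\A^{*}$ is strongly $U$-m-stable.

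Here is how I would construct $U$. Since $\L^{1}$ is separable, the closed unit ball $B$ of $\Linf=(\L^{1})^{*}$ is weak$^{*}$-compact by Banach--Alaoglu and weak$^{*}$-metrizable, hence weak$^{*}$-separable; fix a countable set $\{h_{k}:k\ge1\}$ which is weak$^{*}$-dense in $B$ and put
$$
U=\{\1\}\cup\{\,2+h_{k}:k\ge1\,\}.
$$
I would first check that $U\subset\calN$. Each $v=2+h_{k}$ satisfies $v\ge\1$, so monotonicity and translation-invariance of $\rho_{t}$ give $-\rho_{t}(-v)\ge-\rho_{t}(-\1)=1>0$ a.s.\ and $1/(-\rho_{t}(-v))\le1\in\Linf$; by the num\'eraire characterisation of Theorem \ref{numthm} (applied with $\G_{t}$ in place of $\F_{0}$) this gives $v\in\calN_{t}$ for every $t$, so $v\in\calN$, and clearly $\1\in\calN$.

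The crux is the following separation property: if $D\in\L^{1}$ and, for some $t$, $\E(Dv\mid\G_{t})=0$ for every $v\in U$, then $D=0$. Testing against $v=\1$ gives $\E(D\mid\G_{t})=0$, and then testing against $v=2+h_{k}$ gives $\E(Dh_{k}\mid\G_{t})=0$ for all $k$. Given $A\in\G$, weak$^{*}$-density of $\{h_{k}\}$ in the metrizable ball $B$ lets me extract a subsequence with $h_{k_{n}}\to 1_{A}$ weak$^{*}$; then for every $G\in\G_{t}$,
$$
\E\!\big(1_{G}\,\E(D1_{A}\mid\G_{t})\big)=\lim_{n}\E(1_{G}Dh_{k_{n}})=\lim_{n}\E\!\big(1_{G}\,\E(Dh_{k_{n}}\mid\G_{t})\big)=0,
$$
so $\E(D1_{A}\mid\G_{t})=0$ a.s.; taking $A=\{D\ge0\}$ and $A=\{D<0\}$ forces $D^{+}=D^{-}=0$, i.e.\ $D=0$. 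With this in hand, strong $U$-m-stability of $\A^{*}$ is immediate: in the defining hypothesis one is given $Z,Z^{1},\dots,Z^{k}\in\A^{*}$, a partition $F^{i}_{t}\in\F_{t}$, multipliers $\al^{i}\in\L^{0}_{+}(\F_{t+1})$, and $Y=\sum_{i}1_{F^{i}_{t}}\al^{i}Z^{i}\in\L^{1}$ with $\E(Zv\mid\G_{t})=\E(Yv\mid\G_{t})$ for all $v\in U$; applying the separation property to $D=Z-Y$ yields $Y=Z\in\A^{*}$. Hence $\A^{*}$ is strongly $U$-m-stable for a countable $U\subset\calN$, so by Theorem \ref{equ} $\A$ is strongly $U$-time-consistent, i.e.\ $\A$ is countably strongly time-consistent.

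I expect no serious obstacle here; the points that need care are that $\{h_{k}\}$ must be chosen dense in the whole unit ball of $\Linf$, so that each indicator $1_{A}$ is a weak$^{*}$-limit of a \emph{sequence} drawn from $U$ (which is exactly where weak$^{*}$-metrizability, and hence separability of $\L^{1}$, is used), and that $2+h_{k}$ must lie in $\calN_{t}$ at \emph{every} time $t$, not only at $t=T$. The substantive content is that separability of $\L^{1}$ is precisely what makes a countable set of num\'eraires "separating", after which the (essentially trivial) representation of Lemma \ref{l6} is upgraded from $\calN$ to a countable subfamily.
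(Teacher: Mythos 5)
Your argument is correct and follows essentially the same route as the paper's: reduce via Theorem \ref{equ} to showing $\A^*$ is strongly $U$-m-stable for a countable $U$, take $U$ to consist of $\1$ together with affine perturbations of $\1$ by a countable dense family, and use that density to force the equality $Y=Z$ in the m-stability condition. The only real difference is technical: the paper takes a countable set that is $\L^1$-norm dense in $B_+=\{u\in\L^1_+:u\le 1\}$ and passes to the limit in $\E(fu_n)$ by a truncation estimate, whereas you take a weak$^*$-dense sequence in the unit ball of $\Linf$ (using Banach--Alaoglu plus weak$^*$-metrizability from separability of $\L^1$) and pass to the limit directly via the $\L^1$--$\Linf$ pairing against $1_GD$; you also, usefully, make explicit the check that each $2+h_k$ lies in $\calN_t$ for every $t$, which the paper leaves implicit.
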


\begin{proof}Thanks to Theorem \ref{equ}, we need only show that $\A^*$ is
countably strongly stable. Since we assume that the space $\L^1$ is
separable it follows that
the subset $B_+\defto\{X\in\L_+^{1}:\;X\leq 1\}$ is separable.
Denote by $H=\{u_n;\,n\geq 1\}$, a countable dense set and define
$$
U=\{1\}\cup\{1+u_{n}:\;n\geq 1\}.
$$
We may check easily that $\A^*$ is strongly $U$-stable, indeed let
$t\in\{0,\ldots,T-1\}$ and $Z,Z^1,\ldots,Z^k\in\A^*$ such that there
exists some $\al^i\in \L^0_+(\G_{t+1})$ with each $\al^i Z^i\in\L^1$
and a partition $F_t^1,\ldots,F_t^k$ satisfying $\E((Z-\sum_{i=1}^k
1_{F^i_t}\al^i Z^i)u|\G_t)=0$ for all $u\in U$, in particular
$\E(Z-\sum_{i=1}^k 1_{F^i_t}\al^i Z^i)u_n=0$ for all $n\geq 1$. We
deduce that for all $u\in B_+$, there is a sequence $u_n\in B_+$
which converges to $u$ in $\L^1$, define $f=Z-\sum_{i=1}^k
1_{F^i_t}\al^i Z^i$ and $f_N=f 1_{(|f|\leq N)}$ for an integer
$N\geq 1$, then
$$
|\E f(u_n-u)|\leq|\E f_N(u_n-u)|+|\E (f-f_N)(u_n-u)|\leq N
\E|u_n-u|+2\E |f|1_{(|f|\geq N)}.
$$
We take the limit when $n$ goes to infinity and obtain
$$
\lim_{n\rightarrow \infty}|\E f(u_n-u)|\leq 2\E|f|1_{(|f|\geq N)}.
$$
We take the limit again a $N$ goes to infinity to obtain
$$
\lim_{n\rightarrow \infty}\E f(u_n-u)=0.
$$
Then $\E(Z-\sum_{i=1}^k 1_{F^i_t}\al^i Z^i)u=0$ for all $u\in \Linf$
and thus $\sum_{i=1}^k 1_{F^i_t}\al^i Z^i=Z\in \A^*$.
\end{proof}

We conclude this subsection with the following
counterexample.

\begin{counterexample}
We take an uncountable collection of independent, identically
distributed Uniform[1,2] random variables indexed by $t\in[0,1]$, with
the usual product measure and $\sigma$-algebra. So, to be concrete:
$\Omega=[1,2]^{[0,1]}$ and we take the
coordinate process $(X_t)_{t\in [0,1]}$ with $X_t(\omega)=\omega(t)$
for $\omega\in\Omega$. We define the $\sigma$-algebra
$$
\F_1=\F=\B([1,2]^{[0,1]}),
$$
and the probability measure $\P$ is the product meaure on $\Omega$ corresponding
to Lebesgue measure
$\lambda$ on each component interval $[1,2]$. We take $\F_0$ to be the
trivial $\sigma$-agebra. Note that the vector space
$\L^1(\Omega,\F,\P)$ is {\em not} separable.

Now consider the coherent
risk measure associated with the singleton $\{\P\}$: we claim that there is no
countable set $U\subset \N$ such that $\{\P\}$ is $U$-m-stable.

To prove this, suppose that
there is such a countable set $U$, with $U=\{u_n,\,n\geq 1\}$ where each $u_n$
is $\F$-measurable.
Then for each $n$, there exists a sequence $(X_{s^n_j})_{j\geq 1}$ such
that $u_n$ is measurable with respect to $\sigma(X_{s^n_j}:\, j\geq 1)$
and so, by diagonalisation, there is a sequence $(X_{s_j})_{j\geq 1}$
such that each $u_n$ is measurable with respect to $\sigma(X_{s_j}:\,
j\geq 1)$.

Take $X=X_t$ for some $t$ with $t\notin
\{s_j:\,j\geq 1\}$. By assumption, defining $V^n=(u_1,\ldots,u_n)$, there is
\begin{itemize}
\item[(i)]a sequence $X_n\in \L^\infty$
that converges weakly$^*$ to $X$;
\item[(ii)]a sequence $Y^n\in \R^n$ (since $\F_0$ is trivial) such that
$X_n-Y^n.V^n \in \A_1$ for each integer $n$;  with
\item[(iii)]$\E_{\P}(X)=\lim_{n\rightarrow\infty}\E_{\P}(Y^n.V^n)$.
\end{itemize}
Since  $\A_1 =\L^{\infty}_-$ we have $X_n\leq Y^n.V^n$ a.s. for each
integer $n$ and so, for all $Z\in \L^1_+$ we have
$\E_{\P}(Z\,Y^n.V^n)\geq \E_{\P}(Z\,X_n)$. In particular, taking
$p\in \N$, and setting $Z_p=X^{2p}/\E(X^{2p})$,  we see that for all
$p$:
\begin{eqnarray*}
\E_{\P}(X)&=&\liminf\E_{\P}(Y^n.V^n)=\liminf\E_{\P}Z_p(Y^n.V^n)\\
&\geq&
\liminf\E_{\P}(Z_p\,X_n)=\E_{\P}(X^{2p+1})/\E_{\P}(X^{2p})\geq
\|X\|_{\L^{2p}},\\
\end{eqnarray*}
the second equality in the first line holding since $Z_p$ and
$(Y^n.V^n)$ are independent and $\E Z_p=1$ and the last inequality
in the second line is an application of H\:older's inequality
$\|X\|_{\L^{q}}\leq \|X\|_{\L^{q+1}}$ with $q=2p$. We take the limit
as $p\rightarrow \infty$ to obtain:
$$
\E_{\P}(X)=3/2\geq\hbox{ess-sup} X=2,
$$
since $X$ is uniform on $[1,2]$ under $\P$. This is the desired contradiction.
\end{counterexample}

\subsection{The case of a finite sample space.}
Here we consider the case where $\Om$ is finite with cardinality $N$. We
consider random variables as vectors in $\R^N$. It is not immediately
obvious (but is, nevertheless, true) that an acceptance set $\A$ is finitely
strongly time consistent.

\begin{lem}
The cone $\K^\eta(\A,V)$ is closed in $\Linf$.
\end{lem}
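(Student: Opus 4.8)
The plan is to reduce the statement to a standard closedness criterion for sums of finitely many closed convex cones in Euclidean space. Write $C_t\defto\K^\eta_t(\A,V)=\A_t(V)\cap\Linf(\G_{t+\eta};\R^d)$, so that $\K^\eta(\A,V)=\bigoplus_{t=0}^{T-\eta}C_t$. Since $\Om$ has cardinality $N$, the space $\Linf(\G;\R^d)$ is finite dimensional (isomorphic to $\R^{Nd}$), each $C_t$ is a genuinely closed convex cone in it, and $\K^\eta(\A,V)$ is a finite sum of such cones. I would then invoke the standard fact from finite-dimensional convex analysis (see e.g. the material on sums and recession cones in Rockafellar, \emph{Convex Analysis}) that a sum $C_0+\dots+C_{T-\eta}$ of finitely many closed convex cones in a finite-dimensional space is closed provided the following compatibility condition holds: whenever $y_0,\dots,y_{T-\eta}$ with $y_t\in C_t$ satisfy $\sum_{t}y_t=0$, then each $y_t$ lies in the lineality space $C_t\cap(-C_t)$.

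So the real work is to verify this compatibility condition, and here is where the cone structure of $\A$ (not merely its closedness) is used. Suppose $\sum_{t=0}^{T-\eta}y_t=0$ with $y_t\in C_t$, and set $Y_t\defto y_t.V$; since $y_t\in\A_t(V)$ we have $\al\,Y_t=(\al\,y_t).V\in\A$ for every $\al\in\Linf_+(\G_t)$ (in particular $Y_t\in\A$), and $\sum_t Y_t=0$. Fix an index $t^*$ and an arbitrary $\al\in\Linf_+(\G_{t^*})$. Then
$$
(\al-1)Y_{t^*}=\sum_{t\ne t^*}Y_t+\al\,Y_{t^*}=\sum_{t\ne t^*}(1\cdot Y_t)+\al\,Y_{t^*},
$$
and every term on the right lies in $\A$ (for $t\ne t^*$ because $Y_t\in\A$, and for $t^*$ because $\al\in\Linf_+(\G_{t^*})$ and $y_{t^*}\in\A_{t^*}(V)$); as $\A$ is a convex cone, $(\al-1)Y_{t^*}\in\A$. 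Now, given a nonzero $\be\in\Linf_+(\G_{t^*})$, apply this with $\al=1-\be/\|\be\|_\infty\in\Linf_+(\G_{t^*})$ and use that $\A$ is a cone to get $-\be\,Y_{t^*}\in\A$; hence $\al(-Y_{t^*})\in\A$ for all $\al\in\Linf_+(\G_{t^*})$, i.e. $-y_{t^*}\in\A_{t^*}(V)$. Since $-y_{t^*}$ is still $\G_{t^*+\eta}$-measurable, $-y_{t^*}\in C_{t^*}$, so $y_{t^*}\in C_{t^*}\cap(-C_{t^*})$. This is exactly the required condition, and the lemma follows.

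The main obstacle is precisely this compatibility step: one should resist arguing through test measures (which is awkward for $\bbQ\not\sim\P$) and instead work directly with the defining property ``$\al X\in\A$ for all $\al\in\Linf_+(\G_t)$'' of $\A_t(V)$, together with the fact that $\A$ is a \emph{cone} (closed under addition and nonnegative scaling). A secondary, purely bookkeeping, point is to quote the right form of the convex-analysis closedness lemma --- the version with the lineality-space hypothesis, rather than the strictly stronger ``the only such relation is trivial'' version --- since here the lineality spaces $C_t\cap(-C_t)$ (which contain, for instance, directions $\al(v_j e_i-v_i e_j)$ whenever these happen to be $\G_{t+\eta}$-measurable) need not reduce to $\{0\}$.
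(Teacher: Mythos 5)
Your proof is correct, and it takes a genuinely different route from the paper's. The paper argues by backwards induction on $t$: writing $\K^\eta_{t,T-\eta}(\A,V)=\K^\eta_t(\A,V)+\K^\eta_{t+1,T-\eta}(\A,V)$, it asserts that $\N\defto \K^\eta_t(\A,V)\cap\bigl(-\K^\eta_{t+1,T-\eta}(\A,V)\bigr)$ is a vector space, replaces the first summand by $\K^\eta_t(\A,V)\cap\N^\perp$, and then runs the usual normalisation-and-compactness argument on a convergent sequence $x^n=x^n_0+x^n_1$. You instead invoke the finite-dimensional closedness criterion for sums of closed convex cones (which packages exactly that induction-plus-normalisation analysis) and spend your effort verifying its hypothesis, namely that any null combination $\sum_t y_t=0$ with $y_t\in\K^\eta_t(\A,V)$ has each $y_t$ in the lineality space of its cone; your verification, using only that $\A$ is a convex cone and the definition of $\A_{t^*}(V)$ via multiplication by $\al\in\Linf_+(\G_{t^*})$, is sound. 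The two proofs rest on the same structural fact, but you prove it while the paper merely asserts it: indeed, your lineality conclusion is precisely what is needed to justify the paper's unproved claim that $\N$ is a vector space (given $x\in\N$, write $-x$ as a sum of elements of the later cones and apply your conclusion to the resulting null relation to get $-x\in\N$). So your approach buys a self-contained justification of the key algebraic step at the price of an external citation for the analytic one, whereas the paper does the analysis by hand and leaves the algebra as an assertion; you are also right that the lineality-space form of the criterion is the one required, since directions such as $\al(v_je_i-v_ie_j)$ can make the lineality spaces nontrivial.
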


\begin{proof}
Remark that $\K^\eta(\A,V)=\K_{0,T-\eta}^\eta(\A,V)$ where
$$
\K_{t,T-\eta}^\eta(\A,V)=\K_{t}^\eta(\A,V)+\ldots+\K_{T-\eta}^\eta(\A,V),
$$
for $t\in \{0,\ldots,T-\eta\}$. We prove the closedness of the cone
$\K^\eta(\A,V)$ by backwards induction on $t=T-\eta,\ldots,0$. For
$t=T-\eta$, the cone $\K_{T-\eta}^\eta(\A,V)$ is closed. Now suppose
that the cone $\K_{s,T-\eta}^\eta(\A,V)$ is closed for
$s=T-\eta,\ldots,t+1$. Observe that
$$
\K_{t,T-\eta}^\eta(\A,V)=\K_{t}^\eta(\A,V)+\K_{t+1,T-\eta}^\eta(\A,V),
$$
and that the subset
$$
\N\defto \K_{t}^\eta(\A,V)\cap -\K_{t+1,T-\eta}^\eta(\A,V),
$$
forms a vector space. We define $\N^\perp$ to be its orthogonal complement, then
$$
\K_{t,T-\eta}^\eta(\A,V)=\K_{t}^\eta(\A,V)\cap\N^\perp+\K_{t+1,T-\eta}^\eta(\A,V).
$$
Now take a sequence
$$
x^n=x^n_0+x^n_1\in
\K_{t}^\eta(\A,V)\cap\N^\perp+\K_{t+1,T-\eta}^\eta(\A,V),
$$
that converges (weakly$^*$ or in norm) to some $x$. We claim that
the sequence $x^n_0$ is bounded. If not we divide both sides of the
equation by the norm of $x^n_0$ in $\Linf$ and obtain
$$
\dfrac{x^n}{\|x^n_0\|}=\dfrac{x^n_0}{\|x^n_0\|}+\dfrac{x^n_1}{\|x^n_0\|}\defto
y^n_0+y^n_1.
$$
The sequence $y^n_0$ is bounded, it converges (or at least some
subsequence does) to some $y_0$ and then the sequence $y^n_1$ converges
to $y_1=-y_0$. Therefore $y_0\in \N\cap\N^\perp$ which means that
$y_0=0$. This contradicts the fact that $\|y_0\|=1$. Now, since the
sequence $x^n_0$ is bounded, w.l.o.g. it converges to some $x_0$ and then the
sequence $x^n_1$ converges
to $x_1$. We conclude that the sequence $x^n$ converges to
$x=x_0+x_1$.
\end{proof}

\begin{lem}$\A$ is finitely strongly time-consistent.
\end{lem}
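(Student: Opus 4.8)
The plan is to reduce the statement, via Theorems \ref{eqq} and \ref{count0}, to producing a \emph{finite} set of num\'eraires $V\subset\calN$ with respect to which the polar cone $\A^*$ is strongly m-stable: once this is done, $\A$ is strongly represented by $V$, hence strongly $V$-time-consistent, hence finitely strongly time-consistent. Conceptually this is just the finite-dimensional incarnation of Theorem \ref{separable}: the proof there hinges on a countable dense subset of $\{X\in\L^1_+:X\leq1\}$, but when $\Om$ is finite the space $\L^1$ is finite-dimensional, so a finite set of num\'eraires whose restrictions to each atom of each $\F_t$ span the corresponding coordinate space will serve the same purpose. (Theorem \ref{separable} already gives \emph{countable} strong time-consistency here; the point of the present lemma is the sharpening to \emph{finite}.)

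Concretely, I would first discard, without loss of generality, the $\P$-null points of $\Om$, so that $\P$ charges every $\om\in\Om$, and then take
\[V=\{1\}\cup\{1+1_{\{\om\}}:\;\om\in\Om\},\]
a finite subset of $\Linf_+$ of cardinality $|\Om|+1$. Every $v\in V$ satisfies $1\leq v\leq2$, and one checks routinely---via the obvious time-$t$ analogue of Theorem \ref{numthm}, using monotonicity and translation invariance to get $-\rho_t(-v)\geq-\rho_t(-1)=1>0$ a.s. and hence $1/(-\rho_t(-v))\leq1$, so $1/(-\rho_t(-v))\in\Linf$---that $v\in\calN_t$ for every $t$, whence $V\subset\calN$. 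This verification that the chosen assets are genuine num\'eraires is the one slightly fiddly point, but it is entirely elementary.

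It then remains to verify that $\A^*$ is strongly $V$-m-stable, and here the choice of $V$ makes the condition essentially vacuous. Suppose $t\in\{0,\ldots,T-1\}$, $Z,Z^1,\ldots,Z^k\in\A^*$, a partition $F^1_t,\ldots,F^k_t\in\F_t$ and $\al^i\in\L^0_+(\F_{t+1})$ are such that $Y\defto\sum_{i}1_{F^i_t}\al^iZ^i$ satisfies $\E(Zv\,|\,\G_t)=\E(Yv\,|\,\G_t)$ for all $v\in V$. On any atom $A$ of $\F_t$ the restrictions $\{v|_A:v\in V\}$ contain $1_A$ together with all the vectors $1_A+1_{\{\om\}}$, $\om\in A$, hence contain every $1_{\{\om\}}$ with $\om\in A$ and therefore span $\R^A$; since $\sum_{\om\in A}(Z-Y)(\om)\,\P(\{\om\})\,v(\om)=0$ for all $v\in V$, it follows that $(Z-Y)(\om)=0$ for each $\om\in A$. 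As $A$ is arbitrary, $Y=Z\in\A^*$. Thus the hypothesis of the m-stability clause can only be met when $Y=Z$, so strong $V$-m-stability holds trivially, and Theorems \ref{eqq} and \ref{count0} conclude the argument. (The preceding lemma, that $\K^0(\A,V)$ is closed, then allows one, via Remark \ref{count0101}, to replace the weak$^*$-approximation in the definition by genuine attainment, if desired.) The main ``obstacle'' is really just the observation that the rich-num\'eraire device of Theorem \ref{separable} can be run with finitely many assets as soon as $\Om$ is finite; after that every step is routine.
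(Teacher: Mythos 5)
Your proof is correct and follows essentially the same route as the paper: choosing the finite num\'eraire family $\{1\}\cup\{1+1_{\{\om\}}:\om\in\Om\}$ and observing that the strong $V$-m-stability condition on $\A^*$ is then forced to hold trivially because the test equalities $\E((Z-Y)v\,|\,\G_t)=0$ for all $v\in V$ pin down $Y=Z$ pointwise. The only addition you make is the explicit check that $V\subset\calN$, which the paper leaves implicit.
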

\begin{proof}We may assume without loss of generality that $\F$ is the
power set of $\Om$. Then every $X\in\Linf$ can be written as
$X=\sum_{\om\in\Om}X(\om)\,1_{\{\om\}}$. Define
$U=\{1,1+1_{\{\om\}};\;\om\in\Om\}$. We need to show that $\A^*$ is
strongly $U$-stable. Fix $t\in\{0,\ldots,T-1\}$,
$Z,Z^1,\ldots,Z^k\in\A^*$ such that there exists some $\al^i\in
\L^0_+(\G_{t+1})$ with each $\al^i Z^i\in\L^1$ and a partition
$F_t^1,\ldots,F_t^k$ satisfying $\E((Z-\sum_{i=1}^k 1_{F^i_t}\al^i
Z^i)u|\G_t)=0$ for all $u\in U$,

which means that $\E(Z-\sum_{i=1}^k 1_{F^i_t}\al^i
Z^i)\,1_{\{\om\}}=0$ for all $\om\in\Om$. Consequently
$Z=\sum_{i=1}^k 1_{F^i_t}\al^i Z^i$ and so $\sum_{i=1}^k
1_{F^i_t}\al^i Z^i\in \A^*$.
\end{proof}

\goodbreak
\section{Associating a coherent risk measure to a trading
cone.}\label{ultsec}
\def\B{\mathcal B}
As promised, we now show how to represent a trading cone as
(essentially) the acceptance set of a coherent risk measure\footnote{A version
of the results in this section
originally appeared in \cite{JBW}. Since they are only distantly related
to the main results in that paper, we have removed them from the version
of that paper which is to be submitted for publication.}.

Let $\B$ be a closed convex cone given by $\B=\K_0+...+\K_T$ where,
as described in section 2,
each $\K_t$ is generated by positive $\F_t$-measurable multiples of
the vectors $-e_i,\,e_j-\pi^{ij}_t\,e_i$ for $1\leq i,j\leq d$.

Recall that null strategies are elements $(\xi_0,\ldots,\xi_T)$ of
$\K_0\times\ldots \times \K_T$ staisfying $\sum_0^T\xi_t=0$, and, from
\cite{JBW}, that we may
suppose
without loss of generality that the null strategies of this decomposition form a
vector space. Our aim in this section is to transform trading with
transaction costs to a partially frictionless setting by adding a new period
on the time axis and then to show that the revised trading cone is
(essentially) the acceptance set of a coherent risk measure.

We introduce some notation. For $i\in\{1,\dots,d\}$ we define
the random variables $B^i\defto\pi^{1i}_T$ and
$S^i\defto1/\pi^{i1}_T$ We define the random convex sets
$H=\{1\}\times\prod\limits_{i=2}^d\,[S^i,B^i]$ and, for $\varepsilon>0$\,,
$$
H_\varepsilon=\{1\}\times\prod\limits_{i=2}^d\,[(1-\varepsilon)S^i,(1+\varepsilon)B^i].
$$
Let $\Psi_\varepsilon$ be the (finite) set of extreme points of the
set $H_\varepsilon$, i.e the $2^{d-1}$ random vectors of the form
$(1,X_2,...,X_d)$ where
each $X_i=(1-\varepsilon)S^i\,\mbox{or}\,(1+\varepsilon)B^i$.
Let $\tilde\Omega=\{0,1\}^{d-1}$ and enumerate the elements of
$\Psi_\varepsilon$ as follows:
$$
\Psi_\varepsilon=\{Y(\om,\tom):\,\tom\in\tOm\},
$$
where
\begin{equation}\label{ext1}
Y(\omega,\tomega_1,\ldots,\tomega_{d-1})\defto
e_1+\sum_{j=1}^{d-1}\,\left\{(1-\tom_j)(1-\vare)\,S^{j+1}(\om)
+\tom_j(1+\vare)\,B^{j+1}(\om)\right\}\,e_{j+1}.
\end{equation}

Define
$\B^o$ to be the collection of consistent price processes for $\B$.
Recall from \cite{schacher} that this means that
$$
\B^o=\{Z\in\L^1_+(\F_T,\R^d):\, Z>0\hbox{ a.s. and }Z_t\in \K_t^{(*)}\hbox{
a.s.}\},
$$
where $\K_t^{(*)}\defto \{X\in \L^1(\F_t,\R^d):\, X.Y\leq 0\hbox{ a.s.
for all  }Y\in \K_t\}$.
\begin{prop}
\label{cc} Let $Z\in \B^o$. Then there
exist strictly positive random variables $\lambda^{Z_T}(\cdot,\tom)$ defined for
each $\tom\in\tOm$
such that $\sum\limits_{\tom\in\tOm}\,\lambda^{Z_T}(\om,\tom)=1$ and
$$
\frac{Z_T}{Z^1_T}=\sum_{\tom\in\tOm}\,\lambda^{Z_T}(\om,\tom)Y(\om,\tom).
$$
\end{prop}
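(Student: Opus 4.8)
The plan is to identify $Z_T/Z^1_T$ as a point lying strictly inside the random box $H_\varepsilon$, and then to exhibit it explicitly as a product-measure mixture of the vertices $\{Y(\om,\tom):\tom\in\tOm\}$.

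First I would unpack the consistency condition. Since $Z\in\B^o$ we have $Z_T\in\K_T^{(*)}$ a.s.\ and $Z_T>0$ a.s. Because $\K_T$ is the convex cone generated by the vectors $-e_k$ and $e_j-\pi^{ij}_T e_i$, the requirement $Z_T\in\K_T^{(*)}$ is equivalent to $Z_T.g\le 0$ for each such generator $g$, i.e.\ to $Z^k_T\ge 0$ for all $k$ and $Z^j_T\le\pi^{ij}_T Z^i_T$ for all $i,j$. Taking $i=1$ gives $Z^j_T\le\pi^{1j}_T Z^1_T=B^j Z^1_T$, and taking $j=1$ gives $Z^1_T\le\pi^{i1}_T Z^i_T$, i.e.\ $Z^i_T\ge S^i Z^1_T$. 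Since $Z^1_T>0$ a.s., dividing through yields $Z^i_T/Z^1_T\in[S^i,B^i]$ for each $i\ge 2$ (while $Z^1_T/Z^1_T=1$), so $Z_T/Z^1_T\in H$ a.s.

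Next, since $0<S^i\le B^i$ a.s.\ (the netting assumption of section 2 forces $\pi^{1i}_T\pi^{i1}_T\ge 1$, hence $B^i\ge S^i>0$), each interval $[S^i,B^i]$ sits strictly inside $\bigl((1-\varepsilon)S^i,(1+\varepsilon)B^i\bigr)$ for any $\varepsilon>0$. Hence, for $i=2,\ldots,d$, the quantity
$$
\mu_i\defto\frac{Z^i_T/Z^1_T-(1-\varepsilon)S^i}{(1+\varepsilon)B^i-(1-\varepsilon)S^i}
$$
is well-defined (the denominator is at least $2\varepsilon S^i>0$), is $\F_T$-measurable, takes values in $(0,1)$ a.s., and satisfies $Z^i_T/Z^1_T=(1-\mu_i)(1-\varepsilon)S^i+\mu_i(1+\varepsilon)B^i$. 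I would then define
$$
\lambda^{Z_T}(\om,\tom)\defto\prod_{j=1}^{d-1}\mu_{j+1}(\om)^{\tom_j}\bigl(1-\mu_{j+1}(\om)\bigr)^{1-\tom_j},\qquad\tom=(\tom_1,\ldots,\tom_{d-1})\in\tOm.
$$
These are $\F_T$-measurable, strictly positive a.s., and sum over $\tom\in\tOm$ to $1$, being the masses of a product of $d-1$ Bernoulli laws on $\{0,1\}^{d-1}$. Finally, marginalising coordinate by coordinate and using the explicit form (\ref{ext1}) of $Y(\om,\tom)$, the first coordinate of $\sum_{\tom\in\tOm}\lambda^{Z_T}(\om,\tom)Y(\om,\tom)$ is $1$, and its $(j+1)$-st coordinate equals $(1-\mu_{j+1})(1-\varepsilon)S^{j+1}+\mu_{j+1}(1+\varepsilon)B^{j+1}=Z^{j+1}_T/Z^1_T$; that is, the mixture is exactly $Z_T/Z^1_T$, which is the assertion.

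I do not anticipate a genuine obstacle. The one point needing care is the measurability (and strict positivity) of the weights: rather than invoking abstractly ``a point interior to a polytope is a convex combination of its vertices with all weights positive'' together with a measurable selection theorem, the explicit product-of-Bernoullis formula above produces $\lambda^{Z_T}(\cdot,\tom)$ directly as a measurable function of the $\F_T$-measurable data $Z_T,(S^i),(B^i)$, with strict positivity automatic from $\mu_i\in(0,1)$. It is also worth recording at the outset that $Z^1_T>0$ a.s.\ (so the ratios are meaningful) and that $\varepsilon>0$ alone, with no upper bound, is enough for the strict inclusion used above.
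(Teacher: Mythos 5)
Your proof is correct and takes essentially the same route as the paper's: both derive $Z^i_T/Z^1_T\in[S^i,B^i]$ from the consistency condition, define the same barycentric coordinates (your $\mu_i$ equals the paper's $\theta(\om,i)$), and build $\lambda^{Z_T}$ as the same product of Bernoulli masses. Your write-up is somewhat more explicit about where the bounds come from (reading them off the generators of $\K_T$ rather than citing standard properties) and about strict positivity and measurability, but there is no difference in substance.
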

\begin{proof}
We know from the properties of consistent price processes that for
$i,j=1,...,d$,
$$
\frac{Z^j_T}{Z^i_T}\leq \pi^{ij}_T\leq
\pi^{i1}_T\,\pi^{1j}_T=\frac{B^j}{S^i}.
$$
In consequence, for every $i=2,\cdots,d$ we get
$$
\barZ^i_T\defto\frac{Z^i_T}{Z^1_T}\in [S^i,B^i]\,,
$$
and so
$$
\barZ_T=(\barZ^1_T,...,\barZ^d_T)\in H\subset H_\vare.
$$
Now, for $2\leq i\leq d$, let
$$
\theta(\om,i)\defto \frac{\barZ^i_T-S^i(1-\vare)}{B^i(1+\vare)-S^i(1-
\vare)},
$$
and then define
$$
\lambda^{Z_T}(\om,\tom)\defto \prod_1^{d-
1}\theta(\om,i+1)^{\tom_{i}}(1-\theta(\om,i+1))^{1-{\tom_{i}}}
$$
Since $\theta(\om,i)$ is exactly the co-efficient $\theta$ such that
$$
\barZ^i_T=\theta B^i(1+\vare)+(1-\theta)S^i(1-\vare)
$$
the result follows.
\end{proof}

To set up the new probability space, let $\tF$
be the power set of $\tOm$ and let $\tP$ be the uniform measure on
$\tOm$, then define $\hOm=\Om\times\tOm$, $\hF=\F\otimes\tF$ and
$\hP=\P\otimes\tP$.

Now we define the frictionless
bid-ask prices at time $T+1$ by
$$
\pi^{ij}_{T+1}\defto\dfrac{Y_j}{Y_i}
$$
(where the random vector $Y$ is defined in (\ref{ext1})), and so
$K_{T+1}$ is the convex cone generated by positive
$\F_{T+1}$-measurable multiples of the vectors $-e_i$ and
$e_j-\pi^{ij}_{T+1}\,e_i$, where $\F_{T+1}=\F_T\otimes \tF$. We
define the new trading cone  by $\B_{T+1}\defto\B+K_{T+1}$. Here we
assume the obvious embedding of $\B$ in $L^0(\F_{T+1};\R^d)$.

From now on, closedness and arbitrage-free properties are with
respect to the vector space $\L^0(\F_{T+1})$.

\goodbreak
\begin{prop}\label{extend}
The cone $\B_{T+1}$ is closed and arbitrage-free.
\end{prop}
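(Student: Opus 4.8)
The plan is to build a strictly positive consistent price process for $\B_{T+1}$ on the enlarged space $\hOm=\Om\times\tOm$, and then to invoke the fundamental theorem of asset pricing for markets with proportional transaction costs (Schachermayer \cite{schacher}; see also Theorem 1.2 of \cite{JBW}). Indeed, $\B_{T+1}=\K_0+\cdots+\K_T+\K_{T+1}$ is exactly the cone of claims attainable from zero endowment in the bid-ask model on $(\hOm,\hF,\hP)$ whose bid-ask process agrees with $\pi$ at times $0,\ldots,T$ and is frictionless with prices $\pi^{ij}_{T+1}=Y_j/Y_i$ at time $T+1$; for such a model the existence of a strictly consistent price system is equivalent to robust no-arbitrage, and robust no-arbitrage forces the attainable cone to be both arbitrage-free and closed in $\L^0(\F_{T+1})$. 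So the whole argument reduces to exhibiting one strictly consistent price system.

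To that end, first I would fix a strictly consistent price process $Z$ for $\B$ --- one exists by the standing no-arbitrage hypotheses on $\B$ and \cite{schacher} ---; in particular $Z\in\B^o$, so $Z>0$ a.s.\ and $Z_t=\E_\P[Z\,|\,\F_t]\in\K_t^{(*)}$ for $t=0,\ldots,T$. Using the representation furnished by Proposition \ref{cc}, I would then define $\hat Z\in\L^1(\hF;\R^d)$ by
$$
\hat Z(\om,\tom)\defto 2^{d-1}\,\lambda^{Z_T}(\om,\tom)\,Z^1_T(\om)\,Y(\om,\tom),
$$
and verify that $\hat Z$ is a strictly consistent price system for $\B_{T+1}$; this needs three routine checks. (a) $\hat Z>0$ a.s.: as $Z^1_T>0$ and $Y>0$ it suffices that $\lambda^{Z_T}(\om,\tom)>0$, which holds because each factor $\theta(\om,i)=\bigl(\barZ^i_T-S^i(1-\vare)\bigr)/\bigl(B^i(1+\vare)-S^i(1-\vare)\bigr)$ lies strictly in $(0,1)$, since $\barZ^i_T\in[S^i,B^i]$ --- this is precisely the purpose of fattening $H$ to $H_\vare$. (b) Since $\hP=\P\otimes\tP$ with $\tP$ uniform on $\tOm$ and every $\om$-dependent factor in $\hat Z$ is $\F_T$-measurable, $\E_{\hP}[\hat Z\,|\,\F_T]=Z^1_T\sum_{\tom\in\tOm}\lambda^{Z_T}(\cdot,\tom)Y(\cdot,\tom)=Z^1_T\cdot(Z_T/Z^1_T)=Z_T$ by Proposition \ref{cc}; hence $\hat Z_t=Z_t\in\K_t^{(*)}$ (in its relative interior, as $Z$ was chosen strict) for $t=0,\ldots,T$. (c) At the added date $\hat Z_{T+1}=\hat Z$ is a strictly positive scalar multiple of $Y$, and since the time-$(T+1)$ market is frictionless with prices $Y_j/Y_i$ the polar cone $\K_{T+1}^{(*)}$ is the ray $\{\mu Y:\mu\ge 0\}$, so $\hat Z_{T+1}$ automatically lies in its relative interior.

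Given $\hat Z$, arbitrage-freeness of $\B_{T+1}$ is immediate: for any $\hat X\in\B_{T+1}\cap\L^0_+(\F_{T+1};\R^d)$ the duality pairing yields $\E[\hat Z.\hat X]\le 0$ (after the customary truncation of the strategy to secure integrability), and $\hat Z>0$ componentwise forces $\hat X=0$. Closedness is the substantive half and follows by feeding $\hat Z$ into the theorem of \cite{schacher} (equivalently Theorem 1.2 of \cite{JBW}) applied to the enlarged model. The step I expect to require the most care is exactly this last appeal: one must confirm that the enlarged data --- with $\tom$ revealed only at time $T+1$ and the time-$(T+1)$ prices frictionless --- genuinely constitutes a bid-ask model of the type to which that theorem applies, and that $\hat Z$ meets its strictness (relative-interior) requirement, the conditions for $t\le T$ being inherited from the choice of $Z$ and the one for $t=T+1$ holding by the frictionless structure. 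The normalisation that the null strategies of the decomposition form a vector space, already in force for $\B$, passes to $\B_{T+1}$ and underpins that closedness argument.
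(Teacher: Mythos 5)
Your construction of the extended price process $\hat Z$ is exactly the paper's, and your verifications (a)--(c) are sound. The gap lies in what you then do with it. You want to conclude closedness by feeding $\hat Z$ into Schachermayer's robust no-arbitrage theorem, and for this you need $\hat Z$ to be a \emph{strictly} consistent price system, which in turn forces you to start from a strictly consistent $Z$ for $\B$. But the standing hypotheses on $\B$ are only that it is closed and arbitrage-free (equivalently, in the normalisation recalled in section 2, that the null strategies of $\K_0\times\cdots\times\K_T$ form a vector space). These assumptions guarantee $\B^o\neq\emptyset$ but do \emph{not} imply robust no-arbitrage, hence do not guarantee the existence of a strict $Z$. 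So the sentence ``one exists by the standing no-arbitrage hypotheses on $\B$ and \cite{schacher}'' asserts something that is not available, and the subsequent appeal to robust no-arbitrage for $\B_{T+1}$ collapses.

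The paper circumvents this precisely because closedness is proved directly and not through robust no-arbitrage. It first uses the (not necessarily strict) extension $(Z_0,\ldots,Z_T,Z_{T+1})$ together with Theorem 4.10 of \cite{JBW} to get that $\overline{\B_{T+1}}$ is arbitrage-free. It then establishes the auxiliary inclusion $\K_{T+1}\cap\L(\F_T)\subset\B$ and uses it to show that the null strategies of $\K_0\times\cdots\times\K_{T+1}$ form a vector space: given a null strategy $(x_0,\ldots,x_{T+1})$, since $x_{T+1}=-\sum_{t\le T}x_t\in\L(\F_T)$ one has $x_{T+1}\in\B$, so each $-x_t$ is absorbed back into the corresponding $\K_t$, and the frictionless structure of $\K_{T+1}$ forces $-x_{T+1}\in\K_{T+1}$. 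This is the Kabanov--R\'asonyi--Stricker criterion for closedness, which needs neither a strict price system nor robust no-arbitrage. If you want to keep your dual route you would need to strengthen the hypotheses on $\B$ (to robust no-arbitrage, or efficient friction); under the paper's weaker standing assumptions the primal null-strategy argument is required.
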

\begin{proof} We prove first that a consistent price process for the cone $\B$
can be extended to (be the trace of) a consistent price process for
the cone $\B_{T+1}$.

Let $Z_T\in\B^o$ and define
$$
Z_{T+1}\defto
2^{d-1}\,Z^1_T\,\lambda^{Z_T}\,Y,
$$
where the random variable $\lambda^{Z_T}$ is given in Proposition
\ref{cc}. Then $Z_{T+1}>0$, $Z_{T+1}\in K^*_{T+1}$ and for $X_T\in
\L_+^{\infty}(\F_T)$ we have, by Fubini's Theorem,
$$
\E_{\hP}\,(X_T.Z_{T+1})
=\sum_{\tom\in\tOm}\,\E_{\P}\,X_T.\left(Z^1_T\,\lambda^{Z_T}(\cdot,\tom)
Y(\cdot,\tom)\right)=\E_{\P}\,X_T.\left(Z^1_T\bar
Z_T\right)=\E_{\P}\,(X_T.Z_{T}).
$$
Consequently $Z_{T+1}\in \L^1$ with $Z_T=\E_{\hP}\,(Z_{T+1}|\F_T)$,
therefore $(Z_0,\ldots,Z_T,Z_{T+1})$ is a consistent price process
for the cone $\B_{T+1}$ and so we conclude from Theorem 4.10 of
\cite{JBW} that $\bar \B_{T+1}$ is arbitrage-free. We shall now show
that
\begin{equation}\label{little}
K_{T+1}\cap \L(\F_T)\subset\B.
\end{equation}
Indeed, let $X\in K_{T+1}\cap \L(\F_T)$, so for every $n\geq 1$, we
have
$$
X^n\defto X\,1_{(|X|\leq n)}\in K_{T+1}\cap \Linf(\F_T);
$$
therefore, for any consistent price process, $Z$,
$$
\E(Z_T.X^n)=\E(Z_{T+1}.X^n)\leq 0.
$$
It follows from Theorem 4.14 of \cite{JBW} that $X^n\in\B$ and thus, by closure,
$X\in\B$.

Now we prove that the
cone $\B_{T+1}$ is closed. We do this by showing that
$\N(\trade_0\times\ldots\times \trade_{T+1})$, the collection of null strategies
of the decomposition $\trade_0+\ldots +\trade_{T+1}$, is a vector space.

Let
$$
(x_0,\ldots,x_{T+1})\in \N(\trade_0\times\ldots\times \trade_{T+1})
$$ and define $$
x=x_0+...+x_{T}
$$
so that $x+x_{T+1}=0$. Then it follows (since $x\in \L(\F_T)$) that
$x_{T+1}\in \L(\F_T)$ and so we conclude from (\ref{little}) that
$x_{T+1}\in\B$. We deduce that there exist $y_0\in K_0,\ldots,y_T\in
K_T$ such that $x_{T+1}=y_0+\ldots+y_T$. We conclude that each
$-(x_t+y_t)\in K_t$ and then, by adding $x_t$, respectively $y_t$,
we conclude that both $-x_t$ and $-y_t$ are contained in $K_t$ for
$0\leq t\leq T$.

Observe that since the time $T+1$ bid-ask prices are frictionless,
it follows that every element, $u\in \trade_{T+1}$ can be written as
$u=u_1-u_2$, where $u_1\in lin(\trade_{T+1})$, the lineality space
of $\trade_{T+1}$, and $u_2\geq 0$. If we express $x_{T+1}$ like
this, we then have that
$$
0\leq
u_2=u_1-x_{T+1}=u_1+x\in\B_{T+1},
$$
so, since $\B_{T+1}$ is arbitrage-free, $u_2=0$ and therefore
$$
-x_{T+1}=-u_1\in
\trade_{T+1}
$$ (since $u_1\in lin(\trade_{T+1}$)). It follows that
$\N((\trade_0\times\ldots\times \trade_{T+1})$ is a vector
space.
\end{proof}

Now define the subset of probabilities
$$
\Q\defto\left\{\bbQ:\,
\frac{d\bbQ}{d\hP}=2^{d-1}\,\frac{Z^1_T}{Z^1_0}\,\lambda^{Z_T}\;:\;Z\in\B^o\right\},
$$
and denote by $\rho$ the associated coherent risk measure.

\begin{theorem}\label{reverse}For every $X\in \Linf(\F_T;\R^d)$ we
have:
\begin{equation}\label{risk1}
\rho(Y.X)=\sup\{\E(Z_T.X):\;Z\in\B^o,\,\E Z^1_T=1\}.
\end{equation}

In particular
\begin{eqnarray}\label{risk2}
\lefteqn{\B\cap\Linf(\F_T;\R^d)}\\
&=&\left\{X\in\Linf(\F_T;\R^d):\,\E_\bbQ(Y.X)\leq
0\, \hbox{ for all }\;\bbQ\in \Q\right\}\nonumber\\
&=&\left\{X\in\Linf(\F_T;\R^d):\rho(Y.X)\leq 0\right\}.\nonumber
\end{eqnarray}
\end{theorem}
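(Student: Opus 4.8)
The plan is to prove the pricing formula (\ref{risk1}) directly from the definitions of $\rho$ and $\Q$, and then to obtain (\ref{risk2}) as an immediate consequence of (\ref{risk1}) together with the superhedging duality for the cone $\B$. Throughout, since $\rho$ is the coherent risk measure with test set $\Q$, we have $\rho(W)=\sup_{\bbQ\in\Q}\E_\bbQ(W)$; note in passing that although $Y$ need not be bounded, the Fubini computation below shows $\E_\bbQ(Y_i)=\E_\P(Z^i_T)<\infty$, so $\E_\bbQ(Y.X)$ is finite for $X\in\Linf(\F_T;\R^d)$ and $\rho(Y.X)$ is well defined.

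For (\ref{risk1}) I would first note that the density $2^{d-1}(Z^1_T/Z^1_0)\lambda^{Z_T}$ defining a generic $\bbQ\in\Q$ is unchanged when $Z\in\B^o$ is multiplied by a strictly positive $\F_0$-measurable factor (since $\lambda^{cZ_T}=\lambda^{Z_T}$, and $\B^o$ is a cone stable under such scalings); so I may restrict to $Z\in\B^o$ normalised by $\E Z^1_T=1$, for which the density is simply $2^{d-1}Z^1_T\lambda^{Z_T}$, and this correspondence $Z\mapsto\bbQ$ is onto $\Q$. Then, fixing $X\in\Linf(\F_T;\R^d)$, which is constant in the $\tOm$-coordinate, I would integrate first over $(\tOm,\tF,\tP)$ and apply Proposition \ref{cc} in the form $\sum_{\tom\in\tOm}\lambda^{Z_T}(\cdot,\tom)Y(\cdot,\tom)=\barZ_T=Z_T/Z^1_T$ — exactly the Fubini step already carried out in the proof of Proposition \ref{extend} — to get $\E_\bbQ(Y.X)=\E_\P(Z^1_T\barZ_T.X)=\E_\P(Z_T.X)$. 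Taking the supremum over all such $\bbQ$, equivalently over $\{Z\in\B^o:\E Z^1_T=1\}$, is then (\ref{risk1}).

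For (\ref{risk2}), the cone $\B=\K_0+\ldots+\K_T$ is closed and arbitrage-free, so the superhedging theorem (Theorem 4.14 of \cite{JBW}, already invoked in the proof of Proposition \ref{extend}) gives, for $X\in\Linf(\F_T;\R^d)$, that $X\in\B$ if and only if $\E(Z_T.X)\leq 0$ for every consistent price process $Z\in\B^o$. By (\ref{risk1}) this last condition is precisely $\rho(Y.X)\leq 0$, and by the definition of $\rho$ it is also precisely the requirement that $\E_\bbQ(Y.X)\leq 0$ for every $\bbQ\in\Q$; chaining these three equivalent conditions yields both displayed equalities in (\ref{risk2}).

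The computations here are essentially routine, so the only point demanding care is the bookkeeping in Step 1: checking that the normalisation is legitimate, i.e. that every $\bbQ\in\Q$ is realised by some normalised $Z\in\B^o$, so that $\sup_{\bbQ\in\Q}\E_\bbQ(Y.X)$ genuinely coincides with the supremum in (\ref{risk1}); and keeping track of the cardinality factor $2^{d-1}$ against the uniform weight of $\tP$ on $\tOm=\{0,1\}^{d-1}$ when interchanging the orders of integration. The substantive external ingredient, the superhedging duality used in Step 2, is quotable and requires nothing beyond closedness and absence of arbitrage for $\B$, both of which are in hand.
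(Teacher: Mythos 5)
Your proof is correct and matches the paper's approach: equality (\ref{risk1}) is established by the Fubini computation carried over from the proof of Proposition \ref{extend}, and (\ref{risk2}) by combining (\ref{risk1}) with the superhedging duality of Theorem 4.14 of \cite{JBW}. One small slip worth noting: the normalisation under which the density collapses to $2^{d-1}Z^1_T\lambda^{Z_T}$ is $Z^1_0\equiv 1$ (i.e.\ $\E[Z^1_T\mid\F_0]\equiv 1$), not merely $\E Z^1_T=1$; these coincide when $\F_0$ is trivial, and the ontoness of $Z\mapsto\bbQ$ and the Fubini identity you invoke are unaffected.
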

\begin{proof}
Equality (\ref{risk1}) is immediate from the definition of $\Q$; the second
equality in (\ref{risk2})
follows from (\ref{risk1}), while
the first follows from Theorem 4.14 of \cite{JBW} and the fact that, as in the
proof of Proposition \ref{extend},
$\E_{\bbQ}Y.X=\E_{\P}Z_T.X$
\end{proof}

\begin{remark}
If we define $\rho_t:\L^\infty(\F_{T+1})\rightarrow
\L^\infty(\F_{t})$ by
$$
\rho_t(X)=\hbox{ess inf}\{\lambda\in \L^\infty(\F_{t}):\;
\rho(c(X-\lambda))\leq 0\hbox{ for all }c\in \L^\infty_+(\F_t)\},
$$
then it is easy to show that
\begin{itemize}
\item[(i)]$\{X:\; cX\in \B_{T+1}\cap \L^\infty(\F_{T+1};\R^d)\hbox{ for all
}c\in\L^\infty_+(\F_t)\}
=\{X:\;\rho_t(Y.X)\leq 0 \hbox{ a.s.}\}$.
\item[(ii)]$\C_t^\infty\defto \C_t(\B_{T+1})\cap \L^\infty(\F_{T+1};\R^d)=\{X\in
\L^\infty(\F_{t};\R^d):\;
\rho_t(Y.X)\leq 0 \hbox{ a.s.}\}$.
\end{itemize}
It follows directly from Theorem 4.16 of \cite{JBW}, that
$\C_t^\infty$ is $\sigma(\Linf(\P),\L^1(\P))$-closed  and hence we
may apply Corollary 4.7 of \cite{JBW}.
\end{remark}


\begin{appendix}
\section{Proofs and further results on num\'eraires}\label{num2}

{\em Proof of Theorem \ref{numthm}:\ }
Suppose $v\in \calN_0$, then there exists some $\lambda\in\Linf(\F_0)$
such that $1-\lambda\,v\in\A_0$. Since, by the no-arbitrage property,
$1_F\notin\A_0$ for any
$F\in\F_0$ with $\P(F)>0$, we have $\lambda>0$ a.s.
Now, $1-\lambda\,v\in\A_0$ means that for all $\bbQ\in\Q$ we have
$1-\lambda\,\E_{\bbQ}(v|\F_0)\leq 0$, therefore a.s
$\lambda_0(v)\geq 1/\lambda>0$ and $1/\lambda_0(v)\leq \lambda$.

Now let $v\in \Linf$ be such that $\lambda\defto \lambda_0(v)>0$ a.s
and $1/\lambda\in\Linf$. Then for all $X\in \Linf$, setting
$b=\|X\|_{\Linf}$, we have
$$
X-\dfrac{b}{\lambda}\,v\in\A_0,
$$
since for all $\bbQ\in\Q$,
$$
\E_\bbQ\left(X-\dfrac{b}{\lambda}\,v|\F_0\right)\leq
\E_\bbQ\left(X-\dfrac{b}{\E_\bbQ(v|\F_0)}\,v|\F_0\right)=\E_\bbQ(X|\F_0)-b\leq
0.
$$
\hfill$\square$

{\em Proof of Lemma \ref{l2}:\ }The first assertion can be deduced
immediately from the properties of the cone $\A_0$. Now
we prove that
$$
\A_0=\{X\in\Linf:\;\rho_0^v(X)\leq 0\;\mbox{a.s}\}\defto \A^v.
$$
By definition of the mapping $\rho_0^v$ we have the first inclusion
$\A_0\subseteq \A^v$. Now let $X\in\A^v$, then
$X-\rho_0^v(X)v\in\A_0$ from the definition of $\rho_0^v$ and
$\rho_0^v(X)v\in\A_0$ from the monotonicity of $\rho_0$ and then
$X=(X-\rho_0^v(X)v)+\rho_0^v(X)v\in\A_0$.

To prove (ii), define $\xi(X)\defto \mbox{ess-
sup}\left\{\dfrac{\E_\bbQ(X|\F_0)}{\E_\bbQ(v|\F_0)}:\,\bbQ\in\Q\right\}$.
Note first that $X-\rho_0^v(X)v\in\A_0$, so for all $\bbQ\in\Q$ we
have
$$
\E_\bbQ(X-\rho_0^v(X)v|\F_0)=\E_\bbQ(X|\F_0)-\rho_0^v(X)\,\E_\bbQ(v|\F_0)\leq
0,
$$
which leads us to conclude that $\xi(X)\leq\rho_0^v(X)$. Now define
$\tX=X-\rho_0^v(X)v$ and suppose that there exists some $\vare>0$
such that $\P(F^\vare)>0$ where
$$
F^\vare\defto\{\xi(\tX)\leq -\vare\}\in\F_0.
$$
Then $(\tX+\vare v) 1_{F^\vare}\in \A_0$ and consequently
$\rho_0^v(\tX)\leq -\vare$ on $F^\vare$. This contradicts the fact
that $\rho_0^v(\tX)=0$ a.s. We conclude then that
$\xi(\tX)=\rho_0^v(\tX)=0$. By the $\F_0$-translation invariance
property with respect to $v$ of $\xi$ we conclude that
$\xi(X)=\rho_0^v(X)$.
\endpf

Later we will need the following lemma.
\begin{lem}
\label{r1} Let $u,v\in \calN_0$ and $X\in\Linf$, then
$(\rho_0^v(X)=0)=(\rho_0^u(X)=0)$ a.s.
\end{lem}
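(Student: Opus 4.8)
The plan is to prove the inclusion $(\rho_0^v(X)=0)\subseteq(\rho_0^u(X)=0)$ up to $\P$-null sets; the reverse inclusion is then obtained by interchanging $u$ and $v$, so this suffices. Write $F\defto(\rho_0^v(X)=0)\in\F_0$. The two tools I would rely on are: (a) for every num\'eraire $w\in\calN_0$ and every $Y\in\Linf$ one has $Y-\rho_0^w(Y)w\in\A_0$ (the observation following (\ref{**})); and (b) $\A_0$ is a convex cone, stable under multiplication by elements of $\Linf_+(\F_0)$ (Proposition \ref{p2}).

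The easy inequality is $\rho_0^u(X)\leq 0$ on $F$. Multiplying $X-\rho_0^v(X)v\in\A_0$ by $1_F\in\Linf_+(\F_0)$ and using $1_F\rho_0^v(X)=0$ gives $1_F X\in\A_0$, hence $\rho_0^u(1_F X)\leq 0$ by Lemma \ref{l2}(i) (applied to the num\'eraire $u$); by $\F_0$-positive homogeneity of $\rho_0^u$ this reads $1_F\rho_0^u(X)\leq 0$, i.e. $\rho_0^u(X)\leq 0$ on $F$.

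The main work — and the step I expect to be the real obstacle — is the reverse inequality $\rho_0^u(X)\geq 0$ on $F$, which I would prove by contradiction. Assume $\P(G)>0$ for $G\defto F\cap(\rho_0^u(X)\leq-\vare)\in\F_0$ with some $\vare>0$. Multiplying $X-\rho_0^u(X)u\in\A_0$ by $1_G$ gives $1_G X+\delta u\in\A_0$ with $\delta\defto-1_G\rho_0^u(X)\in\Linf_+(\F_0)$ and $\delta\geq\vare$ on $G$. Now I would ``trade the $u$ for $v$'': setting $\mu\defto-\rho_0^v(-u)\in\Linf(\F_0)$ one has $\mu v-u\in\A_0$ by (a), and, crucially, $\mu=\mbox{ess-inf}_{\bbQ\in\Q}\E_\bbQ(u|\F_0)/\E_\bbQ(v|\F_0)>0$ a.s.\ because $\E_\bbQ(u|\F_0)\geq\lambda_0(u)>0$ (Theorem \ref{numthm}) while $\E_\bbQ(v|\F_0)\leq\|v\|_{\Linf}$ (here I use Lemma \ref{l2}(ii) to identify $\mu$). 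Multiplying $\mu v-u\in\A_0$ by $\delta$ and adding to $1_G X+\delta u\in\A_0$ cancels the $u$-terms and leaves $1_G X+\delta\mu v\in\A_0$; adding $1_{G^c}(X-\rho_0^v(X)v)\in\A_0$ then yields $X-mv\in\A_0$ with $m\defto-1_G\delta\mu+1_{G^c}\rho_0^v(X)\in\Linf(\F_0)$. Since $\rho_0^v(X)$ is the ess-inf over all such $m$, we get $\rho_0^v(X)\leq m$ a.s.; but $m=-\delta\mu<0$ on $G$, contradicting $\rho_0^v(X)=0$ on $G\subseteq F$. Combining the two inequalities gives $\rho_0^u(X)=0$ on $F$, i.e. $F\subseteq(\rho_0^u(X)=0)$, and symmetry closes the argument.

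The delicate point throughout the third paragraph is purely the cone bookkeeping: one must pick the $\F_0$-measurable multipliers so that the $u$-contributions cancel exactly while a strictly negative coefficient of $v$ survives on $G$, and check that every coefficient ($\delta$, $\mu$, $\rho_0^v(X)$) lies in $\Linf(\F_0)$ so that stability of $\A_0$ applies. An alternative, more computational route avoids the contradiction entirely: by Theorem \ref{numthm} both $\E_\bbQ(v|\F_0)$ and $\E_\bbQ(u|\F_0)$ are bounded above and below by strictly positive constants uniformly in $\bbQ\in\Q$, so in the representation of Lemma \ref{l2}(ii) the functions $\E_\bbQ(X|\F_0)/\E_\bbQ(v|\F_0)$ and $\E_\bbQ(X|\F_0)/\E_\bbQ(u|\F_0)$ differ by a factor lying in a fixed interval $[a,b]\subset(0,\infty)$; taking essential suprema over $\bbQ$ then gives $(\rho_0^v(X)\leq0)=(\rho_0^u(X)\leq0)$ and $(\rho_0^v(X)<0)=(\rho_0^u(X)<0)$, whence $(\rho_0^v(X)=0)=(\rho_0^u(X)=0)$.
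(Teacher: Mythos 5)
Your main argument (second and third paragraphs) is correct and takes essentially the same route as the paper: first deduce $\rho_0^u(X)\leq 0$ on $F$ from $X1_F\in\A_0$ together with Lemma \ref{l2}(i), then rule out $\rho_0^u(X)<0$ on a positive-measure subset of $F$ by a trade in $u$ that would drive the $v$-denominated price strictly below zero. The bookkeeping differs a little --- the paper works with $(X+\vare u)1_{G^\vare}\in\A_0$ and invokes subadditivity of $\rho_0^v$ directly (and in fact writes the resulting bound somewhat loosely as $-\vare\rho_0^v(u)$ where subadditivity actually gives $\vare\rho_0^v(-u)$, though both are strictly negative so the contradiction stands), whereas you explicitly trade the $u$-units for $v$-units via $\mu v-u\in\A_0$ with $\mu=-\rho_0^v(-u)>0$ and exhibit an admissible hedge $X-mv\in\A_0$ with $m<0$ on $G$, contradicting the ess-inf definition of $\rho_0^v(X)$. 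Those are the same idea in slightly different clothing; your version has the virtue of making every $\F_0$-measurable multiplier explicit.

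Your closing alternative is genuinely different and worth recording: Theorem \ref{numthm} guarantees that the ratio $\E_\bbQ(u|\F_0)/\E_\bbQ(v|\F_0)$ is, uniformly in $\bbQ\in\Q$, sandwiched between two $\F_0$-measurable random variables bounded away from $0$ and $\infty$; multiplying the kernels in the dual representation of Lemma \ref{l2}(ii) by such a factor preserves the $\F_0$-events where the essential supremum is $\leq 0$ and where it is $<0$, hence where it is $=0$. This dispenses with the contradiction and with cone arithmetic entirely, arguing purely from the representation formula. It would deserve two or three lines of justification about localising the essential supremum over the $\F_0$-measurable set in question, but as a route to the lemma it is cleaner than the paper's.
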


\begin{proof}Fix $X\in\Linf$, define $F=(\rho_0^v(X)=0)$ and $X^F=X1_F$. Remark
that $F\in\F_0$ and
$$
\rho_0^v(X^F)=\rho_0^v(X)1_F=0.
$$
It follows that $X^F\in \A_0$. Suppose that there exists some
$\vare>0$ such that $\P(G^\vare)>0$ with
$G^\vare=F\cap(\rho_0^u(X)\leq -\vare)$. We deduce that $(X+\vare
u)1_{G^\vare}\in\A_0$ and thus by subadditivity $0=\rho_0^v(X)\leq
-\vare \rho_0^v(u)<0$ a.s on $G^\vare\subset F$. We obtain a
contradiction. Thus, for all $\vare>0$, we have $\P(G^\vare)=0$.
Consequently, since, by part (i) of Lemma \ref{l2},
$X^F\in\A_0=\A^u$:
$$
\P(F)=\P(F\cap(\rho_0^u(X)\geq 0))=\P(F\cap(\rho_0^u(X)=0)).
$$
We deduce that $ F\subset(\rho_0^u(X)=0)$. By symmetry the result
follows.
\end{proof}

In the following lemma, we give some properties of the mapping
$v\mapsto \rho_0^v(X)$ for a fixed $X\in\Linf$.
\begin{lem}\label{l3}
For all $X\in\Linf$ and $v,u,w,w^1,...,w^n\in \calN_0$ we
have:
\begin{enumerate}
\item $\rho_0^v(u)\leq \rho_0^v(w)\,\rho_0^w(u)$.
\item $\rho_0^v(X)\leq
\rho_0^v\left(\sum_{k=1}^n\rho_0^{w^k}(X_k)w^k\right)$ whenever
$X=X_1+...+X_n$ with $X_i\in \Linf$ for each $i=1,\ldots,n$.
\item With the convention $\dfrac{0}{0}=0$, we have
$$
\rho_0^{v+w}(X)\leq\dfrac{
\rho_0^v(X)\,\rho_0^w(X)}{\rho_0^v(X)+\rho_0^w(X)}.
$$
\item For $\lambda\in\Linf_+(\F_0)$ such that $\lambda\,v\in\calN_0$,
we have:
$$
\rho_0^{\lambda\,v}(X)=\frac{1}{\lambda}\,\rho_0^v(X).
$$
\item $uv\in \calN_0$ iff
$\hbox{ess-inf}_{\bbQ\in\Q^v}\E_\bbQ(u|\F_0)>0$ a.s and then
$$
\rho_0^{uv}(X)=\left(\rho_0^{(v)}\right)^u(X/v).
$$
\end{enumerate}
\end{lem}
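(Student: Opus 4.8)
The plan is to read off all five assertions from the structural properties of the $v$-denominated risk measures established in Lemma \ref{l2} --- in particular the dual representation $\rho_0^v(X)=\mbox{ess-sup}_{\bbQ\in\Q}\E_\bbQ(X|\F_0)/\E_\bbQ(v|\F_0)$, the identity $\A_0=\{X:\rho_0^v(X)\leq 0\}$, subadditivity, $\F_0$-positive homogeneity and $\F_0$-translation invariance with respect to $v$ --- together with the characterisation of num\'eraires in Theorem \ref{numthm}. It will be used throughout that, since $v\in\calN_0$, the conditional expectations $\E_\bbQ(v|\F_0)$ are bounded away from $0$ and from $\infty$ uniformly in $\bbQ\in\Q$.

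Parts (1), (2) and (4) are short. For (2) I would observe that each $X_k-\rho_0^{w^k}(X_k)w^k$ lies in $\A_0$ (this is the identity recorded just after (\ref{**})), so their sum $X-\sum_k\rho_0^{w^k}(X_k)w^k\in\A_0$ since $\A_0$ is a convex cone; applying the monotone map $\rho_0^v$ (which is $\leq 0$ on $\A_0$) and subadditivity then gives the inequality. Part (1) is the case $n=1$, $X_1=u$, $w^1=w$, once one notes that $\rho_0^w(u)\geq\rho_0^w(0)=0$ is $\F_0$-measurable and bounded, so $\rho_0^v\bigl(\rho_0^w(u)w\bigr)=\rho_0^w(u)\,\rho_0^v(w)$ by positive homogeneity. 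For (4), $\lambda v\in\calN_0$ forces $1/\lambda\in\Linf$ (apply Theorem \ref{numthm} to $\lambda v$, using $\lambda_0(\lambda v)=\lambda\,\lambda_0(v)$), so $\lambda$ is a strictly positive, bounded, $\F_0$-measurable factor that can simply be pulled out of the ess-sup in the dual representation.

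For (3) I would first check $v+w\in\calN_0$: since $v+w\geq v$ we get $\lambda_0(v+w)\geq\lambda_0(v)>0$ and $1/\lambda_0(v+w)\leq 1/\lambda_0(v)\in\Linf$, so Theorem \ref{numthm} applies. Writing $a=\rho_0^v(X)$, $b=\rho_0^w(X)$, the heart of the matter is a pointwise estimate: for each $\bbQ$, putting $x=\E_\bbQ(X|\F_0)$, $p=\E_\bbQ(v|\F_0)$, $q=\E_\bbQ(w|\F_0)$, one has $x\leq ap$ and $x\leq bq$ a.s., whence
$$
x=\frac{b}{a+b}\,x+\frac{a}{a+b}\,x\leq\frac{b}{a+b}\,ap+\frac{a}{a+b}\,bq=\frac{ab}{a+b}\,(p+q),
$$
and dividing by $p+q>0$ and taking the ess-sup over $\bbQ$ (Lemma \ref{l2}(ii) for $v+w$) yields the claim. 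The delicate point is legitimising this convex combination: the $\F_0$-measurable weights $b/(a+b)$ and $a/(a+b)$ must be nonnegative, which follows from the observation that $a$ and $b$ are a.s. of the same sign --- if $b<0$ on $F\in\F_0$ then $1_FX\in\A_0$ by positive homogeneity, forcing $a\leq 0$ on $F$, and symmetrically --- while on $\{a=b=0\}$ one argues directly that $1_{\{a=0\}}X\in\A_0$, so $\rho_0^{v+w}(X)\leq 0$ there, matching the convention $0/0=0$.

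For (5) the starting point is the identity $\E_\R(u|\F_0)=\E_\bbQ(uv|\F_0)/\E_\bbQ(v|\F_0)$ for $\R\in\Q^v$ corresponding to $\bbQ\in\Q$ (the $\F_0$-measurable normalisation $\E_\bbQ(v|\F_0)$ cancels). Since $\E_\bbQ(v|\F_0)$ is bounded away from $0$ and $\infty$, $\mbox{ess-inf}_{\R\in\Q^v}\E_\R(u|\F_0)$ and $\lambda_0(uv)=\mbox{ess-inf}_{\bbQ\in\Q}\E_\bbQ(uv|\F_0)$ are comparable up to multiplicative constants, so by Theorem \ref{numthm} --- applied to $\rho$ for the asset $uv$ and to $\rho_0^{(v)}$ (whose test probabilities are $\Q^v$) for the asset $u$ --- the condition $uv\in\calN_0$ is equivalent to $u$ being a num\'eraire for $\rho_0^{(v)}$, i.e. to $\mbox{ess-inf}_{\R\in\Q^v}\E_\R(u|\F_0)>0$ a.s. For the final formula, Remark \ref{r4} gives $Y\in\A_0\iff Y/v\in\A_0^{(v)}$, where $\A_0^{(v)}$ is the acceptance set of $\rho_0^{(v)}$; hence $X-m\,uv\in\A_0\iff X/v-mu\in\A_0^{(v)}$, and taking the ess-inf over $m\in\Linf(\F_0)$ gives $\rho_0^{uv}(X)=(\rho_0^{(v)})^u(X/v)$. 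I expect the two reconciliations just mentioned --- the sign bookkeeping and the $0/0$ convention in (3), and the matching of the two forms of the num\'eraire condition in (5) --- to be the only non-routine points; everything else is a direct appeal to Lemma \ref{l2}.
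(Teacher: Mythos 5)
Your proof is correct and follows essentially the same route as the paper: both sets of arguments are direct applications of Lemma \ref{l2} (the acceptance-set characterisation $\A_0=\{X:\rho_0^v(X)\le 0\}$ on the one hand, the dual formula $\rho_0^v(X)=\operatorname{ess\text{-}sup}_{\bbQ}\E_\bbQ(X|\F_0)/\E_\bbQ(v|\F_0)$ on the other), Theorem \ref{numthm}, and Remark \ref{r4}. The only systematic difference is that you tend to argue in the acceptance set where the paper computes with the ess-sup formula and vice versa: for (2) you observe $X-\sum_k\rho_0^{w^k}(X_k)w^k\in\A_0$ and invoke subadditivity, while the paper pushes the dual formula directly; for (5) you pass through $\A_0\Leftrightarrow\A_0^{(v)}$ via Remark \ref{r4}, while the paper computes with the change-of-measure identity on the ess-sup; for (3) the paper writes the convex combination inside $\A_0$, you write the same combination at the level of conditional expectations; for (1) you deduce it from (2) with $n=1$ and positive homogeneity, the paper writes the single telescoping element of $\A_0$ directly. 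These are cosmetically distinct but mathematically interchangeable given Lemma \ref{l2}. One small gain in your write-up: in (3) you actually justify the claim that $\rho_0^v(X)$ and $\rho_0^w(X)$ have the same sign (via the positive-homogeneity/$\A_0$-membership argument and Lemma \ref{r1} for the common zero set), which the paper asserts without proof, and you treat the $0/0$ set explicitly rather than setting it aside with \lq\lq suppose $\rho_0(X)\neq0$".
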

\begin{proof}To prove the assertion $(1)$, we remark that
$$
u-\rho_0^v(w)\,\rho_0^w(u)v=u-\rho_0^w(u)w+\rho_0^w(u)(w-\rho_0^v(w)v)\in\A_0.
$$
To prove $(2)$, we use formula $(ii)$ in Lemma \ref{l2} to see that
for all $\bbQ\in\Q$,
$$
\rho_0^v\left(\sum_{k=1}^n\rho_0^{w^k}(X_k)w^k\right)\geq
\dfrac{1}{\E_\bbQ(v|\F_0)}\E_\bbQ\left(\sum_{k=1}^n\dfrac{\E_{\bbQ}(X_k|\F_0)\,w^k}
{\E_{\bbQ}(w^k|\F_0)}|\F_0\right),
$$
which means that
$$
\rho_0^v\left(\sum_{k=1}^n\rho_0^{w^k}(X_k)w^k\right)\geq
\dfrac{1}{\E_\bbQ(v|\F_0)}\sum_{k=1}^n\E_\bbQ(X_k|\F_0).
$$
So, since $X=X_1+\ldots+X_n$, we have for all $\bbQ\in\Q$:
$$
\rho_0^v\left(\sum_{k=1}^n\rho_0^{w^k}(X_k)w^k\right)\geq
\dfrac{\E_\bbQ(X|\F_0)}{\E_\bbQ(v|\F_0)},
$$
and consequently
$$
\rho_0^v\left(\sum_{k=1}^n\rho_0^{w^k}(X_k)w^k\right)\geq\rho_0^v(X).
$$
To prove $(3)$, we note that $\rho_0^v(X)$ and $\rho_0^w(X)$ have
the same sign and then thanks to Lemma \ref{r1},
$\rho_0^v(X)+\rho_0^w(X)=0$ if and only if $\rho_0(X)=0$. Now
suppose that $\rho_0(X)\neq 0$ and remark that the $\F_0$-measurable
random variable
$$
\alpha(X)\defto \dfrac{\rho_0^v(X)}{\rho_0^v(X)+\rho_0^w(X)},
$$
has values a.s in the interval $[0,1]$, so
$$
X-\dfrac{\rho_0^v(X)\rho_0^w(X)}{\rho_0^v(X)+\rho_0^w(X)}(v+w)
=\alpha(X)(X-\rho_0^w(X)w)+(1-\alpha(X))(X-\rho_0^v(X)v)\in\A_0.
$$
Assertion $(4)$ is an immediate consequence of Lemma \ref{l2}. For
assertion $(5)$ we have for $\bbQ\in\Q$:
$$
\frac{\E_\bbQ(X|\F_0)}{\E_\bbQ(uv|\F_0)}=\frac{\E_\R(X/v|\F_0)}{\E_\R(u|\F_0)},
$$
with
$$
\frac{d\R}{d\bbQ}=\frac{v}{\E_\bbQ(v|\F_0)},
$$
and then $\R\in \Q^v$. We deduce that
$$
\rho_0^{uv}(X)\leq \left(\rho_0^{(v)}\right)^u(X/v).
$$
The reverse inequality is proved in the same way.
\end{proof}

Remark that at time zero, trading between two different num\'eraires
may incur additional costs. Nevertheless the set $\calN_0$ can be
partitioned into equivalence classes so that trade is frictionless
within each one of them.

\begin{defi}
\label{d.4} Two assets $v,w\in \calN_0$ are said to be
$\F_0$-equivalent (or equivalent at time zero) (and we write $v\sim w$) if for
all $X\in
\Linf$, we have $\rho_0^v(X)=\rho_0^v(w)\,\rho_0^w(X)$.
\end{defi}

Some equivalent conditions for $\F_0$-equivalence are given below.
\begin{lem}
\label{l4} Let $v,w\in \calN_0$, then the following are equivalent:
\begin{enumerate}
\item $\rho_0^v(w)\rho_0^w(v)=1$.
\item $w-\rho_0^v(w)v\in lin(\A_0)$ where $lin(\A_0)=\A_0\cap(-\A_0)$ is the
lineality subspace of $\A_0$.
\item $\rho_0^v(-w)=-\rho_0^v(w)$.
\item The two assets $v$ and $w$ are $\F_0$-equivalent.
\item The two assets $v$ and $v+w$ are $\F_0$-equivalent.
\item For all $X\in \Linf$,
$$
\rho_0^{v+w}(X)=\dfrac{\rho_0^v(X)\,\rho_0^w(X)}{\rho_0^v(X)+\rho_0^w(X)},
$$
with the convention $\dfrac{0}{0}=0$.
\end{enumerate}
\end{lem}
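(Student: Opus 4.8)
The plan is to route all six statements through a single ``hub'' condition, namely
\[
(\star)\qquad \E_\bbQ(w\,|\,\F_0)=\rho_0^v(w)\,\E_\bbQ(v\,|\,\F_0)\ \hbox{ a.s. for every }\bbQ\in\Q ,
\]
i.e. that the ratio $\E_\bbQ(w|\F_0)/\E_\bbQ(v|\F_0)$ is a.s. the same for all test measures. I will freely use the representation $\rho_0^v(X)=\mbox{ess-sup}\{\E_\bbQ(X|\F_0)/\E_\bbQ(v|\F_0):\bbQ\in\Q\}$ of Lemma \ref{l2}, together with $\F_0$-translation invariance with respect to the relevant num\'eraire (whence $\rho_0^v(v)=1$, $\rho_0^v(v+w)=1+\rho_0^v(w)$, $\rho_0^w(v+w)=1+\rho_0^w(v)$, $\rho_0^{v+w}(v+w)=1$), the fact that $w-\rho_0^v(w)v\in\A_0$, and the fact (from Theorem \ref{numthm}, since $\lambda_0(v+w)\ge\lambda_0(v)>0$) that $v+w\in\calN_0$, so that $\rho_0^{v+w}$ and statements $(5)$, $(6)$ make sense. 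The one preliminary observation that carries the weight is the identity $\rho_0^v(-w)\,\rho_0^w(v)=-1$, valid for all $v,w\in\calN_0$: writing $g_\bbQ=\E_\bbQ(w|\F_0)/\E_\bbQ(v|\F_0)$, which is a.s. bounded above and below by strictly positive constants, one has $\rho_0^v(-w)=-\mbox{ess-inf}_\bbQ g_\bbQ$, while the reciprocal identity $\mbox{ess-sup}_\bbQ(1/g_\bbQ)=1/\mbox{ess-inf}_\bbQ g_\bbQ$ gives $\rho_0^w(v)=1/\mbox{ess-inf}_\bbQ g_\bbQ$.

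Granting this, the ``easy'' equivalences are immediate. $(3)\Leftrightarrow(\star)$: since $\rho_0^v(w)=\mbox{ess-sup}_\bbQ g_\bbQ$ and $-\rho_0^v(-w)=\mbox{ess-inf}_\bbQ g_\bbQ$, statement $(3)$ says these coincide, which forces every $g_\bbQ$ to equal the common value a.s., i.e. $(\star)$. $(2)\Leftrightarrow(3)$: because $w-\rho_0^v(w)v\in\A_0$ always, $(2)$ amounts to $\rho_0^v(w)v-w\in\A_0$, i.e. (Lemma \ref{l2}(i)) to $\rho_0^v(\rho_0^v(w)v-w)\le0$, i.e. by translation invariance to $\rho_0^v(w)+\rho_0^v(-w)\le0$; subadditivity forces the reverse inequality always, so $(2)$ is $\rho_0^v(-w)=-\rho_0^v(w)$, which is $(3)$. $(1)\Leftrightarrow(3)$: divide the identity $\rho_0^v(-w)\rho_0^w(v)=-1$ by $\rho_0^w(v)>0$. $(\star)\Rightarrow(4)$: substituting $\E_\bbQ(v|\F_0)=\rho_0^v(w)^{-1}\E_\bbQ(w|\F_0)$ into the representation and pulling the strictly positive $\F_0$-measurable constant $\rho_0^v(w)$ out of the $\mbox{ess-sup}$ gives $\rho_0^v(X)=\rho_0^v(w)\rho_0^w(X)$ for all $X$, which is $(4)$. $(\star)\Leftrightarrow(5)$: constancy of $g_\bbQ$ in $\bbQ$ is equivalent to constancy of $\E_\bbQ(v+w|\F_0)/\E_\bbQ(v|\F_0)=1+g_\bbQ$, and, by the equivalence $(\star)\Leftrightarrow(4)$ (whose proof above uses nothing special about $v,w$) applied with the pair $(v,v+w)$, the latter is exactly the $\F_0$-equivalence of $v$ and $v+w$, i.e. $(5)$. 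Finally $(\star)\Rightarrow(6)$: substituting $\E_\bbQ(w|\F_0)=\rho_0^v(w)\E_\bbQ(v|\F_0)$ yields $\rho_0^w(X)=\rho_0^v(w)^{-1}\rho_0^v(X)$ and $\rho_0^{v+w}(X)=(1+\rho_0^v(w))^{-1}\rho_0^v(X)$, after which $(6)$ is a one-line algebraic identity, the $0/0$ case being covered by the stated convention since $\rho_0^v(X)=0$ there.

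It remains to recover $(\star)$ (equivalently $(1)$) from $(4)$ and from $(6)$; this is the step one expects to be awkward, but it collapses as soon as the correct test claim is substituted. Putting $X=v$ in $(4)$ gives $1=\rho_0^v(v)=\rho_0^v(w)\rho_0^w(v)$, which is $(1)$. Putting $X=v+w$ in $(6)$ gives $1=\rho_0^{v+w}(v+w)=\frac{(1+a)(1+b)}{2+a+b}$ with $a=\rho_0^w(v)$, $b=\rho_0^v(w)$, which rearranges to $ab=1$, again $(1)$. Combining: from the second paragraph $(1)\Leftrightarrow(2)\Leftrightarrow(3)\Leftrightarrow(\star)\Leftrightarrow(4)\Leftrightarrow(5)$, and we have just shown $(\star)\Rightarrow(6)\Rightarrow(1)\Leftrightarrow(\star)$, so all six conditions are equivalent. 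The genuine content is thus the hub reformulation $(\star)$ and the reciprocity identity $\rho_0^v(-w)\rho_0^w(v)=-1$; after that the argument is bookkeeping together with the two lucky substitutions $X=v$ and $X=v+w$.
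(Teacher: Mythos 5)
Your proof is correct, but it is organised quite differently from the paper's. The paper establishes Lemma~\ref{l4} by a direct chain of implications that stays in the language of cone membership: it shows $(1)\Rightarrow(2)$ and $(2)\Rightarrow(1)$ by manipulating $w-\rho_0^v(w)v$ inside $\A_0$ and $lin(\A_0)$ together with the inequality $\rho_0^v(u)\leq\rho_0^v(w)\rho_0^w(u)$ of Lemma~\ref{l3}(1); it passes between $(2)$ and $(3)$ via translation invariance; it proves $(1)\Rightarrow(4)$ by a sandwich argument using that same Lemma~\ref{l3}(1) inequality in both directions; and it gets $(5)$ and $(6)$ by rerunning the earlier equivalences for the pair $(v,v+w)$. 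You instead introduce a single ``hub'' reformulation $(\star)$ --- constancy of $g_\bbQ=\E_\bbQ(w|\F_0)/\E_\bbQ(v|\F_0)$ over $\bbQ\in\Q$ --- and prove each of $(1)$--$(6)$ equivalent to it. The genuinely new ingredient is the reciprocity identity $\rho_0^v(-w)\,\rho_0^w(v)=-1$, which rests on the lattice fact $\mbox{ess-sup}_\bbQ(1/g_\bbQ)=1/\mbox{ess-inf}_\bbQ g_\bbQ$ (valid here because $g_\bbQ$ is uniformly bounded above and away from $0$ by Theorem~\ref{numthm}); this identity has no counterpart in the paper, where only the one-sided Lemma~\ref{l3}(1) bound is used. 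What your route buys is transparency --- $(\star)$ makes it obvious why all six conditions are equivalent (they all say the same thing about the test measures), and the implications $(4)\Rightarrow(1)$ and $(6)\Rightarrow(1)$ collapse to the single substitutions $X=v$ and $X=v+w$. What the paper's route buys is economy of prerequisites: it never needs the reciprocal ess-sup identity or the explicit dual representation of $\rho_0^v(-w)$, working purely with the acceptance set. One small merit of your write-up worth keeping is the explicit check that $\lambda_0(v+w)\geq\lambda_0(v)>0$, hence $v+w\in\calN_0$, so that $(5)$ and $(6)$ are even well posed; the paper takes this silently.
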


\begin{proof}$(1)\Rightarrow( 2)$ We have $v-\rho_0^w(v)w\in\A_0$ by
definition, so
$$
-w+\rho_0^v(w)v=\rho_0^v(w)(v-\rho_0^w(v)w)\in\A_0.
$$
Since $w-\rho_0^v(w)v\in\A_0$, it follows that $w-\rho_0^v(w)v\in
lin(\A_0)$.

$(2)\Rightarrow(1)$ Conversely we have $w-\rho_0^v(w)v\in lin(\A_0)$
which means that $v-\frac{1}{\rho_0^v(w)}w\in \A_0$ and hence
$\rho_0^v(w)\rho_0^w(v)\leq 1$. From assertion $(1)$ in Lemma
\ref{l3} we deduce that $\rho_0^v(w)\rho_0^w(v)=1$.

$(2)\Rightarrow( 3)$ We have $\rho_0^v(-w+\rho_0^v(w)v)=0$ since
$-w+\rho_0^v(w)v\in lin(\A_0)$ and then
$0=\rho_0^v(-w+\rho_0^v(w)v)=\rho_0^v(-w)+\rho_0^v(w)$.

$(3)\Rightarrow( 2)$ Conversely we have, by definition,
$-w-\rho_0^v(-w)v\in\A_0$, so
$$
-w+\rho_0^v(w)v=-w-\rho_0^v(-w)v\in\A_0.
$$

$(1)\Leftrightarrow( 4)$ to prove the forward implication, note that
$$
X-\rho_0^v(w)\rho_0^w(X)v=(X-\rho_0^w(X)w)+\rho_0^w(X)(w-\rho_0^v(w)v)\in\A_0,
$$
since $X-\rho_0^w(X)w\in\A_0$ and $w-\rho_0^v(w)v\in lin(\A_0)$.
Hence $ \rho_0^v(X)\leq \rho_0^v(w)\rho_0^w(X)$. Swapping the roles
of $v$ and $w$ we obtain also that $ \rho_0^w(X)\leq
\rho_0^w(v)\rho_0^v(X)$ and then
$$
\rho_0^v(X)\leq
\rho_0^v(w)\rho_0^w(X)\leq\rho_0^v(w)\rho_0^w(v)\rho_0^v(X)=\rho_0^v(X).
$$
The converse is trivial.

$(3)\Rightarrow( 5)$ we have
$$
\rho_0^v(-(v+w))=\rho_0^v(-v-w)=\rho_0^v(-w)-1=-\rho_0^v(w)-1=-\rho_0^v(v+w).
$$
$(5)\Rightarrow(3)$ Conversely we have
$\rho_0^v(-w)=\rho_0^v(-v-w)+1=-\rho_0^v(v+w)+1=-\rho_0^v(w)$.

$(1)\Rightarrow(6)$ Assume $(1)$ holds, then, as we have previously
proved, $(4)$ and $(5)$ hold. Then
$$
\rho_0^{v+w}(X)=\rho_0^{v+w}(v)\rho_0^v(X)=
\dfrac{\rho_0^v(X)}{\rho_0^v(v+w)}=
\dfrac{\rho_0^v(X)}{1+\rho_0^v(w)}=
\dfrac{\rho_0^v(X)\rho_0^w(X)}{\rho_0^v(X)+\rho_0^w(X)}.
$$
$(6)\Rightarrow(1)$ Conversely, we take $X=v+w$ and deduce assertion
$(1)$.
\end{proof}

\begin{remark}
\label{r3} Assertion $(2)$ in Lemma \ref{l4} is equivalent
to
\begin{itemize}
\item[($2'$)]: $w-\al v\in lin(\A_0)$ for some $\al\in\Linf_+(\F_0)$ such
that $1/\al\in\Linf$.
\end{itemize}
Indeed for the direct implication we can take $\al=\rho_0^v(w)$. For
the converse, we have
$$
\rho_0^w(v)\leq \dfrac{1}{\al}\leq\dfrac{1}{\rho_0^v(w)},
$$
and then $\rho_0^v(w)\rho_0^w(v)=1$. This implies $(2)$.
\end{remark}

\begin{cor}
\label{l5}The binary relation $\sim$ defined on $\calN_0$ in
Definition \ref{d.4}, is an equivalence relation. For all $v\in
\calN_0$, the subset $[v]\cup\{0\}$ is a convex cone where
$[v]\defto\{w\in \calN_0: w\sim v\}$ denotes the equivalence class
of $v$. Moreover
$$
\overline{[v]}=\left\{x\in \overline{\calN_0}:\;\exists \lambda\in
\Linf_+(\F_0)\;\hbox{such that}\; x-\lambda v\in
lin(\A_0)\right\}\defto {\mathcal E}(v),
$$
where the closure is taken with respect to the weak$^*$ topology in
$\Linf$.
\end{cor}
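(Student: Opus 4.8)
The argument falls into three parts: that $\sim$ is an equivalence relation, that $[v]\cup\{0\}$ is a convex cone, and the identity $\overline{[v]}={\mathcal E}(v)$. For the equivalence relation I would proceed via Lemma \ref{l4}. Reflexivity is immediate from $\F_0$-translation invariance with respect to $v$ (equation (\ref{inv})), which gives $\rho_0^v(v)=\rho_0^v(0)+1=1$, so $\rho_0^v(X)=\rho_0^v(v)\rho_0^v(X)$. Symmetry holds because $v\sim w$ is condition (4) of Lemma \ref{l4}, equivalent to condition (1) there, $\rho_0^v(w)\rho_0^w(v)=1$, which is symmetric in $(v,w)$. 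For transitivity, if $v\sim w$ and $w\sim u$ then, by condition (2) of Lemma \ref{l4}, $w-\rho_0^v(w)v\in lin(\A_0)$ and $u-\rho_0^w(u)w\in lin(\A_0)$, whence
$$
u-\rho_0^w(u)\rho_0^v(w)\,v=\bigl(u-\rho_0^w(u)w\bigr)+\rho_0^w(u)\bigl(w-\rho_0^v(w)v\bigr)\in lin(\A_0),
$$
since $lin(\A_0)=\A_0\cap(-\A_0)$ is a linear subspace stable under multiplication by nonnegative bounded $\F_0$-measurable random variables (Proposition \ref{p2}). By Lemma \ref{l2}(ii) and Theorem \ref{numthm}, $\rho_0^v(w)$ and $\rho_0^w(u)$ are bounded above and bounded away from $0$, so the coefficient $\rho_0^w(u)\rho_0^v(w)$ lies in $\Linf_+(\F_0)$ with bounded reciprocal, and Remark \ref{r3} then yields condition (2) for the pair $(v,u)$, i.e.\ $v\sim u$.

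For the cone statement I would show $[v]$ is closed under addition and under multiplication by strictly positive reals; adjoining $0$ (the zero multiple) produces a convex cone. If $w_1,w_2\in[v]$, summing the relations $w_i-\rho_0^v(w_i)v\in lin(\A_0)$ gives $(w_1+w_2)-\bigl(\rho_0^v(w_1)+\rho_0^v(w_2)\bigr)v\in lin(\A_0)$; moreover $w_1+w_2\in\calN_0$, because $\lambda_0$ is superadditive, so $\lambda_0(w_1+w_2)\geq\lambda_0(w_1)>0$ a.s.\ with $1/\lambda_0(w_1+w_2)\in\Linf$ (Theorem \ref{numthm}); Remark \ref{r3}, with $\al=\rho_0^v(w_1)+\rho_0^v(w_2)\in\Linf_+(\F_0)$ and $1/\al\in\Linf$, then gives $w_1+w_2\sim v$. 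Closure under positive scalar multiples is identical and easier.

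For the identity $\overline{[v]}={\mathcal E}(v)$ I would first record a characterisation: for $x\in\Linf_+$ there is some $\lambda\in\Linf_+(\F_0)$ with $x-\lambda v\in lin(\A_0)$ \emph{iff} $\rho_0^v(x)=-\rho_0^v(-x)$, and then necessarily $\lambda=\rho_0^v(x)$. Indeed, unwinding $x-\lambda v\in\A_0\cap(-\A_0)$ through Lemma \ref{l2}(i) and (\ref{inv}) gives $\rho_0^v(x)\leq\lambda\leq-\rho_0^v(-x)$; such a $\lambda$ (automatically $\geq 0$, since $x\geq0$ forces $\rho_0^v(x)\geq0$) exists exactly when $\rho_0^v(x)\leq-\rho_0^v(-x)$, the reverse inequality being automatic by subadditivity ($0=\rho_0^v(0)\leq\rho_0^v(x)+\rho_0^v(-x)$). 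The inclusion ${\mathcal E}(v)\subseteq\overline{[v]}$ then follows by approximation: for $x\in{\mathcal E}(v)$ (so $x\in\overline{\calN_0}\subseteq\Linf_+$ and $x-\lambda v\in lin(\A_0)$ for some $\lambda\in\Linf_+(\F_0)$) and any $\vare>0$, the element $x+\vare v$ is nonnegative, bounded, and has $\lambda_0(x+\vare v)\geq\vare\lambda_0(v)>0$ a.s.\ with bounded reciprocal, hence $x+\vare v\in\calN_0$ by Theorem \ref{numthm}; since $(x+\vare v)-(\lambda+\vare)v=x-\lambda v\in lin(\A_0)$ with $1/(\lambda+\vare)\leq1/\vare\in\Linf$, Remark \ref{r3} gives $x+\vare v\sim v$, i.e.\ $x+\vare v\in[v]$, and letting $\vare\downarrow0$ gives $x+\vare v\to x$ in $\Linf$-norm, hence weak$^*$, so $x\in\overline{[v]}$.

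The remaining inclusion $\overline{[v]}\subseteq{\mathcal E}(v)$ is the step I expect to be the main obstacle, because one must control the $\F_0$-measurable coefficients along a weak$^*$ limit. Given $x\in\overline{[v]}$, pick $w_n\in[v]\subseteq\calN_0$ with $w_n\to x$ weak$^*$; then $x\in\overline{\calN_0}$ and $(w_n)$ is norm-bounded (weak$^*$-convergent sequences in $\Linf=(\L^1)^*$ are bounded). From $w_n\sim v$ and condition (3) of Lemma \ref{l4}, $\rho_0^v(w_n)=-\rho_0^v(-w_n)$. Applying the Fatou property (weak$^*$ lower semicontinuity) of $\rho_0^v$ — legitimate because $\rho_0^v(\cdot)=\mbox{ess-sup}_{\bbQ\in\Q}\E_\bbQ(\cdot|\F_0)/\E_\bbQ(v|\F_0)$ is a supremum of weak$^*$-continuous functionals (each divisor $\E_\bbQ(v|\F_0)$ has bounded reciprocal), equivalently because each sublevel set $\{x:\rho_0^v(x)\leq m\}=mv+\A_0$ is weak$^*$-closed — to both $(w_n)$ and $(-w_n)$ and combining with $\rho_0^v(x)+\rho_0^v(-x)\geq0$ gives
$$
\limsup_n\rho_0^v(w_n)\leq-\rho_0^v(-x)\leq\rho_0^v(x)\leq\liminf_n\rho_0^v(w_n)\leq\limsup_n\rho_0^v(w_n),
$$
so all four coincide; in particular $\rho_0^v(x)=-\rho_0^v(-x)$, and by the characterisation above $x-\rho_0^v(x)v\in lin(\A_0)$ with $\rho_0^v(x)\in\Linf_+(\F_0)$, hence $x\in{\mathcal E}(v)$. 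The only delicate point is that the semicontinuity is genuinely used for weak$^*$ (not merely in-probability) convergence; if $\L^1$ is non-separable one works, as elsewhere in the paper, with weak$^*$-convergent sequences, and the argument is unchanged.
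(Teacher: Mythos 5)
Your first two parts (equivalence relation and cone), and the inclusion ${\mathcal E}(v)\subseteq\overline{[v]}$, are correct and essentially match the paper's arguments (you give a bit more detail on why $w_1+w_2\in\calN_0$ and on bounding the coefficients, which the paper passes over with a reference to Remark \ref{r3} and to $(4)\Leftrightarrow(5)$ of Lemma \ref{l4}). Your characterisation of ${\mathcal E}(v)$ via $\rho_0^v(x)=-\rho_0^v(-x)$ is also correct and a nice way to organise the argument.

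The gap is in the inclusion $\overline{[v]}\subseteq{\mathcal E}(v)$, and it is a genuine one: the conditional risk measure $\rho_0^v$ is \emph{not} weak$^*$ lower semicontinuous in the a.s.-pointwise-$\liminf$ sense you need. Concretely, take $\Om=[0,1]^2$ with coordinates $(s,t)$, $\P$ uniform, $\F_0=\sigma(s)$, $\Q=\{\P\}$, $v=\1$. Let $w_n=2+\sin(2\pi ns)$. Each $w_n\in\calN_0=[\1]$ and $w_n\to 2$ weak$^*$ (Riemann--Lebesgue), but $\rho_0^\1(w_n)=\E[w_n\,|\,\F_0]=2+\sin(2\pi ns)$, so by equidistribution $\liminf_n\rho_0^\1(w_n)=1$ a.s.\ and $\limsup_n\rho_0^\1(w_n)=3$ a.s., whereas $\rho_0^\1(2)=-\rho_0^\1(-2)=2$. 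Your displayed chain of inequalities therefore reads $3\leq 2\leq 2\leq 1\leq 3$, which is false. (Here the conclusion $x\in{\mathcal E}(v)$ is of course still true, which is why the failure is easy to miss.) Both of your proposed justifications fail for the same underlying reason: weak$^*$-closedness of $\{w:\rho_0^v(w)\leq m\text{ a.s.}\}$ only gives that if a subsequence satisfies $\rho_0^v(w_{n_k})\leq m$ \emph{a.s.\ everywhere} then so does the limit -- one cannot set $m=\liminf_n\rho_0^v(w_n)$ and still have the hypotheses met at finite $n$; and "supremum of weak$^*$-continuous functionals" is not correct, since $w\mapsto\E_\bbQ(w\,|\,\F_0)/\E_\bbQ(v\,|\,\F_0)$ is $\Linf(\F_0)$-valued and weak$^*$-to-weak$^*$ continuity gives no control over the a.s.\ $\liminf$. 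The paper avoids the issue entirely by working with the integrated identity: since $w^n\sim v$, for every $\bbQ\in\Q$ one has $\E_\bbQ(f\,w^n)=\E_\bbQ(f\,\al^n v)$ for $f\in\L^1(\F_0)$ with $\al^n=\rho_0^v(w^n)$ independent of $\bbQ$; one then passes to the limit in this scalar identity (exploiting that $\al^n$ converges weak$^*$ in $\L^\infty(\F_0)$, together with an $\vare$-shift to bound $1/\al^n$), and concludes $\E_\bbQ(w-\al v\,|\,\F_0)=0$ for all $\bbQ\in\Q$. No a.s.\ semicontinuity of $\rho_0^v$ is invoked. Your argument needs to be reorganised around such an integrated passage to the limit -- for instance, showing $\E[Z\rho_0^v(x)]=-\E[Z\rho_0^v(-x)]$ for all $Z\in\L^1_+(\F_0)$ -- rather than an a.s.\ pointwise lower semicontinuity claim that is simply false.
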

\begin{proof}From Remark \ref{r3} we deduce that $\sim$ is an equivalence
relation and
from the equivalence of properties $(4)$ and $(5)$ in Lemma
\ref{l4}, we deduce easily that for each $v\in \calN_0$, the subset
$[v]\cup\{0\}$ is a convex cone. Now we prove that
$\overline{[v]}\subset {\mathcal E}(v)$. Let $w^n$ be a sequence in
$[v]$ which converges weakly$^*$ to $w$ in $\Linf$. The sequence
$$
\alpha^n\defto
\rho_0^v(w^n)=\dfrac{\E_\bbQ(w^n|\F_0)}{\E_\bbQ(v|\F_0)},
$$
for a fixed $\bbQ\in \Q^e$, converges weakly$^*$ to
$$
\alpha\defto \dfrac{\E_\bbQ(w|\F_0)}{\E_\bbQ(v|\F_0)}.
$$
By working with $w^n+\vare\,v$ and then taking the limit in $\vare$,
we suppose, without loss of generality, that $\al\geq \vare>0$. So
the sequence $1/\al^n$ is bounded, thus there exists an
$\F_0$-measurable integer-valued sequence $\tau_n$ such that the
sequence $\al^{\tau_n}$ converges a.s to some $\alpha$ and then
$\al\in\Linf$ and $1/\al\in\Linf$. We know that for all $\bbQ\in\Q$
and $f\in \L^1(\F_0)$ we have
\begin{eqnarray}
\label{number}
\E_\bbQ(f\,w^{\tau_n})=\sum_{k}\E_\bbQ(f\,1_{(\tau_n=k)}w^{k})
=\sum_{k}\E_\bbQ(f\,1_{(\tau_n=k)}\al^{k}\,v)=\E_\bbQ(f\,\alpha^{\tau_n}\,v),
\end{eqnarray}
since for all $k$, $w^k\sim v$ and $f\,1_{(\tau_n=k)}\in
\L^1(\F_0)$. So the left hand side of (\ref{number}) converges to
$\E_{\bbQ}(f\,w)$ and the right hand side converges to
$\E_\bbQ(f\,\alpha\,v)$. Hence $ \E_\bbQ(w-\alpha\,v|\F_0)=0$ for
all $\bbQ\in\Q$ and so $w-\alpha\,v\in lin(\A_0)$.

Conversely, let $w\in {\mathcal E}(v)$ which means that $w=\al\,v+z$
with $z\in lin(\A_0)$. Define
$$
w^n\defto w+\frac{1}{n}v=\left(\al+\frac{1}{n}\right)\,v+z \in
\calN_0,
$$
then $w^n\in [v]$ from Remark \ref{r3} and $w^n$ converges
weakly$^*$ to $w$ in $\Linf$.
\end{proof}

\section{proofs of results in section \ref{abscone}}\label{app3}
{\em Proof of Lemma \ref{duality0}:\ }
The inclusion $D\subset\cap_{t=0}^{T-\eta}M^\eta_t(D)$ is
trivial.

Now we let $Y\in \cap_{t=0}^{T-\eta}M^\eta_t(D)$ and seek to prove that $Y\in
D$.
So, for all $t\in \{0,\ldots,T-\eta\}$, there exists some $Z^t\in
D$, $\be_t\in \L^0_+(\G_{t})$ with $\beta_t Z^t\in \L^1$ such that
$Z_{t+\eta}=\be_t\,Z^t_{t+\eta}$. Define $\xi^{T-\eta}=Z^{T-\eta}$
and for $t\in\{0,\ldots,T-\eta-1\}$:
$$
\xi^t=1_{F_t}\k_t\,\xi^{t+1}+1_{F^c_t}\,Z^{t},
$$
where $F_t=(\be_t>0)$ and $\k_t=\be_{t+1}/\be_{t}$. Remark that
$$
Z_{t+1+\eta}=\be_{t+1}\,Z^{t+1}_{t+1+\eta},
$$
and
$$
Z_{t+\eta}=\be_{t}\,Z^{t}_{t+\eta}.
$$
Thus
$$
\E[\be_{t+1}Z^{t+1}|\G_{t+\eta}]=\E[Z_{t+1+\eta}|\G_{t+\eta}]=Z_{t+\eta}
=\be_{t}\,Z^{t}_{t+\eta},
$$
which leads us to deduce that
$$
\E((1_{F_t}\k_t\,Z^{t+1}+1_{F^c_t}Z^{t})|\G_{t+\eta})=Z^{t}_{t+\eta}.
$$
Remark also that, since $D\subset\L^1_+(\G;\R^d)$,
$$
Z=\be_0\,\k_0\times\ldots\times\k_{T-\eta-1}\,Z^{T-\eta}.
$$
We deduce that $Z=\be_0\,\xi^0$.

Now we prove by backwards induction
on $t$ that $Z^{t}_{t+\eta}=\xi^{t}_{t+\eta}$ and $\xi^t\in D$. For
$t=T-\eta$, we have $\xi^{T-\eta}=Z^{T-\eta}\in D$ by definition.
Suppose that for all $s=T-\eta,\ldots,t+1$, we have
$Z^{s}_{s+\eta}=\xi^{s}_{s+\eta}$ and $\xi^s\in D$ and show that
$Z^{t}_{t+\eta}=\xi^{t}_{t+\eta}$ and $\xi^t\in D$. First
$\xi^t=1_{F_t}\k_t\,\xi^{t+1}+1_{F^c_t}\,Z^{t}$ so we get
$$
\xi^{t}_{t+\eta}=\E((1_{F_t}\k_t\,\xi^{t+1}+1_{F^c_t}\,Z^{t})|\G_{t+\eta})
=\E((1_{F_t}\k_t\,Z^{t+1}+1_{F^c_t}\,Z^{t})|\G_{t+\eta})=Z^{t}_{t+\eta}.
$$
Then
$$
Z^{t}_{t+\eta}=\E((1_{F_t}\k_t\,\xi^{t+1}+1_{F^c_t}\,Z^{t})|\G_{t+\eta}),
$$
with $Z^t,\xi^{t+1}\in D$. By the $\eta$-stability of the subset $D$
we deduce that $\xi^t\in D$ and consequently $Z=\be_0\,\xi^0\in D$
since $\xi^0\in D$ and $\be_0$ is a positive scalar.
\endpf

{\em Proof of Lemma \ref{duality1}:\ }
\begin{enumerate}
\item We remark that $[D]$ is a closed convex cone in $\L^1$,
containing $D$. Now we prove that $[D]$ is $\eta$-stable. Fix
$t\in\{0,1,\ldots,T-\eta\}$ and suppose that $Z^1,\ldots,Z^{k}\in [D]$ are such
that there
exists $Z\in [D]$, a partition $F^1_t,\ldots,F^{k}_t\in\F_t$ and
$\al^1,\ldots,\al^{k}\in\L^0(\F_{t-\eta+1})$ with each
$\al^i\,Z^i\in \L^1$ and
$$
\E(Z|\,\G_{t})=\E(\al^i\,Z^i|\G_t),
$$
on $F^i_t$ for each $i=1,\ldots,k$. We want to prove that
$$
Y\defto\sum_{i=1}^{k}1_{F^i_t} \al^i\,Z^i\in [D].
$$
Now for $s\geq t-\eta+1$ we have
$$
Y_{s+\eta}=\sum_{i=1}^{k}1_{F^i_t} \al^i\,Z^i_{s+\eta},
$$
with $Z^i_{s+\eta}=W^i_{s+\eta}$ for some $W^i\in D_{(s)}$.
Therefore
$$
Y_{s+\eta}=\left(\sum_{i=1}^{k}1_{F^i_t} \al^i\,W^i\right)_{s+\eta},
$$
with $\sum_{i=1}^{k}1_{F^i_t} \al^i\,W^i\in D_{(s)}$.

Now
for $s\leq t-\eta$ we have $Y_{s+\eta}=(Y_{t})_{s+\eta}$ and
$$
Y_t=\sum_{i=1}^{k}1_{F^i_t} \E(\al^i\,Z^i|\G_{t})=Z_t,
$$
and then $Y_{s+\eta}=Z_{s+\eta}=W_{s+\eta}$ for some $W\in D_{(s)}$.
Therefore $Y\in [D]$.

\item We prove that $D=[D]\,\Leftrightarrow\, D$ is an $\eta$-stable closed
convex cone
in $\L^1$. For the reverse implication, thanks to Lemma
\ref{duality0} we have:
$$
D=D^{**}=\cap_{t=0}^{T-\eta}\overline{conv}(M^\eta_t(D))=\cap_{t=0}^{T-\eta}\overline{R^\eta_t},
$$
with $D\subset[D]\subset \cap_{t=0}^{T-\eta}\overline{R^\eta_t}$.
Then $D=[D]$. The direct implication is trivial from the first
assertion.

\item To prove that $[D]$ is the smallest $\eta$-stable closed convex cone in
$\L^1$ which contains $D$, simply let $D'$ be an $\eta$-stable closed convex
cone in $\L^1$, containing $D$. Then $[D]\subset [D']=D'$.
\end{enumerate}

\endpf

{\em Proof of Theorem \ref{rafa5}:\ }
Remark that $\B^*$ is a closed convex cone in $\L^1$. We claim that if we can
prove that
for all $t=0,\ldots,T-\eta$ we have
$K_t^\eta(\B)=\left(M_t^\eta(\B^*)\right)^*$, the result will follow
thanks to Lemmas \ref{duality0} and \ref{duality1}. To see this, note
first
that Lemma \ref{duality0} tells us that $\B^*$ being $\eta$-stable
implies that $\B^*=\cap M^\eta_t$. Thus if $\K^\eta_t=(M^\eta_t)^*$ then
$\B=\B^{**}=(\cap
M^\eta_t)^*=\overline{\oplus(M^\eta_t)^*}=\overline{\oplus(\K^\eta_t)}$,
establishing the reverse implication. Conversely,
$(M^\eta_t)^*=\left(\overline{conv}(M^\eta_t)\right)^*=(R^\eta_t)^*$, so, if
$\B=\overline{\oplus(\K^\eta_t)}=\overline{\oplus(M^\eta_t)^*}$, then
$\B=\overline{\oplus(R^\eta_t)^*}$ so $\B^*=\cap R^\eta_t=[\B^*]$ and then,
by Lemma \ref{duality1}, $\B^*$ is $\eta$-stable.

First we prove that $M^\eta_t(\B^*)\subset (\K^\eta_t(\B))^*$. Let
$Z\in M^\eta_t(\B^*)$, then there exists some $Z'\in \B^*$ and
$\al\in\L^0_+(\F_t)$ with $\al\,Z'\in\L^1$ such that
$Z_{t+\eta}=\al\,Z'_{t+\eta}$. Take $X\in \K^\eta_t(\B)$, then
$$
\E(Z.X)=\E(Z_{t+\eta}.X)=\E(\al_t\,Z'_{t+\eta}.X),
$$
since $X\in \Linf(\G_{t+\eta},\R^d)$. We then obtain:
$$
\E(Z.X)=\E(Z'_{t+\eta}.\al_t\,X)=\lim_{n\to\infty}\E(Z'.\al_t\,1_{(\al_t\leq
n)}\,X)\leq 0,
$$
since $\al_t\,1_{(\al_t\leq n)}\,X\in \B$ and $Z'\in \B^*$.

Now we
prove that $(M^\eta_t(\B^*))^*\subset \K^\eta_t(\B)$. We remark that
$\B^*\subset M^\eta_t(\B^*)$ and also that
$\Linf_+(\G_t)\,M^\eta_t(\B^*)\subset M^\eta_t(\B^*)$, so
$(M^\eta_t(\B^*))^*\subset \B_t$. Let $X\in (M^\eta_t(\B^*))^*$, we
want to prove that $X\in \Linf(\G_{t+\eta},\R^d)$. Let $Z\in
\L^1(\F,\R^d)$, we remark that $Z-Z_{t+\eta}\in M^\eta_t(\B^*)$ and
consequently $\E((Z-Z_{t+\eta}).X)\leq 0$. We deduce then that
$\E((X-X_{t+\eta}).Z)=\E((Z-Z_{t+\eta}).X)\leq 0$ for all $Z\in
\L^1$. Therefore $X=X_{t+\eta}$.

\endpf

{\em Proof of Lemma \ref{lem5.3}:\ }
 $(1)\Leftrightarrow(2)$. For $Z\in \B^*$ and $f^+_t\in \Linf_+(\G_t)$ we have:
$$
\E f^+_t(Z_t.X)=\E Z.(f^+_t\,X)\leq 0,
$$
since $f^+_t\,X\in \B$. Then $Z_t.X\leq 0$ a.s. Conversely let
$f^+_t\in \Linf_+(\G_t)$ we want to prove that $f^+_t\,X\in \B$. Let
$Z\in \B^*$ then
$$
\E Z.(f^+_t\,X)=\E Z_t.(f^+_t\,X)=\E f^+_t(Z_t.X)\leq 0.
$$
Therefore $f^+_t\,X\in \B$.

$(2)\Leftrightarrow(3)$, for all $W\in \L^{1}$ such that $W_t=
Z_t$ with $Z\in \B^*$ and $f^+_t\in \Linf_+(\G_t)$, we have
$$
\E(f^+_t\,(W.X))=\E(f^+_t\,W_t.X)=\E(f^+_t\,Z_t.X)\leq 0.
$$
Conversely we prove first that $X\in \Linf(\G_t)$. Remark that for
every $W\in \L^{1}$ we have $\E(W-W_t|\,\G_t)= 0$ with $0\in \B^*$,
then $\E[(W-W_t).X]\leq 0$. Consequently for every $W\in \L^{1}$ we
get
$$
\E W.(X-X_t)=\E(W-W_t).X\leq 0,
$$
and so $\E W.(X-X_t)=0$ for every $W\in \L^{1}$. Then
$X=X_t\defto\E(X|\,\G_t)$. Let $Z\in \B^*$, then
$$
Z_t.X=\E(Z.X|\,\G_t)\leq 0.
$$
\endpf



\section{Further results on $\eta$-decomposability}\label{app2}
\begin{prop}
\label{p1}We have:
\begin{enumerate}
\item For all $t\in\{0,\ldots,T\}$,
$(\B_t)^*=\overline{(\B^*)_{t,T}}\defto(\B^*)_{(t)}$, where
$$
(\B^*)_{t,T}=\{\al Z;\; \al\in \L^\infty(\F_t),\,Z\in \B^*\}.
$$
\item Define $\B^{\eta}\defto\overline{\oplus_{t=0}^{T-\eta}\,\K^\eta_t(\B)}$;
then
$\B^{\eta}$ is the largest $\eta$-decomposable closed convex cone in
$\B$.
\end{enumerate}
\end{prop}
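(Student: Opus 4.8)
The plan is to prove both assertions by convex duality, with polars taken throughout between $\Linf(\G;\R^d)$ and $\L^1(\G;\R^d)$, and using repeatedly that a convex cone in one of these spaces coincides with its bipolar exactly when it is closed in the corresponding weak topology.

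For assertion (1) I would first record two preliminary facts. First, $\B_t$ is a weak$^*$-closed convex cone: it is the intersection, over $\al\in\Linf_+(\G_t)$, of the sets $\{X\in\Linf(\G;\R^d):\al X\in\B\}$, each weak$^*$-closed because $X\mapsto\al X$ is weak$^*$-continuous and $\B$ is weak$^*$-closed, and convexity is inherited from $\B$. Second, $(\B_t)^*$ is a weak-closed convex cone in $\L^1$ which is stable under multiplication by $\Linf_+(\G_t)$, the stability being inherited from that of $\B_t$. The inclusion $(\B^*)_{(t)}\subseteq(\B_t)^*$ then follows immediately: since $\B_t\subseteq\B$ we have $\B^*\subseteq(\B_t)^*$, and a closed convex cone stable under $\Linf_+(\G_t)$-multiplication and containing $\B^*$ must contain the closed convex cone generated by $\{\al Z:\al\in\Linf_+(\G_t),\,Z\in\B^*\}$, which is exactly $(\B^*)_{(t)}$ in the notation of Definition \ref{dt}. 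For the reverse inclusion I would invoke the bipolar theorem. Let $X$ lie in the polar of $(\B^*)_{(t)}$; taking $\al\equiv 1$ shows $X\in(\B^*)^*=\B$, and for arbitrary $\al\in\Linf_+(\G_t)$ and $Z\in\B^*$ we have $\al Z\in(\B^*)_{(t)}$, hence $\E[Z.(\al X)]=\E[(\al Z).X]\leq 0$; letting $Z$ range over $\B^*$ forces $\al X\in(\B^*)^*=\B$, so $X\in\B_t$. Thus $((\B^*)_{(t)})^*\subseteq\B_t$, and taking polars, which reverses the inclusion, gives $(\B_t)^*\subseteq((\B^*)_{(t)})^{**}=(\B^*)_{(t)}$, the last equality holding because $(\B^*)_{(t)}$ is a closed convex cone. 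Together with the first inclusion this proves (1).

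For assertion (2) I would argue in three short steps. (i) $\B^\eta\subseteq\B$: each summand $\K^\eta_t(\B)=\B_t\cap\Linf(\G_{t+\eta};\R^d)$ lies in $\B_t\subseteq\B$, and $\B$, being a weak$^*$-closed convex cone, is closed under addition and closed, so $\B^\eta=\overline{\oplus_{t=0}^{T-\eta}\K^\eta_t(\B)}\subseteq\B$. (ii) $\B^\eta$ is $\eta$-decomposable, which I would deduce from the identity $\K^\eta_t(\B^\eta)=\K^\eta_t(\B)$ for every $t$. The inclusion ``$\subseteq$'' follows from $\B^\eta\subseteq\B$, hence $(\B^\eta)_t\subseteq\B_t$. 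For ``$\supseteq$'', note that $\K^\eta_t(\B)$ is itself stable under multiplication by $\Linf_+(\G_t)$ (inherited from $\B_t$, using $\Linf(\G_t)\subseteq\Linf(\G_{t+\eta})$) and consists of $\G_{t+\eta}$-measurable vectors, so for $X\in\K^\eta_t(\B)$ and $\al\in\Linf_+(\G_t)$ one has $\al X\in\K^\eta_t(\B)\subseteq\B^\eta$, whence $X\in(\B^\eta)_t\cap\Linf(\G_{t+\eta};\R^d)=\K^\eta_t(\B^\eta)$. Consequently $\B^\eta=\overline{\oplus_t\K^\eta_t(\B)}=\overline{\oplus_t\K^\eta_t(\B^\eta)}$. (iii) Maximality: if $\C$ is any $\eta$-decomposable closed convex cone with $\C\subseteq\B$, then $\C_t\subseteq\B_t$ and hence $\K^\eta_t(\C)\subseteq\K^\eta_t(\B)$ for each $t$, so that $\C=\overline{\oplus_t\K^\eta_t(\C)}\subseteq\overline{\oplus_t\K^\eta_t(\B)}=\B^\eta$.

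The one genuinely delicate point is the duality bookkeeping in assertion (1): one must track which of the paired spaces $\Linf$ and $\L^1$ each polar inhabits, and make sure that $(\B^*)_{(t)}$ is already weak-closed, so that $((\B^*)_{(t)})^{**}=(\B^*)_{(t)}$, and that $\B_t$ is weak$^*$-closed, so that $\B_t^{**}=\B_t$. Assertion (2) is essentially formal once one has checked the $\Linf_+(\G_t)$-stability of the generators $\K^\eta_t(\B)$ and the monotonicity $\C\subseteq\B\Rightarrow\K^\eta_t(\C)\subseteq\K^\eta_t(\B)$.
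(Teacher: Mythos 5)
Your proof is correct and follows essentially the same route as the paper: both assertions are handled by the same duality bookkeeping (bipolar theorem for part (1), monotonicity of $\K^\eta_t(\cdot)$ plus $\Linf_+(\G_t)$-stability for part (2)). You spell out one step the paper leaves implicit — that $\K^\eta_t(\B)$ is itself $\Linf_+(\G_t)$-stable, which is what actually delivers the inclusion $\K^\eta_t(\B)\subseteq\K^\eta_t(\B^\eta)$ — but the argument is otherwise the paper's.
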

\begin{proof}To prove $(1)$, we fix $t\in\{0,\ldots,T\}$,\,$Z\in (\B^*)_{t,T}$
and
$X\in \B_t$, then there exists some $Z'\in \B^*$ and $\al\in
\Linf_+(\G_t)$ such that $Z=\al\,Z'$ and
$$
\E Z.X=\E \al\,Z'.X=\E Z'.(\al\,X)\leq 0.
$$
We deduce that $(\B^*)_{t,T}\subset (\B_t)^*$.

Now let $X\in \Linf$ be such that $\E Z.X\leq 0$ for all $Z\in
(\B^*)_{t,T}$. Then, in particular for all $Z\in \B^*$ and $\al\in
\Linf_+(\G_t)$ we have
$$
\E Z.(\al\,X)=\E (\al\,Z).X\leq 0,
$$
since $\al\,Z\in (\B^*)_{t,T}$, which implies that $\al X\in\B$ for
all $\al\in \Linf_+(\G_t)$ and then $X\in\B_t$.

To prove $(2)$, remark that for all $t\in\{0,\ldots,T-\eta\}$, we have
$\K^\eta_t(\B)\subset \B^{\eta}\subset \B$, then
$\K^\eta_t(\B^{\eta})=\K^\eta_t(\B)$ and so
$\B^{\eta}=\overline{\oplus_{t=0}^{T-\eta}\,\K^\eta_t(\B^{\eta})}$.
Now let $M$ be an $\eta$-decomposable closed convex cone in $\B$,
then for all $t\in\{0,\ldots,T-\eta\}$ we have $\K^\eta_t(M)\subset
\K^\eta_t(\B)\subset \B^{\eta}$. We deduce that
$M=\overline{\oplus_{t=0}^{T-\eta}\K^\eta_t(M)}\subset \B^{\eta}$.
\end{proof}

\goodbreak

\begin{cor}
\label{cor1} We have
\begin{enumerate}
\item If $\B$ is $\eta$-decomposable, then each $\B_t$ is $\eta$-decomposable.
\item For fixed $t\in\{0,\ldots,T\}$, we have $\B^{\eta}_t\defto
(\B^\eta)_t=(\B_t)^{\eta}$.
\item For fixed $t\in\{0,\ldots,T\}$, $\B_t$ is $\eta$-decomposable if and only
if
$\B_t=\B^{\eta}_t$.
\end{enumerate}
\end{cor}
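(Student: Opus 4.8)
The plan is to deduce all three parts from Proposition \ref{p1}, in the order (3), (1), (2), as the first two are short consequences of the third. Recall that Proposition \ref{p1}(2) exhibits $\B^\eta = \overline{\oplus_{s=0}^{T-\eta}\K^\eta_s(\B)}$ as the largest $\eta$-decomposable closed convex cone contained in $\B$; in particular $\B$ is $\eta$-decomposable precisely when $\B = \B^\eta$. Applying this with $\B_t$ in place of $\B$ (and using $\K^\eta_s((\B_t)^\eta) = \K^\eta_s(\B_t)$, so that $(\B_t)^\eta$ is itself $\eta$-decomposable), $\B_t$ is $\eta$-decomposable iff $\B_t = (\B_t)^\eta$; granting (2) this right-hand side is $\B^\eta_t = (\B^\eta)_t$, which is (3). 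Granting (2) again, if $\B$ is $\eta$-decomposable then $\B = \B^\eta$ and hence $\B_t = (\B^\eta)_t = (\B_t)^\eta$ is $\eta$-decomposable, which is (1). So everything comes down to (2): $(\B^\eta)_t = (\B_t)^\eta$.

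I would begin the proof of (2) with a computation: directly from the definition of $\B_t$ one gets $(\B_t)_s = \B_{\max(s,t)}$, whence $\K^\eta_s(\B_t) = \K^\eta_s(\B)$ for $s \ge t$ while $\K^\eta_s(\B_t) = \B_t \cap \Linf(\G_{s+\eta};\R^d)$ for $s < t$; since $\Linf(\G_0) \subseteq \Linf(\G_1) \subseteq \cdots$, these last cones are nested, their sum is $\B_t \cap \Linf(\G_{(t-1)+\eta};\R^d) \subseteq \B_t \cap \Linf(\G_{t+\eta};\R^d) = \K^\eta_t(\B)$, so they are absorbed into the $s=t$ summand and $(\B_t)^\eta = \overline{\oplus_{s=t}^{T-\eta}\K^\eta_s(\B)}$. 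Using this, $(\B_t)^\eta \subseteq (\B^\eta)_t$ is immediate: by Proposition \ref{p1}(2), $(\B_t)^\eta$ is $\eta$-decomposable and contained in $\B_t \subseteq \B$, so $(\B_t)^\eta \subseteq \B^\eta$ by maximality; moreover $(\B_t)^\eta$ is stable under multiplication by $\Linf_+(\G_t)$, since for $s \ge t$ every $\al \in \Linf_+(\G_t)$ is a bounded $\G_s$- and $\G_{s+\eta}$-measurable function, so $\al\,\K^\eta_s(\B) \subseteq \K^\eta_s(\B)$, and multiplication by a fixed bounded random variable is weak$^*$-continuous; hence $X \in (\B_t)^\eta$ and $\al \in \Linf_+(\G_t)$ give $\al X \in (\B_t)^\eta \subseteq \B^\eta$, i.e. $X \in (\B^\eta)_t$.

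The reverse inclusion $(\B^\eta)_t \subseteq (\B_t)^\eta$ will hold once $(\B^\eta)_t$ is known to be $\eta$-decomposable --- being then an $\eta$-decomposable closed convex cone inside $\B_t$, it is contained in $(\B_t)^\eta$ by maximality. As $\B^\eta$ is $\eta$-decomposable, this is an instance of the claim that the map $\B \mapsto \B_t$ preserves $\eta$-decomposability, which is exactly assertion (1); and by Theorem \ref{rafa5} together with Proposition \ref{p1}(1) (which gives $(\B_t)^* = (\B^*)_{(t)}$) this is equivalent to saying that $E_{(t)}$ is $\eta$-stable whenever $E$ is an $\eta$-stable closed convex cone. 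This last statement is the main obstacle. I would prove it on the primal side: by the collapse computation above it is enough to show that $\B_t = \overline{\oplus_{s=t}^{T-\eta}\K^\eta_s(\B)}$ whenever $\B$ is $\eta$-decomposable. Here $\supseteq$ is clear, since for $s \ge t$ the cone $\K^\eta_s(\B) \subseteq \B_s$ is $\Linf_+(\G_t)$-stable and contained in $\B$, hence in $\B_t$. For $\subseteq$ I would use backwards induction on $t$ --- the base case $t = T-\eta$ being trivial because $\K^\eta_{T-\eta}(\B) = \B_{T-\eta}$ --- with inductive step the identity $\B_t = \overline{\K^\eta_t(\B) + \B_{t+1}}$: given $X \in \B_t$, decompose each $\al X \in \B$ ($\al \in \Linf_+(\G_t)$) along $\oplus_{s=0}^{T-\eta}\K^\eta_s(\B)$, gather the time-$<t$ pieces into a single $\G_{(t-1)+\eta}$-measurable term, and condition it up to time $t$ as in the conditioning construction in the proof of Lemma \ref{duality0}. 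The delicate part is controlling the weak$^*$-closures and convex hulls across the induction; an alternative is to carry the argument out on the dual side using the sets $M^\eta_s(E)$ of that proof, or --- in the strongly decomposable case --- to use the $t$-cone representation $\B^* = \bigcap_s\{Z : Z_s \in \H_s\}$ of Theorem \ref{rafa3}, verify that $(\B^*)_{(t)} = \bigcap_{s \ge t}\{Z : Z_s \in \H_s\}$, and observe that a finite intersection of $\eta$-stable cones of that form is $\eta$-stable. With (1) established, (2) follows from the two inclusions, and (3) and (1) then follow as in the first paragraph.
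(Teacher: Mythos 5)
Your reduction of the corollary is sound and your computation is clean where it is complete: the identity $(\B_t)_s=\B_{\max(s,t)}$, the resulting absorption giving $(\B_t)^\eta=\overline{\oplus_{s=t}^{T-\eta}\K^\eta_s(\B)}$, and the inclusion $(\B_t)^\eta\subseteq(\B^\eta)_t$ via maximality and weak$^*$-continuity of multiplication by a fixed bounded variable are all correct, and your identification of where the real content lies --- assertion $(1)$, equivalently that $E_{(t)}$ is $\eta$-stable whenever $E$ is --- matches the paper's. But you never prove it. You note that your plan to derive $(1)$ from $(2)$ is blocked because the reverse inclusion in $(2)$ itself requires $(1)$, and then for $(1)$ you offer only a sketch: a backwards induction on $t$ on the primal side, for which you yourself concede ``the delicate part is controlling the weak$^*$-closures and convex hulls across the induction,'' together with unexecuted alternatives. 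That concession is exactly where the proposal stops short; the induction as sketched would require closing a sum at each step and then showing this closure is again of the stated form, which is precisely the kind of step that fails silently in weak$^*$-topology arguments without a concrete estimate.

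The paper closes this gap by working on the dual side directly and without induction: starting from the $\eta$-stability of $\B^*$, it shows that $(\B^*)_{t,T}=\{\al Z:\al\in\Linf_+(\F_t),\,Z\in\B^*\}$ is $\eta$-stable by taking a candidate combination $Y=\sum_i 1_{F^i_s}\al^i Z^i$ with $Z_s=\E[Y|\G_s]$, writing each $Z^i=\beta^i W^i$ with $\beta^i\in\Linf_+(\G_t)$ and $W^i\in\B^*$, and then for $s\geq t$ factoring out $\beta$ on the set $G=(\beta>0)$ to produce a combination directly testable against the $\eta$-stability of $\B^*$, while for small $s$ the conclusion is immediate because $\al^i,1_{F^i_s}$ are then $\G_t$-measurable so $Y$ lands in $(\B^*)_{t,T}$ by definition. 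Theorem \ref{rafa5} then upgrades this to $\eta$-decomposability of $\B_t$. No closures need to be chased. If you want to retain your structure, the cleanest repair is to replace your primal induction by this dual verification (it amounts to proving that $P_{(t)}$ inherits $\eta$-stability, with the case split $s<t$ versus $s\geq t$ doing all the work), prove $(1)$ first, and then let $(2)$ and $(3)$ follow exactly as you describe.
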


\begin{proof}Suppose that $\B$ is $\eta$-decomposable, then $\B^*$ is
$\eta$-stable.
Hence for all $t\in\{0,\ldots,T\}$, $(\B^*)_{t,T}$ is $\eta$-stable:
indeed for $s\in\{0,\ldots,T-\eta\}$, consider $Z,Z^1,\ldots,Z^k\in
(\B^*)_{t,T}$ and a partition $F^1_s,\ldots,F^k_s\in\F_s$ and
$\al^1,\ldots,\al^k\in \L^0_+(\G_{s-\eta+1})$ with $\al^i Z^i\in
\L^1$ such that $Y\defto \sum_{i=1}^k 1_{F^i_t}\al^i Z^i$ satisfies:
$$
Z_s=\E(Y|\G_s).
$$
For $s\leq t+\eta-1$, we have $Y\in (\B^*)_{t,T}$ and for $s\geq
t-\eta$, by definition of $(\B^*)_{t,T}$, there exists
$\beta,\beta^1,\ldots,\beta^k\in \Linf_+(\G_t)$ and
$W,W^1,\ldots,W^k\in \B^*$ such that $Z=\beta W$ and $Z^i=\beta^i
W^i$. Then
$$
\beta W_s=\sum_{i=1}^k 1_{F^i_t}\beta^i \E(\al^i W^i|\G_s),
$$
which means that
$$
W_s=\E\left(1_G \dfrac{\sum_{i=1}^k 1_{F^i_t}\beta^i \al^i
W^i}{\beta}+1_{G^c} W|\G_s\right),
$$
where $G=(\beta>0)$. Since $\B^*$ is $\eta$-stable, it follows that
$$
Y'\defto 1_G Y+1_{G^c} W\in \B^*,
$$
and consequently $Y=\beta Y' \in \B^*_{t,T}$.

By Theorem \ref{rafa5}, $\B_t$ is $\eta$-decomposable. Now to prove
$(2)$, remark that $\B^\eta_t$ is $\eta$-decomposable (this follows
by assertion $(1)$ since $\B^\eta$ is $\eta$-decomposable) and for
all $s\in\{t+1,\ldots,T-\eta\}$ we have
$$
\K^\eta_s(\B^{\eta}_t)=\K^\eta_s(\B^{\eta})=\K^\eta_s(\B)=\K^\eta_s(\B_t),
$$
and for $s\leq t$, we have $\K^\eta_s(\B_t)\subset \K^\eta_t(\B_t)$.
Hence
$$
\B^{\eta}_t=\overline{\oplus_{s=0}^{T-\eta}\K^\eta_s(\B^{\eta}_t)}
=\overline{\oplus_{s=t}^{T-\eta}\K^\eta_s(\B_t)}=(\B_t)^\eta.
$$
To prove $(3)$, assume that $\B_t$ is $\eta$-decomposable, then
$\B_t=(\B_t)^\eta=\B^\eta_t$. The converse is obvious.
\end{proof}

\end{appendix}

\end{document}